\pdfoutput=1
\documentclass[12pt]{amsart}
\bibliographystyle{amsalpha}

\setlength\textheight{8in} 
\setlength\textwidth{6in}
\setlength\oddsidemargin{0.2in} 
\setlength\evensidemargin{0.2in}

\setlength\marginparsep{10pt}
\setlength\marginparwidth{70pt}

\usepackage{lmodern} % monospace font
\usepackage[scale=0.89]{tgheros} % Helvetica is too big
\usepackage[osf]{Baskervaldx} % tosf in text, tlf in math
\usepackage[baskervaldx,cmintegrals,bigdelims,vvarbb]{newtxmath} % math italic letters from Baskervaldx
\usepackage[cal=boondoxo]{mathalfa} % mathcal from STIX, unslanted a bit
       
  \usepackage{amsmath}          
  \usepackage{amsfonts}           
  \usepackage{amscd}
  \usepackage{enumerate}
  \usepackage[active]{srcltx}
  \usepackage[percent]{overpic}
  \usepackage[mathscr]{eucal}
  \usepackage{graphicx,epsfig} 
  \usepackage{tikz-cd}        
  \usepackage{color}
  \usepackage{xcolor}
  \usepackage{marginnote}
  \usepackage{hyperref}
  \usepackage{bm,bbm}
  \usepackage{enumitem} 
  \usepackage{tikz-cd}
  \usepackage{thmtools}

\tikzset{>=latex,auto}
\usetikzlibrary{positioning,shapes.geometric}

\newtheorem{theorem}{Theorem}[section]
\newtheorem{lemma}[theorem]{Lemma}
\newtheorem{corollary}[theorem]{Corollary}

\theoremstyle{definition}
\newtheorem{definition}[theorem]{Definition}
\newtheorem{example}[theorem]{Example}

\theoremstyle{remark}
\newtheorem{remark}[theorem]{Remark}

\theoremstyle{thmx}
\newtheorem{thmx}{Theorem}

\declaretheoremstyle[numbered=no,bodyfont = \itshape, spaceabove=7pt,spacebelow=7pt]{theorem}
\declaretheorem[name=Basic Structure Lemma, style=theorem]{redlm}

\numberwithin{equation}{section}

\newcommand{\thmref}[1]{Theorem~\ref{#1}}

\newcommand{\remref}[1]{Remark~\ref{#1}}

\newcommand{\lemref}[1]{Lemma~\ref{#1}}

\newcommand{\corref}[1]{Corollary~\ref{#1}}
\newcommand{\figref}[1]{Fig.~\ref{#1}}
\newcommand{\exref}[1]{Example~\ref{#1}}

\newcommand{\vs}{\vspace{6pt}}

\newcommand{\CC}{{\mathbb C}}

\newcommand{\RR}{{\mathbb R}}
\newcommand{\HH}{{\mathbb H}}
\newcommand{\TT}{{\mathbb T}}
\newcommand{\ZZ}{{\mathbb Z}}

\newcommand{\DD}{{\mathbb D}}

\newcommand{\sS}{{\mathscr S}}
\newcommand{\sC}{{\mathscr C}}
\newcommand{\sK}{{\mathscr K}}
\newcommand{\sP}{{\mathscr P}}

\newcommand{\sU}{{\mathscr U}}

\newcommand{\sL}{{\mathscr L}}

\newcommand{\Chat}{\hat{\CC}}
\newcommand{\ga}{\gamma}
\newcommand{\modd}{\operatorname{mod}}

\newcommand{\id}{\operatorname{id}}

\newcommand{\ve}{\varepsilon}
\newcommand{\de}{\delta}
\newcommand{\es}{\emptyset}
\newcommand{\sm}{\smallsetminus}
\newcommand{\bd}{\partial}
\newcommand{\ov}{\overline}

\newcommand{\resit}{\operatorname{r\'esit}}
\newcommand{\res}{\operatorname{res}}

\newcommand{\myre}{\operatorname{Re}}
\newcommand{\myim}{\operatorname{Im}}

\newcommand{\cara}{Carath\'eodory}
\newcommand{\Om}{\Omega}
\newcommand{\La}{\Lambda}

\newcommand{\e}{\mathrm{e}}
\newcommand{\ii}{\mathrm{i}}

\makeatletter
\newcommand{\oset}[3][0ex]{%
  \mathrel{\mathop{#3}\limits^{
    \vbox to#1{\kern-3\ex@
    \hbox{$\scriptstyle#2$}\vss}}}}
\makeatother

\DeclareRobustCommand\longtwoheadrightarrow
     {\relbar\joinrel\twoheadrightarrow}

\newcommand{\ct}{\longtwoheadrightarrow}

\newcommand{\aeq}{\oset[-0.2ex]{\circ}{=}}

\newcommand{\dist}{\operatorname{dist}}

\newcommand{\Aut}{\operatorname{Aut}}

\DeclareMathAlphabet{\mathlib}{OT1}{LinuxLibertineT-OsF}{m}{it}

\definecolor{myblue}{rgb}{0.06, 0.75, 0.99}
\definecolor{mygreen}{rgb}{0.14, 0.75, 0.30}
\definecolor{mypurple}{rgb}{0.74, 0.12, 0.99}

\makeatletter
\@namedef{subjclassname@2020}{%
  \textup{2020} Mathematics Subject Classification}
\makeatother

%Discourage hyphenation:
\hyphenpenalty=5000 \tolerance=1000

\newcommand{\bit}{\it \bfseries}

\begin{document}

\title[Hausdorff limits of external rays]{Hausdorff limits of external rays: the topological picture}

\author[C. L. Petersen and S. Zakeri]{Carsten Lunde Petersen and Saeed Zakeri}

\address{Department of Mathematical Sciences, University of Copenhagen, Universitetsparken 5, DK-2100 Copenhagen {{\O}}, Denmark}

\email{lunde@math.ku.dk}

\address{Department of Mathematics, Queens College of CUNY, 65-30 Kissena Blvd., Queens, New York 11367, USA} 
\address{The Graduate Center of CUNY, 365 Fifth Ave., New York, NY 10016, USA}

\email{saeed.zakeri@qc.cuny.edu}

\subjclass[2020]{37F10, 37F20, 37F40}

\date{\today}

\begin{abstract}
We study Hausdorff limits of the external rays of a given periodic angle along a convergent sequence of polynomials of degree $d \geq 2$ with connected Julia sets.     
\end{abstract}

\maketitle

\setcounter{tocdepth}{1}

\tableofcontents

\section{Introduction}

This paper investigates Hausdorff limits of the external rays of a given periodic angle along a convergent sequence of polynomials of degree $\geq 2$ with connected Julia sets. This is a basic question in the context of {\it geometric limits} of conformal dynamical systems, but it is particularly motivated by our work in \cite{PZ2} and its higher degree analogs where the limbs of connectedness loci are defined by patterns of co-landing rays and questions about such geometric limits arise naturally. \vs

Let $\sC(d)$ be the connectedness locus of all monic polynomial maps $\CC \to \CC$ of degree $d \geq 2$. We denote the Julia set and filled Julia set of $P \in \sC(d)$ by $J_P$ and $K_P$. The external ray of $P$ at angle $\theta \in \RR/\ZZ$ is denoted by $R_{P,\theta}$. \vs

Consider a convergent sequence $P_n \to P$ in $\sC(d)$. Fix an angle $\theta$ which has period $q$ under the endomorphism $t \mapsto d t \, (\modd \ZZ)$ of the circle. Let $\zeta_n$ and $\zeta$ be the landing points of the external rays $R_n:=R_{P_n,\theta}$ and $R:=R_{P,\theta}$. After passing to a subsequence, we may assume $\zeta_n \to \zeta_\infty$ 
and $\ov{R_n}:=R_n \cup \{ \zeta_n, \infty \} \to \sL$ in the Hausdorff metric on compact subsets of the Riemann sphere $\Chat$. Our primary goal is to analyze the possible structures for the continuum $\sL$. It is well-known that if the landing point $\zeta$ of $R$ is repelling, then $\sL=\ov{R}:=R \cup \{ \zeta, \infty \}$. However, if $\zeta$ is parabolic, then $\sL$ can be strictly larger than $\ov{R}$, depending on the choice of perturbations $P_n$. This phenomenon was observed as early as the 1990's, for example in the works of Goldberg and Milnor on the fixed point portraits of polynomials \cite{GM}, Oudkerk on the gate structure of near-parabolic points \cite{O}, and Douady and Lavaurs on parabolic implosion \cite{D,La} (compare \figref{pert}). Facets of the phenomenon appears in the work of Pilgrim and Tan Lei on spinning deformations of rational maps \cite{PT}. The problem of Hausdorff limits of $\ov{R_n}$ when the perturbations $P_n$ are postcritically finite is also tackled in the paper of Gao and Tiozzo on the core entropy of polynomials \cite[\S 9]{GT}.\vs

\begin{figure}[t]
\centering 
\begin{overpic}[width=\textwidth]{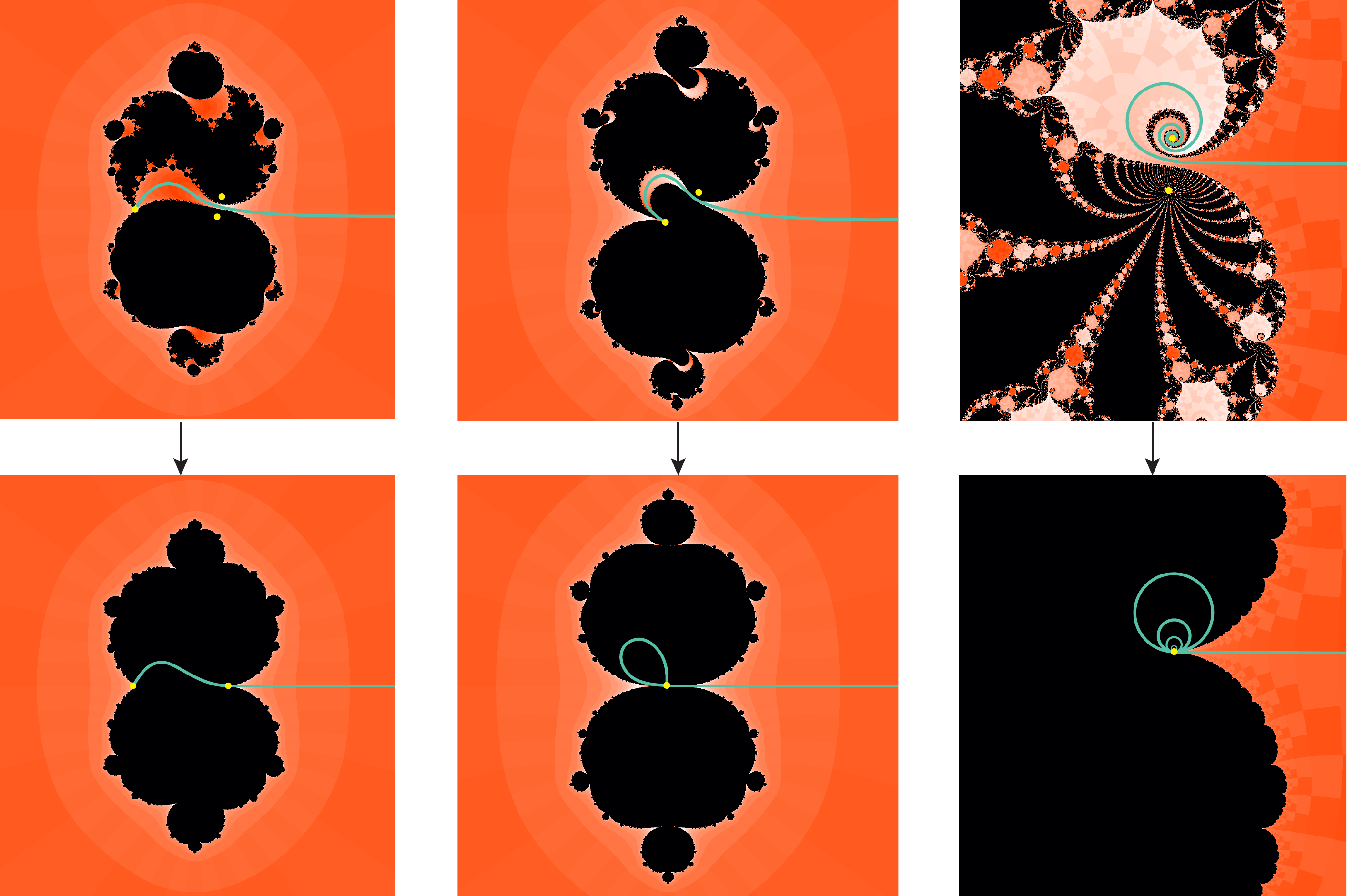}
\put (26,51.2) {\footnotesize{$R_n$}}
\put (63.4,51.1) {\footnotesize{$R_n$}}
\put (97,55.3) {\footnotesize{$R_n$}}
\put (24,16.5) {\footnotesize{$\sL$}}
\put (60,16.5) {\footnotesize{$\sL$}}
\put (94.5,19) {\footnotesize{$\sL$}}
\end{overpic}
\caption{\footnotesize Examples of the Hausdorff limit $\sL$ of the closed external ray $\ov{R_n}$ at angle $\theta=0$. Left: perturbations of a cubic with a non-degenerate parabolic fixed point, where $\sL \cap K_P$ is an embedded arc; Middle: perturbations of the cubic $z+z^3$ with a degenerate parabolic fixed point at $0$, where $\sL \cap K_P$ is a loop; Right: perturbations of the quadratic map $z+z^2$, where $\sL \cap K_P$ is a ``Hawaiian earring.''}
\label{pert}
\end{figure}   

It is easy to see that $\sL$ is a $P^{\circ q}$-invariant continuum containing $\zeta$ and $\zeta_\infty$, with $\sL \sm K_P = R \cup \{ \infty \}$. The following lemma gives the main reduction for our analysis of $\sL$. It shows that $\sL \cap J_P$ is finite and $\sL \cap \mathring{K}_P$ is a disjoint union of real-analytic arcs. \vs

\noindent
{\bf Convention.} Throughout this paper by a {\bit parabolic basin} we mean a connected component of the immediate basin of attraction of a parabolic periodic point. 

\begin{redlm} 
Let $u \in \sL \cap K_P$. 
\begin{enumerate}[leftmargin=*]
\item[(i)]
If $u \in J_P$, then $P^{\circ q}(u)=u$. \vs 
\item[(ii)]
If $u \in \mathring{K}_P$, then $u$ has a simply connected neighborhood $V=P^{\circ q}(V)$ contained in a parabolic basin $B$ and the action of $P^{\circ q}$ on $V$ is conjugate to a hyperbolic translation. More precisely, there is a biholomorphism $\psi: V \to \{ z \in \CC: \myre(z)>0 \}$, normalized by $\psi(u)=1$, which satisfies 
$$
\psi \circ P^{\circ q} = d^q \psi \qquad \text{in} \ V. 
$$
The real analytic arc $\gamma: \, ]0,+\infty[ \to V$ defined by $\gamma(t)=\psi^{-1}(t)$ is contained in $\sL$ and both limits
$$
w^-(\gamma):=\lim_{t \to 0} \gamma(t) \qquad \text{and} \qquad  w^+(\gamma):=\lim_{t \to +\infty} \gamma(t)
$$ 
exist and are fixed under $P^{\circ q}$, with $w^+(\gamma)$ parabolic of multiplier $1$. If $w^+(\gamma) \neq w^-(\gamma)$, the basin $B$ contains at least two critical points of $P^{\circ q}$.
\end{enumerate}
\end{redlm}
 
\noindent
This is proved in \S \ref{larc}. The main idea is to extract \cara \ limits of the pointed disks $(\Chat \sm K_{P_n}, u_n)$, where $u_n \to u \in \sL$. Statements of a similar nature have appeared in the thesis of A. Deniz \cite[Propositions 4.2.7 and 4.2.8]{De} and, in a different but related context, in the work of Bonifant, Milnor and Sutherland on the relative Green's function \cite[Lemma 3.4]{BMS}. The technique of using \cara \ convergence of pointed disks is also used in Luo's work on degenerations of Blaschke products to study limits of quasi-invariant trees \cite[\S 6]{Lu}. \vs      

The Lemma shows that $\sL \cap \mathring{K}_P$ is partitioned into $P^{\circ q}$-invariant real analytic arcs in parabolic basins that have well-defined initial and end points in $J_P$ and are naturally oriented by the dynamics. For simplicity, each such arc will be called an {\bit $\sL$-arc}. Every $\sL$-arc $\gamma$ is contained in an invariant strip $V$ in which $P^{\circ q}$ acts as $z \mapsto d^q z$. Moreover, $V$ isolates $\gamma$ from all other $\sL$-arcs in the sense that $V \cap \sL = \gamma$ (\lemref{inter}). In particular, $\sL \cap \mathring{K}_P$ is the disjoint union of at most countably many $\sL$-arcs. We call $\gamma$ a {\bit heteroclinic} arc if $w^-(\gamma) \neq w^+(\gamma)$, and a {\bit homoclinic} arc if $w^-(\gamma)=w^+(\gamma)$. A maximal nested chain of homoclinic arcs is called an {\bit earring} (see \figref{band}). It is a finite or countably infinite nested collection of homoclinics, all sharing the same initial and end point $w$. We often say that such an earring, or each of its homoclinic arcs, is {\bit based at} $w$. An earring with infinitely many homoclinic arcs is referred to as a {\bit Hawaiian earring}. 

\begin{figure}[t]
\begin{overpic}[width=\textwidth]{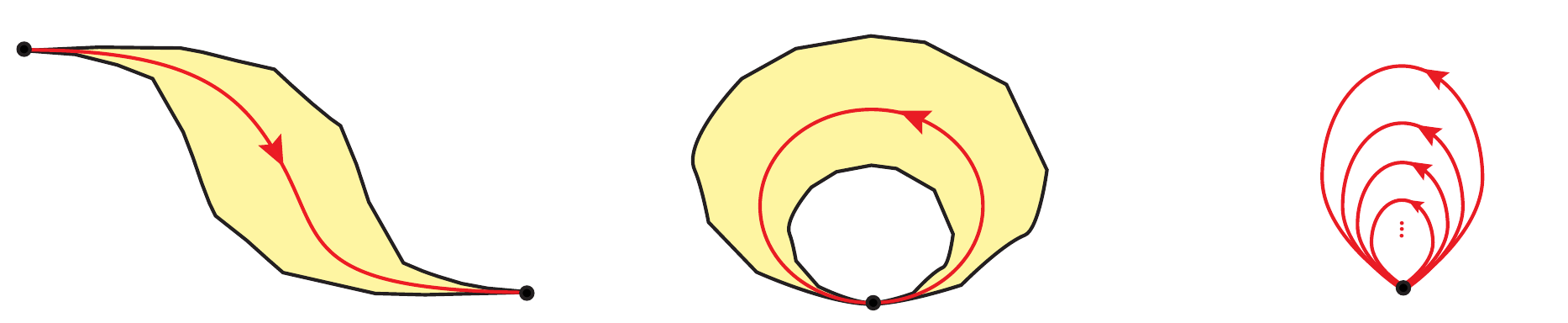}
\put (-2,17) {\footnotesize{$w^-$}}
\put (35,1) {\footnotesize{$w^+$}}
\put (51.4,-1) {\footnotesize{$w^-=w^+$}}
\put (89,0) {\footnotesize{$w$}}
\put (19,10) {\small{\color{red}$\gamma$}}
\put (60,14) {\small{\color{red}$\gamma$}}
\put (15,7) {\small{$V$}}
\put (47,12) {\small{$V$}}
\end{overpic}
\caption{\footnotesize From left to right: a heteroclinic arc, a homoclinic arc, and an earring based at $w$. The arrows indicate the natural dynamical orientation.}
\label{band}
\end{figure}

\begin{thmx}[A trichotomy for $\sL$]\label{A}
We have one of the following possibilities: 

\begin{enumerate}[leftmargin=*]
\item[$\bullet$]
{\bit The tame case:} $\sL=\ov{R}:=R \cup \{ \zeta, \infty \}$. Then $\zeta = \zeta_\infty$, and this point can be either repelling or parabolic. \vs 

\item[$\bullet$]
{\bit The semi-wild case:} $\sL \supsetneq \ov{R}$ and $\zeta=\zeta_{\infty}$. Then there are no heteroclinic arcs in $\sL$, but $\zeta=\zeta_{\infty}$ is the endpoint of at least one homoclinic arc. \vs

\item[$\bullet$]
{\bit The wild case:} $\sL \supsetneq \ov{R}$ and $\zeta \neq \zeta_\infty$. Then the set of heteroclinic arcs in $\sL$ is non-empty and finite. Moreover, we can label the heteroclinics as $\gamma_1, \ldots, \gamma_N$ and the points of $\sL \cap J_P$ as $ w_0=\zeta, w_1, \ldots, w_N=\zeta_\infty$ such that    
$$
w^+(\gamma_j) =w_{j-1} \quad \text{and} \quad w^-(\gamma_j) =w_j  \quad \text{for all} \ 1 \leq j \leq N.
$$
\end{enumerate}
\end{thmx}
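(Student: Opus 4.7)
The strategy is to combine the Basic Structure Lemma with connectedness of $\sL$ and a dynamical analysis of how arcs of $R_n$ accumulate. First, I would show that both $\zeta$ and $\zeta_\infty$ lie in $W:=\sL\cap J_P$. The point $\zeta$ is the landing point of $R$, hence repelling or parabolic by Douady-Hubbard. For $\zeta_\infty$: passing to the limit in $P_n^{\circ q}(\zeta_n)=\zeta_n$ shows $\zeta_\infty$ is a $q$-periodic point of $P$, and comparing multipliers along the convergent sequence $\zeta_n$ (which are repelling or parabolic for $P_n$, so have multipliers of modulus $\geq 1$) rules out $\zeta_\infty$ being attracting. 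In the tame case $\sL=\ov{R}$, this gives $\sL\cap K_P=\{\zeta\}$, hence $\zeta_\infty=\zeta$, and the nature of $\zeta$ as repelling or parabolic is as stated.

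Assume henceforth $\sL\supsetneq\ov{R}$ and encode the coarse structure of $\sL$ in the graph $\Gamma$ with vertex set $W\cup\{\infty\}$ whose edges are $\ov{R}$ (joining $\infty$ to $\zeta$) together with the heteroclinic $\sL$-arcs. Homoclinic arcs organize into earrings based at vertices of $W$ and contribute self-loops that do not affect the underlying graph structure. Connectedness of $\sL$ translates to connectedness of $\Gamma$, and $\infty$ has degree one there. The core claim is that $\Gamma$ is a finite simple path whose vertices in order are $\infty,\;\zeta=w_0,\;w_1,\ldots,w_N=\zeta_\infty$, with each heteroclinic $\gamma_j$ oriented from $w^-(\gamma_j)=w_j$ to $w^+(\gamma_j)=w_{j-1}$.

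The heart of the proof is to lift the Hausdorff convergence $\ov{R_n}\to\sL$ to the level of parameterizations. Via the B\"ottcher coordinate of $P_n$ the ray $R_n$ is the image of a homeomorphism $\Phi_n\colon[1,+\infty]\to\Chat$ with $\Phi_n(1)=\zeta_n$ and $\Phi_n(+\infty)=\infty$. Using the isolating strips $V$ of the Basic Structure Lemma (which satisfy $V\cap\sL=\gamma$), I would identify the times at which $\Phi_n$ traverses each strip and extract, after passing to a subsequence, breakpoints $1=t_n^{(N+1)}<t_n^{(N)}<\cdots<t_n^{(1)}<t_n^{(0)}=+\infty$ such that $\Phi_n\bigl([t_n^{(j+1)},t_n^{(j)}]\bigr)$ Hausdorff-converges to $\ov{\gamma_j}$ with $\Phi_n(t_n^{(j)})\to w_{j-1}$, and the tail $\Phi_n\bigl([t_n^{(1)},+\infty]\bigr)\to\ov{R}$. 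The conjugacy $\psi\circ P^{\circ q}=d^q\psi$ on $V$ fixes the orientation: the increase of $t$ across a crossing matches the dynamical flow from $w^-$ to $w^+$, forcing $w^+(\gamma_j)=w_{j-1}$. The main obstacle is ruling out branching, cycles, or multiple traversals in $\Gamma$, which I expect to reduce to showing that the simple arc $\Phi_n$ crosses each isolating strip essentially once and monotonically in the $\psi$-coordinate; this should follow from injectivity of $\Phi_n$ together with the strip model. Finiteness of $N$ is then automatic once the breakpoint construction is established.

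Finally, the semi-wild/wild dichotomy drops out of the chain. If no heteroclinic exists, then $\Gamma$ reduces to the single edge $\ov{R}$, so $W=\{\zeta\}$ and $\zeta_\infty=\zeta$; the hypothesis $\sL\supsetneq\ov{R}$ then forces a homoclinic arc, necessarily based at $\zeta$, giving the semi-wild case. Otherwise the chain is nontrivial, its endpoints $\zeta=w_0$ and $\zeta_\infty=w_N$ are distinct, and we are in the wild case with the claimed labelling and orientations.
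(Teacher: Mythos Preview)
Your outline captures the right architecture---encode $\sL^*$ as a graph, show it is a simple path, read off the labelling---but the two load-bearing steps are not supported.

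First, finiteness of the heteroclinics is not ``automatic once the breakpoint construction is established''; it is a prerequisite for that construction to make sense. The paper proves finiteness independently (its Theorem~\ref{hetfin}) by passing to Fatou coordinates: each $\sL$-arc in a basin $B$ produces a disjoint annulus of fixed modulus $\pi/(q\log d)$ essentially embedded in the cylinder $\CC/T$, and a Gr\"otzsch inequality bounds how many such annuli can separate two fixed critical values of the Fatou coordinate. Nothing in your proposal substitutes for this modulus argument.

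Second, and more seriously, the claim that ``injectivity of $\Phi_n$ together with the strip model'' forces a single monotone crossing of each strip and hence rules out branching in $\Gamma$ does not follow. Injectivity of $R_n$ is compatible with the graph $\Gamma$ having a vertex of degree three: imagine two heteroclinics $\gamma,\eta$ with $w^+(\gamma)=w^+(\eta)=w$. Their strips $V_\gamma,V_\eta$ are disjoint, and nothing about $R_n$ being simple prevents it from visiting $V_\gamma$, leaving, visiting $V_\eta$, leaving, and continuing. The paper handles this with a genuinely different tool, the \emph{intrinsic potential order}: for $u=\lim R_n(s_n)$ and $u'=\lim R_n(s'_n)$ on $\sL$-arcs, $\lim s_n/s'_n$ always exists in $[0,\infty]$ (this is Lemma~\ref{limex}, and it requires proof), yielding a linear order on all $\sL$-arcs. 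The key Lemma~\ref{third} then shows that if two heteroclinics share an endpoint, a \emph{third} heteroclinic must lie strictly between them in this order---which, together with finiteness, forces uniqueness (Corollary~\ref{hetuni}). Lemmas~\ref{ord} and~\ref{orddd} then pin down the orientations and the identification $\zeta_\infty=w_N$. Your breakpoint picture is morally the same order, but the existence of the limit $\lim s_n/s'_n$ and the ``third arc'' mechanism are the actual content, and they are missing from your sketch.
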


Define the {\bit spine} $\sL^*$ of $\sL$ as the union of $\ov{R}$ together with the heteroclinic arcs $\gamma_1, \ldots, \gamma_N$ (if any) and their endpoints $w_0=\zeta, w_1, \ldots, w_N=\zeta_\infty$. If there are no heteroclinics (tame or semi-wild cases), then $N=0$ and the spine reduces to $\ov{R}$.  

\begin{thmx}[Anatomy of $\sL$]\label{B}
Either $\sL=\sL^*$ or $\sL \sm \sL^*$ is a union of finitely many earrings of homoclinic arcs based at $w_0, \ldots, w_N$. Each parabolic basin of $w_j$ that meets $\sL$ contains either a unique heteroclinic arc or a unique earring of homoclinic arcs, but not both. Moreover, if $N \geq 1$, any earring based at $w_0, \ldots, w_{N-1}$ must consist of a single homoclinic arc. 
\end{thmx}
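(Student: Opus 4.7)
My strategy is to decompose $\sL$ basin-by-basin. By Theorem A, every $\sL$-arc outside $\sL^*$ must be homoclinic, since all heteroclinic arcs are by construction included in $\sL^*$. The base point of such a homoclinic lies in $\sL \cap J_P = \{w_0, \ldots, w_N\}$ by part (i) of the Basic Structure Lemma, and the homoclinic itself sits in some parabolic basin $B$ whose unique parabolic fixed point of multiplier $1$ on $\partial B$ is exactly that base point. The problem thus reduces to analyzing the collection of $\sL$-arcs in each parabolic basin.

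For the nested structure within a basin, I would argue that any two homoclinics $\delta_1, \delta_2 \subset B$ with common base $w$ have pairwise disjoint isolating strips, so the Jordan curves $\delta_i \cup \{w\}$ in $\Chat$ meet only at $w$. Two Jordan curves touching in a single point are necessarily nested, inducing a total order on the homoclinics of $B$; the resulting maximal nested chain is, by definition, an earring based at $w$. Since $\{w_0, \ldots, w_N\}$ is finite and each $w_j$ has finitely many attracting petals (hence finitely many parabolic basin components), only finitely many such earrings can occur. The uniqueness of a heteroclinic in any basin follows directly from Theorem A: the $N$ heteroclinics are indexed by distinct $w^+$-endpoints, so two heteroclinics sharing a basin would share the parabolic fixed point of multiplier $1$ on $\partial B$ and hence coincide.

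The crux is the ``no mixing'' assertion, that no basin $B$ contains both a heteroclinic $\gamma$ and a homoclinic $\delta$. Here I would pass to the Fatou coordinate $\Phi$ of $B$ at its parabolic fixed point $w$, extended to $B$ so that $\Phi \circ P^{\circ q} = \Phi + 1$. Combining this with $\psi \circ P^{\circ q} = d^q \psi$ from the Basic Structure Lemma gives $\psi = C \cdot d^{q\Phi}$ on each isolating strip, so every $\sL$-arc in $B$ appears as a horizontal line $\{\myim \Phi = \text{const}\}$ in $\CC$, with isolating strip of fixed vertical width $\pi/(q \ln d)$. The heteroclinic $\gamma$ separates $B$ into two $P^{\circ q}$-invariant simply connected components; a careful analysis of the prime-end structure of $\partial B$ at $w$ shows that a homoclinic $\delta$ cannot fit, as its two ends would require two independent prime-end accesses to $w$ inside a single component of $B \sm \gamma$, whereas each such component admits only one. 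This yields the required contradiction.

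The most delicate assertion, and in my estimation the main obstacle, is the final one: in the wild case ($N \geq 1$) every earring based at an intermediate $w_0, \ldots, w_{N-1}$ consists of a single homoclinic arc. Here I would examine the approximating simple arcs $R_n$. Each homoclinic in such an earring arises as the limit of a near-loop of $R_n$ within some parabolic basin of $w_j$; at an intermediate $w_j$, the simple arc $R_n$ must additionally perform the heteroclinic transits $\gamma_j$ and $\gamma_{j+1}$. These mandatory transits consume specific gates of the near-parabolic perturbation at $w_j$ (in the spirit of Oudkerk's gate structure), and a combinatorial analysis of the remaining gates should show that at most one homoclinic detour is compatible with the simplicity of $R_n$—two nested detours would force $R_n$ to self-cross. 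The endpoint case $w_N = \zeta_\infty$ is exempt since no subsequent heteroclinic transit is required, which is precisely why Hawaiian earrings are only possible there. Making this combinatorial gate analysis fully rigorous is, in my view, the main technical hurdle.
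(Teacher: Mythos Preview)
Your proposal contains several genuine gaps. First, the claim that ``two Jordan curves touching in a single point are necessarily nested'' is simply false (think of two externally tangent circles), so your argument that a single parabolic basin carries at most one earring is incomplete; indeed the paper's preliminary bound (\thmref{hetfin}) only gives at most \emph{two} earrings per basin, and sharpening this to one requires real work. Second, the identity $\psi = C\cdot d^{q\Phi}$ does not follow from the two functional equations: the ratio $\psi/d^{q\Phi}$ is $P^{\circ q}$-invariant on $V_\gamma$ but need not be constant (it descends to a holomorphic function on the quotient annulus). Consequently the image $\Phi(\gamma)$ is a $T$-invariant curve of the form $t\mapsto t + (\text{periodic})$, \emph{not} a horizontal line in general---the paper's Appendix discusses exactly this oscillatory behaviour. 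Third, your prime-end argument for ``no mixing'' fails: an arc in a simply connected domain can perfectly well have both endpoints landing at the same prime end (e.g.\ any Jordan arc in $\DD$ with both ends at $1$), so a homoclinic in a component of $B\sm\gamma$ is not obstructed by that component having a single prime end at $w$.

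The paper's route is entirely different and avoids both Fatou-coordinate geometry and Oudkerk's gate structure. The key device is the \emph{intrinsic potential order} on $\sL$-arcs (coming from the potentials of approximating points on $R_n$) together with a topological ``unlinked pairs'' theorem (\thmref{unlinked}): on the boundary of a small good disk centred at $w$, the pairs of entry/exit points of consecutive $\sL$-arcs must be unlinked, because otherwise the approximating simple arcs $R_n$ would be forced to self-intersect (via \lemref{Usim} and carefully chosen good transversals). All three substantive claims---at most one earring per basin (\corref{homuni}), no heteroclinic and homoclinic in the same basin (also \corref{homuni}), and single-homoclinic earrings at $w_0,\ldots,w_{N-1}$ (\corref{earlast})---are deduced by exhibiting a linked pair in the would-be counterexample. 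Your sketch for the last claim gestures toward the same underlying obstruction (simplicity of $R_n$), but the paper's formalisation via good disks, good transversals, and the intrinsic order is what makes the argument go through without parabolic-implosion machinery.
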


Compare \figref{spine}. \vs 

The proof of \thmref{A} is carried out in three stages by verifying the following statements: \vs

$\bullet$ Each parabolic basin contains at most finitely many heteroclinics and at most two earrings of homoclinics (both counts will eventually be sharpened to at most one). The proof uses Fatou coordinates and a modulus argument (\S \ref{modo}). \vs

$\bullet$ $\sL$ and all its connected subsets are arcwise-connected and $\sL^*$ is a finite graph embedded in $\Chat$, with $\sL \cap J_P$ and $\infty$ as its vertices and the heteroclinics and $R$ as its edges (\S \ref{wise}). \vs

\begin{figure}[t]
\centering 
\begin{overpic}[width=0.8\textwidth]{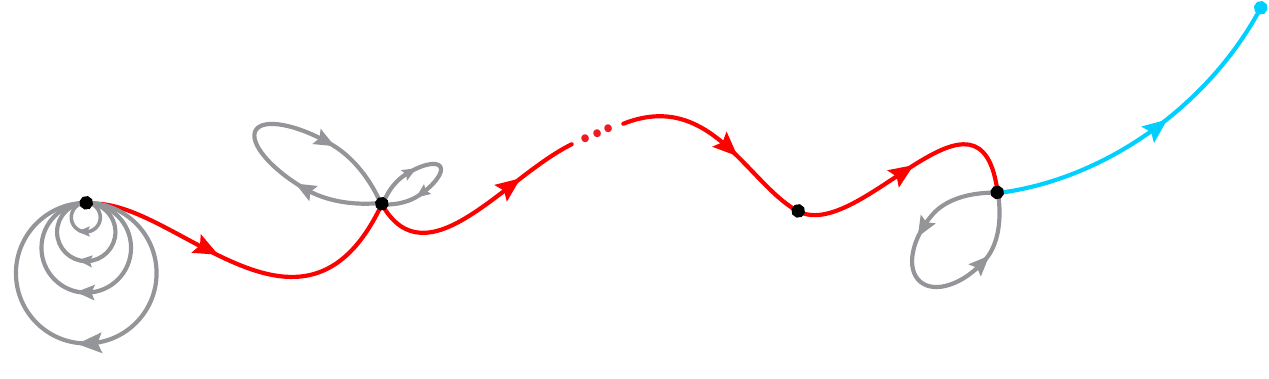}
\put (89,21) {\small{\color{myblue}$R$}}
\put (98.3,30.2) {\small{\color{myblue}$\infty$}}
\put (79.5,12.5) {\footnotesize{$w_0=\zeta$}}
\put (61,10.4) {\footnotesize{$w_1$}}
\put (29.2,9.5) {\footnotesize{$w_{N-1}$}}
\put (0,15) {\footnotesize{$w_N=\zeta_{\infty}$}}
\put (69,17.6) {\small{\color{red}$\gamma_1$}}
\put (56,21) {\small{\color{red}$\gamma_2$}}
\put (40,12) {\small{\color{red}$\gamma_{N-1}$}}
\put (16,6.5) {\small{\color{red}$\gamma_N$}}
\end{overpic}
\caption{\footnotesize Schematic picture of $\sL$, as in \thmref{B}. It consists of the spine $\sL^*$ (union of blue and red) and possible additional earring decorations (in gray). All the earrings based at $w_0, \ldots, w_{N-1}$ consist of a single homoclinic arc. Thus, ``Hawaiian earrings'' with infinitely many loops can only occur at the far left point $w_N=\zeta_{\infty}$.}
\label{spine}
\end{figure}

$\bullet$ Every point of $\sL \cap J_P$ is the initial or end point of at most one heteroclinic or $R$. As a result, the spine $\sL^*$ is a finite tree with vertices of degree $1$ or $2$, from which it easily follows that $\sL^*$ is topologically a closed arc (\S \ref{oshet}). The arguments here make use of the notion of {\bit intrinsic potential order} on $\sL \cap \mathring{K}_P$, which can be described as follows: Let $R_n(s)$ denote the point on the external ray $R_n$ at Green's potential $s>0$. If $u, u' \in \sL \cap \mathring{K}_P$, there are sequences of potential $s_n, s'_n \to 0$ such that $R_n(s_n) \to u$ and $R_n(s'_n) \to u'$, and these sequences are well defined up to multiplication by sequences that tend to $1$. We declare $u<u'$ if and only if $\lim_{n \to \infty} s_n/s'_n<1$. This puts a linear order on the union of $\sL$-arcs, where the above limit belongs to $]0,1[$ if $u,u'$ are on the same $\sL$-arc and is $0$ otherwise (\S \ref{ssord}). \vs

The proof of \thmref{B} depends on the more delicate analysis of the order and structure of homoclinics that is carried out in \S \ref{oshom}. \vs

We say that two earrings of homoclinic arcs in $\sL$ are {\bit equivalent} if they belong to the same cycle of parabolic basins. Let 
\begin{align*}
N & := \text{number of heteroclinic arcs in} \ \sL \\
M & := \text{number of earrings in} \ \sL \\
M^\# & := \text{number of equivalence classes of earrings in} \ \sL.
\end{align*}
In \S \ref{BD} we prove  

\begin{thmx}[Bounding the complexity of $\sL$]
\label{C} 
The following inequality holds:
\begin{equation}\label{BB1}
2N + M^\# \leq d-1.
\end{equation}
If $p$ is the period of $\zeta_{\infty}$ and $\nu$ is the degeneracy order of $\zeta_{\infty}$ as a fixed point of $P^{\circ p}$, then  
\begin{equation}\label{BB2}
2N + M \leq d-1 + \Big( \frac{q}{p}-1 \Big) \nu.
\end{equation}
In particular, if $\zeta_{\infty}$ is either repelling so $\nu=0$ or has top period $p=q$, then  
$$
2N + M \leq d-1.
$$
\end{thmx}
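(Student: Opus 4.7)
The proof is a critical-point counting argument, comparing the local contribution of each $\sL$-carrying parabolic basin against the global budget of $d-1$ critical points of $P$.

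\textbf{Local counts.} By \thmref{B}, every parabolic basin $B$ of $P^{\circ q}$ that meets $\sL$ is of one of two disjoint types: heteroclinic or earring. In the heteroclinic case, the Basic Structure Lemma (ii) forces $B$ to contain at least two critical points of $P^{\circ q}$ (counted with multiplicity); in the earring case, $B$ is a parabolic basin of $P^{\circ q}$ and so contains at least one critical point of $P^{\circ q}$ by classical theory. Hence the relevant basins collectively contain at least $2N + M$ critical points of $P^{\circ q}$.

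\textbf{Passage to critical points of $P$.} Partition the relevant basins by the $P$-cycle $\mathcal{C}$ of parabolic basins to which they belong. Such a cycle has length $p_{\mathcal{C}} \mid q$ and a well-defined local degree $\delta_{\mathcal{C}} := \deg(P^{\circ p_{\mathcal{C}}}|_B)$ for each $B \in \mathcal{C}$; so $\deg(P^{\circ q}|_B) = \delta_{\mathcal{C}}^{q/p_{\mathcal{C}}}$ and $B$ carries $\delta_{\mathcal{C}}^{q/p_{\mathcal{C}}} - 1$ critical points of $P^{\circ q}$. By Riemann--Hurwitz applied along the cycle, $\mathcal{C}$ contains $\sum_i(d_i - 1)$ critical points of $P$, where $d_i = \deg(P|_{B_i})$ and $\prod_i d_i = \delta_{\mathcal{C}}$. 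For the first inequality $2N + M^\# \leq d-1$, the plan is to establish the per-cycle bound
\[
2 N_{\mathcal{C}} + M^\#_{\mathcal{C}} \leq \#\{\text{critical points of } P \text{ in } \mathcal{C}\},
\]
where $N_{\mathcal{C}}$ counts heteroclinics in $\mathcal{C}$ and $M^\#_{\mathcal{C}} \in \{0,1\}$ records whether $\mathcal{C}$ carries any earring, and then sum over cycles, using that the Fatou set of $P$ contains at most $d - 1$ critical points. The per-cycle bound uses \thmref{A} crucially: the heteroclinics form an ordered chain $\gamma_1, \ldots, \gamma_N$ with distinct parabolic endpoints, so distinct heteroclinics in the same $P$-cycle are anchored at distinct parabolic points, preventing their witnessing critical-point pairs from collapsing onto a single critical orbit of $P$.

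\textbf{Second inequality and the $\zeta_\infty$-cycle.} For $2N + M \leq d-1+(q/p-1)\nu$, the $\zeta_\infty$-cycle requires separate treatment. Since $\zeta_\infty$ has period $p$ under $P$ and degeneracy $\nu$, this cycle comprises $p\nu$ parabolic basins—all $P^{\circ q}$-fixed—organized into $\nu$ $P$-subcycles of length $p$. A comparison of Fatou coordinates for $P^{\circ p}$ and $P^{\circ q}$ on such a basin (the $P^{\circ q}$-fundamental domain being $q/p$ times the $P^{\circ p}$-one) shows that each of the $\nu$ equivalence classes from the $\zeta_\infty$-cycle can spawn at most $q/p$ earring basins of $\sL$, giving
\[
M - M^\# \leq \left(\frac{q}{p} - 1\right)\nu.
\]
Adding this to the first inequality yields the second. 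The correction vanishes when $\nu = 0$ (repelling $\zeta_\infty$) or when $p = q$, recovering $2N + M \leq d-1$ in these cases.

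\textbf{Main obstacle.} The delicate step is the per-cycle bound of the first inequality. When $q > p_{\mathcal{C}}$ and $\delta_{\mathcal{C}} = 2$, a \emph{single} critical orbit of $P$ in $\mathcal{C}$ already produces $2^{q/p_{\mathcal{C}}} - 1 \geq 3$ critical points of $P^{\circ q}$ in each basin—more than enough to satisfy the raw two-critical-point requirement of a single heteroclinic. So the local Basic Structure Lemma count does not automatically convert into two distinct critical orbits of $P$ per heteroclinic. Resolving this requires combining the linear-path combinatorics from \thmref{A} with a refined analysis of how a single $P$-critical orbit distributes its $P^{\circ q}$-critical preimages across the basins of $\mathcal{C}$, to rule out the cheap configurations in which two heteroclinics in one cycle are fed by a shared critical orbit.
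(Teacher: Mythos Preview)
Your overall strategy---count critical points of $P$ against the contributions $2$ per heteroclinic cycle and $1$ per earring cycle---is exactly the paper's. But the ``main obstacle'' you isolate is not the real one. The paper proves (\lemref{periodq}) that every parabolic basin meeting $\sL$ has period exactly $q$ under $P$, so your scenario $p_{\mathcal C}<q$ never occurs; once the cycle has length $q$, converting ``$\deg(P^{\circ q}|_B)\geq 3$'' into ``$\geq 2$ critical points of $P$ in the cycle'' is just the arithmetic $\prod d_i\geq 3\Rightarrow\sum(d_i-1)\geq 2$. Also, two heteroclinics can never share a $P$-cycle, since $\gamma_j$ lives in a basin of $w_{j-1}$ and the $w_j$ have pairwise disjoint $P$-orbits; so $N_{\mathcal C}\leq 1$ always and no ``shared critical orbit'' issue arises between heteroclinics.

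The genuine obstruction to your per-cycle bound is different and you do not address it. When some $w_\ell\in\sL\cap J$ has period $p<q$, a single $P$-cycle of period-$q$ basins contains $q/p$ distinct basins attached to $w_\ell$, and \emph{a priori} one of them could carry the heteroclinic $\gamma_{\ell+1}$ while another carries an earring; in that case $2N_{\mathcal C}+M^\#_{\mathcal C}=3$ but the cycle is only guaranteed two critical points of $P$, and the bound fails. Ruling this out is the main technical content of \S\ref{BD}: one first shows (\lemref{period}) that at most one $w_\ell$ can have period $<q$, and then (\thmref{only1}, built on the unlinked-pairs result \thmref{unlinked-k} which compares $\sL$ with its images $P^{\circ ip}(\sL)$ near $w_\ell$) that any $P$-cycle of basins at such a $w_\ell$ with $\ell<N$ meets $\sL$ in \emph{at most one} arc---so a heteroclinic and an earring never share a cycle, and distinct earrings at $w_\ell$ lie in distinct cycles. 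This last fact is also what gives $M_j=M^\#_j$ for $j<N$, which your derivation of the second inequality tacitly assumes; without it the step ``$M-M^\#\leq(q/p-1)\nu$'' is unjustified, since earrings at $w_0,\ldots,w_{N-1}$ could in principle contribute further to $M-M^\#$.
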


Thus, for quadratic polynomials the only possibilities for $(N,M^\#)$ are $(0,0)$ which is tame, and $(0,1)$ which is semi-wild. For cubics, the only possibilities for $(N,M^\#)$ are $(0,0)$ which is tame, $(0,1), (0,2)$ which are semi-wild, and $(1,0)$ which is wild. Compare \figref{double} for an example of the case $(N,M^\#)=(0,2)$. \vs 

If $B=P^{\circ q}(B)$ is a parabolic basin that meets $\sL$, then by classical Fatou-Julia theory the union $B \cup P(B) \cup \cdots \cup P^{\circ q-1}(B)$ contains at least one critical point of $P$. By the Basic Structure Lemma, if $B$ meets $\sL$ along a heteroclinic arc, this union contains at least two critical points of $P$. Thus, to prove the bound \eqref{BB1} it suffices to show that the critical points designated this way are not shared between distinct heteroclinics or between a heteroclinic and an earring. While the former is rather easy to verify, the latter requires a more in-depth investigation, which is carried out in \S \ref{BD}. The proof of the second bound \eqref{BB2} depends on verifying that if $N \geq 1$, the homoclinics based at $w_0, \ldots, w_{N-1}$ are in distinct equivalence classes (\thmref{only1}). \vs 

The bounds in \thmref{C} are indeed optimal. For example, it is not hard  to see that for any $d \geq 3$ there is a sequence of perturbations $P_n(z)=\lambda_n z+z^d$ of $P(z)=z+z^d$ for which the Hausdorff limit of the closed fixed rays $\ov{R_{P_n,0}}$ has $N=0$ and $M^\#=M=d-1$ (compare \figref{double} for the case $d=3$). To this end, consider the map $Q(w)=w(1+w)^{d-1}=w+(d-1)w^2+O(w^3)$ which has a non-degenerate parabolic fixed point at the origin at which the ray $R_{Q,0}$ lands. For the perturbations $Q_n(w)=w(\lambda_n+w)^{d-1}$ with $|\lambda_n-d/(d-1)|<1/(d-1)$ the parabolic fixed point bifurcates into a pair of fixed points: $0$ which is now repelling of multiplier $\lambda_n^{d-1}$, and a nearby fixed point at $w_n=1-\lambda_n$ which is attracting of multiplier $d-(d-1)\lambda_n$. Moreover, the critical point at $-\lambda_n$ (of multiplicity $d-2$) maps to $0$, so the remaining simple critical point at $-\lambda_n/d$ must belong to the attracting basin of $w_n$. It follows that $K_{Q_n}$ is connected. It is well known from the theory of parabolic implosions that for suitable sequences $\lambda_n \to 1$ (say, along the circle $|\lambda_n-(1+a)|=a$ for some $0<a<1/(d-1)$), the closed rays $\ov{R_{Q_n,0}}$ spiral down the fixed point $0$ and converge to a Hausdorff limit which contains a single Hawaiian earring in the unique parabolic basin of $Q$ at $0$. Lifting the picture under the branched covering map $w=z^{d-1}$ which semi-conjugates $P_n$ to $Q_n$ and $P$ to $Q$, it follows that the closed rays $\ov{R_{P_n,0}}$ converge to a Hausdorff limit which has $d-1$ Hawaiian earrings, each residing in one of the $d-1$ invariant parabolic basins of $P$ at $0$. \vs

Another idea, which we present in \S \ref{ex}, is to achieve the optimal bound at the other end of the spectrum by constructing polynomials in any odd degree $d \geq 3$ for which the corresponding $\sL$ has $N=(d-1)/2$ and $M=0$:
   
\begin{thmx}[Existence of maximally wild polynomials]\label{D}
For each $N \geq 1$ there exist real numbers $w_N=0<\cdots<w_1<w_0$ and a real monic polynomial $P$ of degree $d=2N+1$ which has a repelling fixed point at $w_N=0$ and a parabolic fixed point of multiplier $1$ at $w_j$ with \ $\resit(P,w_j)<0$ \ for every $0 \leq j \leq N-1$. For $\ve>0$ sufficiently small, the perturbations $P_\ve:=P+\ve$ are in $\sC(d)$ and the Hausdorff limit $\sL$ of the closed rays $\ov{R_{P_\ve,0}}$ as $\ve \to 0$ consists of $N$ heteroclinics along the real line:
$$
\sL = \sL^* = [0,+\infty] \, = [w_N,w_{N-1}] \cup \cdots \cup [w_1,w_0] \cup [w_0,+\infty]
$$
\end{thmx}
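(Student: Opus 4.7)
\emph{Plan.} I would construct $P$ explicitly and then rely on a scaling argument together with the Fatou-Shishikura inequality. The constraints that $P$ be monic of degree $d = 2N+1$ with $w_N = 0$ a simple fixed point and $w_0, \ldots, w_{N-1}$ multiplier-one fixed points of maximal multiplicity (double roots of $P(z) - z$) uniquely force
$$
P(z) = z + z \prod_{j=0}^{N-1}(z - w_j)^2.
$$
Then $P'(0) = 1 + \prod_j w_j^2 > 1$ is automatic (repelling at $0$), and each $P'(w_j) = 1$ (parabolic of multiplier $1$). Expanding near $w_j$ with $u = z - w_j$, $D_j = \prod_{k \neq j}(w_j - w_k)$, and $S_j = \sum_{k \neq j}(w_j - w_k)^{-1}$, one finds
$$
P(w_j + u) = w_j + u + A_j u^2 + B_j u^3 + O(u^4), \qquad A_j = w_j D_j^2, \quad B_j = D_j^2\bigl(1 + 2 w_j S_j\bigr),
$$
so in the convention $\resit(P, w_j) = 1 - B_j/A_j^2$, the condition $\resit(P, w_j) < 0$ is equivalent to $w_j^2 D_j^2 < 1 + 2 w_j S_j$. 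Under the homothety $w_j \mapsto \lambda w_j$ the right-hand side is invariant while the left scales as $\lambda^{2N}$; hence it suffices to produce one configuration with $1 + 2 w_j S_j > 0$ for every $j$ and then shrink by a small $\lambda > 0$. Choosing for instance $w_j = a^{N-j}$ with $a$ large, a short computation gives $2 w_j S_j \to 2(N-1-j) \geq 0$, so the sign condition holds for all $0 \leq j \leq N-1$ once $a$ is large enough.

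To see $P_\ve \in \sC(d)$ for small $\ve > 0$, solve $P_\ve(z) = z$ near $w_j$: to leading order $u^2 = -\ve/A_j < 0$, so each $w_j$ bifurcates into a complex-conjugate pair $z_\pm(\ve) = w_j \pm i\sqrt{\ve/A_j} + O(\ve)$, and a direct computation of multipliers gives
$$
|P_\ve'(z_\pm(\ve))|^2 = 1 + \frac{4 \ve}{A_j}\bigl(A_j^2 - B_j\bigr) + O(\ve^{3/2}) < 1,
$$
the inequality following from $B_j > A_j^2$ ($\Leftrightarrow \resit < 0$). Thus $P_\ve$ has $2N$ attracting fixed points. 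Since $P_\ve$ is a polynomial of degree $d = 2N+1$, the Fatou-Shishikura inequality caps the number of non-repelling cycles at $d - 1 = 2N$, so the count is saturated; combined with Fatou's theorem that each attracting cycle contains at least one critical point in its immediate basin and the fact that $P_\ve$ has exactly $d - 1 = 2N$ critical points, every critical point of $P_\ve$ lies in an immediate attracting basin. Hence no critical orbit escapes, $K_{P_\ve}$ is connected, and by closedness of $\sC(d)$ also $P \in \sC(d)$.

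For the Hausdorff limit, $P_\ve$ has real coefficients and $P_\ve(x) - x = x\prod(x - w_j)^2 + \ve > 0$ for every real $x > z_0(\ve)$, where $z_0(\ve) = -\ve/\prod_j w_j^2 + O(\ve^2)$ is the unique real fixed point of $P_\ve$ (close to $0$). Hence the real half-line $(z_0(\ve), +\infty)$ lies in the basin of $\infty$, the Böttcher coordinate is real there, and $R_{P_\ve, 0}$ is precisely this half-line. Its closure $[z_0(\ve), +\infty]$ Hausdorff-converges to $[0, +\infty] = [w_N, w_{N-1}] \cup \cdots \cup [w_1, w_0] \cup [w_0, +\infty]$ as $\ve \to 0^+$; by the Basic Structure Lemma, $[w_0, +\infty] = \ov{R}$ is the closed ray of $P$ itself, and each $[w_j, w_{j-1}]$ is a heteroclinic $\sL$-arc living in the real parabolic basin of $w_{j-1}$ (whose sole attracting direction is from the left, since $P(x) > x$ on both sides of each $w_j$). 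The main obstacle is the simultaneous fulfillment of $N$ résit inequalities, and the scaling trick defuses this by reducing everything to the linear condition $1 + 2 w_j S_j > 0$.
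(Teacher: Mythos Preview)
Your proof is correct and follows essentially the same route as the paper: both reduce the conditions $\resit(P,w_j)<0$ to the linear inequalities $1+2w_jS_j>0$ via a scaling argument (you shrink the $w_j$ by a factor $\lambda$; the paper introduces a small leading coefficient $C$ in $P(z)=z+Cz\prod(z-x_j)^2$ and afterwards conjugates to monic form, which amounts to the same thing), and both then exhibit an explicit configuration satisfying these inequalities (a geometric progression $w_j=a^{N-j}$ for you, a recursive choice $x_1=1,\ x_j=x_{j-1}+2^j$ for the paper). Your direct computation of the multipliers $|P_\ve'(z_\pm)|^2=1+\tfrac{4\ve}{A_j}(A_j^2-B_j)+O(\ve^{3/2})$ at the bifurcated fixed points is a minor variant of the paper's appeal to continuity of the r\'esidu it\'eratif, and the identification of $R_{P_\ve,0}$ with the real half-line and the ensuing Hausdorff limit are argued identically.
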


Here $\resit(P,w_j)$ is the {\bit r\'esidu it\'eratif} of $P$ at the parabolic fixed point $w_j$ (see \S \ref{indres} for a brief account). Negativity of this invariant together with real symmetry ensures that $w_j$ bifurcates into a pair of complex conjugate {\it attracting} fixed points for $P_\ve$, which automatically forces the Julia set of $P_\ve$ and therefore $P$ to be connected. If this pair is repelling, the ray $R_{P_\ve}(0)$ may well hit a critical point along the real line, and the Julia set of $P_\ve$ may well be disconnected. \figref{pinch} shows a degree $d=5$ example with $N=2$, before and after perturbation. Examples of this type in degree $d=3$ have appeared in \cite[Appendix B]{GM} and \cite[\S 8]{IK}.

%%%%%%%%%%%%%%%%%%%%%%%%%%%%%%%%%%%%%%%%%%%
\begin{figure}[]
	\centering
	\begin{overpic}[width=0.55\textwidth]{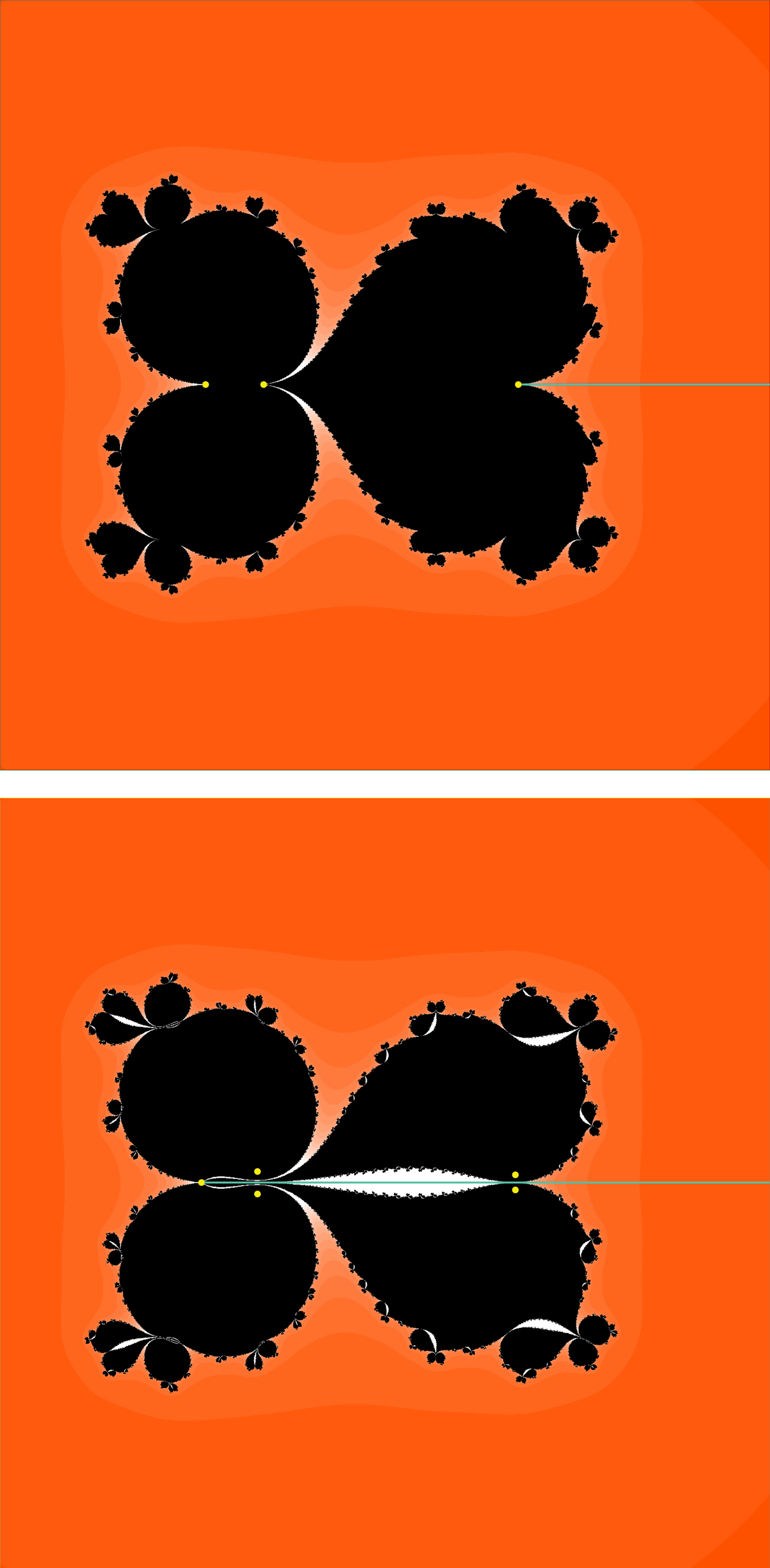}
\put (12,74) {\footnotesize{\color{white} $w_2$}}
\put (16,74) {\footnotesize{\color{white} $w_1$}}
\put (32,74) {\footnotesize{\color{white} $w_0$}}
\put (45,76.5) {\footnotesize{$R_{P,0}$}}
\put (44,25.5) {\footnotesize{$R_{P_\ve,0}$}}
	\end{overpic}
\caption{\footnotesize An example of a real degree $5$ polynomial $P$ (top) and its perturbations $P_\ve=P+\ve$ (bottom) with the properties asserted by \thmref{D}. Under such perturbations the repelling fixed point $w_2$ is stable but the parabolic fixed points $w_0,w_1$ bifurcate into pairs of complex conjugate attracting fixed points. The resulting four immediate attracting basins contain the four critical point of $P_\ve$ and share the repelling fixed point near $w_2$ on their boundary.}
\label{pinch}
\end{figure}
%%%%%%%%%%%%%%%%%%%%%%%%%%%%%%%%%%%%%%%%%%%  

\vs

This work represents an approach to the problem of Hausdorff limits of external rays that entirely avoids parabolic implosions and the gate structure of near-parabolic points. There are related analytic questions, however, for which invoking these tools seems inevitable. This is the subject of a joint project of ours (in progress) that aims at a more quantitative understanding of how external rays behave as they approach a near-parabolic point. For example, in \thmref{D} the multipliers of the attracting fixed points bifurcating off of $w_0, \ldots, w_{N-1}$ tend to $1$ tangentially as $\ve \to 0$. We provide an explanation for this phenomenon by proving that a non-tangential multiplier approach always produces tame convergence of external rays (compare \cite[\S 8]{Mc2} for other manifestations of tameness when parabolic points are perturbed non-tangentially). As a simple formulation, suppose $f(z)=\lambda z + O(z^2) \in \sC(d)$ has a non-degenerate parabolic fixed point at $0$ whose multiplier $\lambda$ is a primitive $q$-th root of unity, and consider a sequence $f_n(z)=\lambda_n z+O(z^2) \in \sC(d)$ of perturbations of $f$ where $\lambda^q_n \to 1$ non-tangentially. If the ray $R_{f,\theta}$ lands at $0$, then $R_{f_n,\theta}$ lands at $0$ or at a nearby period $q$ point according as $|\lambda_n|>1$ or $|\lambda_n|<1$, and in either case $\ov{R_{f_n,\theta}} \to \ov{R_{f,\theta}}$ in the Hausdorff metric. The analytic tools of the theory of near-parabolic germs are perfectly suited for proving such statements, even in more general contexts that go beyond polynomial maps and external rays. \vs

We conclude by noting that the results of this paper essentially cover the more general case where the external angle $\theta$ is preperiodic under $t \mapsto d t \, (\modd \ZZ)$. In fact, it is not hard to see that the Hausdorff limits of $\ov{R_{P_n,\theta}}$ in this case are at worst branched coverings of those described by Theorems \ref{A} and \ref{B}, with finitely many possible branch points within the parabolic basins of $P$. \vs   

\noindent
{\bf Acknowledgments.} We thank H. Inou for his comments, especially on the example illustrated in \figref{double}, and the anonymous referee for providing useful comments and additional references. C.~L.~P.~ would like to thank the Danish Council for Independent Research | Natural Sciences for support via grant DFF-1026-00267B. S.~Z.~ acknowledges the partial support of the Research Foundation of The City University of New York via grant TRADB-54-375.  

\section{Background material}

Throughout the paper we adopt the following notation: \vs 

\begin{enumerate}
\item[$\bullet$]
$\CC$ and $\Chat:=\CC \cup \{ \infty \}$: the complex plane and Riemann sphere \vs  
\item[$\bullet$]
$\DD(p,r):=\{ z \in \CC: |z-p|<r \}$, $\DD:=\DD(0,1)$ \vs
\item[$\bullet$]
$\HH^r:=\{ z \in \CC: \myre(z)>0 \}$: the right half-plane \vs
\item[$\bullet$]
$\ov{X}$ and $\mathring{X}$: the closure and interior of $X$ \vs
\item[$\bullet$]
$\dist$: the Euclidean distance in $\CC$ \vs
\item[$\bullet$]
$\dist_X$: the hyperbolic distance in a domain $X \subset \Chat$ \vs
\item[$\bullet$]
For $a,b>0$, the symbol $a \asymp b$ means $C^{-1} \leq a/b \leq C$ for a constant $C>1$ independent of the choice of $a,b$. \vs    
\end{enumerate}

\subsection{Polynomial maps and external rays}\label{poly}

We assume a working knowledge of basic complex dynamics, as in \cite{DH} or \cite{M}. Let $\sP(d)$ be the space of all monic polynomial maps $\CC \to \CC$ of degree $d$. For $P \in \sP(d)$ we denote by $K=K_P$ and $J=J_P$ the filled Julia set and Julia set of $P$, respectively. The complement $\Om=\Om_P:=\Chat \sm K$ is the basin of attraction of $\infty$. The Green's function of $P$ is the continuous subharmonic function $G=G_P: \CC \to [0,+\infty[$ defined by 
$$
G(z):=\lim_{n \to \infty} \frac{1}{d^n} \log^+ |P^{\circ n}(z)|,
$$
where $\log^+ t = \max \{ \log t, 0 \}$. It satisfies the relation
$$
G(P(z))=d \, G(z) \qquad \text{for all} \ z \in \CC,
$$
with $G(z)=0$ if and only if $z \in K$. We often refer to $G(z)$ as the {\bit potential} of $z$. It is well known that the Green's function also depends continuously on the polynomial; in fact, the function $\sP(d) \times \CC \to [0,+\infty[$ defined by $(P,z) \mapsto G_P(z)$ is continuous (see e.g. \cite[Proposition 8.1]{DH}). \vs

The {\bit B\"{o}ttcher coordinate} of $P$ is the unique conformal isomorphism $\beta=\beta_P$, defined in some neighborhood of $\infty$, which is tangent to the identity at $\infty$ and satisfies 
\begin{equation}\label{bfe}
\beta(P(z))=(\beta(z))^d \qquad \text{for large} \ |z|.  
\end{equation}
The modulus of $\beta$ is related to the Green's function by the relation 
$$
\log |\beta(z)|=G(z) \qquad \text{for large} \ |z|. 
$$
For $\theta \in \RR/\ZZ$, we denote by $R_\theta=R_{P,\theta}$ the maximally extended smooth field line of $\nabla G$ which maps under $\beta$ to the radial line $s \mapsto \e^{s+2\pi \ii \theta}$ for large $s$. We can parametrize $R_\theta$ by the potential $s$: For each $\theta$ there is an $s_\theta = s_{P,\theta} \geq 0$ such that $G(R_\theta(s)) = s$ for all $s > s_\theta$. The field line $R_\theta$ either extends all the way to the Julia set $J$ in which case $s_\theta=0$, or it crashes into a critical point $\omega$ of $G$ at potential $s_\theta>0$ in the sense that $\lim_{s \to s^+_\theta} R_{\theta}(s)=\omega$. The function $\theta \mapsto s_\theta$ is upper semicontinuous \cite{PZ1}, so the set 
$$
V=V_P := \{ \e^{s+2\pi \ii \theta}: s>s_\theta \} \cup \{ \infty \}   
$$
is open. The inverse $\beta^{-1}$ extends to $V$, mapping it conformally to its image $\Om'=\Om'_P \subset \Om$. If the filled Julia set $K$ is connected, then $s_\theta=0$ for all $\theta$, $V=\CC \sm \ov{\DD}$, and $\Om'=\Om$. In this case $\beta: \Om \to \Chat \sm \ov{\DD}$ is a conformal isomorphism. More generally, it is not hard to verify that the unions $\bigcup_{P\in \sP(d)} \, (\{P\} \times \Om'_P)$ and $\bigcup_{P\in \sP(d)} \, (\{P\} \times V_P)$ are open and the global B\"{o}ttcher coordinate $(P,z) \mapsto \beta_P(z)$ is a biholomorphism between them (see for example \cite[\S 1]{BH}). 

\begin{corollary}\label{unif}
Let $P \in \sP(d)$, $\theta \in \RR/\ZZ$, and $s_0>s_{P,\theta}$. If $P_n \to P$ in $\sP(d)$, then $R_{P_n,\theta}(s) \to R_{P,\theta}(s)$ uniformly for $s \in [s_0,+\infty[$. 
\end{corollary}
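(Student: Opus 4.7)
The plan is to deduce the corollary directly from the joint continuity statement highlighted just before its statement, namely that $\sU := \bigcup_{Q \in \sP(d)} (\{Q\} \times V_Q)$ is open in $\sP(d) \times \Chat$ and the global inverse B\"ottcher map $(Q, w) \mapsto \beta_Q^{-1}(w)$ is continuous on $\sU$. The observation is that rays are just radial images under $\beta^{-1}$, so a uniform-on-compacta estimate for the inverse B\"ottcher maps will translate directly into the desired uniform convergence of rays.

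First I would rewrite each ray in B\"ottcher coordinates, setting $R_{Q,\theta}(s) = \beta_Q^{-1}(e^{s+2\pi\ii\theta})$ whenever $s > s_{Q,\theta}$, and introduce the compact set
$$
S := \{ e^{s+2\pi\ii\theta} : s \in [s_0, +\infty[ \, \} \cup \{\infty\} \subset \Chat.
$$
Since $s_0 > s_{P,\theta}$ by hypothesis, $S \subset V_P$, hence $\{P\} \times S \subset \sU$. Openness of $\sU$ combined with the compactness of $\{P\} \times S$ then produces a neighborhood $U$ of $P$ in $\sP(d)$ with $U \times S \subset \sU$.

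Next, for all $n$ large enough we have $P_n \in U$ and therefore $S \subset V_{P_n}$; in particular $s_{P_n,\theta} < s_0$, so $R_{P_n,\theta}(s)$ is well defined for every $s \in [s_0, +\infty[$. Applying the continuity of $(Q,w) \mapsto \beta_Q^{-1}(w)$ to the compact set $\{P\} \times S$ now yields uniform convergence $\beta_{P_n}^{-1}|_S \to \beta_P^{-1}|_S$ in the spherical metric of $\Chat$, which in the parametrization above is exactly the desired uniform convergence of $R_{P_n,\theta}(s)$ to $R_{P,\theta}(s)$ on $[s_0,+\infty[$.

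I do not anticipate a serious obstacle; the argument is essentially a formal combination of continuity and compactness. The only point requiring a little care is the inclusion of $\infty$ in $S$: without it one would only obtain uniform convergence on compact subsets of $[s_0, +\infty[$, whereas by adding $\infty$ one obtains genuine uniformity out to infinity, measured in the spherical metric on $\Chat$ which is the natural one in this setting.
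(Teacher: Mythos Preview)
Your argument is correct and is precisely the intended deduction: the paper states the corollary without proof, as an immediate consequence of the openness of $\bigcup_Q (\{Q\}\times V_Q)$ and the continuity of the global inverse B\"ottcher map discussed just before it. Your use of the tube lemma and the inclusion of $\infty$ in $S$ to upgrade locally uniform to genuinely uniform convergence in the spherical metric is exactly the right way to fill in the details.
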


The following result on the stability of rays landing on repelling points can be found in \cite[Proposition 8.5]{DH} or \cite[Lemma B.1]{GM}:

\begin{theorem}\label{stab}
Suppose $P_0 \in \sP(d)$ and the ray $R_{P_0,\theta}$ lands at a repelling periodic point $\zeta_0$. Then for every neighborhood $V \subset \Chat$ of $\zeta_0$ there is a neighborhood $\sU \subset \sP(d)$ of $P_0$ such that if $P \in \sU$ the ray $R_{P,\theta}$ lands at a repelling periodic point of $P$ in $V$.     
\end{theorem}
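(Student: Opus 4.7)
The plan is to combine the holomorphic stability of repelling periodic points with Koenigs linearization and the uniform ray continuity provided by \corref{unif}. I will treat the main case where $\theta$ is periodic under $t \mapsto d t \, (\modd \ZZ)$, say of period $q$ (the preperiodic case reduces to this one by pushing forward finitely many times and pulling back along the polynomial). Invariance of $R_{P_0,\theta}$ under $P_0^{\circ q}$ forces $P_0^{\circ q}(\zeta_0)=\zeta_0$, and the multiplier $\lambda_0 := (P_0^{\circ q})'(\zeta_0)$ satisfies $|\lambda_0|>1$ by hypothesis. Applying the holomorphic implicit function theorem to $F(P,z):=P^{\circ q}(z)-z$ at $(P_0,\zeta_0)$ produces a neighborhood $\sU_0 \subset \sP(d)$ of $P_0$ and a holomorphic map $P \mapsto \zeta_P$ with $\zeta_{P_0}=\zeta_0$ and $P^{\circ q}(\zeta_P)=\zeta_P$. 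Shrinking $\sU_0$, I may assume $\zeta_P \in V$ and $|\lambda_P|>1$ for every $P \in \sU_0$, where $\lambda_P := (P^{\circ q})'(\zeta_P)$.

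Next, I would install a uniform Koenigs linearization around $\zeta_P$ depending holomorphically on $P$, obtained from the limit $\phi_P(z) = \lim_n \lambda_P^{-n}(P^{\circ qn}(z)-\zeta_P)$ that converges uniformly in $(P,z)$ on a fixed small neighborhood of $\zeta_0$. Concretely, for some $r>0$ and after further shrinking $\sU_0$, I obtain a biholomorphism $\phi_P : W_P \to \DD(0,r)$ with $W_P \subset V$, $\phi_P(\zeta_P)=0$, and $\phi_P \circ P^{\circ q} = \lambda_P \cdot \phi_P$ on $W_P' := \phi_P^{-1}(\DD(0,r/|\lambda_P|))$. The inverse branch $\Phi_P := (P^{\circ q})^{-1}|_{W_P'}$ fixing $\zeta_P$ is conjugated by $\phi_P$ to $z \mapsto z/\lambda_P$ and hence strictly contracts $W_P'$ into itself, with contraction factor bounded uniformly away from $1$ over $\sU_0$.

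Finally, I propagate the landing of $R_{P,\theta}$ by inverse iteration. Since $R_{P_0,\theta}$ lands at $\zeta_0$, choose $s_*>0$ small enough that $R_{P_0,\theta}(s) \in W_{P_0}'$ for all $s \in \, ]0,s_*]$. By \corref{unif} and continuity of $\phi_P$ in $P$, after further shrinking $\sU_0$ one has $R_{P,\theta}(s_*) \in W_P'$ for every $P \in \sU_0$. The ray equivariance $P^{\circ q}(R_{P,\theta}(s))=R_{P,\theta}(d^q s)$ combined with injectivity of $P^{\circ q}|_{W_P}$ translates into $\Phi_P(R_{P,\theta}(s))=R_{P,\theta}(s/d^q)$ whenever both sides lie in $W_P'$. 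By downward induction on $k$, the point $R_{P,\theta}(s_*/d^{qk})=\Phi_P^{\circ k}(R_{P,\theta}(s_*))$ is well-defined and lies in $W_P'$, so the ray extends without crashing to all potentials in $\, ]0,s_*]$. The uniform contraction forces the arcs $R_{P,\theta}|_{[s_*/d^{q(k+1)},\, s_*/d^{qk}]}$ to have geometrically shrinking diameters, so the whole tail $R_{P,\theta}|_{]0,s_*]}$ lies in $W_P \subset V$ and converges to $\zeta_P \in V$, proving that $R_{P,\theta}$ lands at the repelling periodic point $\zeta_P$.

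The one delicate point I anticipate is the parameter continuity of the Koenigs linearization together with a uniform lower bound on the radius $r$ of the linearization domain over $\sU_0$; this is a standard consequence of the Koenigs limit formula and the continuous $P$-dependence of $\zeta_P$ and $\lambda_P$, but it is the place where a little care is needed. Once it is in hand, the rest of the proof is the essentially routine inverse-contraction argument above.
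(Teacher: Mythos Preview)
The paper does not give its own proof of this theorem; it simply cites \cite[Proposition 8.5]{DH} and \cite[Lemma B.1]{GM}. Your argument---implicit function theorem for the repelling periodic point, a parameter-continuous Koenigs linearization, then inverse iteration of a fundamental ray segment using \corref{unif}---is essentially the standard proof found in those references, and it is correct.

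One small refinement worth making explicit: to ensure the induction goes through, choose $s_*$ so that the \emph{entire arc} $R_{P_0,\theta}\big|_{[s_*/d^q,\,s_*]}$ lies in $W_{P_0}'$, and use \corref{unif} on this closed potential interval (not just at the single potential $s_*$) to put $R_{P,\theta}\big|_{[s_*/d^q,\,s_*]} \subset W_P'$ for $P$ near $P_0$. Then $\Phi_P$ carries this whole arc to $R_{P,\theta}\big|_{[s_*/d^{2q},\,s_*/d^q]}$, which in particular shows $s_{P,\theta}<s_*/d^q$ and that this new arc again lies in $W_P'$; iterating gives $s_{P,\theta}=0$ and the desired landing. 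This avoids any ambiguity about whether the ray is defined at the discrete potentials $s_*/d^{qk}$ before you can apply $\Phi_P$. Your parenthetical about the preperiodic case is unnecessary: a ray landing at a repelling periodic point automatically has periodic angle.
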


\subsection{Local invariants of parabolic points} \label{indres}

The following brief presentation will be useful in \S \ref{ex}. For more details, see \cite{M}, \cite{B}, and \cite{BE}. Let $z_0$ be an isolated fixed point of a holomorphic map $f$, so $f(z)=z_0 + \lambda (z-z_0) + O((z-z_0)^2)$ in some neighborhood of $z_0$. Here $\lambda=f'(z_0)$ is the {\bit multiplier} of the fixed point $z_0$. The {\bit index} $\iota(f,z_0)$ is defined as the residue of the meromorphic $1$-form $dz/(z-f(z))$ at $z_0$:   
$$
\iota(f,z_0) := \res \left(\frac{1}{z-f(z)} \, dz,z_0 \right).   
$$ 
The index is invariant under analytic change of coordinates. Moreover,  
\begin{equation}\label{eq:index}
\iota(f,z_0) = \frac{1}{1-\lambda} \qquad \text{if} \ \lambda \neq 1.
\end{equation}
There is a variant of the notion of index which is well behaved under iteration and (despite its more complicated definition) is often easier to work with. Define the {\bit r\'esidu it\'eratif} of $f$ at $z_0$ by
$$
\resit(f,z_0) := -\frac{1}{2} \res \left( \frac{1+f'(z)}{z-f(z)} \, dz,z_0 \right). 
$$
By the argument principle, 
\begin{align*}
\resit(f,z_0) & = \frac{1}{2} \res \left( \frac{1-f'(z)}{z-f(z)} \, dz,z_0 \right) - \res \left(\frac{1}{z-f(z)} \, dz,z_0 \right) \\
& = \frac{m}{2} - \iota(f,z_0),  
\end{align*} 
where $m \geq 1$ is the fixed point multiplicity of $z_0$, i.e., the multiplicity of $z_0$ as a root of $z-f(z)=0$. Evidently,
\begin{equation}\label{eq:resit}
\resit(f,z_0)=\frac{1}{2} - \frac{1}{1-\lambda} \qquad \text{if} \ \lambda \neq 1.  
\end{equation}
This shows that a multiplicity $1$ fixed point $z_0$ is attracting or repelling according as $\myre(\resit(f,z_0))$ is negative or positive. \vs

When $\lambda=1$ the formulas \eqref{eq:index} and \eqref{eq:resit} for the index and r\'esidu it\'eratif break down, but we can still calculate these invariants with the help of a suitable expansion of $f$. Assuming $z_0$ has multiplier $1$ and fixed point multiplicity $m \geq 2$, there is an analytic change of coordinates in which $f$ assumes the local normal form
$$
f(z)=z+a(z-z_0)^m+b(z-z_0)^{2m-1}+O((z-z_0)^{2m})
$$ 
with $a,b \in \CC$ and $a \neq 0$. An easy computation then shows that 
\begin{equation}\label{resitcomp}
\iota(f,z_0)=\frac{b}{a^2} \qquad \text{so} \qquad \resit(f,z_0)=\frac{m}{2}-\frac{b}{a^2}. 
\end{equation}
For a generic perturbation $f_\ve$ of $f$ the parabolic fixed point $z_0$ splits into $m$ simple fixed points $z_1(\ve), \ldots z_m(\ve)$ of multipliers $\lambda_1(\ve), \ldots, \lambda_m(\ve)$. Continuity of the index then gives 
$\lim_{\ve \to 0} \sum_{j=1}^m 1/(1-\lambda_j(\ve)) = \iota(f,z_0)$, or, in terms of the r\'esidu it\'eratif, 
$$
\lim_{\ve \to 0} \sum_{j=1}^m \resit(f_\ve,z_j(\ve)) = \resit(f,z_0).
$$

When $z_0$ is parabolic with multiplier $\lambda$ a primitive $q$-th root of unity, we can apply the preceding remarks to the iterate $f^{\circ q}$. In this case the multiplicity of $z_0$ as a fixed point of $f^{\circ q}$ is necessarily of the form $m=\nu q+1$ for some integer $\nu \geq 1$ called the {\bit degeneracy order} of $z_0$, the case $\nu=1$ being considered a non-degenerate parabolic. Geometrically, there are $m-1=\nu q$ parabolic basins of $f$ attached to $z_0$, and these fall into $\nu$ disjoint cycles of length $q$. {\it We adopt the convention that the degeneracy order of a repelling fixed point is $\nu=0$.}   

\subsection{The Hausdorff metric}

Let $\sK$ be the space of all non-empty compact subsets of the Riemann sphere $\Chat$. The {\bit Hausdorff metric} on $\sK$ is defined by 
$$
{\bf d}(K,H):= \inf \{ \ve>0: K \subset N_\ve(H) \ \text{and} \ H \subset N_\ve(K) \}, 
$$
where $N_\ve(\cdot)$ denotes the $\ve$-neighborhood in the spherical metric. It is well known that $(\sK,{\bf d})$ is a compact metric space. \vs

Let $\{ K_t \}_{t \in T}$ be a family of non-empty compact sets in $\Chat$ parametrized by a topological space $T$. By the definition of $\bf d$, continuity of the map $t \mapsto K_t$ at $t_0 \in T$ means that for every $\ve>0$ there is a neighborhood $V$ of $t_0$ such that $K_t \subset N_\ve(K_{t_0})$ and $K_{t_0} \subset N_\ve(K_t)$ for all $t \in V$. This can be viewed as a combination of two semi-continuity conditions defined as follows. We say $t \mapsto K_t$ is {\bit upper semicontinuous} at $t_0$ if for every $\ve>0$ there is a neighborhood $V$ of $t_0$ such that  
$$
K_t \subset N_\ve(K_{t_0}) \quad \text{for all} \ t \in V, 
$$
and is {\bit lower semicontinuous} at $t_0$ if for every $\ve>0$ there is a neighborhood $V$ of $t_0$ such that  
$$
K_{t_0} \subset N_\ve(K_t) \quad \text{for all} \ t \in V. 
$$

The following result can be found in \cite{D}:

\begin{theorem}[Douady]\label{semi}
For every $d \geq 2$ the maps $\sP(d) \to \sK$ defined by $P \mapsto K_P$ and $P \mapsto J_P$ are upper semicontinuous and lower semicontinuous, respectively. 
\end{theorem}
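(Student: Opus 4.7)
The plan is to treat the two semicontinuity statements separately, each relying on a distinct classical input already recalled in Subsection~\ref{poly}: joint continuity of $(P,z) \mapsto G_P(z)$ for the $K_P$ statement, and density of repelling periodic points in $J_P$ together with their persistence under perturbation for the $J_P$ statement.

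For the upper semicontinuity of $P \mapsto K_P$, I would use the characterization $K_P = \{z : G_P(z) = 0\}$. Fix $P \in \sP(d)$ and an open neighborhood $U \subset \Chat$ of $K_P$. The standard coefficient bound for monic polynomials produces a closed disk $\ov{\DD(0,R)}$ containing $K_Q$ for all $Q$ in some neighborhood of $P$. The compact sublevel sets $\{z \in \ov{\DD(0,R)} : G_P(z) \leq \ve \}$ form a decreasing family whose intersection is $K_P$, so by compactness this family is contained in $U$ for every $\ve > 0$ small enough. Joint continuity then gives $|G_{P_n} - G_P| < \ve$ uniformly on $\ov{\DD(0,R)}$ for $P_n$ close to $P$, and since $G_{P_n} \equiv 0$ on $K_{P_n}$ this forces $G_P < \ve$ on $K_{P_n}$. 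Hence $K_{P_n} \subset U$, the required inclusion.

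For the lower semicontinuity of $P \mapsto J_P$, I would use density of repelling periodic points. Given $\ve > 0$, cover the compact set $J_P$ by finitely many open balls $B_1, \ldots, B_k$ of radius $\ve$. Each $B_i$ contains a repelling periodic point $\zeta_i$ of $P$, say of period $p_i$. Applying the implicit function theorem to the equation $Q^{\circ p_i}(z) - z = 0$ at the point $(P, \zeta_i)$---whose partial derivative in $z$ equals $(P^{\circ p_i})'(\zeta_i) - 1 \neq 0$ by repellence---produces a holomorphic branch $Q \mapsto \zeta_i(Q)$ of periodic points of $Q$ defined near $P$, which remain repelling by continuity of the multiplier and thus lie in $J_Q$. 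For $P_n$ close to $P$ each $\zeta_i(P_n)$ sits in $B_i$, so every $B_i$ meets $J_{P_n}$, whence every point of $J_P$ is within $2\ve$ of $J_{P_n}$. Replacing $\ve$ by $\ve/2$ from the start yields the claim.

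I do not expect a single technical step to form a serious obstacle; the subtlety is conceptual, namely matching each invariant to the direction of semicontinuity it admits. The asymmetry is genuine: interior Fatou components can be created by perturbation, so $K_{P_n}$ may contain points far from $K_P$, defeating lower semicontinuity for $K_P$; symmetrically, attracting basins can engulf large pieces of $J_{P_n}$ in the limit, defeating upper semicontinuity for $J_P$---precisely the phenomenon, in the context of external rays, that the rest of the paper is devoted to analyzing.
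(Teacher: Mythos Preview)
Your proof is correct and follows the standard route. Note that the paper does not supply its own proof of this theorem; it simply attributes the result to Douady and cites \cite{D}. Your argument---joint continuity of the Green's function for upper semicontinuity of $K_P$, and persistence of repelling periodic points via the implicit function theorem for lower semicontinuity of $J_P$---is essentially the classical one.

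One minor remark on your closing commentary (which does not affect the proof itself): the directions are reversed. Failure of lower semicontinuity for $K$ occurs when $K_P$ contains points far from $K_{P_n}$---a parabolic basin present in $K_P$ collapses under perturbation---not the other way around. Symmetrically, failure of upper semicontinuity for $J$ occurs when $J_{P_n}$ invades the former basin and so contains points far from $J_P$. This is indeed the parabolic implosion phenomenon driving the rest of the paper, but your description swaps the roles of $P$ and $P_n$.
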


\subsection{\cara \ limits of pointed disks}

By a {\bit disk} in the plane is meant a simply connected domain $U \subset \CC$ other than $\CC$ itself. A {\bit pointed disk} $(U,u)$ consists of a disk $U$ and the choice of a base point $u \in U$. By the Riemann mapping theorem there is a unique conformal isomorphism $f:(\DD,0) \oset[-0.2ex]{\cong}{\longrightarrow} (U,u)$ normalized so that $f(0)=u$ and $f'(0)>0$. An easy exercise, based on the Schwarz lemma and the Koebe $1/4$-theorem, shows that 
\begin{equation}\label{1/4}
1 \leq \frac{f'(0)}{\dist(u,\bd U)} \leq 4. 
\end{equation}  

We say that a sequence of pointed disks $(U_n,u_n)$ converges to $(U,u)$ {\bit in the sense of \cara}, and write $(U_n,u_n) \ct (U,u)$, if the sequence of normalized Riemann maps $f_n:(\DD,0) \to (U_n,u_n)$ converges locally uniformly to the normalized Riemann map $f:(\DD,0) \to (U,u)$. Equivalently, if the sequence $f_n^{-1} \circ f$ converges locally uniformly in $\DD$ to the identity map. Notice that in this case every compact subset of $U$ must be contained in $U_n$ for all sufficiently large $n$.\vs   

The \cara \ convergence can be formulated in purely topological terms without any reference to Riemann maps as follows: $(U_n,u_n) \ct (U,u)$ if and only if $u_n \to u$ and for every subsequential Hausdorff limit $K$ of $\Chat \sm U_n$, the disk $U$ is the connected component of $\Chat \sm K$ containing $u$. \vs

The next two lemmas are easy consequences of the definition of \cara \ convergence:

\begin{lemma}\label{ct-easy}
Suppose $(U_n,u_n) \ct (U,u)$. Then $u'_n \to u$ if and only if \ $\dist_{U_n}(u_n,u'_n) \to 0$. 
\end{lemma}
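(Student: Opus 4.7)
The plan is to transport everything to the unit disk via normalized Riemann maps. Let $f_n:(\DD,0) \to (U_n,u_n)$ and $f:(\DD,0) \to (U,u)$ be the normalized conformal isomorphisms, so that by hypothesis $f_n \to f$ locally uniformly, and equivalently $f_n^{-1} \circ f \to \id$ locally uniformly on $\DD$. Because each $f_n$ is a hyperbolic isometry,
$$
\dist_{U_n}(u_n, u'_n) = \dist_\DD(0, f_n^{-1}(u'_n)),
$$
and the right side tends to $0$ precisely when $f_n^{-1}(u'_n) \to 0$ in $\DD$. Thus the equivalence to be proved reduces to the statement: $u'_n \to u$ in $\CC$ if and only if $f_n^{-1}(u'_n) \to 0$ in $\DD$.

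For the ``if'' direction, set $w_n := f_n^{-1}(u'_n)$ and assume $w_n \to 0$. Fix any $\rho \in (0,1)$; eventually $w_n \in \overline{\DD(0,\rho)}$, and on this compact set $f_n \to f$ uniformly, so $|f_n(w_n)-f(w_n)| \to 0$. Combined with $f(w_n) \to f(0)=u$ from continuity of $f$, this yields $u'_n = f_n(w_n) \to u$.

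For the ``only if'' direction, suppose $u'_n \to u$. Since $u \in U$ and $U$ is open, $u'_n \in U$ for all large $n$, so $\tilde w_n := f^{-1}(u'_n)$ is well defined and tends to $0$ by continuity of $f^{-1}$ at $u$. Writing $f_n^{-1}(u'_n) = (f_n^{-1} \circ f)(\tilde w_n)$ and applying the same reasoning to the sequence $f_n^{-1} \circ f \to \id$ evaluated at $\tilde w_n \to 0$ gives $f_n^{-1}(u'_n) \to 0$.

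No substantial obstacle arises; the only point to watch is to invoke \emph{uniform} convergence on a compact neighborhood of $0$ in $\DD$ (rather than mere pointwise convergence at $0$) so that the limit may be passed through a sequence $\{w_n\}$ or $\{\tilde w_n\}$ that is itself moving toward $0$.
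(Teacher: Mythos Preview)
Your proof is correct and follows essentially the same approach as the paper: transport to $\DD$ via the normalized Riemann maps and exploit the locally uniform convergence $f_n^{-1}\circ f \to \id$. The paper's argument is organized slightly differently (it introduces $g_n := f_n^{-1}\circ f$ and, in the converse direction, passes through $g_n^{-1}$ rather than using $f_n \to f$ directly), but the substance is the same.
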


\begin{proof}
Take the normalized Riemann maps $f_n:(\DD,0) \to (U_n,u_n)$ and $f:(\DD,0) \to (U,u)$, so $g_n:=f_n^{-1} \circ f \to \id$ locally uniformly in $\DD$. First suppose $u'_n \to u$. Set $z_n:=f^{-1}(u'_n), w_n:=f_n^{-1}(u'_n)$. Then $z_n \to 0$ and $g_n(z_n)=w_n$, so $w_n \to 0$. It follows that $\dist_{U_n}(u_n,u'_n) = \dist_\DD(0,w_n) \to 0$. Conversely, suppose $\dist_{U_n}(u_n,u'_n) \to 0$. Then, with $z_n, w_n$ defined as above, we have $w_n \to 0$. Since $g_n(z_n)=w_n$ and $g_n \to \id$, we must have $z_n \to 0$, so $u'_n \to u$. 
\end{proof}

\begin{lemma}\label{car-bih}
Suppose $(U_n,u_n) \ct (U,u)$ and $(U'_n,u'_n) \ct (U',u')$. For each $n$ take a conformal isomorphism $\psi_n: (U_n,u_n) \to (U'_n,u'_n)$. Then some subsequence of $\{ \psi_n \}$ converges locally uniformly in $U$ to a conformal isomorphism $\psi: (U,u) \to (U',u')$. 
\end{lemma}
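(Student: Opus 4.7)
The plan is to reduce the problem to a sequence of disk automorphisms fixing the origin via the normalized Riemann maps, where compactness of the rotation group provides a convergent subsequence for free. Let $f_n:(\DD,0) \to (U_n,u_n)$, $f:(\DD,0) \to (U,u)$ and $f'_n:(\DD,0) \to (U'_n,u'_n)$, $f':(\DD,0) \to (U',u')$ be the normalized Riemann maps associated to the two convergent sequences of pointed disks. By the definition of \cara{} convergence, $f_n \to f$ and $f'_n \to f'$ locally uniformly in $\DD$, and similarly $f_n^{-1} \to f^{-1}$ and $(f'_n)^{-1} \to (f')^{-1}$ locally uniformly on compact subsets of $U$ and $U'$ respectively (each $f_n^{-1}$ being defined on any such compact subset for all sufficiently large $n$).

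Now consider the conjugated maps
$$
\phi_n := (f'_n)^{-1} \circ \psi_n \circ f_n : (\DD,0) \to (\DD,0).
$$
Each $\phi_n$ is a conformal automorphism of $\DD$ fixing $0$, hence a rotation $\phi_n(z) = \e^{\ii \alpha_n} z$. Since the circle is compact, after passing to a subsequence we may assume $\e^{\ii \alpha_n} \to \e^{\ii \alpha}$, so that $\phi_n \to \phi$ uniformly on $\DD$, where $\phi(z) := \e^{\ii \alpha} z$ is itself a conformal automorphism of $(\DD,0)$.

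Now set $\psi := f' \circ \phi \circ f^{-1} : (U,u) \to (U',u')$. This is a conformal isomorphism as a composition of three such, and $\psi(u) = f'(\phi(0)) = f'(0) = u'$. To verify that $\psi_n \to \psi$ locally uniformly in $U$, fix a compact set $K \subset U$. By \cara{} convergence, $K \subset U_n$ for all large $n$, and $f_n^{-1}|_K \to f^{-1}|_K$ uniformly. Choose a compact $L \subset \DD$ with $f^{-1}(K) \subset \mathring{L}$; then $f_n^{-1}(K) \subset L$ eventually. On $L$ we have $\phi_n \to \phi$ uniformly; choose a compact $M \subset \DD$ with $\phi(L) \subset \mathring{M}$, so that $\phi_n(L) \subset M$ eventually. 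On $M$ we have $f'_n \to f'$ uniformly. Chaining these three uniform convergences yields $\psi_n = f'_n \circ \phi_n \circ f_n^{-1} \to f' \circ \phi \circ f^{-1} = \psi$ uniformly on $K$.

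The only mild subtlety is bookkeeping the domains so that each composition makes sense for large $n$ on the chosen compact sets; the underlying analytic content is just the compactness of the rotation group $\{ \e^{\ii \alpha} : \alpha \in \RR \}$ together with Hurwitz-type stability under local uniform convergence, both of which are automatic here.
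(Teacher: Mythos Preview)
Your proof is correct and follows essentially the same route as the paper: conjugate $\psi_n$ by the normalized Riemann maps to obtain disk automorphisms fixing $0$, use compactness of this rotation group to extract a convergent subsequence, and then undo the conjugation to get $\psi$. The paper is slightly more terse (it does not spell out the bookkeeping for local uniform convergence of the composition), but the argument is identical.
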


\begin{proof}
Take the normalized Riemann maps $f_n:(\DD,0) \to (U_n,u_n)$, $f:(\DD,0) \to (U,u)$, $g_n:(\DD,0) \to (U'_n,u'_n)$, and $g:(\DD,0) \to (U',u')$, so $f_n \to f$ and $g_n \to g$ locally uniformly in $\DD$. The disk automorphisms $\sigma_n:=g_n^{-1} \circ \psi_n \circ f_n$ fix the origin, so after passing to a subsequence, $\sigma_n \to \sigma \in \Aut(\DD)$. It follows that the corresponding subsequence of $\{ \psi_n \}$ converges to $\psi:=g \circ \sigma \circ f^{-1}: (U,u) \to (U',u')$.     
\end{proof}

The space of all pointed disks has the following form of compactness: 

\begin{lemma}\label{compactness}
Every sequence $(U_n,u_n)$ with $u_n \to u$ and $\dist(u_n,\bd U_n) \asymp 1$ has a subsequence which converges to some $(U,u)$ in the sense of \cara. 
\end{lemma}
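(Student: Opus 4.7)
My plan is to invoke the normalized Riemann maps and extract a Carathéodory limit via a normal-family argument, using the estimate \eqref{1/4} as the crucial quantitative input.

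First, let $f_n:(\DD,0) \to (U_n,u_n)$ be the normalized Riemann maps with $f_n(0)=u_n$ and $f_n'(0)>0$. By \eqref{1/4},
\[
\dist(u_n,\bd U_n) \leq f_n'(0) \leq 4 \dist(u_n,\bd U_n),
\]
so the hypothesis $\dist(u_n,\bd U_n) \asymp 1$ yields $f_n'(0) \asymp 1$. In particular, there exist constants $0 < c \leq C < \infty$ independent of $n$ with $c \leq f_n'(0) \leq C$.

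Second, I will use Koebe's distortion theorem to show $\{f_n\}$ is locally uniformly bounded on $\DD$. Indeed, for each univalent $f_n$ with $f_n(0)=u_n$ and $f_n'(0) \leq C$, Koebe's theorem gives $|f_n(z) - u_n| \leq C \cdot |z|/(1-|z|)^2$ for all $z \in \DD$. Since $u_n \to u$, the sequence $\{u_n\}$ is bounded, and hence $\{f_n\}$ is uniformly bounded on each compact subset of $\DD$. By Montel's theorem, after passing to a subsequence (which I will not relabel), $f_n \to f$ locally uniformly in $\DD$ for some holomorphic $f : \DD \to \CC$.

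Third, I will verify that $f$ is the normalized Riemann map of a pointed disk $(U,u)$. The limit satisfies $f(0)=\lim f_n(0)=u$ and $f'(0)=\lim f_n'(0) \geq c > 0$, so $f$ is nonconstant. Since each $f_n$ is univalent, Hurwitz's theorem forces $f$ to be univalent as well. Setting $U:=f(\DD)$, the map $f:(\DD,0) \to (U,u)$ is a conformal isomorphism with $f'(0)>0$. Moreover, by \eqref{1/4} applied to $f$, we have $\dist(u,\bd U) \leq f'(0) \leq C < \infty$, so $U$ omits some point of $\CC$ and therefore is a genuine disk rather than all of $\CC$. By the very definition of Carathéodory convergence, $(U_n,u_n) \ct (U,u)$.

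The only potential obstacle is ensuring the limit $f$ is neither constant nor surjective onto $\CC$; both are ruled out by the two-sided bound $f_n'(0) \asymp 1$, which is itself a direct consequence of the hypothesis $\dist(u_n,\bd U_n) \asymp 1$ via the Koebe $1/4$-theorem. This is exactly why the hypothesis is stated in this form.
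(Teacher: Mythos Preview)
Your proof is correct and follows essentially the same approach as the paper: both take the normalized Riemann maps, use \eqref{1/4} to bound the derivatives $f_n'(0)$, and extract a univalent limit via a normal-family argument. The only cosmetic difference is that the paper first normalizes to schlicht functions $\alpha_n^{-1}(f_n-u_n)$ and then quotes compactness of the schlicht class, whereas you spell this out directly via Koebe distortion, Montel, and Hurwitz; these are the same argument at slightly different levels of packaging.
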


\begin{proof}
Consider the normalized Riemann maps $f_n:(\DD,0) \to (U_n,u_n)$. Since $\alpha_n:=f'_n(0) \asymp 1$ by \eqref{1/4}, we may assume after passing to a subsequence that $\alpha_n \to \alpha \in \, ]0,+\infty[$. The sequence $\{ \alpha_n^{-1}(f_n-u_n) \}$ of schlicht functions has a subsequence that converge locally uniformly to a schlicht function $g: (\DD,0) \to (V,0)$. It follows that the corresponding subsequence of $\{ f_n \}$ converges locally uniformly to the normalized Riemann map $\alpha g + u :(\DD,0) \to (U,u)$, where $U=\alpha V +u$. 
\end{proof}

The following useful result describes how changing the base point can affect \cara \ convergence: 

\begin{theorem}\label{two}
Suppose $(U_n,u_n) \ct (U,u)$, and take any sequence $u'_n \in U_n$ with $u'_n \to u'$.% 
\vs
\begin{enumerate}
\item[(i)]
If $\dist_{U_n}(u_n,u'_n)$ is bounded, after passing to a subsequence we have $(U_n,u'_n) \ct (U,u')$ (in particular, $u' \in U$). \vs
\item[(ii)]
If $\dist_{U_n}(u_n,u'_n) \to +\infty$ and $\dist(u'_n,\bd U_n) \asymp 1$, after passing to a subsequence we have $(U_n,u'_n) \ct (V,u')$, where $V \cap U =\es$ (in particular, $u' \notin U$).
\end{enumerate} 
\end{theorem}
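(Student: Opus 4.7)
The plan is to translate everything through the normalized Riemann maps. Let $f_n : (\DD, 0) \to (U_n, u_n)$ and $f : (\DD, 0) \to (U, u)$ be the normalized Riemann maps, so that by hypothesis $f_n \to f$ locally uniformly in $\DD$.

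For (i), I would set $z'_n := f_n^{-1}(u'_n) \in \DD$ and use the identity $\dist_\DD(0, z'_n) = \dist_{U_n}(u_n, u'_n)$ to conclude that the sequence $\{z'_n\}$ stays in a compact subdisk $\ov{\DD(0,r_0)} \subset \DD$. After passing to a subsequence, $z'_n \to z' \in \ov{\DD(0,r_0)}$, and uniform convergence $f_n \to f$ on $\ov{\DD(0,r_0)}$ yields $u'_n = f_n(z'_n) \to f(z') = u'$, so in particular $u' \in U$ and $z'=f^{-1}(u')$. To upgrade this to \cara \ convergence $(U_n, u'_n) \ct (U, u')$, write the normalized Riemann maps to $(U_n, u'_n)$ in the form $\tilde f_n = f_n \circ \phi_n$, where $\phi_n \in \Aut(\DD)$ is the unique automorphism with $\phi_n(0) = z'_n$ and $\arg \phi_n'(0) = -\arg f_n'(z'_n)$ (so $\tilde f_n'(0) > 0$). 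Since $z'_n \to z'$ and $f_n'(z'_n) \to f'(z')$, the $\phi_n$ converge locally uniformly in $\DD$ to the analogous automorphism $\phi$ for $(U, u')$, whence $\tilde f_n \to f \circ \phi = \tilde f$ locally uniformly, as required.

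For (ii), compactness of pointed disks (\lemref{compactness}) combined with the hypothesis $\dist(u'_n, \bd U_n) \asymp 1$ and $u'_n \to u'$ gives a subsequential \cara \ limit $(U_n, u'_n) \ct (V, u')$ for some pointed disk $(V, u')$. The heart of the matter is to show $V \cap U = \es$. The reduction step I would use is the following elementary consequence of \cara \ convergence: if $(U_n, u_n) \ct (U, u)$ and $w \in U$, then $w \in U_n$ for all large $n$ and $\dist_{U_n}(u_n, w) \to \dist_U(u, w)$. This in turn reduces to the fact that $f_n^{-1}(w) \to f^{-1}(w)$ in $\DD$, a straightforward Hurwitz-type argument: the injectivity of $f$ and the local uniform convergence $f_n \to f$ imply that $f_n$ is injective on any closed disk $\ov{\DD(0,r)}$ containing $f^{-1}(w)$ in its interior, and that $w$ has a unique preimage inside $\DD(0,r)$ under $f_n$ for large $n$, forcing $f_n^{-1}(w)$ to converge to the only candidate $f^{-1}(w)$.

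Assuming this reduction, I would argue by contradiction: if $w_0 \in V \cap U$, then applying the above statement to both sequences gives
$$
\dist_{U_n}(u_n, w_0) \to \dist_U(u, w_0) < +\infty, \qquad \dist_{U_n}(u'_n, w_0) \to \dist_V(u', w_0) < +\infty,
$$
and the triangle inequality $\dist_{U_n}(u_n, u'_n) \leq \dist_{U_n}(u_n, w_0) + \dist_{U_n}(w_0, u'_n)$ would force $\dist_{U_n}(u_n, u'_n)$ to remain bounded, contradicting the hypothesis $\dist_{U_n}(u_n, u'_n) \to +\infty$. Hence $V \cap U = \es$, and in particular $u' \notin U$. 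The main obstacle, and the only real technical content, is the Hurwitz-type continuity statement for the inverse Riemann maps described above; everything else is bookkeeping in hyperbolic geometry.
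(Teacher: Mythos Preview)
Your proof is correct and follows essentially the same route as the paper: for (i) you compose $f_n$ with disk automorphisms sending $0$ to $f_n^{-1}(u'_n)$ and use compactness in $\Aut(\DD)$, and for (ii) you invoke \lemref{compactness} and then derive a contradiction from the convergence $f_n^{-1}(w)\to f^{-1}(w)$ (the paper only needs boundedness of $\dist_{U_n}(u_n,w)$, while you prove the slightly stronger convergence, but this is the same Hurwitz-type fact). The only cosmetic difference is that you spell out the normalization of the automorphisms and the Hurwitz step explicitly, whereas the paper leaves these as understood.
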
 

\figref{carat} illustrates the two cases.

%%%%%%%%%%%%%%%%%%%%%%%%%%%%%%%%%%%%%%%%%%%
\begin{figure}[]
	\centering
	\begin{overpic}[width=0.7\textwidth]{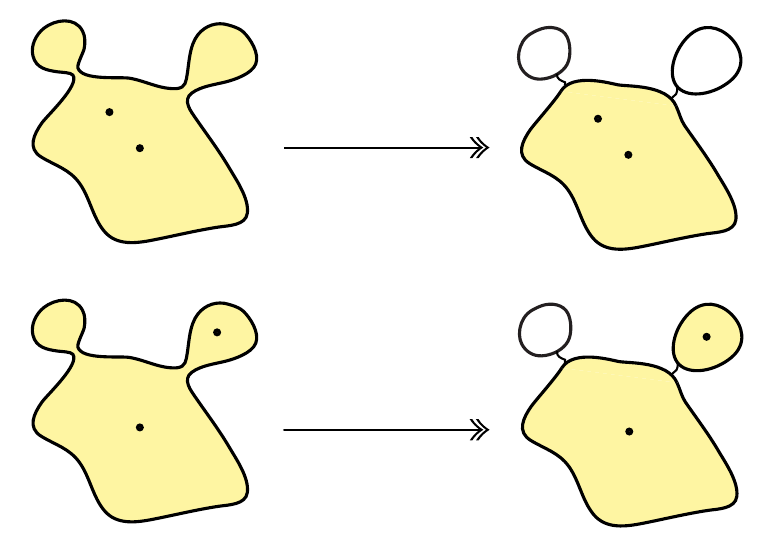}
\put (19,51) {\footnotesize{$u_n$}}
\put (19,15) {\footnotesize{$u_n$}}
\put (10,55) {\footnotesize{$u'_n$}}
\put (28.8,27.4) {\footnotesize{$u'_n$}}
\put (6,45) {\footnotesize{$U_n$}}
\put (6,9) {\footnotesize{$U_n$}}
\put (82,50) {\footnotesize{$u$}}
\put (82,14) {\footnotesize{$u$}}
\put (73.4,55) {\footnotesize{$u'$}}
\put (91.6,26) {\footnotesize{$u'$}}
\put (69,44) {\footnotesize{$U$}}
\put (69,8) {\footnotesize{$U$}}
\put (85,29) {\footnotesize{$V$}}
	\end{overpic}
\caption{\footnotesize Illustration of the two cases of \cara \ convergence in \thmref{two}.}
\label{carat}
\end{figure}
%%%%%%%%%%%%%%%%%%%%%%%%%%%%%%%%%%%%%%%%%%%

\begin{proof}
As usual, consider the normalized Riemann maps $f_n:(\DD,0) \to (U_n,u_n)$ and $f:(\DD,0) \to (U,u)$. \vs

(i) The normalized Riemann map $g_n:(\DD,0) \to (U_n,u'_n)$ is of the form $f_n \circ \psi_n$, where $\psi_n \in \Aut(\DD)$ sends $0$ to $w_n:=f_n^{-1}(u'_n)$. We have $\sup_n |w_n|<1$ since by the hypothesis $\dist_{\DD}(0,w_n)=\dist_{U_n}(u_n,u'_n)$ is bounded. It follows that the $\psi_n$ lie in a compact subset of $\Aut(\DD)$. So, after passing to a subsequence, $\psi_n$ converges to some $\psi \in \Aut(\DD)$. Thus, $g_n \to f \circ \psi$ and therefore $(U_n,u'_n) \ct (U,u')$, with $u'=f(\psi(0))$. \vs

(ii) By \lemref{compactness}, the assumption $\dist(u'_n,\bd U_n) \asymp 1$ guarantees that after passing to a subsequence $(U_n,u'_n) \ct (V,u')$ for some disk $V$. Suppose $V \cap U \neq \es$ and pick some $\zeta \in V \cap U$. Then $\zeta \in U_n$ for all large $n$ and we have $f_n^{-1}(\zeta) \to f^{-1}(\zeta)$. Hence $\dist_{U_n}(u_n,\zeta)=\dist_\DD(0,f_n^{-1}(\zeta))$ is bounded. Similarly, if $g_n:(\DD,0) \to (U_n,u'_n)$ and $g:(\DD,0) \to (V,u')$ denote the corresponding normalized Riemann maps, then  $g_n^{-1}(\zeta) \to g^{-1}(\zeta)$, so $\dist_{U_n}(u'_n,\zeta) =\dist_\DD(0,g_n^{-1}(\zeta))$ is bounded. The two bounds together imply $\dist_{U_n}(u_n,u'_n)$ being bounded, which is a contradiction.     
\end{proof}

\begin{lemma}\label{samelim}
Suppose $(U_n,u_n) \ct (U,u)$. Let $V_n$ be a proper subdisk of $U_n$ containing $u_n$ and $r_n$ be the radius of the largest hyperbolic ball in $U_n$ centered at $u_n$ that is contained in $V_n$. If $r_n \to +\infty$, then $(V_n,u_n) \ct (U,u)$.  
\end{lemma}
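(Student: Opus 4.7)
The plan is to work entirely with normalized Riemann maps. Let $f_n:(\DD,0) \to (U_n,u_n)$ and $f:(\DD,0) \to (U,u)$ be the normalized Riemann maps supplied by the hypothesis $(U_n,u_n) \ct (U,u)$, so $f_n \to f$ locally uniformly in $\DD$. Let $g_n:(\DD,0) \to (V_n,u_n)$ be the normalized Riemann map of $V_n$. My goal is to show $g_n \to f$ locally uniformly, which is exactly the statement $(V_n,u_n) \ct (U,u)$.

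First I would form the composition $\phi_n := f_n^{-1} \circ g_n$. This is a conformal isomorphism from $\DD$ onto the proper subdisk $W_n := f_n^{-1}(V_n) \subset \DD$ with $\phi_n(0)=0$ and $\phi_n'(0)>0$, i.e., the normalized Riemann map of $W_n$. Since $g_n = f_n \circ \phi_n$ and $f_n \to f$ locally uniformly, the task reduces to proving $\phi_n \to \id$ locally uniformly on $\DD$.

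Next I would translate the hypothesis $r_n \to +\infty$ into Euclidean control on $W_n$ via conformal invariance of the hyperbolic metric. The hyperbolic ball of radius $r_n$ about $u_n$ in $U_n$ pulls back under $f_n$ to the hyperbolic ball of radius $r_n$ about $0$ in $\DD$, which is the Euclidean disk $\DD(0,\tanh(r_n/2))$. Since this ball sits inside $V_n$ by hypothesis, we obtain the sandwich
$$
\DD(0,\tanh(r_n/2)) \; \subset \; W_n \; \subset \; \DD,
$$
with $\tanh(r_n/2) \to 1$. A two-sided application of the Schwarz lemma now pins down the derivative at the origin: the inclusion $\phi_n(\DD)=W_n \subset \DD$ together with $\phi_n(0)=0$ gives $\phi_n'(0) \leq 1$, while applying Schwarz to the normalization $z \mapsto \phi_n^{-1}(\tanh(r_n/2)\,z)$ of the inverse yields $\phi_n'(0) \geq \tanh(r_n/2)$. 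Hence $\phi_n'(0) \to 1$.

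Finally, since $|\phi_n(z)| \leq |z|$ on $\DD$ by Schwarz, the family $\{\phi_n\}$ is normal. Any locally uniform subsequential limit $\phi$ is holomorphic with $\phi(0)=0$, $\phi'(0)=1$, and $|\phi(z)| \leq |z|$, forcing $\phi=\id$ by the equality case of the Schwarz lemma. The limit being unique, the full sequence $\phi_n \to \id$ locally uniformly, and therefore $g_n \to f$ locally uniformly as required. The argument has no real obstacle; the only subtlety is cleanly converting the hyperbolic hypothesis into a Euclidean sandwich in $\DD$, after which the rest is a standard Schwarz-lemma squeeze.
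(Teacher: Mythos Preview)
Your proof is correct and follows essentially the same approach as the paper: both arguments introduce the same composition $\phi_n = f_n^{-1}\circ g_n:(\DD,0)\to (f_n^{-1}(V_n),0)$, convert the hyperbolic hypothesis into the Euclidean sandwich $\DD(0,\tanh(r_n/2))\subset f_n^{-1}(V_n)\subset\DD$ (noting $\tanh(r_n/2)=(\e^{r_n}-1)/(\e^{r_n}+1)$), and conclude via the Schwarz lemma that $\phi_n\to\id$. The paper simply states this last implication while you spell out the two-sided derivative bound and the normality/uniqueness argument, but the content is the same.
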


\begin{proof}
Take the normalized Riemann maps $f_n:(\DD,0) \to (U_n,u_n)$ and $g_n:(\DD,0) \to (V_n,u_n)$. By the assumption $f_n$ converges locally uniformly to the normalized Riemann map $f:(\DD,0) \to (U,u)$. The domain $V'_n :=f_n^{-1}(V_n)$ contains the round disk centered at $0$ of radius $(\e^{r_n}-1)/(\e^{r_n}+1) \to 1$. It follows from the Schwarz lemma that $f_n^{-1} \circ g_n: (\DD,0) \to (V'_n,0)$ tends to the identity map, and we conclude that $g_n \to f$ locally uniformly in $\DD$.       
\end{proof}

It is well known that in a given disk $U \subset \CC$ the Euclidean diameter of the hyperbolic ball of fixed radius $r$ tends to $0$ as the center of the ball converges to $\bd U$. The next corollary gives a uniform version of this statement, and is an easy consequence of the following bounds (see \cite[Corollary 1.5]{Po} for a slightly modified form): If $U \subset \CC$ is a disk, $u,u' \in U$, and $r:=\dist_U(u,u')$, then
$$
\frac{1}{4} \tanh\left(\frac{r}{2}\right) \leq \frac{|u-u'|}{\dist(u,\bd U)} \leq 4\exp(2r).
$$

\begin{corollary}\label{shrink}
Fix $r>0$. Take any sequence of pointed disks $(U_n,u_n)$ and let $\de_n$ be the Euclidean diameter of the hyperbolic $r$-ball in $U_n$ centered at $u_n$. Then, $\de_n \to 0$ if and only if $\dist(u_n,\bd U_n) \to 0$ as $n \to \infty$.  
\end{corollary}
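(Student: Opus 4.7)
The plan is to deduce both directions as immediate consequences of the two-sided comparison between hyperbolic and Euclidean distances displayed just above the Corollary; no additional ingredients are required beyond a bit of bookkeeping.

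For the implication ``$\dist(u_n,\bd U_n) \to 0 \Rightarrow \de_n \to 0$'' I will apply the upper estimate. Any two points $u', u''$ of the closed hyperbolic $r$-ball $B_n$ centered at $u_n$ satisfy $\dist_{U_n}(u_n, u') \leq r$ and $\dist_{U_n}(u_n, u'') \leq r$, so the triangle inequality in $\CC$ combined with the upper bound $|u_n - v| \leq 4\exp(2r)\, \dist(u_n,\bd U_n)$ (valid for any $v \in B_n$) gives
$$
|u'-u''| \leq 8\exp(2r)\, \dist(u_n,\bd U_n).
$$
Taking the supremum over $u', u'' \in B_n$ yields $\de_n \leq 8\exp(2r)\, \dist(u_n,\bd U_n)$, which tends to $0$ by hypothesis.

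For the converse direction I will use the lower estimate. Since the hyperbolic metric on a disk is complete and geodesic and $r>0$, for each $n$ I can pick a point $u'_n \in U_n$ with $\dist_{U_n}(u_n, u'_n) = r$. Then $u'_n \in B_n$, so $\de_n \geq |u_n - u'_n|$, and the lower inequality gives
$$
\de_n \;\geq\; |u_n - u'_n| \;\geq\; \tfrac{1}{4}\tanh(r/2)\, \dist(u_n,\bd U_n).
$$
If $\de_n \to 0$, rearranging forces $\dist(u_n,\bd U_n) \to 0$.

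I do not anticipate any real obstacle: the corollary is essentially a rephrasing of the cited bound, and the only small point to check is that a point on the hyperbolic sphere of radius $r$ about $u_n$ actually exists in $U_n$, which is automatic since $r>0$ and any disk in $\CC$ is a complete, geodesic hyperbolic surface.
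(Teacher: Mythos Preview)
Your argument is correct and is exactly the derivation the paper has in mind: the corollary is stated without proof, merely as ``an easy consequence'' of the two-sided Koebe-type bound quoted just before it, and your write-up supplies precisely that easy consequence. The only additional observation you make beyond the bare inequality---existence of a point at hyperbolic distance exactly $r$ from $u_n$---is the right thing to check and follows from completeness of the hyperbolic metric on a disk, as you note.
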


\cara \ convergence can of course be defined for pointed disks in the Riemann sphere $\Chat$ in the same manner. For our purposes the main examples of such disks are basins of infinity of polynomials with connected Julia sets:

\begin{example}\label{omconv}
Suppose $P_n \to P$ in $\sC(d)$. Then $\beta^{-1}_{P_n} \to \beta^{-1}_P$ locally uniformly in $\Chat \sm \ov{\DD}$, hence $(\Omega_{P_n},\infty) \ct (\Omega_P,\infty)$. 
\end{example}

All the lemmas in this subsection remain valid for the \cara \ convergence of disks containing $\infty$ except that the Euclidean condition $\dist(u_n,\bd U_n) \asymp 1$ needed for compactness must be modified in terms of the spherical metric. For example, we could require that both the spherical distance between $u_n$ and $\bd U_n$ and the spherical diameter of $\bd U_n$ be bounded away from $0$. However, in our main examples where $U_n$ is the basin of infinity of a polynomial $P_n \in \sC(d)$ and $u_n \in U_n$ converges in $\CC$, this spherical condition is actually equivalent to $\dist(u_n,U_n) \asymp 1$, so we will use the simpler Euclidean formulation without further warning.  
 
\section{$\sL$-arcs and their basic properties}

Recall that we are considering a convergent sequence $P_n \to P$ in $\sC(d)$, $\theta \in \RR/\ZZ$ is a given angle with $d^q \theta = \theta \ (\modd \ZZ)$, $\zeta_n$ and $\zeta$ are the landing points of the external rays $R_n:=R_{P_n,\theta}$ and $R:=R_{P,\theta}$, $\zeta_n \to \zeta_\infty$ and $\ov{R}_n:=R_n \cup \{ \zeta_n, \infty \} \to \sL$ in the Hausdorff metric. \vs

\noindent
{\bf Convention.} In what follows we will use unsubscripted symbols for objects associated with $P$ and symbols with a subscript $n$ for those associated with $P_n$. \vs 

It is clear that $\sL$ is a compact connected subset of $\Chat$ that satisfies $P^{\circ q}(\sL)=\sL$. Moreover, by \corref{unif} for every potential $s_0>0$ we have $R_n(s) \to R(s)$ uniformly on $[s_0,+\infty[$, which shows the ray segment $\{ R(s) : s\geq s_0\}$ is contained in $\sL$. Since this is true for every $s_0$, it follows that $\sL$ contains $R \cup \{ \zeta, \zeta_\infty, \infty \}$. Thus, if $\sL=\ov{R}=R \cup \{ \zeta, \infty \}$, then $\zeta_\infty$ must coincide with $\zeta$ (this is the ``tame case'' in \thmref{A}). Take any $u \in \sL \sm K$ in $\CC$ and any sequence $u_n \in R_n$ such that $u_n \to u$. By the joint continuity of the Green's function (see \S \ref{poly}) we have $G_n(u_n) \to G(u)>0$. By \corref{unif} $u_n=R_n(G_n(u_n)) \to R(G(u))$, which shows $u=R(G(u))$. It follows that $\sL \sm K = R \cup \{ \infty \}$. \vs 

\subsection{Proof of the Basic Structure Lemma}\label{larc}

First suppose $u \in \sL \cap J$. Take any sequence $u_n \in R_n$ such that $u_n \to u$. Then $s_n:=G_n(u_n) \to 0$. By the lower semicontinuity of $P \mapsto J_P$ (\thmref{semi}), $\dist(u_n,J_n) \to 0$. Since 
$$
\dist_{\Om_n}(u_n,P_n^{\circ q}(u_n)) \leq \dist_{\CC \sm \ov{\DD}} (\e^{s_n+2\pi \ii \theta}, \e^{d^q s_n+2\pi \ii \theta}) = q \log d,  
$$    
it follows from \corref{shrink} that $|u_n - P_n^{\circ q}(u_n)| \to 0$, implying $P^{\circ q}(u)=u$. This proves part (i) of the Basic Structure Lemma. \vs

Now suppose $u \in \sL \cap \mathring{K}$. It suffices to prove the claims in part (ii) of the Lemma under the additional hypothesis $P^{\circ q}(u) \neq u$. Once this is accomplished, this additional hypothesis can be removed easily. In fact, by connectivity of $\sL$ we can always find some $u' \in \sL \cap \mathring{K}$ arbitrarily close to $u$ for which $P^{\circ q}(u') \neq u'$ and conclude that $u'$ belongs to a parabolic basin. This proves that $u$ must be in a parabolic basin and therefore $P^{\circ q}(u) \neq u$. \vs

So let us assume $u \in \sL \cap \mathring{K}$ and $P^{\circ q}(u) \neq u$. By an argument similar to above, no subsequence of $\dist(u_n,J_n)$ can tend to $0$. In other words, $\dist(u_n,J_n) \asymp 1$. On the other hand, $(\Om_n,\infty) \ct (\Om,\infty)$ (\exref{omconv}), and 
$$
\dist_{\Om_n}(u_n,\infty)= \log \left( \frac{\e^{s_n}+1}{\e^{s_n}-1} \right) \to +\infty
$$
since $s_n \to G(u)=0$. Hence, by \thmref{two}(ii), after passing to a subsequence we may assume $(\Om_n,u_n) \ct (V,u)$ for some disk $V \subset K$. It is not hard to see that $P^{\circ q}|_V : V \to V$ is a proper map (see for example \cite[Theorem 5.6]{Mc1}). Since $P_n \in \sC(d)$, the basin of infinity $\Om_n$ does not contain any critical point of $P_n$, so $V$ does not contain any critical point of $P$. Thus, $P^{\circ q}|_V: V \to V$ is a conformal isomorphism. \vs  

Let ``$\log$'' denote the branch of logarithm which maps the slit-plane $\CC \sm ]-\infty,0]$ conformally onto the strip $\{ z \in \CC: |\myim(z)|<\pi \}$. The map
$$
\psi_n(z):= \frac{1}{s_n} \log \big( \e^{-2\pi \ii \theta} \beta_n(z) \big)
$$ 
carries the slit-basin $V_n:=\Om_n \sm R_n(\theta+1/2)$ conformally onto the horizontal half-strip 
$$
S_n:= \left\{ z \in \CC: \myre(z)>0, |\myim(z)|<\frac{\pi}{s_n} \right\}, 
$$
with $\psi_n(u_n)=1$. The subdomain $V'_n \subset V_n$ defined by  
$$
V'_n := \beta_n^{-1} \left\{ r\e^{2\pi \ii t} : r>1, |t-\theta|<\frac{1}{2d^q} \right\}   
$$
maps conformally under $\psi_n$ onto the half-strip 
$$
S'_n:= \left\{ z: \myre(z)>0, |\myim(z)|<\frac{\pi}{d^q s_n} \right\}. 
$$
Moreover, if $z \in V'_n$, 
\begin{align*}
\psi_n(P_n^{\circ q}(z)) & = \frac{1}{s_n} \log \big( \e^{-2\pi \ii \theta} \beta_n(P_n^{\circ q}(z)) \big) = \frac{1}{s_n} \log \big( \e^{-2\pi \ii \theta} (\beta_n(z))^{d^q} \big) \\
& = \frac{1}{s_n} \log \big( (\e^{-2\pi \ii \theta} \beta_n(z))^{d^q} \big) \qquad \qquad (\text{since} \ d^q \theta = \theta \ (\modd \ZZ)) \\
& = \frac{d^q}{s_n} \log \big( \e^{-2\pi \ii \theta} \beta_n(z) \big) = d^q \psi_n(z). 
\end{align*}
This gives the commutative diagram 
\begin{equation}\label{cd1}
\begin{tikzcd}[column sep=small]
V'_n \arrow[d,swap,"P_n^{\circ q}"] \arrow[rr,"\psi_n"] & & S'_n \arrow[d,"\cdot d^q"] \\
V_n \arrow[rr,"\psi_n"] & & S_n 
\end{tikzcd} 
\end{equation}

\begin{lemma}\label{conj}
There is a subsequence of $\{ \psi_n \}$ which converges locally uniformly to a conformal isomorphism $\psi: V \to \HH^r$ normalized by $\psi(u)=1$, and the following diagram commutes: 
\begin{equation}\label{cd2}
\begin{tikzcd}[column sep=small]
V \arrow[d,swap,"P^{\circ q}"] \arrow[rr,"\psi"] & & \HH^r \arrow[d,"\cdot d^q"] \\
V \arrow[rr,"\psi"] & & \HH^r 
\end{tikzcd} 
\end{equation}
\end{lemma}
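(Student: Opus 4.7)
The plan is to deduce Lemma \ref{conj} by combining the Carath\'eodory convergence of two pointed disk sequences with Lemma \ref{car-bih}, then passing to the limit in the commutative diagram \eqref{cd1}. First I would verify that $(V_n, u_n) \ct (V, u)$. We already know $(\Omega_n, u_n) \ct (V, u)$, and $V_n$ is the subdisk of $\Omega_n$ obtained by removing the ray $R_n(\theta + 1/2)$. By Lemma \ref{samelim}, it suffices to show that the hyperbolic distance in $\Omega_n$ from $u_n$ to this removed ray tends to $+\infty$. Since $\beta_n$ is a hyperbolic isometry between $\Omega_n$ and the simply connected domain $\Chat \sm \ov{\DD} \subset \Chat$, this reduces to estimating the hyperbolic distance in $\Chat \sm \ov{\DD}$ from $\e^{s_n + 2\pi \ii \theta}$ to the ray at angle $\theta + 1/2$. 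Pushing forward by $z \mapsto 1/z$ to $\DD$ with its standard hyperbolic metric, this becomes the distance from $\e^{-s_n}$ (after a harmless rotation) to the real segment $[-1,0]$, which an elementary calculation shows is comparable to $\log(1/s_n)$ and hence diverges as $s_n \to 0$.

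Next, the half-strips $S_n = \{w: \myre(w) > 0,\ |\myim(w)| < \pi/s_n\}$ exhaust $\HH^r$ from within, and since the base point $1$ is fixed, we have $(S_n, 1) \ct (\HH^r, 1)$. Combining this with the previous step and the fact that each $\psi_n: (V_n, u_n) \to (S_n, 1)$ is a conformal isomorphism, Lemma \ref{car-bih} produces a subsequence along which $\psi_n$ converges locally uniformly to a conformal isomorphism $\psi: (V, u) \to (\HH^r, 1)$. To obtain the functional equation on $V$, I would fix an arbitrary $z \in V$ and work from \eqref{cd1}. By Carath\'eodory convergence $z \in V_n$ for all large $n$, and $\psi_n(z) \to \psi(z) \in \HH^r$; in particular $|\myim(\psi_n(z))|$ stays bounded, while $\pi/(d^q s_n) \to +\infty$, so $z$ eventually lies in the smaller subdomain $V'_n$. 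The diagram then gives $\psi_n(P_n^{\circ q}(z)) = d^q \psi_n(z)$; passing to the limit, using continuity $P_n^{\circ q}(z) \to P^{\circ q}(z)$ together with the local uniform convergence $\psi_n^{-1} \to \psi^{-1}$ on $\HH^r$, forces both $P^{\circ q}(z) \in V$ and $\psi(P^{\circ q}(z)) = d^q \psi(z)$.

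The main obstacle is the hyperbolic divergence estimate in the first paragraph: without it, Lemma \ref{samelim} cannot be invoked and the limit disk cannot be identified as $V$ itself. The computation is elementary but requires care in recognizing $\Chat \sm \ov{\DD}$ as a simply connected domain in the Riemann sphere (so that $\infty$ is an interior point rather than a puncture, which crucially uses the hypothesis $P_n \in \sC(d)$), and in checking that the minimum in the distance formula is attained where the intuition suggests.
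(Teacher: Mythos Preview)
Your proposal is correct and follows essentially the same route as the paper: both establish $(V_n,u_n)\ct(V,u)$ via \lemref{samelim}, note $(S_n,1)\ct(\HH^r,1)$, and then invoke \lemref{car-bih} before passing to the limit in \eqref{cd1}. The only differences are cosmetic: you spell out the hyperbolic-distance computation that the paper leaves implicit, and for the functional equation you argue pointwise that each $z\in V$ eventually lies in $V'_n$, whereas the paper records the parallel convergence $(V'_n,u_n)\ct(V,u)$ and takes the limit of the whole diagram at once.
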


\begin{proof}
It is easy to see that $(S_n,1) \ct (\HH^r,1)$ and $(S'_n,1) \ct (\HH^r,1)$. By \lemref{samelim}, $(V_n, u_n) \ct (V,u)$ and $(V'_n,u_n) \ct (V,u)$. It follows from \lemref{car-bih} that some subsequence of $\{ \psi_n \}$ converges locally uniformly in $V$ to a conformal isomorphism $\psi: (V,u) \to (\HH^r,1)$. Taking the limit of \eqref{cd1} then shows that $\psi$ satisfies \eqref{cd2}.      
\end{proof} 

Define $\gamma: \, ]0,+\infty[ \to V$ by $\gamma(t):=\psi^{-1}(t)$. Evidently $\gamma$ satisfies $P^{\circ q}(\gamma(t))=\gamma(d^q t)$. For simplicity we use $\gamma$ both for the map and the arc $\gamma( ]0,+\infty[ ) \subset \CC$.  

\begin{lemma}\label{inter}
$\gamma = \sL \cap V$.
\end{lemma}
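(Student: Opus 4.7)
The plan is to verify both inclusions of the equality $\gamma = \sL \cap V$ directly, leveraging the fact that the Carath\'eodory convergences $(V_n, u_n) \ct (V, u)$ and $(S_n, 1) \ct (\HH^r, 1)$ upgrade, via \lemref{car-bih} along the same subsequence used to produce $\psi$ in \lemref{conj}, to locally uniform convergence $\psi_n \to \psi$ on compact subsets of $V$ and $\psi_n^{-1} \to \psi^{-1}$ on compact subsets of $\HH^r$.

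For the inclusion $\gamma \subset \sL \cap V$, I would fix $t > 0$ and unwind the definition of $\psi_n$: the equation $\psi_n(z) = t$ is equivalent to $\beta_n(z) = \e^{s_n t + 2\pi \ii \theta}$, which is the defining property of $R_n(s_n t)$, so $\psi_n^{-1}(t) = R_n(s_n t) \in R_n \subset \ov{R_n}$. By the locally uniform convergence $\psi_n^{-1} \to \psi^{-1}$ at the fixed point $t \in \HH^r$, we have $\psi_n^{-1}(t) \to \psi^{-1}(t) = \gamma(t)$, and the Hausdorff convergence $\ov{R_n} \to \sL$ forces $\gamma(t) \in \sL$. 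Plainly $\gamma(t) \in V$, so this direction is done.

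For the reverse inclusion $\sL \cap V \subset \gamma$, I would take any $u' \in \sL \cap V$ and pick a sequence $u'_n \in \ov{R_n}$ with $u'_n \to u'$. Since $u' \in V \subset \mathring{K}$ while $\zeta_n \to \zeta_\infty \in J$ and $\infty$ stays far from $V$, eventually $u'_n$ lies on $R_n$ proper, so $u'_n = R_n(s'_n)$ for some $s'_n > 0$. Substituting into the formula for $\psi_n$ yields $\psi_n(u'_n) = s'_n/s_n$, a positive real number. The main technical point is to pass to the limit: fixing a small compact neighborhood of $u'$ in $V$ (which lies in $V_n$ for all large $n$ by \cara \ convergence) lets the locally uniform convergence $\psi_n \to \psi$ transfer to the moving points $u'_n$, giving $\psi_n(u'_n) \to \psi(u')$. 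The limit is thus a nonnegative real, but as $u' \in V$ forces $\psi(u') \in \HH^r$, it must in fact be a strictly positive real. Therefore $u' = \psi^{-1}(\psi(u')) = \gamma(\psi(u')) \in \gamma$. The only subtle step is this transfer of uniform convergence to a moving sequence, which the Carath\'eodory framework handles cleanly; everything else is bookkeeping with formulas already assembled in the preceding paragraphs.
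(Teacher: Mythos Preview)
Your proof is correct and follows essentially the same approach as the paper's. Both directions use the locally uniform convergence $\psi_n \to \psi$ on compact subsets of $V$; your forward inclusion $\gamma \subset \sL$ via $\psi_n^{-1}(t) = R_n(s_n t) \to \gamma(t)$ is a slightly more direct variant of the paper's argument (which instead pushes the fixed point $z=\gamma(t)$ forward by $\psi_n$ and projects to the real axis), and your reverse inclusion is identical in substance to the paper's.
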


\begin{proof}
Let $\psi_n: V_n \to S_n$ and $\psi: V \to \HH^r$ be as in \lemref{conj}. First suppose $z \in \sL \cap V$. Take a compact neighborhood $E$ of $z$ such that $E \subset V$ and therefore $E \subset V_n$ for all large $n$. Take a sequence $z_n \in R_n$ such that $z_n \to z$, so $z_n \in E$ for all large $n$. The uniform convergence $\psi_n \to \psi$ on $E$ implies $\psi_n(z_n) \to \psi(z)$. Since $\psi_n(z_n) \in \RR$ for all $n$, we conclude that $\psi(z) \in \RR$, so $z \in \gamma$. \vs 

For the reverse inclusion, take $z=\gamma(t)$ for a given $t>0$. Since $z \in V$ we have $z \in V_n$ for all large $n$ and $\zeta_n:=\psi_n(z) \to \psi(z)=t$. It follows that $t_n:=\myre(\zeta_n) \to t$. The point $z_n:=\psi^{-1}_n(t_n)$ is in $V_n \cap R_n$ and  
$$
\dist_{V_n}(z_n,z)=\dist_{S_n}(t_n,\zeta_n) \to 0 
$$ 
It follows that $z_n \to z$ in the Euclidean metric. Since $z_n \in R_n$, we conclude that $z \in \sL$. 
\end{proof}

A standard hyperbolic geometry argument (see e.g. \cite[Lemma 5.5]{M}) shows that both limits $w^+ :=\lim_{t \to +\infty} \gamma(t)$ and $w^- :=\lim_{t \to 0} \gamma(t)$ exist and are fixed under $P^{\circ q}$. By \lemref{inter}, $w^\pm \in \sL$. By the Snail Lemma (\cite[Lemma 16.2]{M}) $w^+$ is either attracting or parabolic with multiplier $1$. The first case cannot happen: If $w^+$ were attracting, a full neighborhood of $w^+$ would be contained in an attracting basin of $P_n$ for all large $n$ \cite[Lemma 6.3]{D}. Clearly this is impossible since $w^+ \in \sL$ must be accumulated by the rays $R_n$. Thus, $w^+$ is parabolic with multiplier $1$. This proves that $V$ and therefore $\gamma \big( ]0,+\infty[ \big)$ is contained in a parabolic basin $B$ of $w^+$ and $P^{\circ q}: B \to B$ is a proper map of some degree $k$. If $\phi: B \to \DD$ is any conformal isomorphism, the induced map $f:=\phi \circ P^{\circ q} \circ \phi^{-1}: \DD \to \DD$ is a Blaschke product of degree $k$. If $w^+ \neq w^-$, then $\varphi(w^+)$ and $\varphi(w^-)$ are distinct fixed points of $f$, with the former necessarily parabolic with multiplier $1$ and multiplicity $3$ since by symmetry it has two attracting basins (namely $\DD$ and $\Chat \sm \ov{\DD}$). It follows that $f$ has at least $4$ fixed points on $\bd \DD$ counting multiplicities. Since the total number of fixed points of $f$ is $k+1$, we obtain $k+1\geq 4$, or $k \geq 3$. It follows that $B$ contains $k-1 \geq 2$ critical points of $P^{\circ q}$. This completes the proof of the Basic Structure Lemma.

\begin{remark}\label{2sides}
The statement that $P^{\circ q}: B \to B$ has at least two critical points when $w^+ \neq w^-$ will be sharpened in \S \ref{modo}, where we show that each of the two components of $B \sm \gamma$ contains at least one critical point. By contrast, when $w^+=w^-$ the Jordan domain $\Delta$ bounded by $\ov{\gamma}$ is a component of $B \sm \gamma$ free from  critical points and the restriction $P^{\circ q}: \Delta \to \Delta$ is a conformal isomorphism (see \lemref{5eq}).   
\end{remark} 

\begin{remark}\label{topol}
Here is a byproduct of the proof of \lemref{conj}. For each $n$ the conformal map $\psi_n$ sends $R_n(s)$, the point on the ray $R_n$ at potential $s$, to the point $s/s_n$ on the real line. The convergence of $\psi_n : V_n \to S_n$ to $\psi: V \to \HH^r$ shows that the sequence of inverse maps $\psi_n^{-1}: S_n \to V_n$ converges to $\psi^{-1}: \HH^r \to V$ uniformly on compact subsets of $\HH^r$. It follows that for each $t>0$, $R_n(ts_n)=\psi_n^{-1}(t) \to \psi^{-1}(t)=\gamma(t)$, and this convergence is uniform on compact subsets of $]0,+\infty[$. Thus, {\it every compact subarc of $\gamma$ can be approximated in $C^\infty$-topology by a suitable sequence of compact subarcs of the rays $R_n$.} 
\end{remark}   

\subsection{Heteroclinic and homoclinic arcs in $\sL$.}\label{hhintro}

We have shown that $\sL \cap \mathring{K}$ is a disjoint union of $P^{\circ q}$-invariant real-analytic open arcs contained in parabolic basins. Each such arc comes equipped with a natural parametrization $\gamma: \, ]0,+\infty[ \to \sL \cap \mathring{K}$ which satisfies $\gamma(d^q t)=P^{\circ q}(\gamma(t))$. For simplicity each such $\gamma$ will be called an {\bit $\sL$-arc}. The initial point $w^-=w^-(\gamma):= \lim_{t \to 0} \gamma(t)$ and the end point $w^+=w^+(\gamma):= \lim_{t \to +\infty} \gamma(t)$ are fixed under $P^{\circ q}$, with $w^+$ always parabolic of multiplier $1$ under $P^{\circ q}$. Note that $\gamma$ has a well-defined tangent direction at $w^+$, namely the attracting direction of $w^+$ corresponding to the basin that contains $\gamma$. More precisely, $(w^+-\gamma(t))/|w^+-\gamma(t)| \to v$ as $t \to +\infty$, where $v$ is the unit vector in the given attracting direction. Similarly, if $w^-$ is parabolic, then $\gamma$ has a well-defined tangent direction at $w^-$, namely a repelling direction of $w^-$. More precisely, $(\gamma(t)-w^-)/|\gamma(t)-w^-| \to v$ as $t \to 0$, where $v$ is the unit vector in the given repelling direction. We note however that {\it typically} $\gamma$ does not have a $C^1$ extension at its extremities, i.e., $\gamma'(t)/|\gamma'(t)|$ fails to have a limit as $t \to 0$ or $+\infty$ (see the Appendix and compare \figref{gooddisk} where the schematic picture of a homoclinic arc is drawn to suggest this behavior.) \vs  

It follows from the above remarks that if $w^-=w^+=w$, the Jordan curve $\ov{\gamma}=\gamma \cup \{ w \}$ has the well-defined angle $\pi p/(\nu q)$ at $w$, where $p$ is the period of $w$ and $\nu \geq 1$ is the degeneracy order of $w$ as a fixed point of $P^{\circ p}$ (see \S \ref{indres}). In particular, this angle is $\pi$ if $w$ is non-degenerate of period $q$. \vs 

An $\sL$-arc $\gamma$ is called {\bit heteroclinic} if $w^-(\gamma) \neq w^+(\gamma)$ and {\bit homoclinic} if $w^-(\gamma)=w^+(\gamma)$. Any pair of heteroclinic arcs $\gamma, \eta$ with common initial and end points must be contained in the same parabolic basin since by the maximum principle the topological disk bounded by $\ov{\gamma} \cup \ov{\eta}$ is contained in $\mathring{K}$. On the other hand, two homoclinic arcs that join the same $w$ to itself can be contained in different parabolic basins of $w$. \vs

The Basic Structure Lemma shows that every $\sL$-arc $\gamma$ is contained in a topological ``strip'' $V_\gamma$ in which the action of $P^{\circ q}$ is conformally conjugate to $z \mapsto d^q z$ in $\HH^r$, or equivalently, to the translation $z \mapsto z+q \log d$ in the Euclidean strip $\{ z: |\myim(z)| < \pi/2 \}$. We remark that if $\gamma, \eta$ are distinct $\sL$-arcs, then $V_\gamma \cap V_\eta = \es$. To see this, take sequences $u_n \in R_n$ converging to $u \in \gamma$ and $v_n \in R_n$ converging to $v \in \eta$. Then, after passing to subsequences, $(\Om_n,u_n) \ct (V_\gamma, u)$ and $(\Om_n,v_n) \to (V_\eta, v)$. If $\dist_{\Om_n}(u_n,v_n)$ has a bounded subsequence, then $V_\gamma = V_\eta$ by \thmref{two}, which is impossible by \lemref{inter}. Thus $\dist_{\Om_n}(u_n,v_n) \to +\infty$ and another application of \thmref{two} shows that $V_\gamma \cap V_\eta = \es$. If follows from this disjointness property of strips that {\it there are at most countably many $\sL$-arcs}. \vs       

\noindent
{\bf Convention.} It will be convenient to also regard the external ray $R$ as an $\sL$-arc with $w^-(R)=\zeta$ and $w^+(R)=\infty$. We consider $R$ as neither a heteroclinic nor a homoclinic arc. Observe that the existence of special parametrizations of $\sL$-arcs guaranteed by the Basic Structure Lemma is trivially true for $R$ as well, namely there is a parametrization $\gamma: \, ]0,+\infty[ \to R$ which sends $t=1$ to any designated point at Green's potential $s>0$ and satisfies $P^{\circ q}(\gamma(t))=\gamma(d^q t)$. To see this, simply take $\gamma(t):=\beta^{-1}(\e^{st+2\pi \ii \theta})$, where $\beta$ is the B\"{o}ttcher coordinate of $P$.    

\subsection{$\sL$-arcs in a given parabolic basin}\label{modo}

Let $B$ be a parabolic basin that is invariant under $P^{\circ q}$. Take a Fatou coordinate $\Phi: B \to \CC$ which satisfies $\Phi \circ P^{\circ q} = T \circ \Phi$, where $T:z \mapsto z+1$ is the unit translation. We normalize $\Phi$ so that it maps a maximal attracting petal $W \subset B$ biholomorphically onto the right half-plane $\HH^r$, sending some critical point $c \in \bd W \cap B$ of $P^{\circ q}$ to $\Phi(c)=0$. One can check that $\Phi$ maps the closure $\ov{W}$ homeomorphically onto $\ov{\HH^r}=\{ z: \myre(z) \geq 0 \}$. The quotient $W/P^{\circ q}$ is conformally isomorphic to the cylinder $\HH^r/T = \CC/T$. The critical points of $\Phi$ are the critical points of $P^{\circ q}$ and their preimages in $B$, so the critical values of $\Phi$ form finitely many backward orbits of $T$ in $\CC$. It is not hard to show that $\Phi$ is an infinite-degree ramified covering from $B$ onto $\CC$ and as such it has no finite asymptotic value. It follows from the monodromy theorem that any simply connected domain in $\CC$ which avoids the critical values of $\Phi$ can be lifted univalently under $\Phi$. 

\begin{lemma}\label{propan}
Let $\gamma$ be an $\sL$-arc in a parabolic basin $B=P^{\circ q}(B)$. \vs
\begin{enumerate}
\item[(i)]
The Fatou coordinate $\Phi:B \to \CC$ normalized as above maps $V_\gamma$ biholomorphically onto a $T$-invariant topological strip $\tilde{V}_\gamma$ which avoids the critical value $\Phi(c)=0$. The image $\tilde{\gamma}:=\Phi(\gamma)$ is a $T$-invariant arc in $\tilde{V}_\gamma$. \vs  

\item[(ii)]
The annulus $A_{\gamma}:= \tilde{V}_\gamma/T$ is essentially embedded in the cylinder $\CC/T$ and has the projection $\tilde{\gamma}/T$ as its core geodesic. Moreover, $\operatorname{mod}(A_{\gamma})=\pi/(q \log d)$. \vs

\item[(iii)]
If $\eta$ is another $\sL$-arc in $B$ distinct from $\gamma$, then $\tilde{V}_\gamma \cap \tilde{V}_\eta= \es$, hence $A_\gamma \cap A_\eta = \es$.
\end{enumerate}  
\end{lemma}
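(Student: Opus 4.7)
The plan is to identify $\Phi|_{V_\gamma}$, up to an additive constant, with the logarithm of the biholomorphism $\psi: V_\gamma \to \HH^r$ supplied by the Basic Structure Lemma. Set $\sigma(w) := \log(w)/(q\log d)$, so that $\sigma$ carries $\HH^r$ biholomorphically onto the horizontal strip $\Sigma := \{w : |\myim w| < \pi/(2q\log d)\}$ and conjugates $z \mapsto d^q z$ to the unit translation $T$. Then $F := \sigma \circ \psi: V_\gamma \to \Sigma$ is a biholomorphism satisfying $F \circ P^{\circ q} = T \circ F$. Near $w^+$ in the attracting direction of $B$, a sufficiently small attracting petal $P_0$ for $P^{\circ q}$ lies in both $V_\gamma$ and the maximal petal $W$; on $P_0$, both $\Phi$ and $F$ are Fatou coordinates for $P^{\circ q}$, so by the standard uniqueness of Fatou coordinates on an attracting petal, $\Phi|_{P_0} = F|_{P_0} + c_0$ for some constant $c_0 \in \CC$. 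Analytic continuation on the connected domain $V_\gamma$ extends this identity to $\Phi|_{V_\gamma} = F + c_0$. Hence $\Phi|_{V_\gamma}$ is a biholomorphism onto the $T$-invariant horizontal strip $\tilde V_\gamma := \Sigma + c_0$, and $\tilde\gamma = \Phi(\gamma) = \RR + c_0$ is its horizontal center line.

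For the avoidance of the critical value $\Phi(c)=0$, I would first observe that $V_\gamma$ is disjoint from every critical point of $\Phi$ in $B$: the conjugacy $\psi$ forces $V_\gamma$ to avoid every critical point of $P^{\circ q}$, and the forward invariance $P^{\circ q}(V_\gamma) \subset V_\gamma$ then excludes every iterated $P^{\circ q}$-preimage of such a critical point, which is exactly the set of critical points of $\Phi$ in $B$. The avoidance of the value $0$ itself would then follow by combining this observation with the sheet structure of $\Phi$: since the critical values of $\Phi$ form finitely many backward $T$-orbits and hence lie on finitely many horizontal lines in $\CC$, and since the maximality of $V_\gamma$ as an invariant strip (inherited from the \cara \ construction in the proof of the Basic Structure Lemma) forces the two boundary lines of $\tilde V_\gamma$ to coincide with critical-value lines, the strip $\tilde V_\gamma$ lies strictly between two consecutive critical-value lines and avoids every critical value of $\Phi$, in particular $0$.

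Part (ii) is essentially bookkeeping: a horizontal strip of width $\pi/(q\log d)$ modulo horizontal unit translation is an annulus of modulus $\pi/(q\log d)$; its core geodesic is the projection of the center line $\tilde\gamma$; and its essential embeddedness in $\CC/T$ is automatic since this core wraps once around the cylinder. For part (iii), I would descend to the grand-orbit quotient: the map $\Phi$ induces a biholomorphism $\bar\Phi: B/P^{\circ q} \to \CC/T$, since two points of $B$ with the same image modulo $T$ have iterates that eventually enter $W$ and there agree under the injective $\Phi|_W$, hence lie in the same grand orbit. For distinct $\sL$-arcs $\gamma, \eta$, the disjointness $V_\gamma \cap V_\eta = \es$ recorded in \S\ref{hhintro}, together with forward invariance (which forbids a point of $V_\gamma$ and a point of $V_\eta$ from having a common iterate), shows that $V_\gamma$ and $V_\eta$ project to disjoint subsets of $B/P^{\circ q}$. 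Hence $A_\gamma \cap A_\eta = \es$ in $\CC/T$, and $T$-invariance lifts this to $\tilde V_\gamma \cap \tilde V_\eta = \es$ in $\CC$.

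The main obstacle is the avoidance claim in part (i); the identification $\Phi|_{V_\gamma} = F + c_0$ and the disjointness in part (iii) follow more or less formally once the right framework is in place.
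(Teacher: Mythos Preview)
Your identification $\Phi|_{V_\gamma}=\sigma\circ\psi+c_0$ via uniqueness of Fatou coordinates is correct, and it even sharpens the paper's conclusion by exhibiting $\tilde V_\gamma$ as an honest horizontal strip. Your grand-orbit argument for (iii) is likewise sound and is a legitimate alternative to the paper's more direct route through $\tilde V_\gamma\cap\tilde V_\eta\cap\HH^r=\Phi(V_\gamma\cap V_\eta\cap W)$.

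The gap is the avoidance of $0$. Nothing in the Basic Structure Lemma asserts that $V_\gamma$ is \emph{maximal} among $P^{\circ q}$-invariant strips containing $\gamma$; $V_\gamma$ is just one particular \cara\ limit, and there is no reason its boundary (a Hausdorff limit of the Julia sets $J_n$) should project under $\Phi$ to critical-value lines. Indeed the width $\pi/(q\log d)$ of $\tilde V_\gamma$ is a fixed combinatorial quantity, while the spacing between critical-value lines depends on the basin; they need not match. So the step ``maximality forces the two boundary lines of $\tilde V_\gamma$ to coincide with critical-value lines'' is unfounded, and with it your conclusion that $\tilde V_\gamma$ avoids all critical values. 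The paper's argument is entirely different and uses no maximality: since $\tilde V_\gamma\cap\HH^r$ is connected (it is a half-strip, by your own identification), its preimage under the biholomorphism $\Phi|_{V_\gamma}$ is a connected set meeting the component $W$ of $\Phi^{-1}(\HH^r)$, hence equals $V_\gamma\cap W$. Thus any $z_0\in V_\gamma$ with $\Phi(z_0)=0\in\bd\HH^r$ must lie in $V_\gamma\cap\bd W\subset\ov W$, and the homeomorphism $\Phi:\ov W\to\ov{\HH^r}$ forces $z_0=c$, contradicting $c\notin V_\gamma$.
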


\begin{proof}
For (i), consider the maximal attracting petal $W \subset B$ having $c$ on its boundary. Then $\Phi$ maps $V_\gamma \cap W$ biholomorphically onto an open set in $\HH^r$ which is forward invariant under $T$. The relation $\Phi=T^{-n} \circ \Phi \circ P^{\circ nq}$ together with the fact that $P^{\circ q}: V_\gamma \to V_\gamma$ is a conformal isomorphism shows that $\Phi$ is a biholomorphism between $V_\gamma$ and a topological strip $\tilde{V}_\gamma$ in $\CC$ that is fully invariant under $T$. Moreover, $\tilde{V}_\gamma$ avoids the critical value $\Phi(c)=0$ since $V_\gamma$ avoids $c$ and $\Phi: \ov{W} \to \ov{\HH^r}$ is a homeomorphism (see \figref{quo}). \vs 

Part (ii) is an easy exercise since a combination of (i) and the Basic Structure Lemma shows that $A_\gamma \cong V_\gamma/P^{\circ q}$ is isomorphic to the quotient of $\HH^r$ by the action of the automorphism $z \mapsto d^q z$. Statement (iii) follows from the disjointness $V_\gamma \cap V_\eta = \es$ proved at the end of \S \ref{hhintro} since $\tilde{V}_\gamma \cap \tilde{V}_\eta \cap \HH^r = \Phi(V_\gamma \cap V_\eta \cap W) = \es$, so $\tilde{V}_\gamma \cap \tilde{V}_\eta = \es$ by $T$-invariance.    
\end{proof}

%%%%%%%%%%%%%%%%%%%%%%%%%%%%%%%%%%%%%%%%%%%
\begin{figure}[]
	\centering
	\begin{overpic}[width=\textwidth]{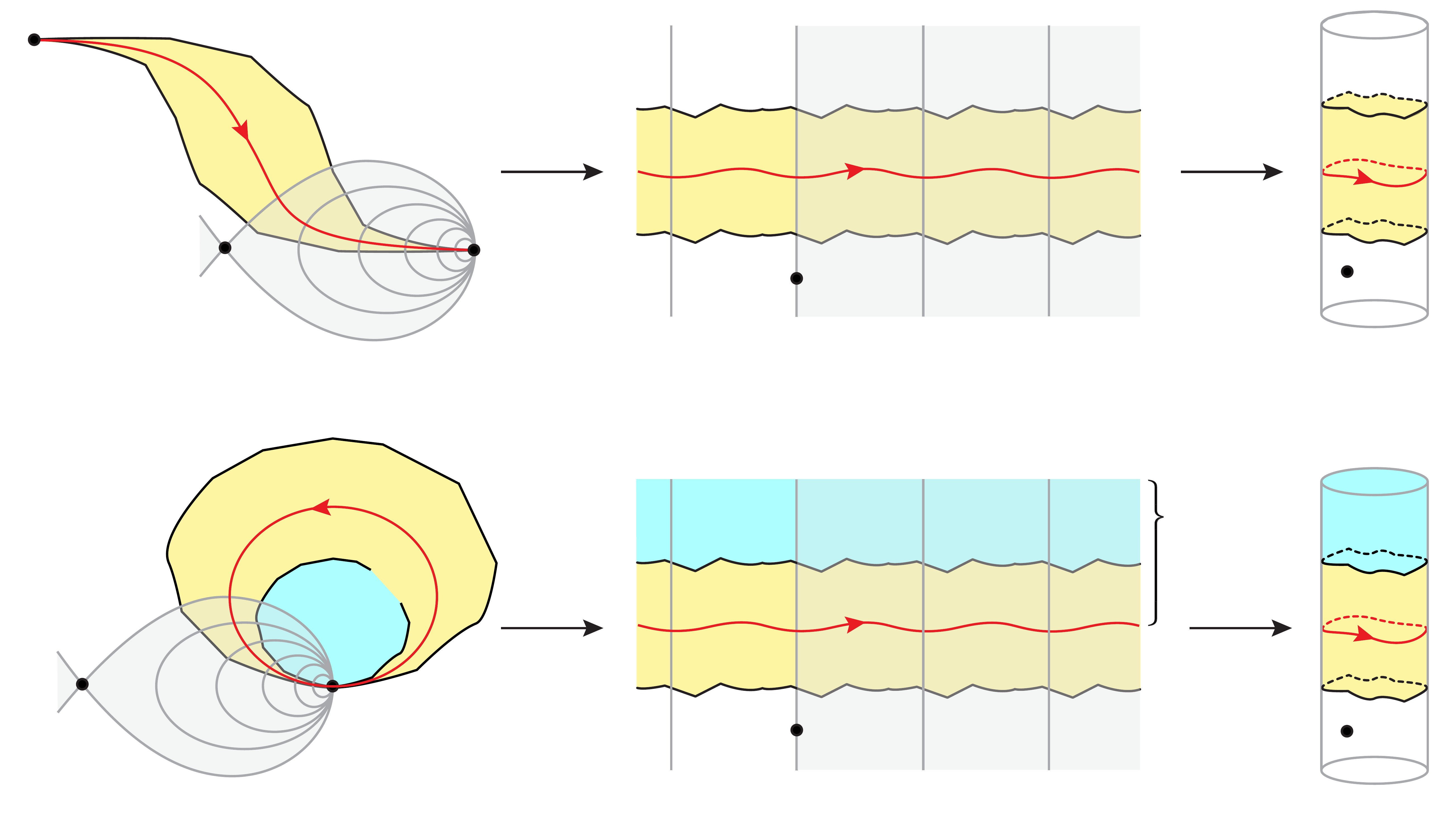}
\put (15,46.5) {\small{\color{red}{$\gamma$}}}
\put (23,20.7) {\small{\color{red}{$\gamma$}}}
\put (19,48) {\footnotesize{$V_\gamma$}}
\put (14,21) {\footnotesize{$V_\gamma$}}
\put (58,43.2) {\small{\color{red}{$\tilde{\gamma}$}}}
\put (58,12) {\small{\color{red}{$\tilde{\gamma}$}}}
\put (33.5,40) {\footnotesize{$w^+$}}
\put (0,55) {\footnotesize{$w^-$}}
\put (23.3,8.5) {\footnotesize{$w^+=w^-$}}
\put (52.7,38) {\footnotesize{$0$}}
\put (52.7,6.8) {\footnotesize{$0$}}
\put (93.3,38) {\footnotesize{$0^*$}}
\put (93.3,6.2) {\footnotesize{$0^*$}}
\put (37,46.5) {\footnotesize{$\Phi$}}
\put (37,15.2) {\footnotesize{$\Phi$}}
\put (14.8,38.5) {\footnotesize{$c$}}
\put (5.1,8.5) {\footnotesize{$c$}}
\put (19.5,32.5) {\footnotesize{\color{gray}$W$}}
\put (10,2.5) {\footnotesize{\color{gray}$W$}}
\put (55.3,53.3) {\footnotesize{\color{gray}$\HH^r$}}
\put (55.3,22.3) {\footnotesize{\color{gray}$\HH^r$}}
\put (94,47.2) {\footnotesize{$A_\gamma$}}
\put (94,16) {\footnotesize{$A_\gamma$}}
\put (49,47.2) {\footnotesize{$\tilde{V}_\gamma$}}
\put (49,16) {\footnotesize{$\tilde{V}_\gamma$}}
\put (80.6,21.5) {\footnotesize{$\tilde{U}^+_\gamma$}}
\put (25.5,17.3) {\footnotesize{$U^+_\gamma$}}
\put (93,32.5) {\footnotesize{\color{gray}$\CC/T$}}
\put (93,1.2) {\footnotesize{\color{gray}$\CC/T$}}
	\end{overpic}
\caption{\footnotesize Top: A heteroclinic arc $\gamma$ with its invariant strip $V_\gamma$ and their images $\tilde{\gamma}$ and $\tilde{V}_\gamma$ under the normalized Fatou coordinate $\Phi$. Bottom: A positively oriented homoclinic arc $\gamma$ with its invariant disk $U^+_\gamma=\Delta_\gamma$ and their images $\tilde{\gamma}$ and $\tilde{U}^+_\gamma$ under $\Phi$. Here $W$ is the maximal attracting petal whose $\Phi$-image is the right half-plane $\HH^r$.}
\label{quo}
\end{figure}
%%%%%%%%%%%%%%%%%%%%%%%%%%%%%%%%%%%%%%%%%%%

Take an $\sL$-arc $\gamma$ in $B$ and its $T$-invariant image $\tilde{\gamma}=\Phi(\gamma)$ as above. Denote the upper and lower components of $\CC \sm \tilde{\gamma}$ by $\tilde{U}^+_\gamma$ and $\tilde{U}^-_\gamma$, respectively. Let $U^\pm_\gamma$ be the unique component of $\Phi^{-1}(\tilde{U}^\pm_\gamma)$ having $\gamma$ on its boundary. Notice that each of the two components of $B \sm \gamma$ contains one of $U^\pm_\gamma$. Every point of $\bd U^\pm_\gamma$ either belongs to the basin boundary $\bd B$ at which $\Phi$ is undefined, or it belongs to an iterated $P^{\circ q}$-preimage of $\gamma$ in $B$ which maps under $\Phi$ to a point of $\tilde{\gamma}$. It is easy to check that $U^\pm_\gamma$ is simply connected, the map $P^{\circ q}: U^\pm_\gamma \to U^\pm_\gamma$ is proper, and the following diagram commutes: 
\begin{equation}\label{cd3}
\begin{tikzcd}[column sep=small]
U^\pm_\gamma \arrow[d,swap,"P^{\circ q}"] \arrow[rr,"\Phi"] & & \tilde{U}^\pm_\gamma \arrow[d,"T"] \\
U^\pm_\gamma \arrow[rr,"\Phi"] & & \tilde{U}^\pm_\gamma 
\end{tikzcd} 
\end{equation}

Recall that each $\sL$-arc has a natural dynamical orientation. We call a homoclinic arc $\gamma$ positively or negatively oriented according as the dynamical orientation of the Jordan curve $\ov{\gamma}$ is counterclockwise or clockwise. The Jordan domain bounded by $\ov{\gamma}$ will be denoted by $\Delta_\gamma$. 

\begin{lemma}[Characterization of homoclinic arcs]\label{5eq}
The following conditions on an $\sL$-arc $\gamma \subset B$ are equivalent: \vs
\begin{enumerate}
\item[(i)]
$\gamma$ is a positively oriented homoclinic arc. \vs
\item[(ii)]
$U^+_\gamma$ is one of the two components of $B \sm \gamma$. \vs 
\item[(iii)]
$P^{\circ q}: U^+_\gamma \to U^+_\gamma$ is a conformal isomorphism. \vs
\item[(iv)]
$\Phi: U^+_\gamma \to \tilde{U}^+_\gamma$ is a conformal isomorphism. \vs
\item[(v)]
$U^+_\gamma$ contains no critical point of $P^{\circ q}$. \vs
\end{enumerate}  
Under these conditions $U^+_\gamma=\Delta_\gamma$. A similar statement is true if we change $\gamma$ in (i) to negatively oriented and $U^+_\gamma, \tilde{U}^+_\gamma$ everywhere to $U^-_\gamma, \tilde{U}^-_\gamma$.
\end{lemma}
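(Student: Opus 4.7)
The plan is to prove the equivalences by completing the cycle (i) $\Rightarrow$ (ii) $\Rightarrow$ (iii) $\Leftrightarrow$ (iv) $\Leftrightarrow$ (v) $\Rightarrow$ (i), simultaneously verifying $U^+_\gamma = \Delta_\gamma$ whenever these hold. The main tools are Riemann-Hurwitz applied to the proper self-map $P^{\circ q}|_{U^+_\gamma}$, the structure of the Fatou coordinate $\Phi$ recalled in \lemref{propan}, a prime-end analysis of $\tilde U^+_\gamma$, and the argument principle applied to $P^{\circ q}|_{\Delta_\gamma}$.

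For the analytic trio (iii) $\Leftrightarrow$ (iv) $\Leftrightarrow$ (v): since $U^+_\gamma$ is simply connected and $P^{\circ q}|_{U^+_\gamma}$ is proper of degree $k \geq 1$, Riemann-Hurwitz gives $k-1$ equal to the total critical weight of $P^{\circ q}$ in $U^+_\gamma$, so (iii) $\Leftrightarrow$ (v). Recalling that the critical points of $\Phi$ in $B$ are precisely the iterated $P^{\circ q}$-preimages of critical points of $P^{\circ q}$ in $B$, and using the forward $P^{\circ q}$-invariance of $U^+_\gamma$ from \eqref{cd3}, condition (v) is equivalent to $\Phi|_{U^+_\gamma}$ having no critical points; since $U^+_\gamma$ is a component of $\Phi^{-1}(\tilde U^+_\gamma)$ and $\tilde U^+_\gamma$ is simply connected, this in turn is equivalent to $\Phi|_{U^+_\gamma}$ being a biholomorphism onto $\tilde U^+_\gamma$, i.e.\ to (iv). The implication (iv) $\Rightarrow$ (iii) is immediate from $P^{\circ q} = \Phi^{-1} \circ T \circ \Phi$ on $U^+_\gamma$.

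To close the cycle at (ii) $\Rightarrow$ (iii): assuming $U^+_\gamma$ coincides with a component $B^+$ of $B \sm \gamma$, one has $B^+ \cap \Phi^{-1}(\tilde\gamma) = \es$ (since $U^+_\gamma \subset B \sm \Phi^{-1}(\tilde\gamma)$), so $\Phi(B^+) \subset \tilde U^+_\gamma$. Moreover $\Phi|_{B^+}: B^+ \to \tilde U^+_\gamma$ is proper, because preimages of compact subsets of $\tilde U^+_\gamma$ stay away from $\bd B^+$: along $\gamma$ the map $\Phi$ tends to $\tilde\gamma$, and as one approaches $\bd B$ the map $\Phi$ tends to infinity. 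Writing $m = \deg(\Phi|_{B^+})$ and $k = \deg(P^{\circ q}|_{B^+})$ and comparing degrees in $\Phi \circ P^{\circ q} = T \circ \Phi$ (with $\deg T = 1$) yields $mk = m$, hence $k = 1$.

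For (iv) $\Rightarrow$ (i), the key observation is that $\tilde\gamma$ has bounded imaginary part: from $\Phi(\gamma(d^q t)) = \Phi(\gamma(t)) + 1$ one writes $\Phi(\gamma(t)) = \log t/(q \log d) + g(\log t/\log d)$ with $g$ a $q$-periodic function, so $\myim \tilde\gamma$ is bounded while $\myre \tilde\gamma$ sweeps $\pm \infty$. Consequently $\tilde U^+_\gamma$ contains a half-plane $\{ \myim z > M \}$ on which $T$ acts parabolically, so $T$ is a parabolic automorphism of $\tilde U^+_\gamma$ with a single fixed prime end at infinity, to which both ends of $\tilde\gamma$ converge. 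Pulling back under the biholomorphism $\Phi^{-1}$, this prime end becomes a single boundary point of $U^+_\gamma$ equal to both $w^+(\gamma)$ and $w^-(\gamma)$; thus $\gamma$ is homoclinic, and since $\Phi$ is orientation-preserving while $\tilde U^+_\gamma$ lies to the left of $\tilde \gamma$ (traversed in the direction of increasing $t$), $\ov\gamma$ is traversed counterclockwise, i.e.\ positively oriented. Finally, (i) $\Rightarrow$ (ii) together with $U^+_\gamma = \Delta_\gamma$: if $\gamma$ is positively oriented homoclinic then $\Delta_\gamma \subset B$ is a Jordan domain and a component of $B \sm \gamma$; $P^{\circ q}$ is continuous on $\ov{\Delta_\gamma}$ and sends $\bd \Delta_\gamma = \ov\gamma$ to itself with boundary degree $1$ (as $\gamma(t) \mapsto \gamma(d^q t)$ is an orientation-preserving homeomorphism), so by the argument principle $P^{\circ q} : \Delta_\gamma \to \Delta_\gamma$ is a conformal isomorphism. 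Applying the (v) $\Rightarrow$ (iv) argument with $\Delta_\gamma$ in place of $U^+_\gamma$ shows that $\Phi|_{\Delta_\gamma}$ is a biholomorphism onto $\tilde U^+_\gamma$, so $\Delta_\gamma$ is the component of $\Phi^{-1}(\tilde U^+_\gamma)$ with $\gamma$ on its boundary, giving $U^+_\gamma = \Delta_\gamma$ and (ii). The most delicate step is the prime-end identification in (iv) $\Rightarrow$ (i); the symmetric statement for negatively oriented $\gamma$ with $U^\pm$ swapped follows from the analogous argument.
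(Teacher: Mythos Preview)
Your cyclic scheme and the treatment of the analytic trio (iii)$\Leftrightarrow$(iv)$\Leftrightarrow$(v) match the paper's approach. Two points deserve comment.

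\textbf{Minor issue in (ii)$\Rightarrow$(iii).} Your justification that $\Phi|_{B^+}$ is proper because ``as one approaches $\bd B$ the map $\Phi$ tends to infinity'' is not correct: $\Phi:B\to\CC$ is an infinite-degree branched covering and is \emph{not} proper, so there are sequences in $B$ converging to $\bd B$ on which $\Phi$ stays bounded. The properness of $\Phi|_{U^+_\gamma}:U^+_\gamma\to\tilde U^+_\gamma$ is immediate, however, from the definition of $U^+_\gamma$ as a connected component of $\Phi^{-1}(\tilde U^+_\gamma)$ together with the fact that $\Phi$ is a branched covering. Your degree comparison $mk=m$ then goes through.

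\textbf{Genuine gap in (iv)$\Rightarrow$(i).} The assertion that ``this prime end becomes a single boundary point of $U^+_\gamma$'' is precisely what needs proof. The biholomorphism $\Phi^{-1}:\tilde U^+_\gamma\to U^+_\gamma$ induces a homeomorphism of Carath\'eodory compactifications, so the single prime end of $\tilde U^+_\gamma$ at $\infty$ corresponds to a single \emph{prime end} $P$ of $U^+_\gamma$; but the impression of $P$ in $\bd U^+_\gamma$ can a priori be a nondegenerate continuum containing both $w^+(\gamma)$ and $w^-(\gamma)$. Nothing you have written rules this out. (Carath\'eodory's extension theorem goes the wrong way here: it would give a continuous boundary extension of a map \emph{into} a Jordan domain, not \emph{out of} one.) The paper avoids this difficulty entirely by a harmonic-measure argument: since $\Phi:U^+_\gamma\to\tilde U^+_\gamma$ is conformal and $\Phi^{-1}(\tilde\gamma)\cap\bd U^+_\gamma=\gamma$, the arc $\gamma$ carries full harmonic measure in $\bd U^+_\gamma$; if $\gamma$ were heteroclinic one could find two distinct accessible points of $\bd U^+_\gamma\sm\gamma$, forcing the harmonic measure of $\gamma$ below $1$. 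Your prime-end route can in fact be salvaged (e.g.\ combine Lindel\"of applied to a forward $P^{\circ q}$-orbit in $U^+_\gamma$, which gives non-tangential limit $w^+$ at the fixed prime end, with the Poisson-integral jump formula for the two one-sided boundary limits $w^\pm$ along $\gamma$), but this extra analytic input is missing from your argument as written.
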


\begin{proof} 
For simplicity we will drop the subscript $\gamma$ from our notation. \vs

(i) $\Longrightarrow$ (ii): The Jordan domain $\Delta$ bounded by $\ov{\gamma}$ is a component of $B \sm \gamma$. We claim that $U^+=\Delta$. In fact, $U^+ \subset \Delta$ since $\gamma$ is positively oriented. If this inclusion were strict, we would have $\bd U^+ \cap \Delta \neq \es$ and any point in this intersection would eventually map to $\gamma$ under the iterations of $P^{\circ q}$. This is impossible since $P^{\circ q}(\ov{\gamma})=\ov{\gamma}$ together with the maximum principle implies $P^{\circ q}(\Delta)=\Delta$. \vs

(ii) $\Longrightarrow$ (iii): The restriction $P^{\circ q}: U^+ \to U^+$ is proper of some degree $k \geq 1$. The assumption on $U^+$ implies $P^{-q}(\gamma) \cap \bd U^+ = \gamma$. Since $P^{\circ q}$ acts homeomorphically on $\gamma$, every point of $\gamma$ has a unique $P^{\circ q}$-preimage on $\bd U^+$, so $k=1$. \vs

(iii) $\Longrightarrow$ (v): Trivial. \vs

(v) $\Longrightarrow$ (iv): By the hypothesis the restriction $\Phi: U^+ \to \tilde{U}^+$ is a ramified covering without critical points, so it is a regular covering map. As $\tilde{U}^+$ is simply connected, this covering map must be a conformal isomorphism. \vs

(iv) $\Longrightarrow$ (i): By \eqref{cd3}, $P^{\circ q}: U^+ \to U^+$ is a conformal isomorphism. Since $P^{\circ q}$ acts homeomorphically on $\gamma$, it follows that $P^{-q}(\gamma) \cap \bd U^+ = \gamma$. This, in turn, implies $\Phi^{-1}(\tilde{\gamma}) \cap \bd U^+ = \bigcup_{n \geq 0} P^{-nq}(\gamma) \cap \bd U^+ = \gamma$. Now for each $z_0 \in U^+$ the arc $\tilde{\gamma}$ has full harmonic measure in $\bd \tilde{U}^+$ as seen from $\Phi(z_0) \in \tilde{U}^+$. Since $\Phi: U^+ \to \tilde{U}^+$ is a conformal isomorphism and $\Phi^{-1}(\tilde{\gamma}) \cap \bd U^+ = \gamma$, it follows that $\gamma$ has full harmonic measure in $\bd U^+$ as seen from $z_0$. By elementary conformal mapping theory, this implies $\gamma$ being homoclinic. In fact, if $\gamma$ were heteroclinic, its end points on $\bd U^+$ would be distinct so we could find distinct accessible points $\alpha, \beta \in \bd U^+ \sm \gamma$. Under any conformal isomorphism $(U^+,z_0) \to (\DD,0)$ these points would correspond to distinct points $\alpha',\beta' \in \bd \DD$ and the image of $\gamma$ would be contained in one of the two components of $\bd \DD \sm \{ \alpha', \beta' \}$, forcing the harmonic measure of $\gamma$ to be $<1$. 
\end{proof}

Given two homoclinic arcs $\gamma, \eta$ based at the same parabolic point, we say that $\gamma$ is {\bit inside} $\eta$, or $\eta$ is {\bit outside} $\gamma$, if $\Delta_\gamma \subset \Delta_\eta$. Any maximal linearly ordered set of homoclinics with respect to this order will be called an {\bit earring}. Notice that all homoclinic arcs in the same earring must have the same (positive or negative) dynamical orientation. 

\begin{theorem}\label{hetfin}
Suppose $B=P^{\circ q}(B)$ is a parabolic basin. \vs 
\begin{enumerate}
\item[(i)]
$B$ contains at most finitely many heteroclinic $\sL$-arcs. \vs
\item[(ii)]
$B$ contains at most two earrings of homoclinic $\sL$-arcs, and each earring has an outermost element.
\end{enumerate}   
\end{theorem}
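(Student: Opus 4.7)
My plan is to combine the annulus structure of Lemma~\ref{propan} with the characterization of homoclinic arcs in Lemma~\ref{5eq}, then carry out a critical-point count for the proper holomorphic map $P^{\circ q}\colon B\to B$. A preparatory step is to sharpen Remark~\ref{2sides}: for a heteroclinic arc $\gamma\subset B$, applying Lemma~\ref{5eq} in both orientations (the equivalence (v)$\Leftrightarrow$(i) and its mirror) shows that each component $U^{\pm}_\gamma$ of $B\sm\gamma$ must contain at least one critical point of $P^{\circ q}$; otherwise $\gamma$ would be homoclinic of the matching orientation. Since $P^{\circ q}\colon B\to B$ is proper of degree $k\le d^q$, there are at most $k-1\le d^q-1$ critical points of $P^{\circ q}$ in $B$, and correspondingly at most $k-1$ critical values of $\Phi$ modulo $T$ in $\CC/T$.

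For (i), the annuli $A_{\gamma_1},\ldots,A_{\gamma_N}$ attached to distinct heteroclinic arcs are pairwise disjoint essentially embedded annuli in $\CC/T$, each of fixed modulus $\pi/(q\log d)$, hence totally ordered by vertical position $A_{\gamma_1}<\cdots<A_{\gamma_N}$, splitting $\CC/T$ into $N+1$ complementary regions. I would assign to each $\gamma_j$ a distinct critical value of $\Phi$ modulo $T$ by a peeling process: pick a critical value in the top region $\tilde U^+_{\gamma_N}/T$ above $A_{\gamma_N}$; then one in the ``ring'' $\tilde U^+_{\gamma_{N-1}}/T\sm \overline{\tilde U^+_{\gamma_N}/T}$ between $A_{\gamma_{N-1}}$ and $A_{\gamma_N}$; and continue downwards, finally picking one in $\tilde U^-_{\gamma_1}/T$ below $A_{\gamma_1}$. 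The existence of a critical value in each intermediate ring should be justified by Riemann--Hurwitz applied to the proper branched covering $\Phi\colon U^+_{\gamma_j}\to \tilde U^+_{\gamma_j}$, using the strengthened Remark~\ref{2sides} to locate the critical points of each layer. This produces at least $N+1$ distinct critical values modulo $T$, and hence $N+1\le d^q-1$, so $N$ is finite.

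For (ii), every homoclinic arc has a definite orientation, and by Lemma~\ref{5eq} its Jordan interior $\Delta_\gamma$ is critical-point-free while all critical points of $P^{\circ q}|_B$ lie on the outside. In a single earring the corresponding annuli in $\CC/T$ form a nested chain of equal-modulus annuli in the same half-cylinder determined by the common orientation; the outermost element exists because the chain is bounded on the critical-point side by any ``barrier'' critical value beyond which no further annulus of the same orientation can fit — a finite-critical-value modulus argument similar to (i). For the bound of at most two earrings, I would show that each orientation admits at most one: two same-orientation earrings would have outermost arcs whose Jordan interiors $\Delta_{\gamma_1},\Delta_{\gamma_2}$ are disjoint (else nested, hence in the same earring), producing two disjoint essentially embedded annuli in the common orientation's half-cylinder, each requiring all critical values of $P^{\circ q}|_B$ to lie on its outside — a configuration that is geometrically impossible given the position constraints in $\CC/T$.

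The main obstacle is the intermediate-gap critical-value claim in (i), together with the uniqueness-per-orientation claim in (ii). Both reduce to a careful analysis of how the branched covering $\Phi$ distributes its critical values among the complementary regions of the nested annular configuration in the Fatou cylinder.
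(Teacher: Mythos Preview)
Your approach to part (i) differs substantively from the paper's and contains a genuine gap at exactly the place you identify: the claim that each intermediate ring between consecutive core curves in $\CC/T$ must contain a critical value of $\Phi$ modulo $T$. From Lemma~\ref{5eq} you only know that for each heteroclinic $\gamma_j$ both half-planes $\tilde U^{\pm}_{\gamma_j}$ contain a critical value; this says nothing about the ring $\tilde U^{+}_{\gamma_j}\cap \tilde U^{-}_{\gamma_{j+1}}$, since the critical value in $\tilde U^{-}_{\gamma_{j+1}}$ could just as well lie in $\tilde U^{-}_{\gamma_j}$. The Riemann--Hurwitz argument you sketch would have to track which component of $\Phi^{-1}(\tilde U^{\pm}_{\gamma_{j+1}})$ the relevant critical point actually sits in, and this bookkeeping is not supplied. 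Even if it can be made to work, it is considerably more laborious than what is needed.

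The paper's argument for (i) sidesteps this entirely by running a contradiction rather than a count. Assuming infinitely many heteroclinics, one relabels so that the $\tilde U^{+}_{\gamma_j}$ are nested. Since the critical values of $\Phi$ lie in finitely many $T$-orbits, pigeonhole produces a \emph{fixed} pair $a,b$ with $a\in\tilde U^{+}_{\gamma_j}$ and $b\in\tilde U^{-}_{\gamma_j}$ for \emph{every} $j$. Then every annulus $A_{\gamma_j}$ separates the images of $a$ and $b$ in $\CC/T$, and Gr\"otzsch's inequality forces the modulus of the region between $A_{\gamma_1}$ and $A_{\gamma_j}$ to tend to infinity, contradicting the bound on moduli of annuli separating two fixed points. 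No intermediate-ring claim is needed.

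For part (ii) your outline is essentially equivalent to the paper's, just phrased in terms of orientation rather than ends of $\CC/T$: if $\gamma,\eta$ are outermost homoclinics of distinct earrings with the same orientation, then $\Phi(\Delta_\gamma)$ and $\Phi(\Delta_\eta)$ are both upper (or both lower) $T$-invariant half-planes, hence nested, yet disjoint since $\Phi(\Delta_\gamma)\cap\Phi(\Delta_\eta)\cap\HH^r=\Phi(\Delta_\gamma\cap\Delta_\eta\cap W)=\es$. Your ``barrier'' argument for the existence of an outermost element is vague, however; the paper again uses the Gr\"otzsch trick, this time with the fixed separated pair consisting of the normalized critical value $0=\Phi(c)$ (which avoids every $\tilde U^{+}_{\gamma_j}$) and any chosen point $a\in\tilde U^{+}_{\gamma_1}\sm\tilde V_{\gamma_1}$.
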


Later we will sharpen this result by replacing ``at most finitely many'' in (i) and ``at most two'' in (ii) with ``at most one'' (see Corollaries \ref{hetuni} and \ref{homuni}).

\begin{proof}
(i) Suppose there are infinitely many heteroclinics $\gamma_0, \gamma_1, \gamma_2, \ldots$ in $B$. After relabeling we may assume $\tilde{U}^+_{\gamma_{j+1}} \subset \tilde{U}^+_{\gamma_j}$ for all $j \geq 0$ (the case where $\tilde{U}^+_{\gamma_{j+1}} \supset \tilde{U}^+_{\gamma_j}$ for all $j$ is similar). By \lemref{propan} the annuli $A_{\gamma_j}=\tilde{V}_{\gamma_j} /T$ are mutually disjoint and essentially embedded in $\CC/T$, all having the same modulus $\pi/(q \log d)$. It follows from the Gr\"{o}tzsch inequality (\cite[Corollary B.6]{M}) that the annulus $X_j$ bounded by the core geodesics of $A_{\gamma_1}$ and $A_{\gamma_j}$ has modulus $\geq (j-1)\pi/(q \log d)$, so $\lim_{j \to \infty} \operatorname{mod}(X_j)= +\infty$. On the other hand, \lemref{5eq} shows that both topological half-planes $\tilde{U}^\pm_{\gamma_j}$ contain critical values of $\Phi$. Since the critical values of $\Phi$ lie in finitely many backward $T$-orbits, there should be distinct critical values $a,b$ such that $a \in \tilde{U}^+_{\gamma_j}$ and $b \in \tilde{U}^-_{\gamma_j}$ {\it for every} $j \geq 0$. As $\tilde{V}_{\gamma_j} \subset \tilde{U}^+_{\gamma_{j-1}} \cap \tilde{U}^-_{\gamma_{j+1}}$, the points $a$ and $b$ belong to different components of $\CC \sm \tilde{V}_{\gamma_j}$ for all $j \geq 1$. It follows that $A_{\gamma_j}$ and therefore $X_j$ separates the images of $a$ and $b$ in the quotient cylinder $\CC/T$ for all $j \geq 1$. This is a contradiction since there is a bound on the moduli of essentially embedded annuli in $\CC/T$ that separate two given points.  \vs   

(ii) First we show that every earring in $B$ has an outermost homoclinic. Suppose to the contrary that there is an infinite sequence $\{ \gamma_j \}$ of distinct homoclinics in $B$ such that $\gamma_j$ is inside $\gamma_{j+1}$ for all $j$. Without loss of generality take every $\gamma_j$ to be positively oriented. By \lemref{5eq}, $U^+_{\gamma_j}=\Delta_{\gamma_j}$, hence $U^+_{\gamma_j} \subset U^+_{\gamma_{j+1}}$ for all $j$. It follows that $\tilde{U}^+_{\gamma_j} \subset \tilde{U}^+_{\gamma_{j+1}}$ and in particular $\tilde{V}_{\gamma_j} \subset \tilde{U}^+_{\gamma_{j+1}} \cap \tilde{U}^-_{\gamma_{j-1}}$ for all $j$. Moreover, every $\tilde{U}^+_{\gamma_j}$ avoids the critical value $\Phi(c)=0$ since $U^+_{\gamma_j}$ avoids $c$ and $\Phi: \ov{W} \to \ov{\HH^r}$ is a homeomorphism. Fixing some point $a \in \tilde{U}^+_{\gamma_1} \sm \tilde{V}_{\gamma_1}$, we see that $0$ and $a$ belong to different components of $\CC \sm \tilde{V}_{\gamma_j}$ and therefore every $A_{\gamma_j}$ separates the images of $0$ and $a$ in the quotient $\CC/T$. This leads to a contradiction by applying Gr\"{o}tzsch inequality as in (i). \vs

We have shown that each earring of homoclinics in $B$ has an outermost element $\gamma$. The image $\Phi(\Delta_\gamma)$ is one of the topological half-planes $\tilde{U}^\pm_\gamma$ depending on the orientation of $\gamma$, so the quotient $\Phi(\Delta_\gamma)/T$ is a punctured neighborhood of one end of the cylinder $\CC/T$. If $\eta$ is the outermost element of another earring in $B$, then $\Phi(\Delta_\eta)$ is disjoint from $\Phi(\Delta_\gamma)$ since 
$$
\Phi(\Delta_\gamma) \cap \Phi(\Delta_\eta) \cap \HH^r = \Phi (\Delta_\gamma \cap \Delta_\eta \cap W) = \es. 
$$
It follows that $\Phi(\Delta_\eta)/T$ is another punctured neighborhood of an end of $\CC/T$ disjoint from $\Phi(\Delta_\gamma)/T$. As this cylinder has only two ends, we conclude that there are at most two earrings of homoclinics in $B$.  
\end{proof} 

One trivial consequence of the above proof: {\it There are at most finitely many homoclinic arcs between a given pair of homoclinics in an earring}. In fact, if $\gamma, \xi, \eta$ are distinct homoclinics with $\Delta_\gamma \subset \Delta_\xi \subset \Delta_\eta$, then the annulus $A_\xi$ is essentially embedded in the annulus bounded by the core geodesics of $A_{\gamma}$ and $A_{\eta}$, and there can be at most finitely many such annuli since they are pairwise disjoint and have the same modulus.

\begin{remark}
We believe it is possible to prove using the theory of parabolic implosions (specifically, the existence and properties of Lavaurs maps) that every earring consists of either one or infinitely many homoclinic arcs. We shall not attempt to present the necessary setup and technical details of a possible argument, which would be incompatible with the approach and intended scope of this paper.       
\end{remark}        

\subsection{Arcwise-connectivity in $\sL$}\label{wise}

For any subset $E \subset \sL$ let $E^*$ denote the set of points in $E$ that do not lie on any homoclinic arc: 
$$
E^* := E \sm (\text{union of all homoclinic arcs}).
$$

\begin{theorem}\label{pc}
For every connected set $E \subset \sL$, both $E$ and $E^*$ are arcwise-connected. The arc in $E^*$ joining a given pair of points in $E \cap J$ is unique up to homotopy in $K$ rel $E \cap J$.   
\end{theorem}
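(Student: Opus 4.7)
I would organize the proof around the multigraph structure of $\sL$: the finite vertex set $V := (\sL \cap J) \cup \{\infty\}$ (finite by part (i) of the Basic Structure Lemma) and the at most countable collection of edges consisting of the closed $\sL$-arcs (loops at a single vertex for homoclinics) together with $\ov R$. By \lemref{inter}, each edge $\ov\gamma$ meets the rest of $\sL$ only at its endpoint vertices, and near an interior point $w$ of $\gamma$, a sufficiently small topological disk around $w$ in $\Chat$ intersects $\sL$ precisely in a short transverse sub-arc of $\gamma$.

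My first step would be a local separation lemma: if $E$ is a connected subset of $\sL$ and an interior point $w$ of an open edge $\gamma$ is a limit point of $E \cap \gamma$ from both sides of $w$, then $w \in E$. The argument is by contradiction. Using the biholomorphism $\psi : V_\gamma \to \HH^r$ from the Basic Structure Lemma, I pull back the vertical line $\{\myre(z) = \myre(\psi(w))\}$ to a real-analytic arc $\sigma$ in $V_\gamma$ crossing $\gamma$ transversally at $w$ and meeting $\sL$ only at $w$ (by \lemref{inter}). I then extend $\sigma$ through $\Chat \sm \sL$ to a Jordan curve in $\Chat$ passing only through $w$ and $\infty$, by navigating in $\Chat \sm \sL$ toward a neighborhood of $\infty$ (where $\sL$ is just $\ov R$, and the complement is locally a slit disk). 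This Jordan curve separates the two sides of $\gamma$ at $w$, partitioning $E$ topologically with points of $E \cap \gamma$ near $w$ on both sides---contradicting connectivity of $E$ since $w \notin E$. As a consequence, each connected component of $E \cap \gamma$ is an interval whose accumulation points in $\ov\gamma \sm E$ can only be vertices of $V$.

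The arcwise-connectivity of $E$ then follows by a combinatorial induction. For $u, v \in E$, connectivity of $E$ combined with the structural lemma forces the existence of a finite sequence of edges $e_1, \dots, e_k$ and vertices $w_0, \dots, w_k \in V \cap E$ such that $w_0$ is joined to $u$ by an interval of $E$ in its edge, each consecutive pair $w_{i-1}, w_i$ is joined by a sub-arc of $e_i$ lying in $E$, and $w_k$ is joined to $v$ analogously; concatenation yields an arc in $E$ from $u$ to $v$. The finite count of heteroclinic edges per basin and the at most two earrings per basin (\lemref{hetfin}) ensure the combinatorial walk terminates, while any countable family of homoclinics at a single vertex is itself arcwise-connected through the base vertex in Hawaiian-earring fashion. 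For $E^*$, I delete the homoclinic loop edges; this preserves vertex connectivity (loops never connect distinct vertices), so the same construction yields an arc in $E^*$ avoiding homoclinic interiors.

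For the homotopy statement, observe that any arc in $E^*$ joining two points $u, v \in E \cap J$ lies entirely in $K$: an injective arc through $\infty$ would need to enter and exit via $\ov R$, the unique edge at $\infty$, which is impossible. Given two such arcs $\alpha, \beta$, the loop $\alpha \cdot \beta^{-1}$ decomposes after cancellation of shared portions into a finite disjoint union of simple Jordan curves $C_1, \dots, C_m$ in the heteroclinic subgraph of $\sL^*$, each contained in $K$. Since $\Om = \Chat \sm K$ is a connected open set containing $\infty$ and disjoint from each $C_j$, $\Om$ lies entirely in the unbounded component of $\Chat \sm C_j$, forcing the bounded component into $K$; thus each $C_j$ bounds a topological disk in $K$ and is null-homotopic there. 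Summing, $\alpha$ is homotopic to $\beta$ in $K$ rel $E \cap J$. The main obstacle is the extension step in the separation lemma: constructing a Jordan curve through $\infty$ that avoids $\sL$ globally while preserving the local separation at $w$, especially when $\gamma$ is buried deep inside a parabolic basin surrounded by earring decorations in neighboring basins.
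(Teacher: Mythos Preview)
Your local separation lemma is false as stated, and the Jordan curve construction cannot be completed in general. Take $\gamma$ to be a homoclinic arc based at a vertex $v$, pick any interior point $w \in \gamma$, and set $E := \sL \sm \{w\}$. Both halves of $\gamma \sm \{w\}$ accumulate on $v$, and $\sL \sm \gamma$ is connected (every other $\sL$-arc meets it at a vertex), so $E$ is connected; yet $w \notin E$ while $E \cap \gamma = \gamma \sm \{w\}$ approaches $w$ from both sides. The obstruction to your Jordan curve is topological, not merely technical: one end of the transverse arc $\sigma$ lies in the Jordan domain $\Delta_\gamma \subset \mathring K$ bounded by $\ov\gamma$, and any continuation from there toward $\infty$ must cross $\bd\Delta_\gamma = \ov\gamma \subset \sL$ at a point other than $w$. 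The same failure occurs for a heteroclinic whenever a second heteroclinic shares both its endpoints, a configuration not yet excluded at this stage of the paper.

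The paper bypasses this by never trying to separate at a single interior point. Instead it removes whole earrings at once: for the outermost homoclinic $\gamma$ of each earring, $\sL \sm (\Delta_\gamma \cup V_\gamma)$ is still compact and connected, and after removing the finitely many such sets one is left with $\sL^*$, which is then a finite connected graph with vertex set $(\sL \cap J)\cup\{\infty\}$ and the heteroclinics and $R$ as edges. For a connected $E\subset\sL$, the passage to $E^*$ preserves connectivity because if $E$ meets a homoclinic $\gamma$ without being contained in it, openness of $\gamma$ in $\sL$ forces the base vertex into $E$, so $E\cap\ov\gamma$ is an arc through that vertex; now $E^*$ is a connected subset of a finite graph, hence arcwise-connected. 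Your homotopy argument is essentially the paper's, though the paper extracts the sharper conclusion that each bounded component of $\CC\sm(\xi\cup\xi')$ lies in a single parabolic basin and is bounded by exactly two heteroclinics with common endpoints.
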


We will eventually see that $\sL^*$ is homeomorphic to the interval $[0,+\infty]$ so the arc joining any pair in $E^*$ is in fact unique (see \S \ref{oshet}).    

\begin{proof}
By \thmref{hetfin} $\sL$ contains at most finitely many heteroclinics and finitely many earrings of homoclinics. The outermost homoclinic $\gamma$ in any earring has the property that $\Delta_\gamma \cup V_\gamma$ contains all homoclinics in that earring and $\sL \sm (\Delta_\gamma \cup V_\gamma)$ is compact and connected. Removing all the finitely many such $\Delta_\gamma \cup V_\gamma$ from $\sL$, we conclude that $\sL^*$ is a compact connected subset of $\Chat$. It follows that $\sL^*$ is a finite connected graph which has the points of $\sL \cap J$ and $\infty$ as its vertices and the heteroclinic arcs and $R$ as its edges. \vs

Suppose now that $E \subset \sL$ is connected. For any homoclinic $\gamma$ that meets $E$, either $E \subset \gamma$ (in which case $E$ is trivially arcwise-connected and $E^*=\es$), or $E \cap \ov{\gamma}$ is an arc (open, closed, half-open) containing $w^-(\gamma)=w^+(\gamma)$. It is now easy to see that $E^*$ is connected, and that arcwise-connectivity of $E$ is equivalent to that of $E^*$ (provided that $E^* \neq \es$). The latter is trivial since $E^*$ is a connected subset of the finite graph $\sL^*$.% 
\vs 

Finally, suppose $\xi \neq \xi'$ are two arcs in $E^*$ that join a given pair in $E \cap J$. Evidently each of these arcs is the closure of a finite union of heteroclinics. Let $U$ be a bounded component of $\CC \sm (\xi \cup \xi')$. Then $U$ is a Jordan domain with $\bd U \subset K$. By the maximum principle $U \subset \mathring{K}$, from which it follows that $U$ is contained in a parabolic basin which also contains all heteroclinics in $\bd U$. This implies that $U$ is bounded by exactly two heteroclinics with the same initial and end points $E \cap J$, which are clearly homotopic in $K$ rel $E \cap J$. Repeating this for the finitely many bounded components of $\CC \sm (\xi \cup \xi')$, we conclude that $\xi,\xi'$ must be homotopic in $K$ rel $E \cap J$.  
\end{proof}

\section{Proof of \thmref{A}}

\subsection{The intrinsic potential order}\label{ssord}

Every $u \in \sL$ is the limit of a sequence $u_n = R_n(s_n)$. In this case the full sequence of potentials $s_n=G_n(u_n)$ must have a well-defined limit. In fact, $s_n \to s>0$ if and only if $u=R(s) \in \sL \sm K$, and $s_n \to 0$ if and only if $u \in \sL \cap K$. \vs

Our goal is to show that the union of $\sL$-arcs inherits a natural {\it linear} order from the potentials of all possible sequences of approximating points on the rays $R_n$. We begin with the following   

\begin{lemma}\label{unipot}
Suppose $u_n=R_n(s_n) \to u \in \sL \sm (J \cup \{ \infty \})$. Take a sequence $\{ s'_n \}$ of potentials and set $u'_n=R_n(s'_n)$. Then, the following conditions on a sequence $n_i \to \infty$ are equivalent: \vs
\begin{enumerate}
\item[(i)]
$u'_{n_i} \to u$ as $i \to \infty$. \vs 

\item[(ii)]
$s_{n_i}/s'_{n_i} \to 1$ as $i \to \infty$. \vs
\end{enumerate}
In particular, $u'_n \to u$ if and only if $s_n/s'_n \to 1$. 
\end{lemma}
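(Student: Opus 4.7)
The plan is to split into two cases depending on whether $u \in \sL \sm K$ or $u \in \sL \cap \mathring{K}$; the hypothesis $u \notin J \cup \{\infty\}$ rules out the only other possibilities. In the first case $u = R(s)$ for some $s > 0$, and joint continuity of the Green's function forces $s_n = G_n(u_n) \to G(u) = s$. By \corref{unif} the rays $R_n$ converge to $R$ uniformly on $[s_0,+\infty[$ for any $s_0 > 0$, so $u'_{n_i} \to u$ is equivalent to $s'_{n_i} \to s$, which in turn is equivalent to $s_{n_i}/s'_{n_i} \to 1$.

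In the second case, $u$ lies on a unique $\sL$-arc $\gamma$ contained in a $P^{\circ q}$-invariant strip $V = V_\gamma$, and $s_n \to 0$. The key tool is the normalized conformal conjugacy $\psi_n : V_n \to S_n$ of \lemref{conj}, constructed so that $\psi_n(R_n(s)) = s/s_n$; in particular $\psi_n(u_n) = 1$ and $\psi_n(u'_n) = s'_n/s_n$ for every $n$. Before invoking $\psi_n$ I would first verify that the \cara \ limit $(V,u)$ of $(\Om_n,u_n)$ is independent of the choice of subsequence: any other subsequential limit $V'$ would contain an $\sL$-arc $\gamma' \ni u$ by \lemref{inter}, and since distinct $\sL$-arcs have disjoint invariant strips (as established in \S \ref{hhintro}), $\gamma' = \gamma$ and hence $V' = V$. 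Consequently the full sequence $\psi_n$ converges locally uniformly in $V$ to a conformal isomorphism $\psi: V \to \HH^r$ with $\psi(u) = 1$.

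With this in place the equivalence of (i) and (ii) is immediate. Assuming (i), any compact neighborhood $E \subset V$ of $u$ eventually contains $u'_{n_i}$, and uniform convergence $\psi_{n_i} \to \psi$ on $E$ yields
$$
s'_{n_i}/s_{n_i} = \psi_{n_i}(u'_{n_i}) \to \psi(u) = 1,
$$
hence (ii). Conversely, assuming (ii), the values $t_i := s'_{n_i}/s_{n_i}$ lie in a compact subset of $\HH^r$ (they tend to $1$), and the induced convergence $\psi_{n_i}^{-1} \to \psi^{-1}$ on compact subsets of $\HH^r$ gives $u'_{n_i} = \psi_{n_i}^{-1}(t_i) \to \psi^{-1}(1) = u$. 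The ``in particular'' statement follows by applying this equivalence to an arbitrary subsequence of the integers and using the standard fact that a sequence in a metric space converges to a prescribed limit iff every subsequence has a further subsequence doing so.

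The main potential obstacle is the subsequence-independence of the strip $V$, so that one may pass to the full sequence $\psi_n$ rather than work with an a priori undetermined subsequential limit; once that is settled via the disjointness of strips and \lemref{inter}, everything reduces to routine bookkeeping with $\psi_n$ and $\psi_n^{-1}$.
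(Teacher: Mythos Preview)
Your overall strategy is sound and close in spirit to the paper's, but there is a genuine gap in the step ``$\gamma' = \gamma$ and hence $V' = V$.'' Knowing that two subsequential \cara\ limits $V, V'$ of $(\Om_n, u_n)$ contain the same $\sL$-arc $\gamma$ does not by itself force $V = V'$: the disjointness result in \S\ref{hhintro} is the implication ``distinct arcs $\Rightarrow$ disjoint strips,'' not its converse, and nothing proved up to this point rules out two different invariant strips about the same core arc. Since both your (i)$\Rightarrow$(ii) and (ii)$\Rightarrow$(i) arguments rely on full-sequence convergence $\psi_n \to \psi$, this gap is load-bearing. (Indeed, the paper only ever extracts $\psi$ along a subsequence in \lemref{conj}, and the upgrade of \remref{topol} to full-sequence convergence in \S\ref{ssord} uses \lemref{unipot} itself, so invoking it here would be circular.)

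The paper sidesteps the issue by never claiming uniqueness of $V$. It first records the identity $\dist_{\Om_n}(u_n,u'_n) = |\log(s_n/s'_n)|$, which makes (ii) equivalent to $\dist_{\Om_n}(u_n,u'_n) \to 0$ along the given subsequence, and then argues each direction by contradiction: if one condition held and the other failed, pass to a sub-subsequence along which $(\Om_n, u_n) \ct (V, u)$ for \emph{some} $V$ and invoke \lemref{ct-easy} to get the contradiction. Your approach can be repaired the same way --- replace ``the full sequence $\psi_n$ converges'' by ``along a further subsequence $\psi_n$ converges'' and run your computations with $\psi_{n_i}(u'_{n_i}) = s'_{n_i}/s_{n_i}$ --- at which point it becomes a concrete unpacking of the paper's use of \lemref{ct-easy}.
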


\begin{proof}
The result follows from the joint continuity of the Green's function if $u \in R$, so let us assume $u \in \sL \cap \mathring{K}$. We will make use of the computation 
$$
\dist_{\Om_n}(u_n,u'_n)=\dist_{\Chat \sm \ov{\DD}}(\e^{s_n+2\pi \ii \theta}, \e^{s'_n+2\pi \ii \theta}) = \left| \log \left( \frac{s_n}{s'_n} \right) \right| 
$$
which shows that the conditions $s_n/s'_n \to 1$ and $\dist_{\Om_n}(u_n,u'_n) \to 0$ along any given subsequence are equivalent. \vs 

First suppose $u'_{n_i} \to u$ but $\dist_{\Om_{n_i}}(u_{n_i},u'_{n_i}) \not \to 0$. Take a subsequence of $\{ n_i \}$ along which $\dist_{\Om_{n_i}}(u_{n_i},u'_{n_i})$ remains bounded away from $0$. As in the proof of the Basic Structure Lemma, there is a sub-subsequence of $\{ n_i \}$ along which $(\Om_{n_i},u_{n_i}) \ct (V,u)$. By \lemref{ct-easy}, $\dist_{\Om_{n_i}}(u_{n_i},u'_{n_i}) \to 0$ along this sub-subsequence of $\{ n_i \}$, which is a contradiction. \vs

Now suppose $\dist_{\Om_{n_i}}(u_{n_i},u'_{n_i}) \to 0$ but $u'_{n_i} \not \to u$. Take a subsequence of $\{ n_i \}$ along which $u'_{n_i}$ remains bounded away from $u$. Take a sub-subsequence of $\{ n_i \}$ along which $(\Om_{n_i},u_{n_i}) \ct (V,u)$. By \lemref{ct-easy}, $u'_{n_i} \to u$ along this sub-subsequence of $\{ n_i \}$, which is a contradiction.  
\end{proof}

Let us call a sequence $\{ s_n \}$ of potentials {\bit admissible} if $\lim_{n \to \infty} R_n(s_n)$ exists and belongs to $\sL \sm (J \cup \{ \infty \})$. Two admissible sequences $\{ s_n \}, \{ s'_n \}$ are {\bit equivalent} if $s_n/s'_n \to 1$. The equivalence class of $\{ s_n \}$ is denoted by $\langle s_n \rangle$. We denote by $\sS$ the space of all equivalence classes of admissible sequences. By \lemref{unipot} there is a well-defined bijection $\Pi: \sS \to \sL \sm (J \cup \{ \infty \})$ given by $\Pi(\langle s_n \rangle):=\lim_{n \to \infty} R_n(s_n)$. We topologize $\sS$ so that $\Pi$ is continuous, in which case $\sS$ is homeomorphic to a disjoint union of at most countably many open intervals. \vs

For each $u=\Pi(\langle s_n \rangle)$ take the $\sL$-arc $\gamma$ through $u$ and the special parametrization $\gamma: \, ]0,+\infty[ \to \sL$ which satisfies $\gamma(1)=u$ and $\gamma(d^q t) = P^{\circ q}( \gamma(t))$, given by the Basic Structure Lemma if $\gamma \neq R$ or the B\"{o}ttcher coordinate if $\gamma=R$. Then $\gamma(t)= \Pi(\langle ts_n \rangle)$ for every $t>0$ (see \remref{topol}). Thus, we can think of $\Pi^{-1}(\gamma)$ as the interval $\{ \langle ts_n \rangle : t>0 \}$ in $\sS$ homeomorphic to $]0,+\infty[$. 

\begin{lemma}\label{limex}
For any pair $\langle s_n \rangle, \langle s'_n \rangle$ in $\sS$ the limit $c:=\lim_{n \to \infty} s_n/s'_n \in [0,+\infty]$ exists. More precisely, if $u=\Pi(\langle s_n \rangle)$ and $u'=\Pi(\langle s'_n \rangle)$ belong to the same $\sL$-arc, then $0<c<+\infty$, while if $u,u'$ belong to different $\sL$-arcs, then $c=0$ or $+\infty$.   
\end{lemma}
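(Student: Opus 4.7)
The cases where $u$ or $u'$ lies on the ray $R$ are immediate: joint continuity of the Green's function pins the corresponding potentials down to positive real limits, so the ratio $s_n/s'_n$ automatically converges in $[0,+\infty]$. I therefore focus on the case $u,u'\in\sL\cap\mathring K$, where $s_n, s'_n \to 0$.

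For the same-arc case where $u,u'\in\gamma$, I use the parametrization $\gamma(t):=\Pi(\langle t s_n\rangle)$ from \remref{topol}, normalized so that $\gamma(1)=u$. Since $u'\in\gamma$, there is a unique $t^*>0$ with $u'=\gamma(t^*)$, hence $\langle s'_n\rangle=\langle t^* s_n\rangle$ by definition of $\Pi$, which forces $s_n/s'_n\to 1/t^*\in(0,+\infty)$.

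Assume now $u\in\gamma$, $u'\in\gamma'$ with $\gamma\neq\gamma'$. To rule out any cluster point $c\in(0,+\infty)$ of $s_n/s'_n$, pass to a subsequence $n_i$ along which $s_{n_i}/s'_{n_i}\to c$ and set $\tilde s_n:=c s'_n$; via the parametrization $\gamma'(t)=\Pi(\langle t s'_n\rangle)$ one has $R_n(\tilde s_n)\to \gamma'(c)$ along the full sequence. Since $s_{n_i}/\tilde s_{n_i}\to 1$, \lemref{unipot} applied to the admissible sequence $\tilde s_n$ yields $R_{n_i}(s_{n_i})\to\gamma'(c)$. Combined with $R_n(s_n)\to u$, this forces $u=\gamma'(c)\in\gamma'$ and so $\gamma=\gamma'$, contradicting the assumption.

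The main obstacle is to rule out that both $0$ and $+\infty$ occur simultaneously as cluster points of $s_n/s'_n$. Geometrically, the two behaviours would correspond to the ray segment joining $u_n$ and $u'_n$ exiting the invariant strip $V_\gamma$ toward $w^+(\gamma)$ along one subsequence and toward $w^-(\gamma)$ along the other, since in the normalized coordinate of \lemref{conj} the ratio $s'_n/s_n$ is exactly $\psi_n(u'_n)$. To derive a contradiction I plan to exploit the logarithmic interpolants $\sigma_n^{(r)}:=s_n^{1-r}(s'_n)^r$ for $r\in(0,1)$: along either bad subsequence one has
\[
\dist_{\Omega_n}(R_n(\sigma_n^{(r)}), R_n(\sigma_n^{(r')}))=|r-r'|\cdot|\log(s'_n/s_n)|\to +\infty
\]
for $r\neq r'$, so by \thmref{two}(ii) the \cara\ limits of $(\Omega_n, R_n(\sigma_n^{(r)}))$, along a common sub-subsequence obtained by diagonalization over a countable dense subset of $(0,1)$, produce pairwise disjoint strips, all also disjoint from $V_\gamma$ and $V_{\gamma'}$. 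The resulting pairwise distinct $\sL$-arcs $\gamma^{(r)}$, together with the finiteness of heteroclinics and earrings in each parabolic basin (\thmref{hetfin}) and the fact that the two Hausdorff limits of the ray segments from $u_n$ to $u'_n$ would trace incompatible paths in the finite graph $\sL^*$ (\thmref{pc}) through opposite endpoints of $\gamma$, yields the sought contradiction and completes the proof.
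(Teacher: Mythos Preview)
Your treatment of the same-arc case and your argument ruling out finite positive subsequential limits of $s_n/s'_n$ in the different-arc case are exactly the paper's proof: parametrize one arc by $t\mapsto\Pi(\langle t s'_n\rangle)$, observe that a cluster value $c\in(0,+\infty)$ would force $u=\gamma'(c)\in\gamma'$ via \lemref{unipot}, contradiction. The paper stops there; it reads the negation of ``$\lim s_n/s'_n\in\{0,+\infty\}$'' as guaranteeing a finite positive cluster point and does not separately discuss the possibility that $0$ and $+\infty$ are both cluster points with nothing in between. You correctly flag this as the residual case.

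Your proposed resolution of that case, however, is not a proof. Three concrete problems:
\begin{enumerate}
\item To invoke \thmref{two}(ii) (or even \lemref{compactness}) for the interpolants $\sigma_n^{(r)}$ you need $\dist(R_n(\sigma_n^{(r)}),\bd\Om_n)\asymp 1$ along the chosen subsequence. Nothing you have said prevents $R_n(\sigma_n^{(r)})$ from accumulating on $J$; if it does, no Carath\'eodory limit disk is produced and no $\sL$-arc $\gamma^{(r)}$ results.
\item Even granting pairwise disjoint limit strips for a countable dense set of $r$, you only get countably many distinct $\sL$-arcs. That is not a contradiction: \thmref{hetfin} bounds the number of heteroclinics and earrings per basin, but a single earring may contain infinitely many homoclinics, so $\sL$ can well have countably many arcs.
\item The closing clause about ``incompatible paths in the finite graph $\sL^*$ through opposite endpoints of $\gamma$'' is a heuristic, not an argument. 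You would need to pin down the two subsequential Hausdorff limits of $R_n([s_n,s'_n])$ and extract a genuine combinatorial obstruction; as written, nothing is proved.
\end{enumerate}
In short: the first three paragraphs match the paper's argument; the last paragraph identifies a real subtlety but does not close it.
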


\begin{proof}
We have already seen that if $u,u'$ belong to the same $\sL$-arc, then $u=\Pi(\langle cs'_n \rangle)$ for some $0<c<+\infty$. This gives $\langle cs'_n \rangle =  \langle s_n \rangle$, which shows $\lim s_n/s'_n =c$, as required. We claim that if $u,u'$ do not belong to the same $\sL$-arc, then one of the relations $\lim s_n/s'_n=0$ or $\lim s_n/s'_n=+\infty$ must hold. If not, take a subsequence $s_{n_i}/s'_{n_i}$ which tends to some limit $c \in \, ]0,+\infty[$ as $i \to \infty$. By \lemref{unipot}, $R_{n_i}(cs'_{n_i}) \to u$. On the other hand, we know that the whole sequence $\{ R_n(cs'_n) \}$ tends to a point on the same $\sL$-arc as $u'$. This contradicts our assumption that $u,u'$ are not on the same $\sL$-arc.
\end{proof}

\lemref{limex} shows that we can define a linear order on $\sS$ by declaring 
$$
\langle s_n \rangle < \langle s'_n \rangle \qquad \text{if and only if} \qquad \lim_{n \to \infty} \frac{s_n}{s'_n} <1.
$$       
Pushing forward this order by the bijection $\Pi$, we obtain a corresponding linear order on $\sL \sm (J \cup \{ \infty \})$ called the {\bit intrinsic potential order}, that is, we define $u<u'$ if and only if $\Pi^{-1}(u)<\Pi^{-1}(u')$. Observe that the restriction of this order to each $\sL$-arc is compatible with the dynamical orientation: if $u,u' \in \gamma$ with $u<u'$, then in going from $w^-(\gamma)$ to $w^+(\gamma)$ we visit $u$ before $u'$. \vs

We write $u \leq u'$ in the usual sense that $u<u'$ or $u=u'$. \vs
 
Evidently if $\gamma, \eta$ are distinct $\sL$-arcs such that $u<u'$ for some $u \in \gamma, u'\in \eta$, then $u<u'$ for every $u \in \gamma, u'\in \eta$. In this case we write $\gamma<\eta$. This defines a linear order on the collection of $\sL$-arcs. Explicitly, $\gamma<\eta$ if and only if for every $\Pi(\langle s_n \rangle) \in \gamma$ and $\Pi(\langle s'_n \rangle) \in \eta$ we have $\lim_{n \to \infty} s_n/s'_n=0$. Note that in this order the ray $R$ is the largest $\sL$-arc.

\begin{remark}
There is a completion $\hat{\sS}$ homeomorphic to $[0,+\infty]$ such that $\Pi$ extends to a continuous surjection $\hat{\Pi}: \hat{\sS} \to \sL$. Moreover, for each $w \in \sL \cap J$ the cardinality of $\hat{\Pi}^{-1}(w)$ is one more than the number of homoclinics based at $w$ (possibly infinite). The proof is based on \thmref{B} and will not be given.     
\end{remark}

The following lemma will be used frequently in the next sections:

\begin{lemma}\label{EE}
Consider positive sequences $\{ s_n \}, \{ s'_n \}$ such that $s_n<s'_n$ for all $n$. Let $E \subset \sL$ be any subsequential Hausdorff limit of the ray segments $R_n([s_n,s'_n]):=\{ R_n(s):  s_n \leq s \leq s'_n \}$, and let $z \in E \sm (J \cup \{ \infty \})$. \vs 
\begin{enumerate}
\item[(i)]
If $\langle s_n \rangle \in \sS$ and $u=\Pi(\langle s_n \rangle)$, then $u \leq z$. \vs

\item[(ii)]
If $\langle s'_n \rangle \in \sS$ and $u'=\Pi(\langle s'_n \rangle)$, then $z \leq u'$. 
\end{enumerate} 
\end{lemma}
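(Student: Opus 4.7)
The plan is to extract along the Hausdorff-converging subsequence an explicit sequence of approximating points of $z$ on the ray segments, compare their potentials with $s_n$ and $s'_n$ along that subsequence, and then upgrade these one-sided subsequential inequalities to a full-sequence conclusion by invoking \lemref{limex} and \lemref{unipot}.

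More concretely, let $n_i \to \infty$ be the subsequence along which $R_{n_i}([s_{n_i}, s'_{n_i}]) \to E$ in the Hausdorff metric. Since $z \in E$, there exist $t_{n_i} \in [s_{n_i}, s'_{n_i}]$ with $z_{n_i} := R_{n_i}(t_{n_i}) \to z$. Because $z \in \sL \sm (J \cup \{ \infty \})$, the bijection $\Pi : \sS \to \sL \sm (J \cup \{ \infty \})$ provides a unique admissible class $\langle t'_n \rangle$ with $\Pi(\langle t'_n \rangle) = z$, so the full sequence $R_n(t'_n) \to z$. Applying \lemref{unipot} to the two sequences $\{t'_n\}$ and $\{t_{n_i}\}$ both giving $z$ as a limit, we deduce
$$
\lim_{i \to \infty} \frac{t_{n_i}}{t'_{n_i}} = 1.
$$

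For part (i), the admissibility of $\{s_n\}$ and $\{t'_n\}$ together with \lemref{limex} ensures that $c := \lim_{n \to \infty} s_n/t'_n$ exists in $[0,+\infty]$. On the other hand, $s_{n_i} \leq t_{n_i}$ gives
$$
\frac{s_{n_i}}{t'_{n_i}} \leq \frac{t_{n_i}}{t'_{n_i}} \longrightarrow 1,
$$
so $c \leq 1$, which by definition of the intrinsic potential order means $u \leq z$. Part (ii) is symmetric: the limit $c' := \lim_{n \to \infty} s'_n/t'_n$ exists in $[0,+\infty]$ by \lemref{limex}, and $s'_{n_i} \geq t_{n_i}$ yields $s'_{n_i}/t'_{n_i} \geq t_{n_i}/t'_{n_i} \to 1$, forcing $c' \geq 1$ and hence $z \leq u'$.

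There is no serious obstacle here; the only subtlety is that the Hausdorff convergence is only subsequential and produces a sequence $\{t_{n_i}\}$ that is a priori not part of an admissible sequence. This is precisely why \lemref{unipot} (which handles subsequential convergence) and \lemref{limex} (which upgrades subsequential bounds to full-sequence limits) are used in tandem, so that the subsequence comparison $s_{n_i} \leq t_{n_i} \leq s'_{n_i}$ can be transferred to the unambiguous full-sequence ratios defining the intrinsic order.
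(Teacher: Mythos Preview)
Your proof is correct and follows essentially the same argument as the paper: extract approximating potentials $t_{n_i}$ along the Hausdorff-converging subsequence, compare them with the admissible sequence $\Pi^{-1}(z)$ via \lemref{unipot}, and then use \lemref{limex} to pass from the subsequential inequality to the full-sequence limit defining the intrinsic order. The only cosmetic difference is that the paper writes the comparison as a product $s_{n_i}/t_{n_i} = (s_{n_i}/r_i)(r_i/t_{n_i})$ rather than as your direct inequality, but the content is identical.
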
 

\begin{proof}
We prove (i), the proof of (ii) being similar. Take an increasing sequence $\{ n_i \}$ of integers such that $R_{n_i}([s_{n_i},s'_{n_i}]) \to E$ in the Hausdorff metric and choose $r_i \in [s_{n_i}, s'_{n_i}]$ such that $R_{n_i}(r_i) \to z$. Let $\langle t_n \rangle := \Pi^{-1}(z)$, so $R_n(t_n) \to z$. By \lemref{unipot}, $r_i/t_{n_i} \to 1$. It follows from   
$$
\frac{s_{n_i}}{t_{n_i}} = \frac{s_{n_i}}{r_i} \cdot \frac{r_i}{t_{n_i}}
$$
that $\limsup_{i \to \infty} s_{n_i}/t_{n_i} \leq 1$ and therefore $\lim_{n \to \infty} s_n/t_n \leq 1$ by \lemref{limex}. This implies $u \leq z$, as required. 
\end{proof}

\subsection{The order and structure of heteroclinic arcs}\label{oshet} 

\begin{lemma}\label{third}
Let $\gamma$ be a heteroclinic arc and $\eta$ be any $\sL$-arc. \vs
\begin{enumerate}
\item[(i)]
Suppose $w^+(\gamma)=w^+(\eta)=w$. If $\eta<\gamma$, there is a heteroclinic arc $\xi$ with $w^+(\xi)=w$ such that $\eta<\xi<\gamma$. \vs
\item[(ii)]
Suppose $w^-(\gamma)=w^-(\eta)=w$. If $\gamma<\eta$, there is a heteroclinic arc $\xi$ with $w^-(\xi)=w$ such that $\gamma<\xi<\eta$.
\end{enumerate}   
\end{lemma}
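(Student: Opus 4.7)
The plan is to prove (i); statement (ii) follows by the symmetric argument using inverse branches of $P^{\circ q}$, which interchanges $w^+$ with $w^-$ and reverses the potential order.

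First I would construct an intermediate point in $\sL$ strictly between $\eta$ and $\gamma$ in the intrinsic potential order. Pick representatives $\Pi^{-1}(u)=\langle s_n\rangle$ of $u \in \eta$ and $\Pi^{-1}(u')=\langle s'_n\rangle$ of $u' \in \gamma$. Since $\eta<\gamma$, we have $s_n/s'_n \to 0$, and WLOG $s_n<s'_n$. Set $r_n:=\sqrt{s_n s'_n}$; after passing to a subsequence, $R_n([s_n,s'_n])$ converges in Hausdorff metric to a compact connected $E \subset \sL$ and $R_n(r_n) \to z \in E$. The ratios $r_n/s_n \to +\infty$ and $r_n/s'_n \to 0$, combined with Lemma~\ref{unipot} and the description $t \mapsto \Pi(\langle ts_n\rangle)$ of the natural parametrization of $\eta$ (and analogously for $\gamma$), exclude $z \in \eta \cup \gamma$. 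After arranging $z \notin J$ (by varying the midpoint along the family $r_n(\tau):=s_n^{1-\tau}(s'_n)^{\tau}$ for $\tau \in (0,1)$ and using that $\sL \cap J$ is finite by the Basic Structure Lemma), Lemma~\ref{EE} yields $u<z<u'$ in the intrinsic potential order, placing $z$ on an $\sL$-arc $\xi$ with $\eta<\xi<\gamma$.

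Next I would verify $w^+(\xi)=w$. Iterate, replacing $(u,u')$ by $(u_k,u'_k):=(P^{\circ qk}(u),P^{\circ qk}(u'))$ on $\eta,\gamma$, which converge spatially to $w$ as $k\to\infty$. By equivariance the midpoint $d^{qk}r_n$ produces $z_k=P^{\circ qk}(z)\in \xi$, sandwiched as $u_k<z_k<u'_k$. The $P^{\circ q}$-dynamics on $\xi$ gives $z_k \to w^+(\xi)$, while a shrinking Hausdorff-limit argument applied to $R_n([d^{qk}s_n,d^{qk}s'_n])$ (as in the proof of the Basic Structure Lemma) forces $z_k \to w$ spatially. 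Hence $w^+(\xi)=w$ and $\xi$ lies in a parabolic basin of $w$.

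If $\xi$ is heteroclinic we are done. Otherwise $\xi$ is homoclinic based at $w$, and I iterate the construction on the pair $(\eta,\xi)$ to obtain an arc $\xi'$ with $\eta<\xi'<\xi$ and $w^+(\xi')=w$, continuing as needed. The main obstacle is ruling out an infinite regress of homoclinics in this recursion. This is controlled by the modulus bound of Lemma~\ref{propan}(ii): the pairwise disjoint strips of successive arcs in a common basin of $w$ give pairwise disjoint essentially embedded annuli of common modulus $\pi/(q\log d)$ in the quotient cylinder $\CC/T$, and Gr\"otzsch's inequality bounds the number of such annuli that can separate two fixed transverse curves (e.g.\ the projections of the strips of $\eta$ and $\gamma$). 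Combined with the finiteness of earrings per basin (Theorem~\ref{hetfin}), the recursion terminates at a heteroclinic $\xi$ with $\eta<\xi<\gamma$ and $w^+(\xi)=w$.
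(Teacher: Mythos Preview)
Your construction of an intermediate $\sL$-arc $\xi$ with $\eta<\xi<\gamma$ via the geometric midpoint $r_n=\sqrt{s_ns'_n}$ is fine, but Step~2 does not go through. The inequality $u_k<z_k<u'_k$ is in the \emph{intrinsic potential order}, which is not a spatial order: it does not squeeze $z_k$ toward $w$ just because $u_k,u'_k\to w$ in $\CC$. Concretely, the Hausdorff limit of $R_n([d^{qk}s_n,d^{qk}s'_n])$ along your subsequence is $P^{\circ qk}(E)$, which contains for instance the entire segment of $\gamma$ from $w^-(\gamma)$ to $u'_k$, and hence does \emph{not} shrink to $\{w\}$ as $k\to\infty$. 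There is no mechanism in the Basic Structure Lemma that forces $z_k\to w$: that lemma's shrinking step applies to points whose distance to $J_n$ tends to $0$, which is not the situation for $z_k\in\xi\subset\mathring{K}$. So the identification $w^+(\xi)=w$ is unproved, and without it the recursion of Step~3 cannot start. Even granting $w^+(\xi)=w$, your termination argument is shaky: successive $\xi$'s produced by the recursion need not lie in a common basin of $w$, so the associated annuli need not sit in a single quotient cylinder and the Gr\"otzsch bound does not apply directly.

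The paper avoids all of this by exploiting the arcwise-connectivity result (\thmref{pc}) directly. Starting from the same Hausdorff limit $E$ of $R_n([s_n,s'_n])$, one observes that $E$ already contains \emph{two distinct} Julia points, namely $w$ (as the end of the $\eta$-segment in $E$) and $w^-(\gamma)$ (as the start of the $\gamma$-segment in $E$). By \thmref{pc} there is an arc $\xi\subset E^*$ joining $w$ to $w^-(\gamma)$; \lemref{EE} forces this arc to avoid both $\eta$ and $\gamma$, and the homotopy uniqueness clause of \thmref{pc} then pins $\xi$ down as a single heteroclinic in the same basin as $\gamma$, with $w^+(\xi)=w$ automatically. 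No recursion is needed and $\eta<\xi<\gamma$ falls out of \lemref{EE}. The key point you are missing is that the graph structure of $\sL^*$ established in \thmref{pc} lets you read off the desired heteroclinic in one stroke, rather than groping for it via midpoints.
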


Since by \thmref{hetfin} there are only finitely many heteroclinic arcs in $\sL$, we immediately obtain the following 

\begin{corollary}\label{hetuni}
\mbox{}
\begin{enumerate}
\item[(i)]
For every $w \in \sL \cap J$ there is at most one heteroclinic arc $\gamma$ with $w=w^-(\gamma)$ or with $w=w^+(\gamma)$. In particular, every parabolic basin $B=P^{\circ q}(B)$ contains at most one heteroclinic arc. \vs
\item[(ii)]
Let $\gamma$ be a heteroclinic and $\eta$ be a homoclinic arc. If $w^+(\gamma)=w^+(\eta)$, then $\gamma<\eta$. If $w^-(\gamma)=w^-(\eta)$, then $\eta<\gamma$. 
\end{enumerate}    
\end{corollary}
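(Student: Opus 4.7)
The plan is to handle part (i); part (ii) will follow by a dual argument reversing the intrinsic potential order. The strategy is to extract $\xi$ from the Hausdorff limit of ray segments of $R_n$ at potentials intermediate between those of $\eta$ and $\gamma$.

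First I would use the Basic Structure Lemma to fix reference sequences so that $R_n(t s^\eta_n) \to \eta(t)$ and $R_n(t s^\gamma_n) \to \gamma(t)$ uniformly on compacts in $t \in \,]0,+\infty[$. The hypothesis $\eta < \gamma$ yields $s^\eta_n / s^\gamma_n \to 0$. For any large $T$, the ray points $R_n(T s^\eta_n) \to \eta(T)$ approach $w = w^+(\eta)$, while $R_n(s^\gamma_n/T) \to \gamma(1/T)$ approach $w^-(\gamma) \neq w$ (since $\gamma$ is heteroclinic). Passing to a subsequence and taking the Hausdorff limit of $R_n([T s^\eta_n, s^\gamma_n/T])$, I would obtain a continuum $E_T \subset \sL$ connecting $\eta(T)$ to $\gamma(1/T)$; by \lemref{EE}, the intermediate points of $E_T$ in $\sL \setminus (J \cup \{\infty\})$ lie on $\sL$-arcs strictly between $\eta$ and $\gamma$ in the intrinsic order.

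Next I would pick potentials $r_n = T_n s^\eta_n$ with $T_n \to +\infty$ so slowly that $r_n / s^\gamma_n \to 0$, placing $r_n$ strictly between the $\eta$- and $\gamma$-scales. After passing to a subsequence, $R_n(r_n) \to z \in E_T$. If $z \notin J$, then $z$ lies on some intermediate $\sL$-arc $\xi'$ with $\eta < \xi' < \gamma$. Since $r_n / s^\eta_n = T_n \to +\infty$ and the ray is close to $\eta(T)$ (hence close to $w$) at potential $T s^\eta_n$, continuity should force $z$ to lie close to $w$ along $\xi'$; via the Basic Structure Lemma parametrization of $\xi'$, this places $z$ at a large $t$-parameter on $\xi'$, forcing $w^+(\xi') = w$. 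If instead $z \in J$ (necessarily $z = w$ by the local setup), a slight perturbation of $T_n$ or the subsequence should suffice to land $z$ on an intermediate $\sL$-arc rather than exactly on $w$.

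The hard part will be guaranteeing that $\xi'$ is heteroclinic rather than homoclinic. To handle this, I plan to use the heteroclinic nature of $\gamma$: the chain of intermediate heteroclinic arcs in $E_T^*$ (available via \thmref{pc}) has terminal vertex $w^-(\gamma) \neq w$, forcing genuine motion away from $w$ somewhere in the chain. Combined with the finiteness of heteroclinic arcs in each parabolic basin (\thmref{hetfin}) and a careful counting of arcs approaching $w$ from the high-$t$ side, this should yield an intermediate heteroclinic $\xi$ with $w^+(\xi) = w$ and $\eta < \xi < \gamma$. Extracting the heteroclinic (rather than homoclinic) $\xi$ with the correct endpoint is the delicate step and will likely require a close inspection of the Fatou-coordinate structure at $w$ together with the essentially embedded annuli $A_\xi$ from \lemref{propan}.
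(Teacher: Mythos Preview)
You are attempting to prove the wrong statement. What you describe---extracting a heteroclinic $\xi$ with $w^+(\xi)=w$ and $\eta<\xi<\gamma$---is exactly the content of the \emph{preceding} Lemma (\lemref{third}), not of \corref{hetuni}. The paper derives the Corollary in one line from \lemref{third}: if two heteroclinics shared the same $w^+$ (or $w^-$), iterating \lemref{third} would produce infinitely many heteroclinics with that endpoint, contradicting the finiteness in \thmref{hetfin}; part (ii) is then immediate, since if the order were wrong \lemref{third} would produce a second heteroclinic with the same endpoint as $\gamma$, violating part (i). You never supply this iteration/finiteness step, so even if your construction of $\xi$ succeeded you would not have proved part (i). Your remark that ``part (ii) will follow by a dual argument reversing the intrinsic potential order'' is also off: part (ii) concerns a heteroclinic versus a homoclinic, and is a consequence of part (i), not a symmetric restatement.

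Regarding your approach to constructing the intermediate $\xi$: this is far more complicated than necessary, and the step you flag as ``the hard part'' (guaranteeing $\xi$ is heteroclinic) is a genuine gap in your outline. The paper handles this cleanly and without any Fatou-coordinate analysis: take the Hausdorff limit $E$ of the ray segments $R_n([s_n,s'_n])$ joining a point of $\eta$ to a point of $\gamma$, and invoke \thmref{pc} to find an arc $\xi$ in $E^*$ (i.e., avoiding all homoclinics) from $w$ to $w^-(\gamma)$. By \lemref{EE} this arc must be disjoint from $\gamma$ and $\eta$, and the homotopy-uniqueness clause of \thmref{pc} forces $\xi$ to be a single heteroclinic in the same basin as $\gamma$, hence $w^+(\xi)=w$. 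Your scheme of choosing diverging scales $r_n$ and hoping the limit point lands on an intermediate arc, followed by an inspection of annuli $A_\xi$ to rule out homoclinics, is neither needed nor clearly workable.
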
  

\begin{proof}[Proof of \lemref{third}]
We only prove (i), as the proof of (ii) is similar. Fix $u = \Pi(\langle s_n \rangle) \in \eta$ and $u' = \Pi(\langle s'_n \rangle) \in \gamma$, so $\lim_{n \to \infty} s_n/s'_n=0$. Let $E$ be a subsequential Hausdorff limit of the ray segments $R_n([s_n,s'_n])$. Then $E$ is a compact connected subset of $\sL$ containing $u,u'$. In fact, it is easy to see that $E$ contains the segment of $\eta$ between $u$ and $w$ and the segment of $\gamma$ between $w^-(\gamma)$ and $u'$. Observe that by \lemref{EE}, every $z \in E \sm J$ other than $u,u'$ satisfies $u<z<u'$. \vs

By \thmref{pc} we can find an arc $\xi$ in $E^*$ joining $w$ and $w^-(\gamma)$. By the above observation, this arc cannot meet the open segment of $\eta$ between $w^-(\eta)$ and $u$ or the open segment of $\gamma$ between $u'$ and $w$. Hence it must be altogether disjoint from $\gamma, \eta$. Another application of \thmref{pc} then shows that $\xi$ is homotopic to $\gamma$ rel $E \cap J$ and therefore must be a heteroclinic arc in the same basin as $\gamma$. Evidently $w^+(\xi)=w^+(\gamma)=w$ and $\eta<\xi<\gamma$.    
\end{proof}

The next lemma shows that the dynamical orientation of adjacent heteroclinic arcs is compatible with their intrinsic potential order. 

\begin{lemma}\label{ord}
If $\gamma, \eta$ are heteroclinic arcs with $w^+(\gamma)=w^-(\eta)=w$, then $\gamma < \eta$.
\end{lemma}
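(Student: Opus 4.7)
I proceed by contradiction: suppose $\eta \leq \gamma$, so since $\gamma \neq \eta$ Lemma \ref{limex} forces $\eta < \gamma$ strictly. Fixing the special parametrizations with $u = \gamma(1)$ and $u' = \eta(1)$, choose approximating sequences $u_n = R_n(s_n) \to u$ and $u'_n = R_n(s'_n) \to u'$; the assumption translates to $s'_n/s_n \to 0$, so $s'_n < s_n$ for all large $n$.

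Passing to a subsequence, let $E$ be the Hausdorff limit of the ray segments $R_n|_{[s'_n, s_n]}$, a compact connected subset of $\sL$ containing $u$ and $u'$. The same scaling argument used in \lemref{EE} gives forced inclusions: for any $\tau \in \, ]0,1]$ the potential $\tau s_n$ lies in $[s'_n, s_n]$ eventually (since $\tau s_n/s'_n \to +\infty$), so $\gamma(\tau) \in E$, whence $\{w^-(\gamma)\} \cup \gamma|_{]0,1]} \subset E$. Symmetrically $\eta|_{[1,+\infty[} \cup \{w^+(\eta)\} \subset E$. Meanwhile, the intrinsic-order constraint $u' \leq z \leq u$ for $z \in E \setminus (J \cup \{\infty\})$ given by \lemref{EE} forbids any point of the ``top'' $\gamma|_{]1,+\infty[}$ or of the ``bottom'' $\eta|_{]0,1[}$ (both approaching $w$) from lying in $E$.

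By \thmref{pc}, $E^*$ is arcwise-connected, so (handling the degenerate coincidence $w^-(\gamma) = w^+(\eta)$ separately---there $\ov{\gamma} \cup \ov{\eta}$ is already a closed Jordan loop and one jumps to the last step) there exists a simple arc $\xi \subset E^*$ joining $w^-(\gamma)$ to $w^+(\eta)$. By \corref{hetuni}, the only heteroclinic edges of $\sL^*$ incident to $w$ are $\gamma$ and $\eta$ themselves; since their $w$-halves are excluded from $E$, the arc $\xi$ cannot visit $w$. It also cannot traverse the interior points $u$ or $u'$: near $u$ the set $E^* \subset \sL^*$ is one-sided (only $\gamma|_{]0,1]}$), so a simple arc entering a neighborhood of $u$ in $E^*$ would be forced either onto the forbidden top of $\gamma$ or to turn back on itself, violating injectivity. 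Consequently $\xi$ meets $\ov{\gamma} \cup \ov{\eta}$ only at the endpoints $w^-(\gamma), w^+(\eta)$, and $C := \xi \cup \ov{\gamma} \cup \ov{\eta}$ is a simple Jordan curve contained in $K$.

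The curve $C$ bounds a Jordan domain $D \subset \CC$; applying the maximum principle to the Green's function $G_P$ yields $D \subset \mathring{K}$, so $D$ lies in a single parabolic basin $B$. However $\bd D = C$ contains interior points of $\gamma$, forcing $B$ to be the parabolic basin $B_\gamma$ (of $w$) containing $\gamma$; symmetrically, $\bd D$ contains interior points of $\eta$, forcing $B = B_\eta$ (a parabolic basin of the distinct fixed point $w^+(\eta) \neq w$). Since $w$ and $w^+(\eta)$ are distinct parabolic fixed points of $P^{\circ q}$, the basins $B_\gamma$ and $B_\eta$ are disjoint---the desired contradiction. The main obstacle will be the careful edge-traversal argument ensuring $\xi$ avoids the points $u, u', w$ and yields a genuinely simple Jordan closure once combined with $\ov{\gamma} \cup \ov{\eta}$; the degenerate endpoint coincidence $w^-(\gamma) = w^+(\eta)$ must also be treated separately but simplifies the argument since no auxiliary arc $\xi$ is needed.
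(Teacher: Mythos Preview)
Your proof is correct and follows the same overall strategy as the paper: assume $\eta<\gamma$, pass to a Hausdorff limit $E$ of the ray segments between the chosen potentials, and produce an arc $\xi\subset E^*$ joining $w^-(\gamma)$ to $w^+(\eta)$ that is disjoint from $\gamma$ and $\eta$. The endgame, however, is different. The paper invokes the homotopy clause of \thmref{pc}: since $\xi$ and the concatenation $\gamma*\eta$ join the same pair of Julia points, they are homotopic in $K$ rel the Julia vertices, forcing $w\in\xi$; the two segments of $\xi$ at $w$ are then extra heteroclinics in the basins of $\gamma$ and $\eta$, contradicting \corref{hetuni}(i). You instead argue that $w\notin E^*$ (using \corref{hetuni}(i) to see that the only $\sL^*$-edges at $w$ are $\gamma$ and $\eta$, whose $w$-halves are excluded from $E$), and then close up $\xi\cup\ov\gamma\cup\ov\eta$ into a Jordan curve whose interior disk, by the maximum principle, lies in a single parabolic basin---impossible since $\gamma$ and $\eta$ live in basins of distinct parabolic points. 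Your route is essentially the max-principle argument underlying the homotopy statement of \thmref{pc}, applied directly; it avoids citing that clause at the cost of re-deriving it in this special case.

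One small omission: when you assert that the only edges of $\sL^*$ at $w$ are $\gamma$ and $\eta$, you should also dispose of the possibility $w=\zeta$, in which case $R$ is a third edge at $w$. This is harmless since $R$ is the largest $\sL$-arc in the intrinsic potential order, so $R\cap E=\es$ by \lemref{EE}; but it deserves a sentence. The ``one-sided'' argument showing a simple arc in $E^*$ cannot pass through the dead-end points $u,u'$ is fine, though you might phrase it as: $u$ has a neighborhood in $E^*$ homeomorphic to a half-open interval, so no injective arc with endpoints $\neq u$ can contain $u$.
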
   

\begin{proof}
(i) First note that $w^+(\eta) \neq w^-(\gamma)$; otherwise the union $\gamma \cup \eta \cup \{ w^{\pm}(\gamma) \}$ would bound a topological disk in $\mathring{K}$, which would imply $\gamma, \eta$ are contained in the same parabolic basin $B$. This leads to a contradiction, either by invoking \corref{hetuni}(i) or by simply observing that all orbits of $P^{\circ q}$ in $B$ must converge to a unique boundary point. \vs

Assume by way of contradiction that $\eta<\gamma$. As in the proof of \lemref{third} fix $u = \Pi(\langle s_n \rangle) \in \eta$ and $u' = \Pi(\langle s'_n \rangle) \in \gamma$, consider a subsequential Hausdorff limit $E \subset \sL$ of the ray segments $R_n([s_n,s'_n])$, and find an arc $\xi$ in $E^\ast$ joining $w^+(\eta)$ and $w^-(\gamma)$ which must be disjoint from $\gamma, \eta$. By \thmref{pc}, $\xi$ is homotopic to the arc $\gamma$ followed by $\eta$ rel $E \cap J$. In particular, $w \in \xi$. It follows that the segment of $\xi$ between $w^+(\eta)$ and $w$ is a heteroclinic in the same basin as $\eta$ and the segment of $\xi$ between $w$ and $w^-(\gamma)$ is a heteroclinic in the same basin as $\gamma$. This contradicts \corref{hetuni}(i).
\end{proof}

\begin{lemma}\label{orddd}
Suppose $\gamma$ is a heteroclinic arc and $u=\Pi(\langle s_n \rangle) \in \gamma$. Take any sequence of potentials $\{ t_n \}$ with $R_n(t_n) \to w \in \sL \cap J$. \vs
\begin{enumerate}
\item[(i)]
If $w=w^+(\gamma)$, then $s_n/t_n \to 0$. \vs
\item[(ii)]
If $w=w^-(\gamma)$, then $s_n/t_n \to +\infty$.
\end{enumerate}
\end{lemma}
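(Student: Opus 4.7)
The plan is to argue by contradiction using a dichotomy based on the behavior of $s_n/t_n$. For part (i), assume $w = w^+(\gamma)$ but $s_n/t_n \not\to 0$. After passing to a subsequence, $s_{n_i}/t_{n_i} \to c \in \, ]0, +\infty]$. In the finite-ratio case $c < +\infty$, the hyperbolic distance
$$
\dist_{\Om_{n_i}}(R_{n_i}(s_{n_i}), R_{n_i}(t_{n_i})) = |\log(s_{n_i}/t_{n_i})|
$$
stays bounded, and the construction from the proof of the Basic Structure Lemma gives $(\Om_{n_i}, R_{n_i}(s_{n_i})) \ct (V_\gamma, u)$ after a further subsequence. \thmref{two}(i) then forces $(\Om_{n_i}, R_{n_i}(t_{n_i})) \ct (V_\gamma, w)$ with $w \in V_\gamma$. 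Since $V_\gamma$ lies in a parabolic basin and therefore in $\mathring{K}$, this contradicts $w \in J$.

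The substantive case is $c = +\infty$, where $t_{n_i}/s_{n_i} \to 0$ so $t_{n_i} < s_{n_i}$ eventually. Pass to a further subsequence so that the ray segments $R_{n_i}([t_{n_i}, s_{n_i}])$ converge in Hausdorff metric to a continuum $E \subset \sL$ containing $u$ and $w$. Since the potentials along these segments tend to $0$, we have $E \subset K$. Applying \lemref{EE}(ii) to the admissible sequence $\langle s_{n_i} \rangle$ at the upper endpoint, every $z \in E \sm J$ satisfies $z \leq u$ in the intrinsic potential order. By \thmref{pc}, $E$ is arcwise-connected, so there is an arc in $E$ from $u$ to $w$; since $\sL \cap J$ is finite, the arc enters $w$ through a unique $\sL$-arc $\delta$ having $w$ as an endpoint, and I will rule out each possibility for $\delta$.

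The options are: (a) $\delta = \gamma$, impossible because the constraint $t_{n_i}/s_{n_i} \leq r_i/s_{n_i} \leq 1$ forces $\gamma \cap E = \gamma(]0,1])$, which runs from $w^-(\gamma)$ to $u$ and never reaches $w = w^+(\gamma)$; (b) another heteroclinic with $w^+(\cdot) = w$, ruled out by \corref{hetuni}(i); (c) a heteroclinic $\delta$ with $w^-(\delta) = w = w^+(\gamma)$, which by \lemref{ord} satisfies $\gamma < \delta$, giving $z > u$ for $z \in \delta$ and contradicting $z \leq u$; (d) a homoclinic based at $w$, for which $w^+(\gamma) = w^+(\delta)$ gives $\gamma < \delta$ by \corref{hetuni}(ii), again contradicting $z \leq u$; (e) the ray $R$ in the case $\zeta = w$, excluded since $R \subset \Om$ but $E \subset K$. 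Part (ii) is entirely symmetric: assume $w = w^-(\gamma)$ but $t_n/s_n \not\to 0$, split off the finite-ratio case (which again forces $w \in V_\gamma \cap J$), and in the case $s_{n_i}/t_{n_i} \to 0$ take the Hausdorff limit $E$ of $R_{n_i}([s_{n_i}, t_{n_i}])$, apply \lemref{EE}(i) to get $z \geq u$ on $E \sm J$, and run the mirror enumeration of $\sL$-arcs terminating at $w = w^-(\gamma)$, invoking the other halves of \corref{hetuni} and \lemref{ord}.

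The main obstacle is the topological argument in the infinite-ratio case; the bounded-ratio case is handled routinely via hyperbolic distance and Carath\'eodory convergence. The enumeration relies on the full structural results of \S \ref{wise}, namely arcwise-connectivity of connected subsets of $\sL$, uniqueness of heteroclinic arcs with a given parabolic endpoint, and the order comparisons between heteroclinics and between heteroclinics and homoclinics that share an endpoint.
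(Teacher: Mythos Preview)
Your proof is correct and follows essentially the same approach as the paper: split by whether $s_n/t_n$ has a finite or infinite subsequential limit, dispatch the finite case by forcing $w$ into the interior (the paper does this in one line via \lemref{unipot} rather than unpacking the \cara\ argument), and in the infinite case pass to a Hausdorff limit $E$, use \lemref{EE} to bound the intrinsic order on $E\sm J$, and derive a contradiction from arcwise-connectivity. The paper compresses your case enumeration (a)--(e) into the single assertion that, by \corref{hetuni}(i), the only arc in $\sL$ joining $u$ to $w$ is the subarc $\xi\subset\gamma$ between them; your explicit treatment of the homoclinic and $R$ cases via \corref{hetuni}(ii) and \lemref{ord} is a valid elaboration of that claim.
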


\begin{proof}
We prove (i), the proof of (ii) being similar. Assume by way of contradiction that $\limsup_{n \to \infty} s_n/t_n >0$. If there were a subsequence $s_{n_i}/t_{n_i} \to c \in ]0,+\infty[$, then $\lim_{i \to \infty} R_{n_i}(t_{n_i}) = \lim_{n \to \infty} R_n(c^{-1}s_n) \in \gamma$ by \lemref{unipot}, which would lead to the conclusion $w \in \gamma$. Thus, we must have $\lim_{n \to \infty} s_n/t_n = +\infty$. Let $E$ be a subsequential Hausdorff limit of the segments $R_n([t_n, s_n])$. Then $E$ is a compact connected subset of $\sL$ containing $w,u$. Moreover, by \lemref{EE}, if $z \in E \sm J$ and $z \neq u$, then $z<u$. In particular, $E$ is disjoint from the open subarc $\xi \subset \gamma$ between $u$ and $w$. Now \thmref{pc} shows that there is an arc in $E$ joining $w$ and $u$. But by \corref{hetuni}(i) the only arc in $\sL$ joining $w$ and $u$ is $\xi$. The contradiction proves $\lim_{n \to \infty} s_n/t_n=0$. 
\end{proof}

We now have all the ingredients we need to prove \thmref{A}: 

\begin{proof}[Proof of \thmref{A}]
Most of the claims of the theorem have already been verified.  \thmref{pc} showed that the spine $\sL^*$ is a finite connected graph embedded in $\Chat$ which has $(\sL \cap J) \cup \{ \infty \}$ as its vertices and all non-homoclinic $\sL$-arcs as its edges. By \corref{hetuni}(i), every vertex of this graph has degree $1$ or $2$. Since $\infty$ is surely a vertex of degree $1$, it follows that this graph is a tree with exactly two vertices of degree $1$ and the remaining vertices (if any) of degree $2$. In particular, this tree is homeomorphic to a closed arc. If there are no vertices of degree $2$, then $\sL^* = \ov{R}$ and therefore $\sL \cap J = \{ \zeta \} = \{ \zeta_{\infty} \}$, and depending on whether $\sL$ contains any homoclinics or not, we are in the semi-wild or tame case, respectively. On the other hand, if $\sL^*$ does have vertices of degree $2$, \lemref{ord} shows that we can sort the heteroclinic arcs in $\sL$ as $\gamma_N < \cdots < \gamma_1 <R$ so that $w^+(\gamma_j)=w_{j-1}$ and $w^-(\gamma_j)=w_j$ for all $1 \leq j \leq N$, and $w^-(R)=w_0$ (compare \figref{lstar}). \vs

%%%%%%%%%%%%%%%%%%%%%%%%%%%%%%%%%%%%%%%%%%%
\begin{figure}[t]
	\centering
\begin{overpic}[width=0.8\textwidth]{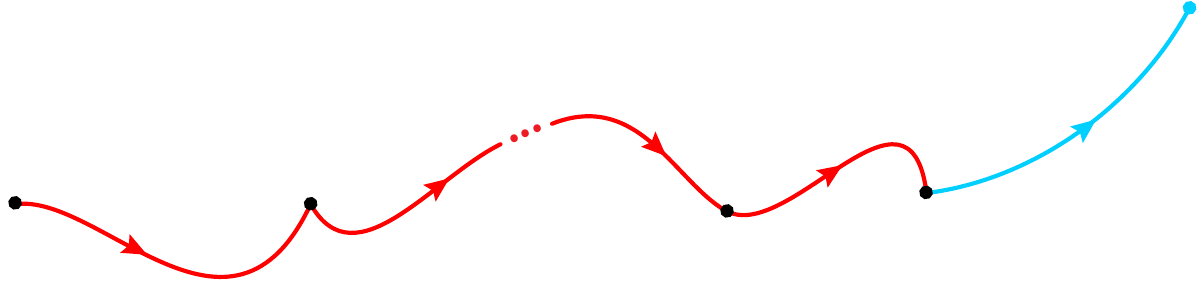}
\put (89,16) {\small{\color{myblue}$R$}}
\put (98.6,25.8) {\small{\color{myblue}$\infty$}}
\put (75,6.5) {\footnotesize{$w_0=\zeta$}}
\put (60,4.9) {\footnotesize{$w_1$}}
\put (25,9.5) {\footnotesize{$w_{N-1}$}}
\put (-4,10) {\footnotesize{$w_N=\zeta_{\infty}$}}
\put (68,13) {\small{\color{red}$\gamma_1$}}
\put (55,15) {\small{\color{red}$\gamma_2$}}
\put (36,6.5) {\small{\color{red}$\gamma_{N-1}$}}
\put (10,1.4) {\small{\color{red}$\gamma_N$}}
\end{overpic}
\caption{\footnotesize The spine $\sL^*$ is the Hausdorff limit $\sL = \lim_{n \to \infty} \ov{R_n}$ with all possible homoclinic arcs removed. It is a piecewise analytic arc homeomorphic to $[0,+\infty]$.}
\label{lstar}
\end{figure}
%%%%%%%%%%%%%%%%%%%%%%%%%%%%%%%%%%%%%%%%%%%  

To finish the proof, it remains to show that the limit $\zeta_\infty=\lim_{n \to \infty} \zeta_n$ of the landing points of the rays $R_n$ is $w_N$. Suppose $\zeta_\infty=w_j$ for some $0 \leq j \leq N-1$. Fix some $u= \Pi(\langle s_n \rangle) \in \gamma_{j+1}$. On the one hand, since $\zeta_n \to \zeta_{\infty}$, we can find a sequence $\{ t_n \}$ of potentials converging to $0$ so fast that $0<t_n<s_n$ and $R_n(t_n) \to \zeta_{\infty}$. On the other hand, \lemref{orddd}(i) implies $s_n/t_n \to 0$. This is a contradiction.      
\end{proof}

\section{Proof of \thmref{B}}

So far we have shown that $\sL$ is the union of the spine $\sL^*$ together with a finite collection (possibly empty) of earrings attached to the points of $\sL \cap J$. For the proof of \thmref{B}, we need to show that these earrings cannot occur in the basins that meet the spine, that the same basin cannot contain two distinct earrings, and that every earring with more than one homoclinic arc must be based at the point $w_N=\zeta_\infty$. These statements require a better understanding of the structure of homoclinic arcs and will be addressed in \S \ref{oshom}.      

\subsection{Good disks}\label{gd}

Suppose $\gamma$ is an $\sL$-arc with $w^+(\gamma)=w$. By real analyticity, there are at most countably many radii $\ve>0$ for which the boundary of the disk $D:=\DD(w,\ve)$ meets $\gamma$ tangentially. In other words, for all but countably many choices of $\ve>0$ the circle $\bd D$ meets $\gamma$ transversally at finitely many points. Of course a similar description holds when $w^-(\gamma)=w$. 
It follows that for any finite collection $\sC$ of $\sL$-arcs with either the initial or end point at $w$, there are arbitrarily small $\ve>0$ for which $\bd D$ meets every $\gamma \in \sC$ transversally at finitely many points. We call such $D$ a {\bit good disk} centered at $w$ for the collection $\sC$. \vs

Suppose $D=\DD(w,\ve)$ is a good disk for a finite collection $\sC$, where $w \in \sL \cap J$ is parabolic. Let $\gamma \in \sC$ be a heteroclinic with $w^-(\gamma)=w$, so $\gamma$ is asymptotic to a repelling direction at $w$. If $a^-$ is the point where $\bd D$ meets the radial line at $w$ in this repelling direction, then every $z \in \gamma \cap \bd D$ satisfies $|z-a^-|=o(\ve)$ as $\ve \to 0$ (see \S \ref{hhintro}). The greatest point on $\gamma \cap \bd D$ (in the intrinsic potential order) is denoted by $w^-(\gamma,D)$. Thus, $w^-(\gamma,D)$ is characterized as the point on $\gamma \cap \bd D$ such that $z \in \gamma$ and $w^-(\gamma,D)<z$ imply $z \notin \ov{D}$. Similarly, suppose $\gamma \in \sC$ is a heteroclinic with $w^+(\gamma)=w$, so $\gamma$ is asymptotic to an attracting direction at $w$. If $a^+$ is the point where $\bd D$ meets the radial line at $w$ in this attracting direction, then every $z \in \gamma \cap \bd D$ satisfies $|z-a^+|=o(\ve)$ as $\ve \to 0$. The least point on $\gamma \cap \bd D$ (in the intrinsic potential order) is denoted by $w^+(\gamma,D)$. Thus, $w^+(\gamma,D)$ is characterized as the point on $\gamma \cap \bd D$ such that $z \in \gamma$ and $z<w^+(\gamma,D)$ imply $z \notin \ov{D}$. \vs

Now suppose $\gamma$ is a homoclinic in $\sC$ so $\gamma$ is asymptotic to a pair of repelling and attracting directions at $w$. If $a^-, a^+$ are the points where $\bd D$ meets the radial line at $w$ in these repelling and attracting directions, then $|a^--a^+|\asymp \ve$ but every $z \in \gamma \cap \bd D$ satisfies $|z-a^-|=o(\ve)$ or $|z-a^+|=o(\ve)$ depending on which end of $\gamma$ the point $z$ is close to. In other words, the finite set $\gamma \cap \bd D$ is partitioned into two subsets, one near $a^-$ and the other near $a^+$, unambiguously separated for $\ve$ sufficiently small. By definition, the greatest point of $ \gamma \cap \bd D$ in the first subset is denoted by $w^-(\gamma,D)$ and the least point of $\gamma \cap \bd D$ in the second subset is denoted by $w^+(\gamma,D)$. Notice that by this definition $w^-(\gamma,D)<w^+(\gamma,D)$ and the segment of $\gamma$ between $w^\pm(\gamma,D)$ is outside $\ov{D}$. Recall that $\Delta_\gamma$ denotes the Jordan domain bounded by $\ov{\gamma}=\gamma \cup \{ w \}$. We define $I_\gamma$ to be the closed arc of the circle $\bd D$ bounded by $w^\pm(\gamma,D)$ which is nearly contained in $\Delta_\gamma$ in the sense that the length of $I_\gamma \sm \Delta_\gamma$ is $o(\ve)$ (see \figref{gooddisk}). \vs

%%%%%%%%%%%%%%%%%%%%%%%%%%%%%%%%%%%%%%%%%%%
\begin{figure}[]
	\centering
	\begin{overpic}[width=0.5\textwidth]{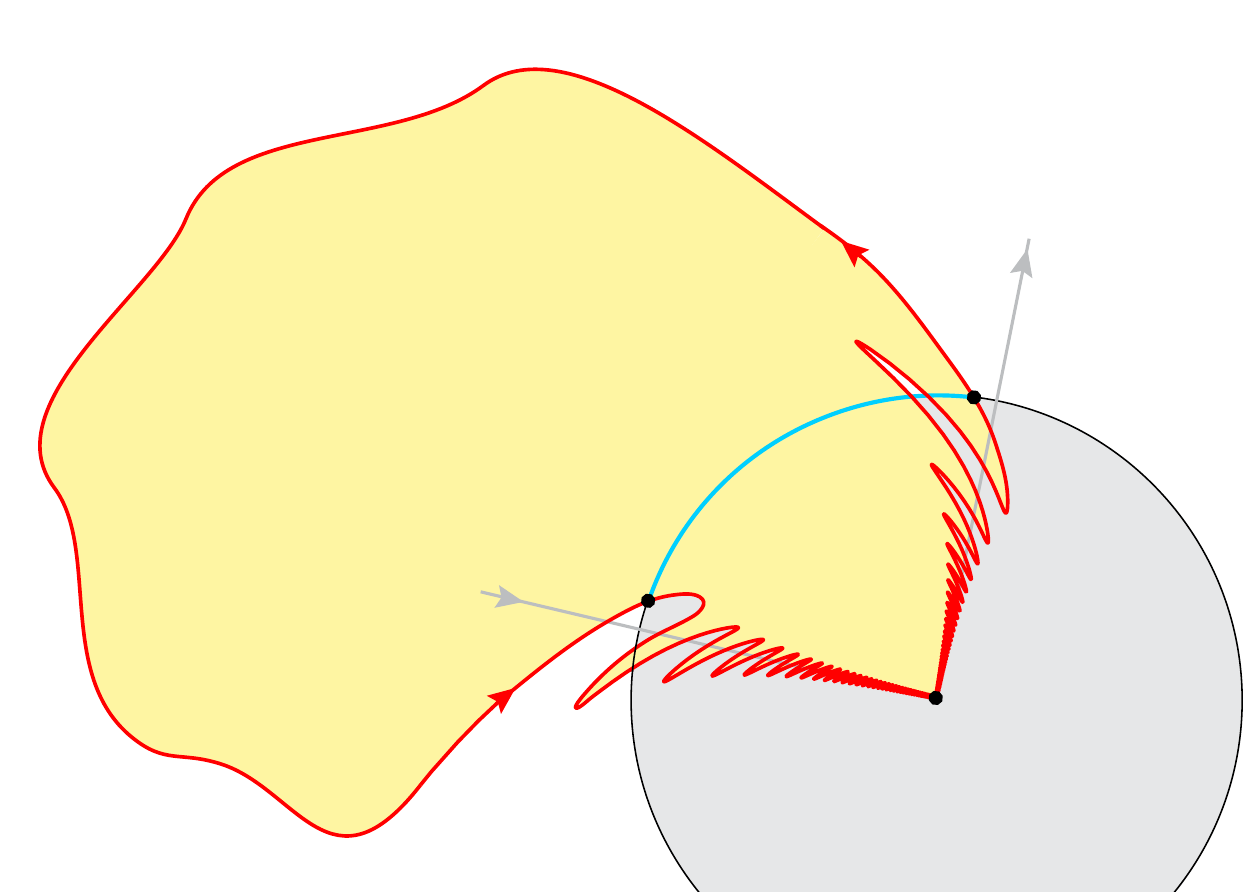}
\put (54,35) {\color{myblue}$I_\gamma$}
\put (70,51) {\color{red}$\gamma$}
\put (30,40) {$\Delta_\gamma$}
\put (73,12.3) {\footnotesize{$w$}}
\put (77,40.4) {\footnotesize{$w^-(\gamma,D)$}}
\put (48.8,24.6) {\footnotesize{$w^+(\gamma,D)$}}
\put (90,10) {\color{gray}$D$}
	\end{overpic}
\caption{\footnotesize A good disk $D$ centered at $w$, a homoclinic arc $\gamma$, and the points $w^-(\gamma,D)$ and $w^+(\gamma,D)$. The closed arc $I_\gamma$ bounded by $w^\pm(\gamma,D)$ and nearly contained in $\Delta_\gamma$ is highlighted in blue.}
\label{gooddisk}
\end{figure}
%%%%%%%%%%%%%%%%%%%%%%%%%%%%%%%%%%%%%%%%%%%

The following basic properties will be used in the next section and can be easily verified. Suppose, as above, that $\gamma \in \sC$ is a homoclinic. 

\begin{enumerate}
\item[(P1)] 
If $\eta \in \sC$ is a homoclinic in the same earring as $\gamma$, then $I_\eta \subset I_\gamma$ or $I_\gamma \subset I_\eta$. \vs

\item[(P2)] 
If $\eta \in \sC$ is {\it not} a homoclinic in the same earring as $\gamma$, then neither of $w^\pm(\eta,D)$ (when defined) can be in $I_\gamma$. 
\end{enumerate} 

\subsection{Good transversals}\label{gt}

We now turn to another construction that will be useful for our purposes. Let us work with the compact set $\sL_{\leq 1} := \sL \sm (R (]1,+\infty[) \cup \{ \infty \})$, i.e., the result of truncating $\sL$ beyond Green's potential $1$. It will be convenient to use the term {\bit chain} to describe a finite sequence of adjacent homoclinics in the same earring starting with the outermost. In other words, the homoclinics $\eta_1, \ldots, \eta_n$ form a chain if $\eta_1$ is the outermost in its earring and $\ov{\Delta}_{\eta_j} \supsetneq \ov{\Delta}_{\eta_{j+1}}$ and $(\Delta_{\eta_j} \sm \ov{\Delta}_{\eta_{j+1}}) \cap \sL =\es$ for all $1 \leq j \leq n-1$. We refer to a chain of length $n$ as an {\bit $n$-chain}.   

\begin{definition}
A smooth embedded arc $\Sigma: [0,1[ \to \CC$ with $\lim_{t \to 1} \Sigma(t)=\infty$ is called a {\bit good transversal} for $\sL$ if $\Sigma$ intersects $\sL_{\leq 1}$ transversally at finitely many points $z_1, \ldots, z_n$ such that \vs
\begin{enumerate}
\item[$\bullet$] 
either $n=1$ and $z_1$ belongs to a heteroclinic arc or $R$, \vs

\item[$\bullet$]
or $z_1, \ldots, z_n$ belong to an $n$-chain of homoclinics $\eta_1, \ldots, \eta_n$, respectively.    
\end{enumerate}
\end{definition}

\begin{lemma}[Existence of good transversals]\label{egt}
\mbox{} 
\begin{enumerate}
\item[(i)] 
Suppose $z$ belongs to a heteroclinic or $R(]0,1[)$. Then there is a good transversal $\Sigma$ with $\Sigma \cap \sL_{\leq 1}= \{ z \}$. \vs 
\item[(ii)]
Suppose $z_1, \ldots, z_n$ belong to an $n$-chain of homoclinics $\eta_1, \ldots, \eta_n$, respectively. Then there is a good transversal $\Sigma$ with $\Sigma \cap \sL_{\leq 1}= \{ z_1, \ldots, z_n \}$. \vs
\item[(iii)]
Given finitely many distinct points of type (i) and collections of points of type (ii) in different earrings, we can choose corresponding good transversals that are pairwise disjoint.    
\end{enumerate} 
\end{lemma}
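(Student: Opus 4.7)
My plan is to build each transversal $\Sigma$ as a concatenation of short local arcs $\sigma_z$ crossing the prescribed $\sL$-arcs transversally at the prescribed points, joined by smooth connecting arcs in the open complement $\CC \sm \sL_{\leq 1}$. The essential topological input is the component structure of this complement. Since the spine $\sL^*$ is a finite tree by \thmref{A}, its intersection with $\sL_{\leq 1}$ does not separate $\CC$; the earrings of $\sL$ then introduce exactly one bounded component per pair of consecutive homoclinics in an earring (the annular region between them), together with an innermost disk $\Delta_\eta$ whenever $\eta$ is the last element of a finite earring. Every remaining point lies in the unique unbounded component $U_\infty$, which contains $\Om_P \sm R([0,1])$ and in particular a punctured neighborhood of $\infty$. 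All these components are open, hence smoothly arc-connected.

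For the local crossings, \lemref{inter} (or the B\"ottcher coordinate in the case $\gamma = R$) provides, for the $\sL$-arc $\gamma$ through $z$, a simply connected strip $V_\gamma$ satisfying $V_\gamma \cap \sL = \gamma$. Using real-analyticity of $\gamma$ inside $V_\gamma$, I take $\sigma_z \subset V_\gamma$ to be an arbitrarily short smooth embedded arc through $z$ transverse to $\gamma$, so that $\sigma_z \cap \sL = \{z\}$ and its two endpoints lie in the two components of $V_\gamma \sm \gamma$.

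To prove (i), I take such a $\sigma_z$ and use that the spine is a tree and $z$ is not a vertex to conclude that both endpoints of $\sigma_z$ lie in $U_\infty$; I then prolong one of them to $\infty$ by a smooth embedded arc in $U_\infty$. To prove (ii), I build $\Sigma$ from the inside outward: since $V_{\eta_n} \cap \Delta_{\eta_n}$ is nonempty and free of $\sL$ (by \lemref{inter}), I place $\Sigma(0)$ there, use $\sigma_{z_n}$ to cross into the annular component $\Delta_{\eta_{n-1}} \sm \ov{\Delta}_{\eta_n}$ (disjoint from $\sL$ by the chain condition), smoothly join inside that annulus to an endpoint of $\sigma_{z_{n-1}}$, iterate, and finally exit $\Delta_{\eta_1}$ via $\sigma_{z_1}$ into $U_\infty$ to proceed to $\infty$ as in (i). For (iii), I perform the above constructions independently and then make the $\sigma_z$'s pairwise disjoint by taking them small enough; in each open component of $\CC \sm \sL_{\leq 1}$ where several connecting arcs coexist, a generic perturbation keeps them pairwise disjoint, since these components are two-dimensional. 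The main obstacle I anticipate is not the construction itself but the bookkeeping needed to verify the component description of $\CC \sm \sL_{\leq 1}$, which hinges on the tree structure of the spine from \thmref{A} together with the isolation property $V_\gamma \cap \sL = \gamma$ from \lemref{inter}.
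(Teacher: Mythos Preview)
Your construction is correct, but the paper follows a more streamlined route. Rather than assembling each transversal from local crossing arcs $\sigma_z$ glued to connecting paths in the components of $\CC \sm \sL_{\leq 1}$, the paper first forms the ``filled'' compact $\hat{\sL}$ by adjoining to $\sL^\ast_{\leq 1}$ the closed disks $\ov{\Delta}_{\tau_i}$ bounded by the outermost homoclinics of each earring; since $\sL^\ast$ is already known to be an arc, $\hat{\sL}$ is full with piecewise analytic boundary. The transversals are then simply the non-dynamical ``external rays'' of the uniformization $(\Chat \sm \ov{\DD}, \infty) \cong (\Chat \sm \hat{\sL}, \infty)$, extended slightly past their landing points (and, for an $n$-chain, further inward through the nested annuli exactly as you do). This packaging buys two things at once: such rays land orthogonally at smooth boundary points, so transversality is automatic, and distinct rays are disjoint, so part (iii) needs no separate argument. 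Your approach trades this single conformal input for the explicit component bookkeeping plus a general-position step in (iii); note that ``generic perturbation'' of arcs in a surface only makes them transverse, not disjoint --- what actually gives you pairwise disjoint arcs to $\infty$ is that the unbounded component of $\Chat \sm \sL_{\leq 1}$ is simply connected, which is precisely the fact the paper exploits via the Riemann map.
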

 
\begin{proof} 
Let $\tau_1, \ldots, \tau_k$ denote the outermost homoclinics of all the earrings in $\sL$. Consider the union $\hat{\sL}$ of the closed disks $\ov{\Delta}_{\tau_1}, \ldots, \ov{\Delta}_{\tau_k}$ together with all heteroclinics, the ray segment $R(]0,1])$, and all points in $\sL \cap J$. In other words, $\hat{\sL}$ is the ``filled in'' $\sL_{\leq 1}$. Evidently $\hat{\sL}$ is a full compact subset of $\CC$ containing $\sL_{\leq 1}$ with piecewise analytic boundary. Using the non-dynamical ``external rays'' of the uniformization $(\Chat \sm \ov{\DD}, \infty) \oset[-0.2ex]{\cong}{\longrightarrow} (\Chat \sm \hat{\sL}, \infty)$ we see that every $z \in \bd \hat{\sL}$ is the landing point of at least one ray in $\CC \sm \hat{\sL}$. If $z \in \bd \hat{\sL} \sm (J \cup R(1))$, each ray landing at $z$ meets the $\sL$-arc through $z$ orthogonally, so it can be extended ever so slightly past its $z$-end to become a good transversal with $\Sigma \cap \sL_{\leq 1} = \{ z \}$. This proves part (i) and part (ii) for $1$-chains. \vs

If $z_1, \ldots, z_n$ lie on an $n$-chain $\eta_1, \ldots, \eta_n$ for $n \geq 2$, take a good transversal $\Sigma$ with $\Sigma \cap \sL_{\leq 1} = \{ z_1 \}$ as above. It is then easy to extend $\Sigma$ smoothly all the way inside $\Delta_{\eta_n}$, crossing $\eta_j$ once transversally at $z_j$ for $2 \leq j \leq n$. This proves part (ii) for $n \geq 2$. \vs

Part (iii) follows from the fact that distinct ``external rays'' are disjoint.   
\end{proof}

\subsection{Linked and unlinked pairs}

Let $D$ be a round disk in $\CC$. Pairs $(a_1,a_2), (b_1,b_2)$ of distinct points on the boundary circle $\bd D$ are said to be {\bit linked} if $b_1$ and $b_2$ lie in different connected components of $\bd D \sm \{ a_1, a_2 \}$. Otherwise, $(a_1,a_2), (b_1,b_2)$ are called {\bit unlinked}. A collection of pairs on $\bd D$ is unlinked if every two pairs in the collection are unlinked. It is customary to  represent a pair $(a_1,a_2) \in \bd D$ by the hyperbolic geodesic in $D$ with endpoints at $a_1$ and $a_2$. An unlinked collection is then visualized as one whose representative geodesics are pairwise disjoint. \vs

If $(a_1,a_2), (b_1,b_2)$ on $\bd D$ are linked, any two paths in $D$ that connect $a_1$ to $a_2$ and $b_1$ to $b_2$ must intersect. This is an easy consequence of the Jordan curve theorem. The following lemma is a generalization of this fact. Recall that a point $z$ on the boundary of a simply connected domain $U \subsetneq \CC$ is {\bit uniaccessible} if $\bd U \sm \{ z \}$ is connected. Equivalently, if for any base point $z_0 \in U$ there is a unique up to homotopy arc in $U$ that connects $z_0$ to $z$.      

\begin{lemma}\label{Usim}
Suppose $(a_1,a_2), (b_1,b_2)$ on $\bd D$ are linked. Let $U$ be any simply connected domain with $D \subset U \subset \CC$ such that $a_1,a_2,b_1,b_2$ are uniaccessible points of $\bd U$. Then any two paths in $U$ that connect $a_1$ to $a_2$ and $b_1$ to $b_2$ must intersect.
\end{lemma}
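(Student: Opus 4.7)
The plan is to transport the problem from $U$ to the unit disk $\DD$ via a Riemann map and reduce it to the Jordan curve theorem on $\ov{\DD}$. Let $\phi: \DD \to U$ be a Riemann map. Since the four points $a_1, a_2, b_1, b_2$ are uniaccessible, $\phi^{-1}$ extends continuously at each to yield four distinct points $\tilde{a}_1, \tilde{a}_2, \tilde{b}_1, \tilde{b}_2$ on $\bd \DD$. Moreover, any path in $\ov{U}$ with interior in $U$ terminating at one of these four points lifts under $\phi^{-1}$ to a path in $\ov{\DD}$ terminating at the corresponding point of $\bd \DD$.

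The crux of the argument is to show that the pairs $(\tilde{a}_1, \tilde{a}_2)$ and $(\tilde{b}_1, \tilde{b}_2)$ are linked on $\bd \DD$. For this I use the ambient disk $D$. The straight chord $\sigma \subset \ov{D}$ joining $a_1$ to $a_2$ separates $D$ into two open subregions $D_1, D_2$ with $b_1 \in \bd D \cap \ov{D_1}$ and $b_2 \in \bd D \cap \ov{D_2}$. The open chord $\sigma \sm \{a_1, a_2\}$ lies in $U$, and uniaccessibility ensures that $\tilde{\sigma} := \phi^{-1}(\sigma \sm \{a_1, a_2\}) \cup \{\tilde{a}_1, \tilde{a}_2\}$ is a Jordan arc in $\ov{\DD}$ joining $\tilde{a}_1$ to $\tilde{a}_2$; by the Jordan curve theorem, it separates $\ov{\DD}$ into two closed topological half-disks $\ov{H_1}$ and $\ov{H_2}$. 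Since $\phi^{-1}(D_1)$ and $\phi^{-1}(D_2)$ are connected subsets of $\DD$ disjoint from $\tilde{\sigma}$, they lie one in each $H_j$; choosing sequences in $D_j$ converging to $b_j$ and applying uniaccessibility to $\phi^{-1}$ then shows (after a suitable relabeling) that $\tilde{b}_j \in \ov{H_j}$ for $j=1,2$. Since the four points are distinct, $\tilde{b}_1$ and $\tilde{b}_2$ lie in different components of $\bd \DD \sm \{\tilde{a}_1, \tilde{a}_2\}$.

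Once this linking is established, the conclusion is standard: any two paths in $U$ joining $a_1$ to $a_2$ and $b_1$ to $b_2$ lift to paths in $\ov{\DD}$ with linked endpoints on $\bd \DD$; after reducing each to a Jordan subarc with the same endpoints, the Jordan curve theorem (closing up one arc along an arc of $\bd \DD$) forces them to meet at an interior point of $\DD$, and pushing forward by $\phi$ produces the required intersection in $U$. The main obstacle is the linking-preservation step above: a conformal map can severely distort the boundary correspondence, so it is not automatic that a linking relation on $\bd D$ transfers to the prime-end picture on $\bd \DD$. The role of the disk $D \subset U$ is precisely to supply a concrete separating arc (its chord) whose topology in $U$ survives the pull-back by $\phi^{-1}$.
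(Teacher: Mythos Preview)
Your proof is correct and follows essentially the same strategy as the paper: transport everything to $\DD$ via a Riemann map, use uniaccessibility to pin down four well-defined boundary points, and exploit the inclusion $D \subset U$ to verify that the linking survives. The only difference is in how the linking-preservation step is executed: the paper takes the four radial segments in $D$ from its center $w$ to $a_1,a_2,b_1,b_2$ and observes that a conformal map preserves the cyclic order of these arcs at $w$, hence the cyclic order of their landing points on $\bd\DD$; you instead use the single chord $[a_1,a_2]$ and a separation argument. The paper's device is marginally cleaner (one line instead of a Jordan-curve/closure argument), but the content is the same.
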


The assumptions that $D \subset U$ and $a_1,a_2,b_1,b_2$ are uniaccessible are both necessary; compare \figref{linked}. 

%%%%%%%%%%%%%%%%%%%%%%%%%%%%%%%%%%%%%%%%%%%
\begin{figure}[]
	\centering
	\begin{overpic}[width=0.7\textwidth]{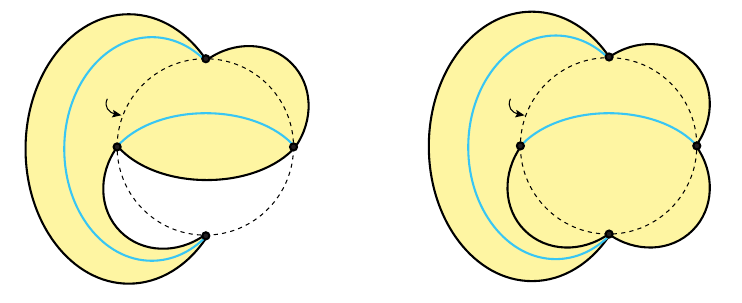}
\put (80.5,19) {$U$}
\put (26,19) {$U$}
\put (13,28) {$\bd D$}
\put (67.5,28) {$\bd D$}
	\end{overpic}
\caption{\footnotesize Examples of simply connected domains $U$ that violate the conclusion of \lemref{Usim}. In both cases there are non-intersecting paths that connect linked pairs on $\bd D \cap \bd U$.}
\label{linked}
\end{figure}
%%%%%%%%%%%%%%%%%%%%%%%%%%%%%%%%%%%%%%%%%%% 

\begin{proof}
Let $w$ be the center of $D$ and take a conformal isomorphism $\phi: (U,w) \oset[-0.2ex]{\cong}{\longrightarrow} (\DD,0)$. By elementary conformal mapping theory the four radial lines in $U$ starting at $w$ and landing on $a_1,a_2,b_1,b_2$ map under $\phi$ to four disjoint paths in $\DD$ starting at $0$ and landing at distinct points $a'_1,a'_2,b'_1,b'_2$. The pairs $(a'_1,a'_2), (b'_1,b'_2)$ on the unit circle $\bd \DD$ are linked because $\phi$ preserves the cyclic order of the radial lines near $w$. Now since $a_1, a_2,b_1,b_2$ are uniaccessible, any two paths in $U$ that connect $a_1$ to $a_2$ and $b_1$ to $b_2$ map under $\phi$ to two paths in $\DD$ that connect the same pairs $(a'_1,a'_2)$ and $(b'_1,b'_2)$, and the result follows.  
\end{proof}
                
\subsection{The order and structure of homoclinic arcs}\label{oshom}

The proof of \thmref{B} will be based on a series of statements about the order of the homoclinics that are based at a given point in $\sL \cap J$ (Corollaries \ref{homuni}-\ref{earlast}). We derive these statements from the following topological result which will also be used repeatedly in \S \ref{BD}:   

\begin{theorem}\label{unlinked}
Fix $w \in \sL \cap J$ and $h \geq 1$. Let $\eta_0< \cdots <\eta_{h+1}$ be any collection of $\sL$-arcs such that $\eta_1, \ldots, \eta_h$ are homoclinics based at $w$ and $w^+(\eta_0)=w^-(\eta_{h+1})=w$ (so $\eta_0, \eta_{h+1}$ may or may not be homoclinics). Assume further that the homoclinics in this collection form a union of chains. Take a sufficiently small good disk $D$ centered at $w$ for the collection $\{ \eta_0, \ldots, \eta_{h+1} \}$ and let 
$$
v_j := w^+(\eta_j,D) \quad \text{and} \quad u_{j+1} := w^-(\eta_{j+1},D) \qquad (0 \leq j \leq h),
$$
so $v_0<u_1<v_1<\cdots<u_h<v_h<u_{h+1}$. Then the set of pairs 
$$
\{ (v_0,u_1), \ (v_1,u_2), \ \ldots, \ (v_h,u_{h+1}) \}
$$ 
on $\bd D$ is unlinked. 
\end{theorem}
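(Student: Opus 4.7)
The plan is to extract, for each $j=0,1,\dots,h$, a simple chord $\kappa_j^n$ of $D$ (a simple arc in $\ov D$ with endpoints on $\bd D$) whose endpoints converge to $v_j$ and $u_{j+1}$ as $n\to\infty$, so that for each large $n$ the $\kappa_j^n$'s are pairwise disjoint. Unlinkedness of the pairs $(v_j,u_{j+1})$ on $\bd D$ will then follow from the standard planar topology fact that pairwise disjoint chords of a disk correspond to pairwise unlinked boundary pairs.

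To produce such chords, I would take representatives of the equivalence classes $\Pi^{-1}(v_j)=\langle t_n^{(j)}\rangle$ and $\Pi^{-1}(u_{j+1})=\langle s_n^{(j+1)}\rangle$, so that $R_n(t_n^{(j)})\to v_j$ and $R_n(s_n^{(j+1)})\to u_{j+1}$; by \lemref{limex}, the given intrinsic ordering $v_0<u_1<\cdots<u_{h+1}$ forces the potential inequalities $t_n^{(0)}<s_n^{(1)}<t_n^{(1)}<\cdots<s_n^{(h+1)}$ for all sufficiently large $n$. The ray sub-arcs $\rho_j^n:=R_n|_{[t_n^{(j)},s_n^{(j+1)}]}$ are then $h+1$ pairwise disjoint simple sub-arcs of the simple curve $R_n$, with endpoints converging to the required pairs. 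Using the good disk property of $D$ together with the $C^\infty$-approximation of $\sL$-arcs by ray sub-arcs (\remref{topol}), the intersections of $\rho_j^n$ with $\bd D$ concentrate in two small clusters on $\bd D$ near $a^+(\eta_j)$ and $a^-(\eta_{j+1})$, while the bulk of $\rho_j^n$ traverses a neighborhood of $w$ entirely inside $D$. Defining $\kappa_j^n$ as the connected component of $R_n\cap\ov D$ that contains this bulk piece produces a simple chord of $D$ whose endpoints lie in the two clusters; disjointness of the $\rho_j^n$'s yields disjointness of the $\kappa_j^n$'s, and their endpoint pairs are therefore pairwise unlinked on $\bd D$.

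The remaining step is to transfer unlinkedness from the chord-endpoint pairs to the target pairs $(v_j,u_{j+1})$. When two $\sL$-arcs in the collection lie in distinct parabolic basins, their associated clusters on $\bd D$ are disjoint and the transfer is immediate. When they lie in a common chain, the two clusters coincide, but the nested structure of the arcs $I_{\eta_j}$ provided by Property (P1) of \S\ref{gd} organizes the cluster points coherently, so that cluster-local perturbations of endpoints preserve all linking relations with other pairs. Letting $n\to\infty$ yields the claimed unlinkedness of the pairs $\{(v_0,u_1),\dots,(v_h,u_{h+1})\}$.

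\textbf{Main obstacle.} The delicate step is the last one in the presence of several homoclinics sharing a common chain; resolving it requires a careful use of Property (P1) to verify that both the bulk-chord endpoints and the target points $v_j,u_{j+1}$ sit coherently within the shared $a^\pm$ clusters, so that no new linkings appear under the limiting perturbation.
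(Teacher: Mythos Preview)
Your approach has a genuine gap at the step where you claim that ``the bulk of $\rho_j^n$ traverses a neighborhood of $w$ entirely inside $D$'' and that the intersections of $\rho_j^n$ with $\bd D$ concentrate in the two clusters near $a^+(\eta_j)$ and $a^-(\eta_{j+1})$. The theorem is stated for an \emph{arbitrary} collection $\eta_0<\cdots<\eta_{h+1}$ satisfying the chain hypothesis; nothing forces these arcs to be consecutive in the intrinsic potential order. In the applications (e.g.\ the corollaries that follow) there will typically be other $\sL$-arcs $\xi$ with $\eta_j<\xi<\eta_{j+1}$ that are \emph{not} in the chosen collection: homoclinics at $w$ in other basins, or even heteroclinics and homoclinics based at other points of $\sL\cap J$. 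Any subsequential Hausdorff limit of $\rho_j^n$ is a connected subset of $\sL$ containing $v_j$ and $u_{j+1}$ and hence, by \lemref{EE}, contains every such $\xi$. Since each such $\xi$ leaves every neighborhood of $w$, the segments $\rho_j^n$ must repeatedly exit $\ov D$, and their crossings of $\bd D$ need not be confined to the two intended clusters. In particular there is no single ``bulk'' component of $R_n\cap\ov D$ joining the clusters at $a^+(\eta_j)$ and $a^-(\eta_{j+1})$, so your chord $\kappa_j^n$ is not well defined.

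Even in the favorable case where $\eta_j,\eta_{j+1}$ happen to be consecutive, your ``main obstacle'' is more serious than you suggest: the endpoints of the innermost component of $\rho_j^n\cap\ov D$ are only known to lie somewhere in the $a^+(\eta_j)$ and $a^-(\eta_{j+1})$ clusters, not at (or near) $v_j,u_{j+1}$ specifically. When two pairs share a cluster, Property~(P1) alone does not tell you that the chord endpoints and the target points sit on the same side of one another within the cluster; you would need a monotonicity statement along the ray that you have not established. The paper sidesteps both difficulties by abandoning the idea of chords in $D$ altogether: it builds good transversals $\Sigma_i$ whose \emph{only} intersections with $\sL_{\leq 1}$ are prechosen points $z_j\in\eta_j$, assembles a set $E_n$ (transversals plus ray segments approximating the $\tilde\eta_j$) with simply connected complement in $\CC$, and then applies \lemref{Usim} in $\CC\sm E_n$. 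A putative linked pair then forces one of the relevant ray segments to hit a transversal, producing an accumulation point in $\{z_0,\ldots,z_{h+1}\}$ whose position contradicts the order bounds from \lemref{EE}. The transversals are precisely the device that controls the uncontrolled excursions of $\rho_j^n$ away from $D$.
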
  

\begin{proof}
For each $0 \leq j \leq h+1$ choose a point $z_j \in \eta_j$, with $z_{h+1}<R(1)$. By \lemref{egt} we can take pairwise disjoint good transversals $\Sigma_1, \ldots, \Sigma_r$ so that $\bigcup_{i=1}^r \Sigma_i \cap \sL_{\leq 1} = \{ z_0, \ldots, z_{h+1} \}$. By choosing the good disk $D$ sufficiently small we can guarantee that the $\Sigma_i$ are disjoint from $\ov{D}$ and that
\begin{equation}\label{ccc} 
z_0<v_0<u_1<z_1<v_1<\cdots<u_h<z_h<v_h<u_{h+1}<z_{h+1}<R(1). 
\end{equation}
By transversality, for all large $n$ the ray $R_n$ meets $\bd D$ at nearby points 
$$
v_{n,j} := R_n(t_{n,j}) \quad \text{and} \quad u_{n,j+1} := R_n(s_{n,j+1}) \qquad (0 \leq j \leq h).
$$ 
Here 
$$
\langle t_{n,0} \rangle < \langle s_{n,1} \rangle < \langle t_{n,1} \rangle < \ldots < \langle s_{n,h} \rangle < \langle t_{n,h} \rangle < \langle s_{n,h+1} \rangle
$$ 
are the respective preimages of $v_0,u_1,v_1, \ldots,u_h,v_h,u_{h+1}$ under the homeomorphism $\Pi: \sS \to \sL \sm (J \cup \{ \infty \})$ of \S\ref{ssord}. Complete the construction at the two ends by choosing $u_0:=\Pi(\langle s_{n,0} \rangle) \in \eta_0$ and $v_{h+1}:=\Pi(\langle t_{n,h+1} \rangle) \in \eta_{h+1}$ such that $u_0<z_0$ and $z_{h+1}<v_{h+1}<R(1)$, and set $u_{n,0}:=R_n(s_{n,0}), v_{n,h+1}:=R_n(t_{n,h+1})$ (see \figref{R5}). \vs 

%%%%%%%%%%%%%%%%%%%%%%%%%%%%%%%%%%%%%%%%%%%
\begin{figure}[]
	\centering
	\begin{overpic}[width=\textwidth]{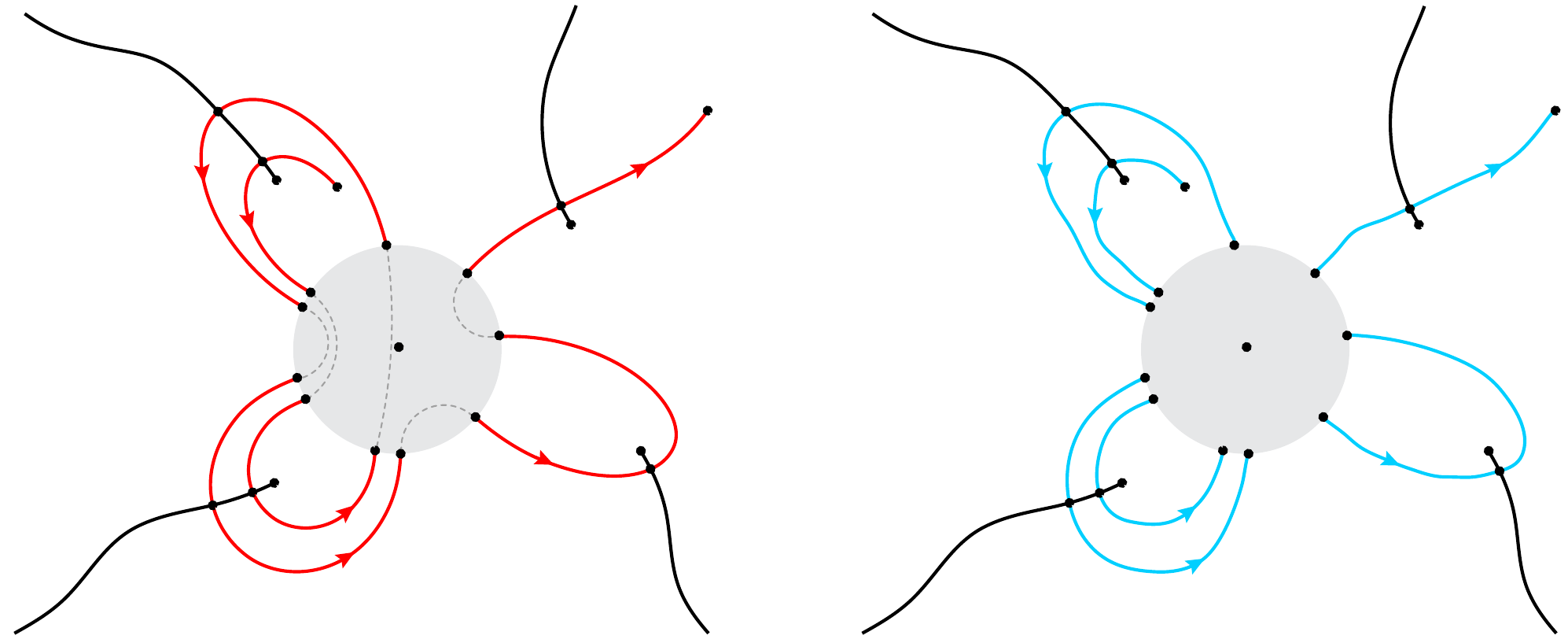}
\put (17.3,25) {\color{red}\small{$\tilde{\eta}_0$}}
\put (20,9.3) {\color{red}\small{$\tilde{\eta}_1$}}
\put (10,28) {\color{red}\small{$\tilde{\eta}_2$}}
\put (22,3) {\color{red}\small{$\tilde{\eta}_3$}}
\put (33,9) {\color{red}\small{$\tilde{\eta}_4$}}
\put (40,27.4) {\color{red}\small{$\tilde{\eta}_5$}}
\put (2,36) {\small $\Sigma_1$}
\put (1,3) {\small $\Sigma_2$}
\put (44.5,2) {\small $\Sigma_3$}
\put (32.5,38) {\small $\Sigma_4$}
\put (81,16) {\color{gray}\small{$D$}}
\put (25.7,17) {\footnotesize{$w$}}
\put (78.5,16.7) {\footnotesize{$w$}}
\put (16.6,31.2) {\footnotesize{$z_0$}}
\put (14.7,7.4) {\footnotesize{$z_1$}}
\put (13.3,34.8) {\footnotesize{$z_2$}}
\put (11.8,6.8) {\footnotesize{$z_3$}}
\put (42.3,10) {\footnotesize{$z_4$}}
\put (33.3,27.8) {\footnotesize{$z_5$}}
\put (21,27.2) {\tiny{$u_0$}}
\put (19,13.8) {\tiny{$u_1$}}
\put (25.2,25) {\tiny{$u_2$}}
\put (17.3,17.3) {\tiny{$u_3$}}
\put (30.5,14.4) {\tiny{$u_4$}}
\put (28,24) {\tiny{$u_5$}}
\put (19.9,22.7) {\tiny{$v_0$}}
\put (22,11.7) {\tiny{$v_1$}}
\put (18,19.8) {\tiny{$v_2$}}
\put (26.1,11.6) {\tiny{$v_3$}}
\put (31,20.2) {\tiny{$v_4$}}
\put (44.5,34.5) {\tiny{$v_5$}}
\put (74,27.5) {\tiny{$u_{n,0}$}}
\put (73,13.8) {\tiny{$u_{n,1}$}}
\put (79.2,25) {\tiny{$u_{n,2}$}}
\put (71.7,17.5) {\tiny{$u_{n,3}$}}
\put (84.3,14.8) {\tiny{$u_{n,4}$}}
\put (82.2,22.3) {\tiny{$u_{n,5}$}}
\put (73.2,22.8) {\tiny{$v_{n,0}$}}
\put (77,12.8) {\tiny{$v_{n,1}$}}
\put (72,19.9) {\tiny{$v_{n,2}$}}
\put (80.1,11.6) {\tiny{$v_{n,3}$}}
\put (85,20.2) {\tiny{$v_{n,4}$}}
\put (98.5,34.5) {\tiny{$v_{n,5}$}}

	\end{overpic}
\caption{\footnotesize Illustration of the proof of \thmref{unlinked}. Left: The subarcs $\tilde{\eta}_j$ of $\eta_j$ are shown in red and the good transversals $\Sigma_i$ in black. The union of red and black is the set $E$. Right: The arcs in blue are segments of the external ray $R_n$ for a large $n$ that uniformly approximate the $\tilde{\eta}_j$. The union of blue and black is the set $E_n$. Both $\CC \sm E$ and $\CC \sm E_n$ are simply connected.}
\label{R5}
\end{figure}
%%%%%%%%%%%%%%%%%%%%%%%%%%%%%%%%%%%%%%%%%%%

For $0 \leq j \leq h+1$ let $\tilde{\eta}_j$ denote the subarc of $\eta_j$ that joins $u_j$ to $v_j$. The closed set 
$$
E := \bigcup_{i=1}^r \Sigma_i \cup \bigcup_{j=0}^{h+1} \tilde{\eta}_j 
$$ 
has simply connected complement in $\CC$ (this is the reason why we introduced the good transversals $\Sigma_i$). For each $0 \leq j \leq h+1$ the ray segment $R_n([s_{n,j},t_{n,j}])$ that joins $u_{n,j}$ to $v_{n,j}$ tends to the subarc $\tilde{\eta}_j$ in $C^\infty$-topology as $n \to \infty$. Thus, for large $n$ the closed set 
$$
E_n := \bigcup_{i=1}^r \Sigma_i \cup \bigcup_{j=0}^{h+1} R_n([s_{n,j},t_{n,j}]) 
$$ 
is $C^\infty$-close to $E$. Since all the intersections in $E$ are transversal, it follows that the complement $\CC \sm E_n$ is also simply connected (see \figref{R5}). \vs

Now suppose the pairs $(v_j,u_{j+1})$ and $(v_k,u_{k+1})$ are linked for some $0 \leq j<k \leq h$. Then for all large $n$ the pairs $(v_{n,j},u_{n,j+1})$ and $(v_{n,k},u_{n,k+1})$ are linked as well. At least one of the open ray segments $R_n (]t_{n,j},s_{n,j+1}[)$ between $v_{n,j}$ and $u_{n,j+1}$ or $R_n (]t_{n,k},s_{n,k+1}[)$ between $v_{n,k}$ and $u_{n,j+k}$ must meet $E_n$; otherwise by \lemref{Usim} these ray segments would have to intersect, which is impossible. Since these ray segments are clearly disjoint from the ray segments in $E_n$, one of them must intersect $\bigcup_{i=1}^r \Sigma_i$. In other words, for all large $n$ the ray $R_n$ intersects $\bigcup_{i=1}^r \Sigma_i$ at some point $R_n(\lambda_n)$, where 
\begin{equation}\label{asat1}
t_{n,j}<\lambda_n<s_{n,j+1} \qquad \text{or} \qquad t_{n,k}<\lambda_n<s_{n,k+1}. 
\end{equation}
Any accumulation point $\tilde{z}$ of the sequence $\{ R_n(\lambda_n) \}$ must then belong to $\bigcup_{i=1}^r \Sigma_i \cap \sL_{\leq 1} = \{ z_0, \ldots, z_{h+1} \}$. But then \lemref{EE} together with \eqref{asat1} implies that $v_j \leq \tilde{z} \leq u_{j+1}$ or $v_k \leq \tilde{z} \leq u_{k+1}$, contradicting \eqref{ccc}.
\end{proof}

We now gather several corollaries of \thmref{unlinked}.

\begin{corollary}\label{homuni}
Two distinct $\sL$-arcs in a given parabolic basin must be homoclinics belonging to the same earring. In particular, every parabolic basin contains at most one earring of homoclinics. 
\end{corollary}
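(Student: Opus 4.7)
The plan is to prove the corollary in two steps, both via \thmref{unlinked}. Let $B = P^{\circ q}(B)$ be a parabolic basin meeting $\sL$, with unique parabolic boundary fixed point $w$ and attracting direction $a^+$ of $B$ at $w$. By the Basic Structure Lemma and \thmref{A}, every $\sL$-arc $\xi \subset B$ satisfies $w^+(\xi) = w$, and $w = w_k$ lies in $\sL \cap J$ for some $0 \leq k \leq N$. I will show that (i) $B$ cannot contain a heteroclinic and a homoclinic simultaneously, and (ii) $B$ cannot contain two distinct earrings of homoclinics. Combined with \corref{hetuni}(i), these two statements yield the full claim.

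For (i), suppose $\gamma_j \subset B$ is a heteroclinic and $\eta \subset B$ a homoclinic; then $w = w_{j-1}$. Let $\eta_2 := R$ if $j = 1$ and $\eta_2 := \gamma_{j-1}$ if $j \geq 2$, so $w^-(\eta_2) = w$ and, by \corref{hetuni}(ii), $\gamma_j < \eta < \eta_2$ in intrinsic order. I apply \thmref{unlinked} with $h = 1$ and $\eta_0 = \gamma_j$, $\eta_1 = \eta$, $\eta_2$, which asserts that the pairs $(v_0, u_1)$ and $(v_1, u_2)$ on the boundary of a sufficiently small good disk $D$ at $w$ are unlinked. To derive a contradiction, I observe that $\gamma_j$ cannot enter $\Delta_\eta$ near $w$: by \lemref{5eq}, $\Delta_\eta$ is $P^{\circ q}$-invariant, so any segment of $\gamma_j$ inside $\Delta_\eta$ would be trapped there under forward iteration, contradicting $w^-(\gamma_j) = w_j \neq w$. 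Hence $\gamma_j$ lies in the sector of $B$ opposite to $\Delta_\eta$ near $w$, forcing $v_0$ and $v_1$ onto opposite sides of the radial line at $a^+$; meanwhile $u_1$ lies in the $B$-sector near the repelling direction of $\eta$, and $u_2$ lies outside the $B$-sector since $\eta_2 \not\subset B$. The resulting cyclic order on $\bd D$ puts $v_1$ and $u_2$ on opposite sides of the chord $(v_0, u_1)$, so the pairs are linked---contradicting \thmref{unlinked}.

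For (ii), by (i) we may assume $B$ contains no heteroclinic, and we suppose $B$ contains two distinct earrings with outermost homoclinics $\eta, \eta'$. These must have opposite orientations, since two positively (or two negatively) oriented homoclinics based at $w$ are necessarily nested and hence belong to a common earring; consequently $\Delta_\eta$ and $\Delta_{\eta'}$ occupy the two distinct sectors of $B$ adjacent to $a^+$. Assume without loss of generality $\eta < \eta'$ in intrinsic order, and choose $\eta_2$ with $w^-(\eta_2) = w$ as in (i). I apply \thmref{unlinked} with $\eta_0 = \eta$, $\eta_1 = \eta'$, and this $\eta_2$; the two singleton collections $\{\eta\}, \{\eta'\}$ form a union of chains, so the hypothesis is satisfied. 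The analogous cyclic-order argument at $w$ again produces a linked configuration, contradicting \thmref{unlinked}. The hardest step will be the cyclic-order case analysis, which branches according to which repelling direction at $w$ the ``external'' arc $\eta_2$ is tangent to, and whether this direction coincides with one of the repelling directions of $\eta$ or $\eta'$; the decisive observation in every subcase is that arcs contained in $B$ lie in the $B$-sector while arcs not contained in $B$ lie in its complement, so $u_2$ never sits in the short angular window near $a^+$ delimited by $\{v_0, v_1\}$, guaranteeing that $v_1$ and $u_2$ are separated by $(v_0, u_1)$.
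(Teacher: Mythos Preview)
Your approach is essentially the same as the paper's: reduce to the two illegal configurations (heteroclinic + homoclinic, or two homoclinics in different earrings) and derive a contradiction from \thmref{unlinked}. The paper handles both cases at once by taking an arbitrary $\sL$-arc $\gamma$ in $B$ not in the earring of $\eta$, ordering $\gamma<\eta<\xi$ (where $\xi$ is the heteroclinic or $R$ with $w^-(\xi)=w$), and then invoking property~(P2) of good disks to get $v_0,u_2\notin I_\eta$; since $v_0,v_1$ are both $o(\ve)$-close to the attracting direction $a^+$ while $u_2$ is $o(\ve)$-close to a repelling direction, this forces the pairs $(v_0,u_1),(v_1,u_2)$ to be linked. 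This is cleaner than your sector/side-of-radial-line language, which is imprecise: $v_0,v_1$ are \emph{both} asymptotic to $a^+$, so ``opposite sides of the radial line at $a^+$'' is not the right formulation---what matters is $v_0\notin I_\eta$, which is exactly (P2).

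Two small points to fix. In case~(i) you must take $\eta$ to be the \emph{outermost} homoclinic in its earring; otherwise the singleton $\{\eta\}$ is not a chain and the hypothesis of \thmref{unlinked} fails. In case~(ii), your claim that two same-orientation homoclinics in $B$ are nested is correct (it follows from \lemref{5eq} and the fact that the Fatou images $\tilde U^+_\eta,\tilde U^+_{\eta'}$ are nested half-planes, giving $\Delta_\eta\cap W\subset\Delta_{\eta'}\cap W$ or vice versa), but it is unnecessary: the paper's argument via (P2) does not use orientations at all.
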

 
\begin{proof}
We already know from \corref{hetuni}(i) that a parabolic basin $B=P^{\circ q}(B)$ contains at most one heteroclinic. Thus, we must rule out a homoclinic/heteroclinic pair or a homoclinic/homoclinic pair in different earrings in $B$. Assume by way of contradiction that $B$ contains a homoclinic $\eta$ and an $\sL$-arc $\gamma$ not in the earring of $\eta$. Without loss of generality we can take $\eta$, and $\gamma$ if it is also a homoclinic, to be the outermost elements in their respective earrings. Set $w:=w^+(\gamma)=w^+(\eta)$ and let $\xi$ be the unique heteroclinic with $w^-(\xi)=w$, or $\xi=R$ if $w=w_0$. By \corref{hetuni}(ii), $\gamma<\eta<\xi$ if $\gamma$ is a heteroclinic. We may assume the same order even if $\gamma$ is a homoclinic (simply swap $\eta$ and $\gamma$ if necessary). \vs

Take a small good disk $D$ centered at $w$ for the collection $\{ \gamma, \eta, \xi \}$ and set  
$$
v_0:=w^+(\gamma,D), \quad u_1:=w^-(\eta,D), \quad v_1:=w^+(\eta,D), \quad u_2:=w^-(\xi,D).
$$
Since $\gamma$ and $\xi$ are not in the earring of $\eta$, neither of the points $v_0,u_2$ belongs to the arc $I_\eta \subset \bd D$ bounded by $u_1,v_1$ (property (P2) of good disks in \S \ref{gd}). Moreover, $\gamma, \eta$ are in the same basin so $v_0,v_1$ are asymptotically close to the same attracting direction, while $u_2$ is asymptotically close to a repelling direction. Thus, the pairs $(v_0,u_1), (v_1,u_2)$ must be linked. This contradicts \thmref{unlinked}. 
\end{proof} 

The next corollary shows that the intrinsic potential order on the set of homoclinics in an earring is compatible with the order coming from embedding in the plane:
 
\begin{corollary}\label{homord}
Suppose $\gamma, \eta$ are homoclinics in the same earring, with $\Delta_\eta \subset \Delta_\gamma$. Then, $\eta<\gamma$. 
\end{corollary}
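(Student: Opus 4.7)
The plan is to assume for contradiction that $\gamma < \eta$ and apply \thmref{unlinked} to a carefully chosen collection of $\sL$-arcs at $w$, producing a forbidden pair of linked pairs on the boundary of a small good disk.

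First I would reduce to consecutive pairs in the chain. Since the intrinsic potential order is transitive on $\sL$-arcs and between any two homoclinics in an earring lie only finitely many others (the remark following \thmref{hetfin}), it suffices to prove $\xi_{k+1} < \xi_k$ whenever $\xi_k$ and $\xi_{k+1}$ are consecutive in the chain $\xi_1 \supsetneq \xi_2 \supsetneq \cdots \supsetneq \xi_{k+1}$ running from the outermost element of the earring down to $\xi_{k+1} = \eta$, with $\gamma = \xi_k$.

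To invoke \thmref{unlinked} I would form the collection $\{\xi_1, \ldots, \xi_{k+1}\}$ together with two flanking arcs at $w$. For $\eta_{h+1}$ take the spine arc with $w^-(\cdot)=w$ (the heteroclinic $\gamma_j$ when $w=w_j$ with $j\geq 1$, or the ray $R$ when $w=w_0$), which always exists and by \corref{hetuni}(ii) exceeds every homoclinic at $w$ in intrinsic potential. For $\eta_0$ take the spine heteroclinic with $w^+(\cdot)=w$ when one exists, and otherwise (this only happens when $w=w_N$) take whichever $\xi_i$ has smallest intrinsic potential among the chain. Ordering the remaining arcs by potential completes $\eta_1 < \cdots < \eta_h$, and the homoclinics in the resulting collection are exactly the chain $\{\xi_1, \ldots, \xi_{k+1}\}$, so the chain condition of \thmref{unlinked} is satisfied.

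The contradiction is then extracted from the cyclic arrangement on $\bd D$ for a sufficiently small good disk $D$ centered at $w$. The nesting $I_{\xi_1} \supset \cdots \supset I_{\xi_{k+1}}$ places the endpoints $u^{(i)} := w^-(\xi_i,D)$ and $v^{(i)} := w^+(\xi_i,D)$ along $I_{\xi_1}$ in the nested parentheses pattern $u^{(1)}, u^{(2)}, \ldots, u^{(k+1)}, v^{(k+1)}, \ldots, v^{(2)}, v^{(1)}$, while the anchor point $u_{h+1} = w^-(\eta_{h+1},D)$ always sits outside $I_{\xi_1}$. The hypothesis $\xi_k < \xi_{k+1}$ prevents the permutation $\sigma$ that sorts the chain by intrinsic potential from being strictly decreasing, forcing at least one adjacent inversion $\sigma(l) < \sigma(l+1)$. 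For any such inversion the pair $(v^{(\sigma(l))}, u^{(\sigma(l+1))})$ partitions $\bd D$ into two arcs in such a way that, after a short descent along the sort using the outside anchor $u_{h+1}$, a second pair $(v_j,u_{j+1})$ of the collection has its two endpoints in distinct components; this linked configuration contradicts the unlinkedness conclusion of \thmref{unlinked}.

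The hardest point is the case $w = w_N$, where no spine heteroclinic ends at $w$ and $\eta_0$ must be drawn from the chain itself. Then the outer flanking point $v_0$ lies inside $I_{\xi_1}$ rather than outside, but the argument still succeeds because only the outside anchor $u_{h+1}$ is needed to expose the linked pair; this requires a short case analysis on whether $\sigma(m) > \sigma(l)$ or $\sigma(m) < \sigma(l)$, either directly producing a linkage with the last pair $(v^{(\sigma(m))}, u_{h+1})$ or recursing along the sort until one appears.
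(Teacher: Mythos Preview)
Your approach is the paper's --- set up \thmref{unlinked} on a chain together with a flanking arc at $w$ and exhibit a linked pair --- but the execution has a gap at the decisive step, and one part of the setup is superfluous.

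The reduction to consecutive $\xi_k,\xi_{k+1}$ buys nothing: you immediately revert to the full chain $\xi_1,\ldots,\xi_{k+1}$ and a permutation $\sigma$, and consecutiveness is never used again. More importantly, you only \emph{assert} that an adjacent inversion $\sigma(l)<\sigma(l+1)$ leads, ``after a short descent along the sort'' or by ``recursing along the sort'', to a linked configuration. You do not name the second pair, nor explain why the descent must terminate in a linkage rather than unwind harmlessly. Since this is exactly the content of the corollary, the proposal as written is incomplete here.

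The paper closes this with a minimality trick that eliminates all recursion. Label the earring $\eta_1,\eta_2,\ldots$ by nesting ($\Delta_{\eta_1}\supset\Delta_{\eta_2}\supset\cdots$) and take the \emph{least} $n$ with $\eta_{n-1}<\eta_n$. Minimality forces $\eta_{n-1}<\cdots<\eta_1$, so the immediate predecessor $\eta_k$ and (when it exists) immediate successor $\eta_\ell$ of $\eta_n$ among $\eta_1,\ldots,\eta_{n-1}$ satisfy $k>\ell$; hence $I_{\eta_n}\subset I_{\eta_k}\subset I_{\eta_\ell}$ and the two consecutive pairs $\bigl(w^+(\eta_k,D),w^-(\eta_n,D)\bigr)$ and $\bigl(w^+(\eta_n,D),w^-(\eta_\ell,D)\bigr)$ are visibly linked. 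If $\eta_n$ has no successor one replaces $\eta_\ell$ by the spine arc $\xi$ with $w^-(\xi)=w$, whose entry point $w^-(\xi,D)$ lies outside every $I_{\eta_j}$. Note that only this \emph{right} flank $\xi$ is ever needed; no left flank $\eta_0$ has to be manufactured, so your ``hardest point'' at $w=w_N$ simply does not arise.
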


\begin{proof}
Label the homoclinics in the earring $\eta_1, \eta_2, \eta_3, \ldots$ so that $\Delta_{\eta_1} \supset \Delta_{\eta_2} \supset \Delta_{\eta_3} \supset \cdots$. It suffices to show that $\eta_j < \eta_{j-1}$ for all $j$. Suppose there is a least index $n \geq 2$ such that the opposite order $\eta_{n-1}<\eta_n$ holds. Then $\eta_n$ has an immediate predecessor in the $n$-chain $\{ \eta_1, \ldots, \eta_n \}$ with respect to $<$, i.e., there is a unique $1 \leq k \leq n-1$ such that $\eta_k<\eta_n$ and no other $\eta_j$ comes between $\eta_k,\eta_n$. We consider two cases: \vs

{\it Case 1.} $\eta_n$ has an immediate successor in this $n$-chain, i.e., there is a unique $1 \leq \ell \leq n-1$ such that $\eta_n<\eta_\ell$ and no other $\eta_j$ comes between $\eta_n,\eta_\ell$. Note that $k > \ell$ by minimality of $n$. Let $w$ be the point where the earring is based at. Take a sufficiently small good disk $D$ centered at $w$ for the collection $\{ \eta_1, \ldots, \eta_n \}$ and set  
$$
v_0:=w^+(\eta_k,D), \quad u_1:=w^-(\eta_n,D), \quad v_1:=w^+(\eta_n,D), \quad u_2:=w^-(\eta_\ell,D).
$$   
Since $\Delta_{\eta_n} \subset \Delta_{\eta_k} \subset \Delta_{\eta_\ell}$, we have $I_{\eta_n} \subset I_{\eta_k} \subset I_{\eta_\ell}$, which shows that the pairs $(v_0,u_1), (v_1,u_2)$ must be linked. This contradicts \thmref{unlinked}. \vs 

{\it Case 2.} $\eta_n$ has no successor in this $n$-chain. Let $\xi$ be the unique heteroclinic with $w^-(\xi)=w$, or $\xi=R$ if $w=w_0$. By \corref{hetuni}(ii), $\eta_j<\xi$ for all $1 \leq j \leq n$.  Take a sufficiently small good disk $D$ centered at $w$ for the collection $\{ \eta_1, \ldots, \eta_n, \xi \}$ and set  
$$
v_0:=w^+(\eta_k,D), \quad u_1:=w^-(\eta_n,D), \quad v_1:=w^+(\eta_n,D), \quad u_2:=w^-(\xi,D).
$$   
Since $\Delta_{\eta_n} \subset \Delta_{\eta_k}$, we have $I_{\eta_n} \subset I_{\eta_k}$ but $u_2 \notin I_{\eta_k}$ (properties (P1) and (P2) of good disks in \S \ref{gd}). It follows that the pairs $(v_0,u_1),(v_1,u_2)$ are linked, which again contradicts \thmref{unlinked}. 
\end{proof}

\begin{corollary}\label{earlast}
Suppose there is an earring based at $w \in \sL \cap J$ containing at least two distinct homoclinics. Then there can be no heteroclinic $\gamma$ with $w^+(\gamma)=w$. 
\end{corollary}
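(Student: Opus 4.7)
I would argue by contradiction: assume there is a heteroclinic arc $\gamma$ with $w^+(\gamma)=w$ together with at least two distinct homoclinics in the earring based at $w$, and derive a contradiction via \thmref{unlinked}.

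The first step is to assemble four $\sL$-arcs in the order required by the theorem. Within the earring at $w$, let $\alpha$ be the outermost homoclinic (existing by \thmref{hetfin}(ii)) and let $\beta$ be the next homoclinic inside $\alpha$; such a $\beta$ exists since there are at least two homoclinics and between any two homoclinics an earring contains only finitely many others (remark after \thmref{hetfin}). The pair $(\alpha,\beta)$ forms a $2$-chain: $\Delta_\alpha\sm\ov{\Delta_\beta}$ lies in the basin $B$ containing $\alpha$, and by \corref{homuni} all $\sL$-arcs in $B$ belong to this earring, so no intermediate $\sL$-arc can appear. Next I locate an $\sL$-arc $\xi$ with $w^-(\xi)=w$: since $\gamma$ ends at $w$, \thmref{A} shows $w=w_j$ for some $0\leq j\leq N-1$, so I may take $\xi:=\gamma_j$ if $j\geq 1$ and $\xi:=R$ if $j=0$. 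Combining \corref{homord} with \corref{hetuni}(ii) (and the fact that $R$ is the largest $\sL$-arc), the intrinsic potential order is $\gamma<\beta<\alpha<\xi$.

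Now apply \thmref{unlinked} with $h=2$, $\eta_0:=\gamma$, $\eta_1:=\beta$, $\eta_2:=\alpha$, $\eta_3:=\xi$. This furnishes a small good disk $D$ centered at $w$ and six points $v_0,u_1,v_1,u_2,v_2,u_3$ on $\bd D$ with the conclusion that the three pairs $\{(v_0,u_1),(v_1,u_2),(v_2,u_3)\}$ are unlinked. The contradiction will come from showing that $(v_0,u_1)$ and $(v_1,u_2)$ are actually \emph{linked}. The key geometric input is that $\alpha$ and $\beta$ both reside in the basin $B$ (\corref{homuni}): hence they share both the attracting tangent direction of $B$ at $w$ and the same bounding repelling direction, so by the $o(\ve)$ asymptotics of \S \ref{hhintro} the points $u_1,u_2$ cluster within $o(\ve)$ of one point on $\bd D$ while $v_1,v_2$ cluster within $o(\ve)$ of another, at mutual distance $\asymp \ve$. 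Combining this clustering with the nesting $I_\beta\subset I_\alpha$ (property (P1)) forces the cyclic order $u_2,u_1,v_1,v_2$ along $I_\alpha$. Property (P2) gives $v_0\notin I_\alpha$. Therefore the pair $(v_1,u_2)$ splits $\bd D\sm\{v_1,u_2\}$ into a short arc through $I_\alpha$ containing $u_1$ and a complementary arc containing $v_0$, so $(v_0,u_1)$ and $(v_1,u_2)$ are linked, contradicting the conclusion of \thmref{unlinked}.

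The main obstacle is the cyclic order claim $u_2,u_1,v_1,v_2$ along $I_\alpha$, which in turn rests on verifying that both $\alpha$ and $\beta$ emerge from $w$ along the \emph{same} bounding repelling direction of $B$. This is the only nontrivial point: it follows because $\beta\subset\Delta_\alpha$, and $\Delta_\alpha$ occupies near $w$ a single angular sector bounded by $\alpha$'s two tangent directions at $w$ (one attracting, one repelling), leaving only one repelling direction accessible to $\beta$. Once this tangential clustering is secured, the cyclic order and the linking are immediate planar topology.
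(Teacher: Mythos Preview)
Your proposal is correct and follows essentially the same route as the paper: set up the two nested homoclinics together with the heteroclinic $\gamma$, apply \thmref{unlinked}, and show that the pairs $(v_0,u_1)$ and $(v_1,u_2)$ are in fact linked using $I_\beta\subset I_\alpha$, property (P2), and the clustering of the $u$'s and $v$'s near common tangent directions. The one difference is that you introduce a fourth arc $\xi$ (the next heteroclinic or $R$) and invoke \thmref{unlinked} with $h=2$, whereas the paper simply lets the outermost homoclinic itself play the role of $\eta_{h+1}$ and applies the theorem with $h=1$; since your linking contradiction only uses the first three arcs, your extra $\xi$ is harmless but unnecessary. On the other hand, your explicit justification that $\alpha$ and $\beta$ share the same repelling tangent direction (via $\beta\subset\Delta_\alpha$ and the sector picture) is a point the paper leaves implicit, so your write-up is in that respect more complete.
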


\begin{proof}
Suppose such $\gamma$ exists. Let $\xi$ be the outermost homoclinic in the given earring and $\eta$ be the adjacent homoclinic inside $\gamma$. A combination of Corollaries \ref{hetuni}(ii) and \ref{homord} then shows that $\gamma<\eta<\xi$. Take a sufficiently small good disk $D$ centered at $w$ for the collection $\{ \gamma, \eta, \xi \}$ and set 
$$
v_0:=w^+(\gamma,D), \ \ u_1:=w^-(\eta,D), \ \ v_1:=w^+(\eta,D), \ \ u_2:=w^-(\xi,D), \ \ v_2:=w^+(\xi,D).
$$
We have $I_\eta \subset I_\xi$ but $v_1 \notin I_\xi$. It follows that the pairs $(v_0,u_1),(v_1,u_2)$ are linked, contradicting \thmref{unlinked}.   
\end{proof}

\begin{proof}[Proof of \thmref{B}]
By \thmref{A} the complement $\sL \sm \sL^*$ is either empty or consists of finitely many earrings attached to the points of $\sL \cap J$. By \corref{homuni} none of these earrings can share its basin with another earring or heteroclinic arc. By \corref{earlast} every earring with at least two homoclinics must be based at $w_N=\zeta_\infty$.   
\end{proof}

\subsection{Comment on the order of homoclinic arcs} 

As the final word of this section, let us comment on the relative order of the homoclinics in two earrings based at the same point. Suppose $\{ \eta_j \}$ and $\{ \xi_j \}$ are distinct earrings based at $w_N=\zeta_\infty$, labeled so that $\eta_{j+1}$ is inside $\eta_j$ and $\xi_{j+1}$ is inside $\xi_j$ for all $j$. By \corref{homord}, $\eta_{j+1} < \eta_j$ and $\xi_{j+1} < \xi_j$ for all $j$. Without loss of generality assume $\xi_1<\eta_1$. Then an inductive argument using \thmref{unlinked}, which we shall omit, shows that the two earrings must {\it order-interlace}:
$$
\cdots < \xi_3 < \eta_3 < \xi_2 < \eta_2 < \xi_1 < \eta_1.
$$
An explicit example of this phenomenon, communicated to us by H. Inou, is provided by suitable perturbations of the cubic $P(z)=z+z^3$ of the form $P_n(z)=\lambda_n z+z^3$, where $|\lambda_n|>1$ and $\lambda_n \to 1$ tangentially, as illustrated in \figref{double}. For large $n$ the fixed rays $R_{P_n,0}$ follow a double spiral towards their landing point at $0$. The Hausdorff limit $\sL=\lim_{n \to \infty} \ov{R_{P_n,0}}$ consists of $\ov{R_{P,0}}$ together with two order-interlacing earrings, each contained in one of the invariant basins of the parabolic fixed point at $0$. 

\begin{figure}[t]
\centering 
\begin{overpic}[width=\textwidth]{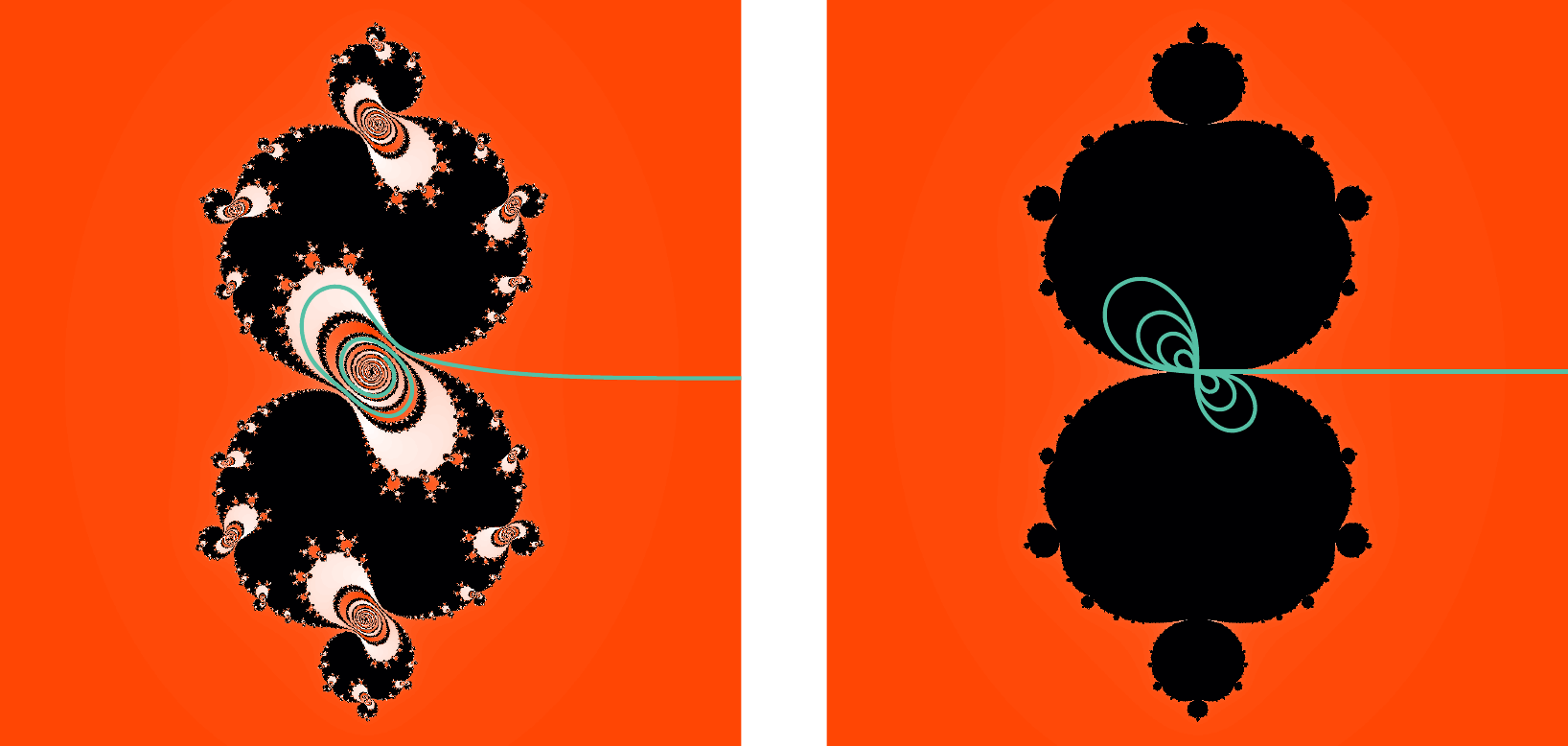}
\end{overpic}
\caption{\footnotesize Perturbations of the cubic $P(z)=z+z^3$ with a degenerate parabolic fixed point at $0$. The Hausdorff limit $\sL$ of the closed ray at angle $0$ contains two order-interlacing earrings based at the same point. The Hausdorff limit of the closed ray at angle $1/2$ contains a similar pair of earrings (not shown).}
\label{double}
\end{figure}

\section{Proof of \thmref{C}}\label{BD}

It is easy to see that the period of a point in $\sL \cap J$ can be a proper divisor of the ray period $q$. As the simplest example, suppose $\zeta_0$ is a repelling fixed point of $P$ of combinatorial rotation number $\neq 0$ and $R_{P,\theta}$ is a periodic $q$ ray landing at $\zeta_0$. Then any sequence $P_n \to P$ will produce a tame convergence $\ov{R_{P_n,\theta}} \to \ov{R_{P,\theta}}$ (\thmref{stab}). The same holds if $\zeta_0$ is a non-degenerate parabolic point and the sequence of perturbations is chosen such that their multiplier at $\zeta_0$ tends to the corresponding root of unity non-tangentially (work in progress; see the introduction). Figure \ref{pers} illustrates a more subtle example of this drop in period involving a wild convergence.

\begin{figure}[t]
\centering 
\begin{overpic}[width=\textwidth]{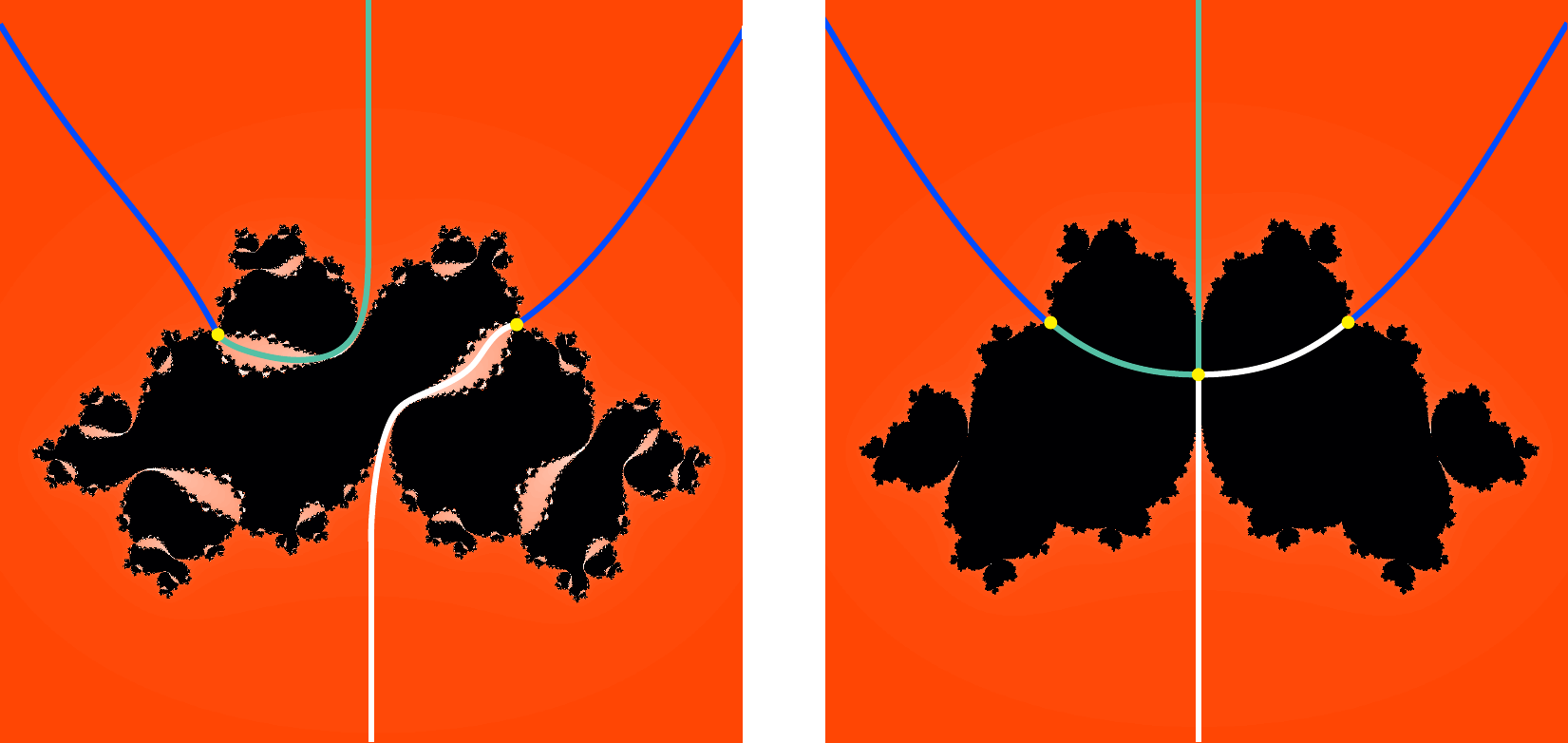}
\put (77.2,21.3) {\color{white}$w_0$}
\put (65.2,24.2) {\color{white}$w_1$}
\put (24,45) {\color{white}\footnotesize{$1/4$}}
\put (24.3,1) {\color{white}\footnotesize{$3/4$}}
\put (43,40) {\color{white}\footnotesize{$1/8$}}
\put (2,40) {\color{white}\footnotesize{$3/8$}}
\put (77,45) {\color{white}\footnotesize{$1/4$}}
\put (77,1) {\color{white}\footnotesize{$3/4$}}
\put (95,40) {\color{white}\footnotesize{$1/8$}}
\put (54,40) {\color{white}\footnotesize{$3/8$}}
\end{overpic}
\caption{\footnotesize Perturbations of the cubic $P(z)=-z+0.6\ii \, z^2+z^3$ with a parabolic fixed point at $0$. Here the Hausdorff limit $\sL$ of the closed ray at angle $1/4$ (shown in green) contains a unique heteroclinic connecting the fixed point $w_0=\zeta=0$ to the repelling period $2$ point $w_1=\zeta_\infty$. The Hausdorff limit of the closed ray at angle $3/4$ is the image $P(\sL)$ (shown in white). Notice the tame behavior of the period $2$ rays at angle $1/8,3/8$ (shown in blue).}
\label{pers}
\end{figure}

\subsection{The iterated images of $\sL$}

Since $P^{\circ q}$ acts homeomorphically on $\sL$, each restriction $P^{\circ i}: \sL \to P^{\circ i}(\sL)$ must be a homeomorphism. Here $P^{\circ i}(\sL)$ coincides with the Hausdorff limit of the sequence of periodic rays $P_n^{\circ i}(R_n)$. Let $\sL \cap J = \{ w_0 = \zeta, \ldots, w_N=\zeta_\infty \}$ and $\gamma_1, \ldots, \gamma_N$ be the heteroclinic arcs in $\sL$, with $\gamma_j$ joining $w_j$ to $w_{j-1}$ (we adopt the usual convention that if there are no heteroclinics then $N=0$ so $w_0=w_N$). Then $P^{\circ i}(\sL) \cap J$ consists of the points $P^{\circ i}(w_j)$, and the arcs $P^{\circ i}(\gamma_j)$ are the heteroclinics in $P^{\circ i}(\sL)$. Moreover, if $\eta$ is a homoclinic arc of $\sL$ based at $w_j$, then $P^{\circ i}(\eta)$ is a homoclinic arc of $P^{\circ i}(\sL)$ based at $P^{\circ i}(w_j)$. This shows that the Hausdorff limits $P^{\circ i}(\sL)$ have the same number and combinatorial structure of heteroclinics and earrings as $\sL$. The $N+1$ Julia points $P^{\circ i}(\sL) \cap J = \{ P^{\circ i}(w_0), \ldots, P^{\circ i}(w_N) \}$ appear in the same linear order as in $\sL \cap J = \{ w_0, \ldots, w_N \}$. It follows in particular that the points $w_0, \ldots, w_N$ have disjoint orbits under $P$. \vs  

Even though the $q$ rays $R, P(R), \ldots, P^{\circ q-1}(R)$ are always disjoint, the Hausdorff limits $\sL, P(\sL), \ldots, P^{\circ q-1}(\sL)$ may indeed intersect, as the example in \figref{pers} illustrates. We will show in \corref{i0} that such intersection can only occur at a {\it unique} point of the Julia set. Here is a much easier preliminary result:  \vs

\noindent
{\bf $(\dagger)$ Simple observation.} If $0 \leq i<j \leq q-1$, then $P^{\circ i}(\sL) \cap P^{\circ j}(\sL)$ is either empty or a subset of the Julia set. \vs 

\noindent
In fact, suppose $u \in P^{\circ i}(\sL) \cap P^{\circ j}(\sL) \cap \mathring{K}$ and take sequences $u_n=P_n^{\circ i}(R_n(s_n))$ and $u'_n = P_n^{\circ j}(R_n(s'_n))$ converging to $u$, where the potentials $s_n, s'_n$ necessarily tend to $0$. On the one hand $(\Om_n,u_n) \ct (V,u)$ for some disk $V \subset K$, so $\dist_{\Om_n}(u_n,u'_n) \to 0$ by \lemref{ct-easy}. On the other hand the rays $P_n^{\circ i}(R_n)$ and $P_n^{\circ j}(R_n)$ are disjoint, so $d^i\theta \neq d^j \theta \ (\modd \ZZ)$ and   
$$
\dist_{\Om_n}(u_n,u'_n) = \dist_{\Chat \sm \ov{\DD}}(\e^{d^i s_n + 2\pi \ii d^i \theta},\e^{d^j s'_n+2\pi \ii d^j \theta}) \to \infty.    
$$
  
From the above simple observation it is easy to conclude that $\sL, P(\sL), \ldots, P^{\circ q-1}(\sL)$ are disjoint if and only if $w_0, \ldots, w_N$ have exact period $q$.       

\begin{lemma}\label{period}
Either all $w_0, \ldots, w_N$ have period $q$, or there is a {\em unique} $0 \leq \ell \leq N$ such that the period of $w_\ell$ is a proper divisor of $q$.   
\end{lemma}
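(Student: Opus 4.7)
The plan is to argue by contradiction: suppose $0 \leq \ell < \ell' \leq N$ both have periods $p_\ell, p_{\ell'} < q$, and set $m := \operatorname{lcm}(p_\ell, p_{\ell'})$, which divides $q$. In the principal case $m < q$, the iterate $P^{\circ m}$ fixes both $w_\ell$ and $w_{\ell'}$, so both lie in $\sL \cap \sL_m$ where $\sL_m := P^{\circ m}(\sL)$; by the simple observation $(\dagger)$ this intersection is contained in $J$. I would then consider the subarcs $\alpha \subseteq \sL^\ast$ and $\alpha' \subseteq \sL_m^\ast$ joining $w_\ell$ to $w_{\ell'}$ along their respective spines. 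Both lie in $K$. If $\alpha = \alpha'$, every heteroclinic arc comprising $\alpha$ would lie simultaneously in $\sL$ and $\sL_m$, hence in $\sL \cap \sL_m \subseteq J$, contradicting that heteroclinic arcs lie in $\mathring{K}$. Hence $\alpha \neq \alpha'$ and $\alpha \cup \alpha'$ contains a Jordan curve $\mathcal{J} \subseteq K$. By the maximum principle applied to the Green function $G_P$, the bounded complementary component $U$ of $\Chat \sm \mathcal{J}$ lies in $K$ and therefore, being open, in a single Fatou component $F$.

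If $\ell' \geq \ell + 2$, then $\alpha$ contains heteroclinic arcs residing in at least two distinct parabolic basins $B_{\ell+1}$ and $B_{\ell'}$ (basins of the distinct fixed points $w_\ell$ and $w_{\ell'-1}$, distinct because the orbits of $w_0, \ldots, w_N$ are disjoint). Each such heteroclinic on $\bd U$ forces $F$ to coincide with its basin, which is an immediate contradiction. The sub-case $\ell' = \ell + 1$ is more delicate: here $\bd U$ consists of just $\gamma_{\ell+1} \subseteq B_{\ell+1}$ and $P^{\circ m}(\gamma_{\ell+1}) \subseteq P^{\circ m}(B_{\ell+1})$, so $B_{\ell+1}$ must be $P^{\circ m}$-invariant. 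To close this sub-case I would combine \thmref{two}(ii) --- applied to the sequences of ray points approximating the two arcs, whose B\"ottcher images tend to distinct angles $\theta$ and $d^m\theta$ on $\bd\ov{\DD}$ so that their $\Om_n$-hyperbolic distances diverge --- which shows the defining \cara\ strips $V_{\gamma_{\ell+1}}$ and $V_{P^{\circ m}(\gamma_{\ell+1})}$ are disjoint subsets of $B_{\ell+1}$, with the observation that in the Fatou coordinate $\Phi$ normalized for $P^{\circ q}$ the action of $P^{\circ m}$ is the horizontal translation by $m/q$, so both strips map to the same horizontal strip in $\CC$; the resulting pair of disjoint, essentially embedded annuli of identical modulus with coincident core curves in the quotient cylinder $B_{\ell+1}/P^{\circ q}$ then contradicts \lemref{propan}(iii).

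The residual case $m = q$ arises only when neither $p_\ell$ nor $p_{\ell'}$ divides the other, so that no single iterate $P^{\circ i}$ with $1 \leq i < q$ fixes both points. I would reduce this to the principal case by running the Jordan-curve argument separately with $\sL_{p_\ell}$ (which contains $w_\ell$ together with any $w_k$ whose period divides $p_\ell$) and with $\sL_{p_{\ell'}}$ (which contains $w_{\ell'}$ together with any $w_k$ whose period divides $p_{\ell'}$), and then combining the two resulting basin-identifications along the spine into a chain that ultimately forces a forbidden coincidence between distinct parabolic basins. The central obstacle throughout is the single-basin sub-case $\ell' = \ell + 1$, where the tension between the \cara-strip disjointness from \thmref{two}(ii) and the Fatou-coordinate identification from \lemref{propan} must be exploited quantitatively in the presence of two concurrent Hausdorff limits $\sL$ and $\sL_m$ in the same basin.
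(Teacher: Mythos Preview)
Your Jordan-curve approach has the right starting instinct --- use the simple observation $(\dagger)$ to confine $\sL \cap P^{\circ m}(\sL)$ to the Julia set and then exploit the maximum principle --- but it runs into two genuine obstacles that your sketch does not resolve.

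First, the sub-case $\ell' = \ell+1$. Your Fatou-coordinate claim that the annuli $A_\gamma$ and $A_{P^{\circ m}(\gamma)}$ have ``coincident core curves'' is incorrect: in the $P^{\circ q}$-Fatou coordinate one has $\Phi(P^{\circ m}(\gamma)) = \tilde{\gamma} + m/q$, so the two core curves in $\CC/T$ differ by a nontrivial rotation of $m/q$ and are merely homotopic, not equal. Two disjoint essentially embedded annuli of equal modulus with homotopic but distinct core curves in a bi-infinite cylinder produce no contradiction whatsoever, and \lemref{propan}(iii) only compares two $\sL$-arcs, not an $\sL$-arc against an $\sL_m$-arc. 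Even iterating to get $k=q/m$ pairwise disjoint translates $\tilde V_\gamma + j/k$ is perfectly consistent with the geometry of $\CC/T$.

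Second, the residual case $m=q$. Here your mechanism collapses: if among $w_0,\ldots,w_N$ only $w_\ell$ has period dividing $p_\ell$ (which is exactly the situation when $\operatorname{lcm}(p_\ell,p_{\ell'})=q$), then by disjointness of orbits $\sL \cap \sL_{p_\ell} = \{w_\ell\}$, and there is no second point with which to form a Jordan curve. Your proposed ``chain of basin-identifications'' has no anchor on either end.

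The paper's proof sidesteps both difficulties with a different idea. It takes $\ell$ to be the \emph{smallest} index with period $p<q$; minimality guarantees that the $k=q/p$ upper spine pieces $\Gamma^i = P^{\circ ip}(\Gamma)$ (each running from $w_\ell$ to $\infty$) are pairwise disjoint except at $w_\ell$. These cut the plane into $k$ sectors. Since $P^{\circ p}$ permutes both the $\Gamma^i$ and the incoming heteroclinics $\gamma^i = P^{\circ ip}(\gamma_{\ell+1})$ cyclically with the same combinatorial rotation number $r/k$ at $w_\ell$, each sector receives exactly one $\gamma^i$. The lower spine pieces $\Lambda^i$ are then forced into distinct sectors, hence pairwise disjoint away from $w_\ell$, and therefore every $w_j$ with $j>\ell$ has period $q$. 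This rotation-number/sector separation is precisely the missing ingredient that your adjacent-index case needed.
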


\begin{proof}
Let $\ell$ be the smallest index for which $w_\ell$ has period $p =q/k$ with $k>1$. If $\ell=N$ we are done, so let us assume $N \geq 1$ and $0 \leq \ell \leq N-1$. Split the spine $\sL^*$ into two arcs: $\Gamma$ from $w_\ell$ to $\infty$ and $\La$ from $w_\ell$ to $w_N$, so $\Gamma \cap \La= \{ w_\ell \}$. For $0 \leq i \leq k-1$ define $\Gamma^i:=P^{\circ ip}(\Gamma), \La^i:=P^{\circ ip}(\La)$. The minimality of $\ell$ implies that the $k$ arcs $\Gamma=\Gamma^0, \Gamma^1, \ldots, \Gamma^{k-1}$ are pairwise disjoint except at their end point $w_\ell$; otherwise $\Gamma^0$ would intersect some $\Gamma^i$ at a point other than $w_{\ell}$, which by the simple observation $(\dagger)$ would imply some $w_j$ with $j<\ell$ being in $\Gamma^0 \cap \Gamma^i$, hence having period $<q$. The arcs $\Gamma^0, \Gamma^1, \ldots, \Gamma^{k-1}$ are permuted cyclically under $P^{\circ p}$ in the manner determined by the combinatorial rotation number of $w_\ell$, which is necessarily of the form $r/k$ with $(r,k)=1$ since $\Gamma^0$ contains $R$ which under the action of $P^{\circ p}$ has period $k$. In particular, $(P^{\circ p})'(w_\ell)=\e^{2\pi \ii r/k}$. \vs

By the assumption $\ell \leq N-1$ there is a unique heteroclinic $\gamma_\ell \subset \Lambda=\Lambda^0$ with $w^+(\gamma_\ell)=w_\ell$. Setting $\gamma^i:=P^{\circ ip}(\gamma_\ell) \subset \Lambda^i$, it follows that $\gamma_\ell=\gamma^0, \gamma^1, \ldots, \gamma^{k-1}$ have a common end point $w_\ell$. By the same reasoning as above, none of these heteroclinics can intersect $\Gamma^0, \Gamma^1, \ldots, \Gamma^{k-1}$ anywhere other than $w_\ell$. As $P^{\circ p}$ permutes $\gamma^0, \gamma^1, \ldots, \gamma^{k-1}$ near $w_{\ell}$ cyclically with the same combinatorial rotation number $r/k$, each of the $k$ sectors of $\CC \sm (\Gamma^0 \cup \Gamma^1 \cup \ldots \cup \Gamma^{k-1})$ must contain exactly one of these heteroclinics. In particular, $\gamma^0, \gamma^1, \ldots, \gamma^{k-1}$ are pairwise disjoint except at their common end point $w_\ell$. It follows that each arc $\Lambda^i$ is contained in the same sector of $\CC \sm (\Gamma^0 \cup \Gamma^1 \cup \ldots \cup \Gamma^{k-1})$ as $\gamma^i$, hence $\Lambda^0, \Lambda^1, \ldots, \Lambda^{k-1}$ are also pairwise disjoint except at $w_\ell$. Thus, the points $w_{\ell+1}, \ldots, w_N \in \Lambda^0$ have period $k$ under $P^{\circ p}$, i.e., period $q$ under $P$.              
\end{proof}

\begin{lemma}\label{periodq}
Let $B=P^{\circ q}(B)$ be a parabolic basin of $P$ that meets $\sL$. Then $B$ has period $q$. In particular, every $\sL$-arc has period $q$. 
\end{lemma}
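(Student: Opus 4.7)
The plan is to establish the two claims in sequence, by contradiction, using the simple observation $(\dagger)$ and Lemma \ref{period}.

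First I would prove the basin claim: assume for contradiction that $B$ has period $p_B$ properly dividing $q$, and set $k := q/p_B \geq 2$. Let $\gamma \subset B$ be an $\sL$-arc with parabolic endpoint $w := w^+(\gamma) \in \bd B$. Since $P^{\circ p_B}(B) = B$, each iterate $\gamma_j := P^{\circ j p_B}(\gamma)$ lies in $B$ for $j=0,\ldots,k-1$, and $\gamma_j$ is an arc of the Hausdorff limit $P^{\circ j p_B}(\sL)$. Applying the simple observation $(\dagger)$ to $\sL$ and $P^{\circ j p_B}(\sL)$ gives $\gamma_0 \cap \gamma_j \subset J$ for $1 \leq j \leq k-1$; since $\gamma_0, \gamma_j \subset \mathring K$, they must be disjoint, and in particular distinct (for if $\gamma_0 = \gamma_j$ then $\gamma_0 \subset \sL \cap P^{\circ j p_B}(\sL) \subset J$). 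Hence $B$ contains at least $k \geq 2$ pairwise disjoint arcs all sharing the endpoint $w$.

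Next I would pin down $w$. Since $w$ is fixed by $P^{\circ p_B}$, its period $p_w$ under $P$ divides $p_B < q$. By Lemma \ref{period} this forces $w = w_\ell$, the unique exceptional point of $\sL \cap J$ of period less than $q$. The proof of Lemma \ref{period} then furnishes a fan of arcs $\gamma^i := P^{\circ i p_w}(\gamma_{\ell+1})$ for $i = 0,\ldots,q/p_w - 1$ emanating from $w_\ell$, each in its own Hausdorff limit; the sub-collection landing inside $B$ is exactly our collection $\{\gamma_0,\ldots,\gamma_{k-1}\}$ (up to reindexing).

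From here the contradiction would come by a combinatorial-topological argument adapting the linking-sector techniques of \S\ref{oshom} (especially \thmref{unlinked}) to the combined multi-limit setting. The $k \geq 2$ heteroclinics in $B$, each from a distinct Hausdorff limit but all anchored at $w_\ell$ and approaching it through the unique attracting direction of $P^{\circ q}$ within $B$, cannot coexist compatibly with the spine structure around $w_\ell$. Concretely, the Fatou coordinate $\Phi: B \to \CC$ normalized by $\Phi \circ P^{\circ p_B} = T$ presents the $\gamma_j$ as $k$ pairwise disjoint $T^k$-invariant curves in $\CC$ that are $T$-translates of each other, and the required contradiction should come from a modulus computation in the Ecalle cylinder $\CC/T$ (using annuli analogous to those of \lemref{propan}(ii) of modulus $\pi/(q\log d)$), or from a direct topological linking-unlinking argument forbidding $k\ge 2$ such arcs.

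The ``in particular'' clause then follows immediately: any $\sL$-arc $\gamma \subset B$ of hypothetical period $p$ properly dividing $q$ would satisfy $\gamma = P^{\circ p}(\gamma) \subset B \cap P^{\circ p}(B)$, but distinct basins in the period-$q$ cycle of $B$ are disjoint, forcing $\gamma = \es$, absurd. The main obstacle is the combinatorial-topological step ruling out multiple heteroclinics from distinct Hausdorff limits in the same basin: \corref{hetuni} handles only one Hausdorff limit at a time, and the genuinely new ingredient is to interlace the sector structure of \lemref{period} with the disjointness and modulus bounds of each individual limit.
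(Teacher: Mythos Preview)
Your setup is reasonable, but the proof is incomplete, and you explicitly acknowledge as much: the ``combinatorial-topological argument'' or ``modulus computation'' that would finish the job is never carried out. The paper's proof avoids this difficulty entirely with a much simpler observation.

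The key idea you are missing is purely topological. In the proof of \lemref{period}, the arcs $\Gamma^i := P^{\circ ip}(\Gamma)$ for $0 \leq i \leq k-1$ (where $p$ is the period of $w_\ell$ and $k=q/p$) run from $w_\ell$ to $\infty$ and are pairwise disjoint except at $w_\ell$; they therefore partition $\CC$ into $k$ sectors. A parabolic basin $B$ attached to $w_\ell$ is a connected subset of $\mathring{K}$ and cannot meet any $\Gamma^i$ (the portions of $\Gamma^i$ inside $\mathring{K}$ lie in parabolic basins of $w_0,\ldots,w_{\ell-1}$, not of $w_\ell$), so $B$ lies in a single sector. Since $P^{\circ p}$ permutes the sectors cyclically with combinatorial rotation number $r/k$ where $\gcd(r,k)=1$, the basin $B$ must have period exactly $k$ under $P^{\circ p}$, i.e., period $q$ under $P$. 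No linking analysis, no modulus bounds, no reference to \thmref{unlinked} is needed.

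Two further remarks. First, you focus on the fan $P^{\circ ip}(\gamma_{\ell+1})$ going inward from $w_\ell$; these do not separate the plane into sectors, whereas the $\Gamma^i$ going outward to $\infty$ do, and that is the decisive feature. Second, your approach to the ``in particular'' clause is fine once the basin claim is established, but note that the paper works with the period $p$ of the parabolic point $w_\ell$ rather than the period $p_B$ of the basin; this is cleaner since $p$ is already identified by \lemref{period}.
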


\begin{proof}
The result is clear if $B$ is a basin of some $w_j \in \sL \cap J$ with period $q$. Let us then assume there is a unique $w_{\ell}$ with period $p=q/k<q$ and $B$ is a basin of  $w_{\ell}$. In this case, by the proof of \lemref{period}, $B$ must be contained in one of the $k$ sectors of $\CC \sm (\Gamma^0 \cup \Gamma^1 \cup \ldots \cup \Gamma^{k-1})$, so the action of $P^{\circ p}$ on $B$ must have the same combinatorial rotation number $r/k$ as $w_\ell$. In particular, the period of $B$ under $P^{\circ p}$ must be $k$.    
\end{proof}

Here is a sharper statement about the period of $\sL$-arcs: 
  
\begin{lemma}\label{nosharing}
If $\gamma$ is an $\sL$-arc, none of the iterated images $P(\gamma), \ldots, P^{\circ q-1}(\gamma)$ can be an $\sL$-arc. 
\end{lemma}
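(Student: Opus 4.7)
The plan is to reduce the lemma to the simple observation $(\dagger)$ from \S \ref{BD}, handling the two cases $\gamma \subset \mathring{K}$ and $\gamma = R$ separately. Recall that under our convention the $\sL$-arcs of $\sL$ consist of the external ray $R$ together with the connected components of $\sL \cap \mathring{K}$. Note also that $P^{\circ i}(\gamma)$ is an $\sL$-arc of the Hausdorff limit $P^{\circ i}(\sL)$ of the rays $R_{P_n,d^i\theta}$, so the question is whether $P^{\circ i}(\gamma)$ can simultaneously be an $\sL$-arc of $\sL$ for some $1 \leq i \leq q-1$.

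First suppose $\gamma$ is a heteroclinic or homoclinic arc, so $\gamma \subset \sL \cap \mathring{K}$. Since $P$ preserves $\mathring{K}$, the image $P^{\circ i}(\gamma)$ is contained in $\mathring{K}$ as well. If $P^{\circ i}(\gamma)$ were an $\sL$-arc of $\sL$ it would give a non-empty subset of $\sL \cap P^{\circ i}(\sL) \cap \mathring{K}$, directly contradicting $(\dagger)$, which asserts $\sL \cap P^{\circ i}(\sL) \subset J$ for $1 \leq i \leq q-1$.

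Next suppose $\gamma = R$, so $P^{\circ i}(\gamma) = R_{P,d^i\theta}$. Since $\theta$ has exact period $q$ under $t \mapsto dt \, (\modd \ZZ)$, we have $d^i\theta \not\equiv \theta$ for $1 \leq i \leq q-1$, so $R_{P,d^i\theta}$ is an external ray of $P$ distinct from $R$ and in particular disjoint from $R$ and contained in $\Om_P$. On the other hand, as noted at the beginning of \S \ref{larc}, $\sL \sm K = R \cup \{\infty\}$. Thus any $\sL$-arc of $\sL$ that is contained in $\Om_P$ must be $R$ itself, which rules out $P^{\circ i}(R)$ being an $\sL$-arc of $\sL$.

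Combining the two cases finishes the proof. The only real content is the invocation of $(\dagger)$; there is no obstacle beyond correctly partitioning the possibilities, since the convention that $R$ is not a homoclinic or heteroclinic forces us to treat it as a separate case.
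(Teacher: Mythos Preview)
Your proof is correct and uses essentially the same idea as the paper: the core obstruction is the hyperbolic distance computation encoded in the simple observation $(\dagger)$. The only difference is packaging---the paper redoes that hyperbolic distance argument directly rather than citing $(\dagger)$, and does not split off the trivial case $\gamma=R$---but the content is the same.
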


\begin{proof}
The idea is similar to the one used to justify the simple observation $(\dagger)$. Suppose $\gamma'=P^{\circ j}(\gamma)$ is an $\sL$-arc for some $1 \leq j \leq q-1$. Take $u=\Pi(\langle s_n \rangle) \in \gamma$ and let $u'=P^{\circ j}(u) =\Pi(\langle s'_n \rangle) \in \gamma'$. Then  $P_n^{\circ j}(R_n(s_n)) \to u'$ and $R_n(s'_n) \to u'$. By \lemref{ct-easy}, the hyperbolic distance in the basin of infinity $\Om_n$ between $P_n^{\circ j}(R_n(s_n))$ and $R_n(s'_n)$ must tend to $0$ as $n \to \infty$. But this distance is the same as $\dist_{\Chat \sm \ov{\DD}}(\e^{d^j s_n + 2\pi \ii d^j \theta},\e^{s'_n+2\pi \ii \theta})$, which clearly tends to $\infty$ since $d^j \theta \neq \theta \ (\modd \ZZ)$. 
\end{proof}

\begin{corollary}[Intersections of the images of $\sL$]\label{i0}
Let $0\leq i<j \leq q-1$. \vs
\begin{enumerate}
\item[(i)]
If all $w_0, \ldots, w_N$ have period $q$, then $P^{\circ i}(\sL)$ and $P^{\circ j}(\sL)$ are disjoint. \vs
\item[(ii)]
If there is a unique $w_{\ell}$ with period $p<q$, then 
$P^{\circ i}(\sL)$ and $P^{\circ j}(\sL)$ are disjoint unless $i=j \ (\modd p)$ in which case $P^{\circ i}(\sL) \cap P^{\circ j}(\sL)$ is the single point $P^{\circ i}(w_{\ell})$. 
\end{enumerate}
\end{corollary}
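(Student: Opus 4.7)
The plan is to reduce the whole corollary to a simple bookkeeping of the Julia points in each Hausdorff limit, leveraging the structural description already established. First I would apply the simple observation $(\dagger)$ stated just above, which gives that any point of $P^{\circ i}(\sL) \cap P^{\circ j}(\sL)$ must lie in the Julia set $J$. Combined with the identification $P^{\circ i}(\sL) \cap J = \{P^{\circ i}(w_0), \ldots, P^{\circ i}(w_N)\}$ recorded at the beginning of \S \ref{BD}, this means every intersection point has the form $z = P^{\circ i}(w_a) = P^{\circ j}(w_b)$ for some indices $0 \leq a, b \leq N$.

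The next step invokes the disjointness of the $P$-orbits of $w_0, \ldots, w_N$, which was also recorded in the same paragraph. If $a \neq b$, the identity $P^{\circ i}(w_a) = P^{\circ j}(w_b)$ would place $w_a$ and $w_b$ in a common forward $P$-orbit, contradicting that disjointness. Therefore $a = b$ is forced, and the equation degenerates to $P^{\circ i}(w_a) = P^{\circ j}(w_a)$, i.e., the exact period of $w_a$ divides $j-i$; since $0 < j-i < q$, this already excludes every $w_a$ whose period equals $q$.

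From here the two cases separate cleanly. In case (i), every $w_a$ has period $q > j-i$, so no intersection point can exist, proving disjointness of $P^{\circ i}(\sL)$ and $P^{\circ j}(\sL)$. In case (ii), \lemref{period} guarantees that $w_\ell$ is the unique point of $\sL \cap J$ whose period is a proper divisor of $q$, so the period condition $\operatorname{per}(w_a) \mid (j-i)$ forces $a = \ell$, and then reduces to $p \mid (j-i)$, i.e.\ $i \equiv j \pmod{p}$. Conversely, when $i \equiv j \pmod{p}$ the point $P^{\circ i}(w_\ell) = P^{\circ j}(w_\ell)$ actually lies in $P^{\circ i}(\sL) \cap P^{\circ j}(\sL)$, and by the analysis above it is the only such point.

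There is no real obstacle here: the corollary is a direct consequence of $(\dagger)$ combined with the two earlier observations that $P^{\circ i}(\sL) \cap J = P^{\circ i}(\sL \cap J)$ and that the orbits of $w_0, \ldots, w_N$ are disjoint. The only subtlety worth double-checking is the uniqueness supplied by \lemref{period} in case (ii), which is what prevents any other Julia point from contributing to the intersection.
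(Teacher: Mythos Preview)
Your argument is correct and follows the same reduction as the paper: use $(\dagger)$ to force any intersection into $J$, then match up the finitely many Julia points $P^{\circ i}(w_a)$ using the disjointness of the $P$-orbits of $w_0,\ldots,w_N$. The one difference is that the paper, after this bookkeeping, still invokes \lemref{nosharing} to rule out a shared homoclinic arc based at $P^{\circ i}(w_\ell)$; your proof dispenses with this because $(\dagger)$ already places the entire intersection inside $J$, so no arc in $\mathring{K}$ can be shared. In that sense your write-up is slightly more economical, while the paper's version gives an independent reason (via \lemref{nosharing}) for the homoclinic case.
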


\begin{proof}
We only need to treat case (ii) and rule out the possibility that $P^{\circ i}(\sL)$ and $P^{\circ j}(\sL)$ with $i=j \ (\modd p)$ might share a homoclinic arc $\gamma$ based at $P^{\circ i}(w_{\ell})$. But for any such $\gamma$ both $P^{\circ q-i}(\gamma)$ and $P^{\circ q-j}(\gamma)$ would be $\sL$-arcs, contrary to \lemref{nosharing}.   
\end{proof}

\subsection{$\sL$-arcs in the same cycle of basins}

Suppose $B=P^{\circ q}(B)$ is a parabolic basin of $w \in \sL \cap J$ that meets $\sL$. If $w$ has period $q$, \corref{i0} shows that none of the iterated images $P(B), \ldots, P^{\circ q-1}(B)$ can meet $\sL$. However, if $w$ has period $p<q$, then the union $P^{\circ p}(B) \cup \cdots \cup P^{\circ (q-p)}(B)$ can {\it a priori} meet $\sL$. Below we investigate this possibility in preparation for the proof of \thmref{C}. \vs     

\noindent
{\bf Standing assumptions.} For the remainder of this section up to the proof of \thmref{C}, we work under the following hypotheses: \vs

$\bullet$ $N \geq 1$ and there is a unique $0 \leq \ell \leq N-1$ for which $w_\ell$ has period $p=q/k<q$, so $w_\ell \neq w_N=\zeta_{\infty}$. Since the multiplier $(P^{\circ p})'(w_\ell)$ is a primitive $k$-th root of unity, there is a unique integer $1 \leq j \leq k-1$ for which $(P^{\circ jp})'(w_\ell)=\e^{2\pi \ii/k}$. We set 
\begin{alignat*}{3}
Q & :=P^{\circ jp}, & \qquad R^i & :=Q^{\circ i}(R), & \qquad \sL^i & :=Q^{\circ i}(\sL), \\
Q_n & :=P_n^{\circ jp}, & \qquad R^i_n &:=Q_n^{\circ i}(R_n). & & 
\end{alignat*}

$\bullet$ There is a cycle $B, Q(B), \ldots, Q^{\circ k-1}(B)$ of parabolic basins at $w_\ell$ containing at least one homoclinic in $\sL$. \vs

Sort all the homoclinic $\sL$-arcs in the cycle $B, Q(B), \ldots, Q^{\circ k-1}(B)$ as $\eta_1<\cdots<\eta_h$. Note that by \thmref{B} and the assumption $\ell \neq N$, each $\eta_j$ is the sole homoclinic arc in its earring. By \corref{homuni}, $\eta_1, \ldots, \eta_h$ belong to different parabolic basins in this cycle and in particular $1 \leq h \leq k$. Let $\eta_0$ be the unique heteroclinic in $\sL$ such that $w^+(\eta_0)=w_\ell$, and $\eta_{h+1}$ be the unique heteroclinic in $\sL$ such that $w^-(\eta_{h+1})=w_\ell$, or $\eta_{h+1}=R$ if $\ell=0$. Then $\eta_0<\eta_1<\cdots<\eta_h<\eta_{h+1}$. This puts us in the  situation of \thmref{unlinked}: If $D$ is a sufficiently small good disk centered at $w_\ell$ for the collection $\{ \eta_0, \ldots, \eta_{h+1} \}$, and if  
$$
v_j := w^+(\eta_j,D) \quad \text{and} \quad u_{j+1} := w^-(\eta_{j+1},D) \qquad (0 \leq j \leq h),
$$
then the set of pairs 
$$
\Theta^0 := \{ (v_0,u_1), \ (v_1,u_2), \ \ldots, \ (v_h,u_{h+1}) \}
$$ 
on $\bd D$ is unlinked. More generally, for each $0\leq i \leq k-1$ we can consider the $\sL^i$-arcs $\eta_j^i:=Q^{\circ i}(\eta_j)$ in the same cycle $B, Q(B), \ldots, Q^{\circ k-1}(B)$, and the points
$$
v^i_j := w^+(\eta^i_j,D) \quad \text{and} \quad u^i_{j+1} := w^-(\eta^i_{j+1},D) \qquad (0\leq j \leq h)
$$ 
(we may arrange the same $D$ to be a good disk for $\{ \eta_0^i, \ldots, \eta_{h+1}^i \}$ for every $i$). We then form the set
$$
\Theta^i := \{ (v^i_0,u^i_1), \ (v^i_1,u^i_2), \ \ldots, \ (v^i_h,u^i_{h+1}) \}
$$
of $h+1$ unlinked pairs. Observe that since $w_\ell$ is a fixed point of $Q$ with multiplier $\rho:=Q'(w_\ell)=\e^{2\pi \ii/k}$, each $\Theta^{i+1}$ is approximately the rotation $\rho \Theta^i$ with an error of the order of $o(\ve)$, where $\ve>0$ is the radius of $D$. Note also that by \lemref{nosharing} all the arcs $\eta^i_j$ are disjoint, hence all the points $u^i_j, v^i_j$ are distinct. \vs

The following is a generalization of \thmref{unlinked}:

\begin{theorem}\label{unlinked-k}
The union $\Theta^0 \cup \Theta^1 \cup \cdots \cup \Theta^{k-1}$ is unlinked. 
\end{theorem}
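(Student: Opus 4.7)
The plan is to extend the proof of \thmref{unlinked} to handle all $k$ iterated layers at once. Suppose for contradiction two pairs in $\Theta^0 \cup \cdots \cup \Theta^{k-1}$ are linked. If both lie in the same $\Theta^i$, applying \thmref{unlinked} to $\sL^i$, which is the Hausdorff limit of the rays $R_n^i$ and therefore enjoys the structure of Theorems \ref{A} and \ref{B}, gives an immediate contradiction. So I may assume the linked pairs are $(v_a^i, u_{a+1}^i) \in \Theta^i$ and $(v_b^{i'}, u_{b+1}^{i'}) \in \Theta^{i'}$ with $i \neq i'$.

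The main new ingredient is a joint version of \lemref{egt}: I need pairwise disjoint good transversals $\Sigma_1, \ldots, \Sigma_r$ that together meet $\bigcup_i \sL^i_{\leq 1}$ transversally in a prescribed finite collection of points $z_j^i \in \eta_j^i$ for $0 \leq i \leq k-1$ and $0 \leq j \leq h+1$. To construct these I form the combined filled set $\hat{\sL}^{\mathrm{all}} := \bigcup_{i=0}^{k-1} \hat{\sL^i}$ and claim it is a full compact subset of $\CC$ with piecewise analytic boundary. By \corref{i0} the layers $\sL^i$ intersect pairwise only at $w_\ell$, and by \lemref{nosharing} together with the cycle-disjointness of basins established in \lemref{period}, the decorations of the $\hat{\sL^i}$ (closed earring disks, heteroclinics, ray segments) lie in pairwise disjoint basins or are carried by distinct points of $J$. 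Thus $\hat{\sL}^{\mathrm{all}}$ is a wedge at $w_\ell$ of the $k$ simply connected sets $\hat{\sL^i}$, itself connected and simply connected, and the uniformization/ray-extension argument of \lemref{egt} then supplies the required transversals. Verifying this joint simple-connectivity is the main obstacle of the proof.

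With the transversals in hand, I follow the proof of \thmref{unlinked} almost verbatim. Let $\tilde{\eta}_j^i$ denote the subarc of $\eta_j^i$ between $u_j^i$ and $v_j^i$ and set $E := \bigcup_m \Sigma_m \cup \bigcup_{i,j} \tilde{\eta}_j^i$; then $\CC \sm E$ is simply connected by transversality of intersections. The analog of \remref{topol} for $\sL^i$, obtained by repeating the Basic Structure Lemma with $R_n$ replaced by $R_n^i$, approximates each $\tilde{\eta}_j^i$ in $C^\infty$-topology by a segment $R_n^i([s_{n,j}^i, t_{n,j}^i])$, producing an approximating set $E_n$ with simply connected complement containing $D$. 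The open ray segments $R_n^i(\,]t_{n,a}^i, s_{n,a+1}^i[\,)$ and $R_n^{i'}(\,]t_{n,b}^{i'}, s_{n,b+1}^{i'}[\,)$ lie in $\CC \sm E_n$, connect the still-linked approximating pairs on $\bd D$, and are disjoint as segments of distinct external rays of $P_n$. By \lemref{Usim} at least one of them must meet $E_n$, and since it cannot meet any ray segment therein, it must meet some transversal $\Sigma_m$. Any accumulation point $\tilde{z}$ of the corresponding intersection sequence lies in $\sL^i$ (or $\sL^{i'}$) and on $\Sigma_m$, hence equals some $z_j^{i''}$. If $i'' \neq i$ (resp.\ $i'$), then $\tilde{z} \in \sL^i \cap \sL^{i''} = \{w_\ell\}$ by \corref{i0}, contradicting $z_j^{i''} \neq w_\ell$; otherwise the natural analog of \lemref{EE} for the relevant layer forces $\tilde{z}$ to lie strictly between the linked pair in the intrinsic potential order, contradicting the choice of the $z_j^{i''}$.
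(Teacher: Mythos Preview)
Your approach is correct and follows the same line as the paper: build a simply connected test set $E$ from transversals and subarcs, approximate by ray segments to get $E_n$ with simply connected complement, and use \lemref{Usim} to force one of the two gap segments to hit a transversal, then derive a contradiction via \lemref{EE} and \corref{i0}.

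The paper makes one simplification you do not: it first observes that by replacing $\sL$ with any $\sL^{i}$ (which is itself the Hausdorff limit of the rays $R_n^{i}$), it suffices to show $\Theta^0 \cup \Theta^i$ is unlinked for each fixed $i$. It then works with just \emph{two} layers, forming $\hat{\sL} \cup \hat{\sL}^{i}$ rather than the full wedge $\hat{\sL}^{\mathrm{all}}$. This avoids verifying simple-connectivity of the $k$-fold union; only the two-layer case (two full sets joined at $w_\ell$) is needed. Your all-layers version works too, and your justification via \corref{i0} and \lemref{nosharing} is the right one, but the two-layer reduction is cleaner.

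One small writing slip: you assert that the two gap segments ``lie in $\CC \sm E_n$'' and then that one ``must meet $E_n$.'' The intended logic is: \emph{if} both lay in $\CC \sm E_n$, they would be disjoint paths connecting linked pairs on $\bd D$ inside the simply connected $\CC \sm E_n$, contradicting \lemref{Usim}; hence at least one meets $E_n$. You should phrase it that way.
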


\begin{proof}
It suffices to prove that for every $1 \leq i \leq k-1$ the union $\Theta^0 \cup \Theta^i$ is unlinked. The argument is similar \thmref{unlinked}, so we will be brief on the identical details. To ease the notation a bit, we will denote all the objects corresponding to $\sL=\sL^0$ without a superscript $0$ and those corresponding to $\sL^i$ with a superscript $*$. Choose points $z_j \in \eta_j$ and $z^*_j \in \eta^*_j$ for $0 \leq j \leq h+1$, with $z_{h+1}<R(1)$ and $z^*_{h+1}<R^*(1)$. We can find pairwise disjoint good transversals $\Sigma_1, \ldots, \Sigma_r$ for $\sL \cup \sL^*$ such that 
\begin{equation}\label{sig}
\bigcup_{j=1}^r \Sigma_j \cap (\sL_{\leq 1} \cup \sL^*_{\leq 1}) = \{ z_0, \ldots, z_{h+1} , z^*_0, \ldots, z^*_{h+1} \}. 
\end{equation}
In fact, the union $\hat{\sL} \cup \hat{\sL}^*$ is connected and full, so in the construction of good transversals in the proof of \lemref{egt} we can use the ``external rays'' of the uniformization $(\Chat \sm \ov{\DD},\infty) \oset[-0.2ex]{\cong}{\longrightarrow} (\Chat \sm (\hat{\sL} \cup \hat{\sL}^*), \infty)$. Choosing $D$ sufficiently small guarantees that these transversals are disjoint from $\ov{D}$ and that the relations  
\begin{equation}\label{zz*}
\begin{aligned}
& z_0<v_0<u_1<z_1<v_1<\cdots<u_h<z_h<v_h<u_{h+1}<z_{h+1}<R(1) \vs \\
& z^*_0<v^*_0<u^*_1<z^*_1<v^*_1<\cdots<u^*_h<z^*_h<v^*_h<u^*_{h+1}<z^*_{h+1}<R^*(1)
\end{aligned}
\end{equation}
hold. By transversality, we can find the approximating sequences
\begin{align*}
v_{n,j} & := R_n(t_{n,j}) \to v_j &  u_{n,j+1} & := R_n(s_{n,j+1}) \to u_{j+1} \\
v^*_{n,j} & := R^*_n(t^*_{n,j}) \to v^*_j &  u^*_{n,j+1} & := R^*_n(s^*_{n,j+1}) \to u^*_{j+1}  
\end{align*}
on $\bd D$ for $0 \leq j \leq h$. Choose $u_0:=\Pi(\langle s_{n,0} \rangle) \in \eta_0$ and $v_{h+1}:=\Pi(\langle t_{n,h+1} \rangle) \in \eta_{h+1}$ such that $u_0<z_0$ and $z_{h+1}<v_{h+1}<R(1)$, and set $u_{n,0}:=R_n(s_{n,0}), v_{n,h+1}:=R_n(t_{n,h+1})$. Similarly, choose $u^*_0:=\Pi(\langle s^*_{n,0} \rangle) \in \eta^*_0$ and $v^*_{h+1}:=\Pi(\langle t^*_{n,h+1} \rangle) \in \eta^*_{h+1}$ such that $u^*_0<z^*_0$ and $z^*_{h+1}<v^*_{h+1}<R^*(1)$, and set $u^*_{n,0}:=R^*_n(s^*_{n,0}), v^*_{n,h+1}:=R^*_n(t^*_{n,h+1})$. \vs

For $0 \leq j \leq h+1$ let $\tilde{\eta}_j$ denote the subarc of $\eta_j$ that joins $u_j$ to $v_j$. Define the subarc $\tilde{\eta}^*_j$ of $\eta^*_j$ analogously. The closed set 
$$
E := \bigcup_{j=1}^r \Sigma_j \cup \bigcup_{j=0}^{h+1} \, \big( \tilde{\eta}_j \cup \tilde{\eta}^*_j \big)
$$
has simply connected complement in $\CC$. For each $0 \leq j \leq h+1$ we have $R_n([s_{n,j},t_{n,j}]) \to \tilde{\eta}_j$ and $R^*_n([s^*_{n,j},t^*_{n,j}]) \to \tilde{\eta}^*_j$ in $C^\infty$-topology as $n \to \infty$. Thus, for large $n$ the closed set 
$$
E_n := \bigcup_{j=1}^r \Sigma_j \cup \bigcup_{j=0}^{h+1} \, \big( R_n([s_{n,j},t_{n,j}]) \cup R^*_n([s^*_{n,j},t^*_{n,j}]) \big)
$$ 
is $C^\infty$-close to $E$. Since all the intersections in $E$ are transversal, the complement $\CC \sm E_n$ must also be simply connected. \vs

Now suppose there is a pair $(v_a,u_{a+1}) \in \Theta$ that is linked with a pair $(v^*_b,u^*_{b+1}) \in \Theta^*$. Then for all large $n$ the pairs $(v_{n,a},u_{n,a+1})$ and $(v^*_{n,b},u^*_{n,b+1})$ are linked as well. It follows that at least one of the open ray segments $R_n (]t_{n,a},s_{n,a+1}[)$ or $R^*_n (]t^*_{n,b},s^*_{n,i+b}[)$ must meet $E_n$, for otherwise by \lemref{Usim} these ray segments would have to intersect, which is impossible since $R_n$ and $R^*_n$ are disjoint. We conclude that either $R_n (]t_{n,a},s_{n,a+1}[)$ or $R^*_n (]t^*_{n,b},s^*_{n,b+1}[)$ must intersect $\bigcup_{j=1}^r \Sigma_j$ for infinitely many values of $n$. In the former case we obtain an accumulation point $\tilde{z} \in \bigcup_{j=1}^r \Sigma_j \cap \sL_{\leq 1}$ of a sequence $\{ R_n(\lambda_n) \}$, where $t_{n,a}<\lambda_n<s_{n,a+1}$. By \eqref{sig}, $\tilde{z} \in \{ z_0, \ldots, z_{h+1} \}$ while by \lemref{EE}, $v_a \leq \tilde{z} \leq u_{a+1}$. This contradicts \eqref{zz*}. In the latter case we obtain an accumulation point $\tilde{z} \in \bigcup_{j=1}^r \Sigma_j \cap \sL^*_{\leq 1}$ of a sequence $\{ R^*_n(\lambda_n) \}$, where $t^*_{n,b}<\lambda_n<s^*_{n,b+1}$. By \eqref{sig}, $\tilde{z} \in \{ z^*_0, \ldots, z^*_{h+1} \}$ while by \lemref{EE}, $v^*_b \leq \tilde{z} \leq u^*_{b+1}$. This, again, contradicts \eqref{zz*}.
\end{proof}

Recall that $\ve>0$ is the radius of the good disk $D$ centered at $w_\ell$ used to define the sets $\Theta^0, \ldots, \Theta^{k-1}$. It will be convenient to represent $\bd D$ in the additive model of the unit circle by identifying $w_\ell+\ve \e^{2\pi \ii t} \in \bd D$ with $t \in \TT:=\RR/\ZZ$. This identification involves rescaling by a factor of $1/\ve$, so it turns every $o(\ve)$ estimate on $\bd D$ to an $o(1)$ estimate in the additive model $\TT$ as $\ve \to 0$. To simplify the formulas that will follow, we write 
$$
x \aeq y \qquad \text{whenever}  \qquad x=y+o(1).
$$ 

By the {\bit distance} $\de(a,b)$ between $a,b \in \TT$ we mean the normalized Lebesgue measure of the shorter arc of $\TT$ between $a,b$. Choosing suitable representatives, it is clear that $\de(a,b)=|a-b| \leq 1/2$. \vs 

Let $\nu \geq 1$ be the degeneracy order of $w_\ell$ as a fixed point of $Q$. There are $\nu k$ attracting and $\nu k$ repelling directions of $w_\ell$ which intersect $\bd D \cong \TT$ at $2\nu k$ equally spaced alternating points that we mark as $\oplus$ for attracting and $\ominus$ for repelling. Thus, every $v^i_j$ is $o(1)$-close to a $\oplus$ and every $u^i_j$ is $o(1)$-close to a $\ominus$. This yields
\begin{align}
\delta(u^i_j,v^i_j) & \aeq \frac{1}{2\nu k} \label{L1} \\
\delta(v^i_j,u^i_{j+1}) & \aeq \text{an odd multiple of}  \ \frac{1}{2\nu k}. \label{L2}
\end{align}
Recalling that $\Theta^{i+1}$ is $o(1)$-close to $\rho \Theta^i$, where $\rho=Q'(w_\ell)=\e^{2\pi \ii/k}$, we also have
\begin{equation}\label{L5}
u^{i+1}_j \aeq u^i_j + \frac{1}{k} \qquad \text{and} \qquad 
v^{i+1}_j \aeq v^i_j + \frac{1}{k}. 
\end{equation}

\begin{lemma}\label{l<}
For any $0 \leq j \leq h$ the pair $(v^i_j,u^i_{j+1}) \in \Theta^i$ satisfies 
\begin{equation}\label{L3}
\delta(v^i_j,u^i_{j+1}) \leq \frac{1}{k} - \frac{1}{2\nu k} + o(1).
\end{equation} 
If $1 \leq j \leq h-1$, or if $j=0$ and $\eta^i_0$ belongs to the cycle $B, Q(B), \ldots, Q^{\circ k-1}(B)$, then
\begin{equation}\label{L4}
\delta(v^i_j,u^i_{j+1}) \aeq \frac{1}{k} - \frac{1}{2\nu k}.
\end{equation}
\end{lemma}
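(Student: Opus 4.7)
The plan is to derive both inequalities by combining \thmref{unlinked-k} with the arithmetic constraints \eqref{L1}, \eqref{L2}, and \eqref{L5}. Throughout I parametrize $\bd D$ as $\TT$ in the additive model, so all distances are in $[0,1/2]$.

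For \eqref{L3}, I would compare the pair $(v^i_j, u^i_{j+1}) \in \Theta^i$ with its translate $(v^{i-1}_j, u^{i-1}_{j+1}) \in \Theta^{i-1}$, which by \eqref{L5} equals the former shifted by $-1/k$ up to $o(1)$. Setting $v^i_j \aeq 0$ and $u^i_{j+1} \aeq \delta$ with $\delta := \delta(v^i_j, u^i_{j+1})$ measured along the shorter arc, one finds $u^{i-1}_{j+1} \aeq \delta - 1/k$ and $v^{i-1}_j \aeq 1 - 1/k$. If $\delta > 1/k + o(1)$, the four points fall in the cyclic order $v^i_j, u^{i-1}_{j+1}, u^i_{j+1}, v^{i-1}_j$ on $\TT$, so the two pairs interleave and hence are linked, contradicting \thmref{unlinked-k}. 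This gives $\delta \leq 1/k + o(1)$, and \eqref{L2} asymptotically restricts $\delta$ to odd multiples of $1/(2\nu k)$. The largest such multiple not exceeding $1/k$ is $(2\nu - 1)/(2\nu k)$, which proves \eqref{L3}.

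For \eqref{L4} the key step is to show that under the stated hypothesis both $v^i_j$ and $v^i_{j+1}$ lie $o(1)$-close to distinct attracting marks of the cycle, so that $\delta(v^i_j, v^i_{j+1})$ is a positive multiple of $1/k$ up to $o(1)$. In the stated range of $j$, both $\eta^i_j$ and $\eta^i_{j+1}$ reside in the cycle $B, Q(B), \ldots, Q^{\circ k-1}(B)$: for $1 \leq j \leq h-1$ both are homoclinics (in the cycle by the standing assumption), and for $j = 0$ with $\eta^i_0$ in the cycle, the homoclinic $\eta^i_1$ is in the cycle too. These two arcs lie in distinct basins of the cycle by \corref{homuni} (applied to two homoclinics, or to the heteroclinic/homoclinic pair, using that heteroclinics and homoclinics cannot coexist in a single basin). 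Since $Q$ permutes the $k$ basins of the cycle with combinatorial rotation number $1/k$, the corresponding $k$ attracting marks form a $\rho$-orbit on $\bd D$, hence an arithmetic progression of spacing $1/k$. Writing $\delta(v^i_j, v^i_{j+1}) = r/k + o(1)$ for an integer $1 \leq r \leq k/2$, the reverse triangle inequality on $\TT$ combined with \eqref{L1} yields
\[
\delta(v^i_j, u^i_{j+1}) \;\geq\; \frac{r}{k} - \frac{1}{2\nu k} + o(1).
\]
Comparing with the upper bound $(2\nu - 1)/(2\nu k) + o(1)$ from \eqref{L3} forces $r = 1$, and the two inequalities then sandwich $\delta(v^i_j, u^i_{j+1})$ to $(2\nu - 1)/(2\nu k) + o(1) = 1/k - 1/(2\nu k) + o(1)$, which is \eqref{L4}.

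The main technical subtlety I anticipate is the identification, in the limit $\ve \to 0$, of the $k$ attracting marks of the cycle as an arithmetic progression on $\bd D$ of spacing $1/k$. This uses the local parabolic normal form of $Q$ at $w_\ell$ together with the fact that $Q$ acts on attracting directions by the rotation $\rho = \e^{2\pi\ii/k}$, so that the cycle $B, Q(B), \ldots, Q^{\circ k-1}(B)$ corresponds to a single $\rho$-orbit of attracting directions picked out from the $\nu k$ total. Once this geometric identification is in hand, the rest of the argument is pure circle arithmetic, coupled with the unlinking constraint from \thmref{unlinked-k}.
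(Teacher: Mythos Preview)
Your argument is correct and follows essentially the same route as the paper: for \eqref{L3} you compare $(v^i_j,u^i_{j+1})$ with its $1/k$-rotate (using $\Theta^{i-1}$ rather than the paper's $\Theta^{i+1}$, which is immaterial) and invoke \thmref{unlinked-k}, then discretize via \eqref{L2}; for \eqref{L4} you use that the $k$ attracting directions of the cycle are equally spaced to get $\de(v^i_j,v^i_{j+1})\geq 1/k+o(1)$, combine with \eqref{L1} for the lower bound, and sandwich against \eqref{L3}. The extra paragraph justifying the arithmetic-progression structure of the attracting marks is a welcome elaboration of what the paper states more tersely.
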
  

\begin{proof}
The proof of \eqref{L3} is based on the simple observation that a pair of distance $>1/k$ and its rotated image by $1/k$ of a turn must be linked. There is nothing to prove if $k=2$, so suppose \eqref{L3} is false and $k \geq 3$. Then, by \eqref{L2}, we would have the lower bound $\delta(v^i_j,u^i_{j+1}) \geq 1/k + 1/(2\nu k) + o(1)$. In view of \eqref{L5}, this lower bound would imply that $(v^i_j,u^i_{j+1})$ and $(v^{i+1}_j,u^{i+1}_{j+1})$ are linked, contradicting \thmref{unlinked-k}. \vs

To prove \eqref{L4}, recall that the attracting directions corresponding to the cycle $B, Q(B), \ldots, Q^{\circ k-1}(B)$ form $k$ equally spaced $\oplus$ points on $\TT$. If $1 \leq j \leq h-1$, or if $j=0$ and $\eta^i_0$ belongs to the cycle $B, Q(B), \ldots, Q^{\circ k-1}(B)$, then $\eta^i_j$ and $\eta^i_{j+1}$ belong to different basins in this cycle, so $\delta(v^i_j,v^i_{j+1}) \geq 1/k +o(1)$. In view of \eqref{L1}, this gives the lower bound $\delta(v^i_j,u^i_{j+1}) \geq 1/k - 1/(2\nu k)+o(1)$. Combining this with the upper bound \eqref{L3}, we obtain \eqref{L4}. 
\end{proof}

The following is the main technical result of this section: 

\begin{theorem}\label{only1}
The cycle $B, Q(B), \ldots, Q^{\circ k-1}(B)$ meets only one $\sL$-arc, which is necessarily the homoclinic $\eta_1$ based at $w_\ell$. 
\end{theorem}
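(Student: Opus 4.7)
My plan would be to argue by contradiction, assuming the cycle $B_0,\ldots,B_{k-1}$ contains more than one $\sL$-arc -- so either $h\geq 2$ or $h=1$ with $\eta_0$ in the cycle -- and to derive a contradiction from Theorem~\ref{unlinked-k} combined with the sharp length estimates of Lemma~\ref{l<}. The key preliminary observation is a \emph{no-nesting principle}: in an unlinked collection of pairs on $\TT$, two distinct pairs with identical shorter-arc lengths must have disjoint shorter arcs, since a nested pair would be a proper sub-arc and hence strictly shorter.

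Next I would identify the ``long pairs'' of length $\alpha:=1/k-1/(2\nu k)+o(1)$ guaranteed by equation \eqref{L4}: these are $(v^i_j,u^i_{j+1})$ for $1\le j\le h-1$ and all $i$, plus $(v^i_0,u^i_1)$ for all $i$ whenever $\eta_0$ lies in the cycle. Using the rotational structure \eqref{L5} together with the ``walk'' relation $m_{j+1}\equiv m_j\pm 1\pmod k$ (itself a consequence of \eqref{L4} combined with the formula $\delta(v^0_j,u^0_{j+1})\aeq 1/k-1/(2\nu k)$), I would pin the shorter arc of each long pair to the slot $[(m_j+i)/k,\,(m_j+i+1)/k-1/(2\nu k)]$. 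Two such slots either coincide up to $o(1)$, when $m_j+i\equiv m_{j'}+i'\pmod k$, or are disjoint; the no-nesting principle then forces the map $(i,j)\mapsto (m_j+i)\pmod k$ to be injective. Since its target has size $k$, the number $A$ of long pairs satisfies $A\le k$, giving $h\le 1$ if $\eta_0$ lies in the cycle and $h\le 2$ otherwise.

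The case $\nu=1$ would fall immediately: since $\alpha=1/(2k)$ equals the angular spacing between adjacent attracting and repelling directions, the upper bound \eqref{L3} is achieved with equality for every pair, so all $k(h+1)$ pairs in $\bigcup_i\Theta^i$ are long. Injectivity then forces $k(h+1)\le k$, contradicting $h\ge 1$; hence $\nu\ge 2$ under the standing assumptions.

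For $\nu\ge 2$ the residual configurations (X) ``$\eta_0$ in cycle with $h=1$'' and (Y) ``$\eta_0$ outside the cycle with $h=2$'' would be eliminated by analysing the terminal pair $(v^i_h,u^i_{h+1})$. Using the intrinsic-order constraint $v^i_h<u^i_{h+1}$, the bound \eqref{L3}, and the requirement that $\eta^i_{h+1}$ be the $Q^i$-image of the specific exit direction of the spine arc $\eta_{h+1}$, one pins $u^i_{h+1}\aeq(m_h+i+1)/k-1/(2\nu k)$; this makes $(v^i_h,u^i_{h+1})$ itself a long pair landing in the slot $m_h+i+1\pmod k$. The resulting $k$ extra long pairs would overflow $A\le k$, producing the desired contradiction with unlinking. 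The main obstacle I anticipate is precisely this last step: verifying unambiguously, via Theorem~\ref{unlinked-k} and cross-checks against the remaining rotations $\Theta^i$, that no alternative placement of $u^i_{h+1}$ at an intermediate repelling direction can evade the slot-injectivity constraint.
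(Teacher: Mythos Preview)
Your counting strategy is genuinely different from the paper's proof. The paper proceeds by direct case analysis and an inductive \emph{trapping} argument: it splits according to whether $\eta_0$ lies in the cycle (Case~1) or not (Case~2), and in the latter according to the sign in $u_2 \aeq v_1 \pm (1/k-1/(2\nu k))$ (Cases~2a, 2b). In each case one shows, using \thmref{unlinked-k} against a specific pair from a neighboring $\Theta^{i}$, that $u_{j+1}$ is forced into the interval $I_{\eta^{i}_{j'}}$ for some rotated homoclinic, so $\eta_{j+1}\subset\Delta_{\eta^{i}_{j'}}$ is itself homoclinic. Iterating, $\eta_{h+1}$ is trapped in such a Jordan domain, contradicting its heteroclinic nature.

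Your global slot-counting idea is appealing but has a real gap prior to the terminal step you already flag. The issue is the slot-injectivity claim. First, the shorter arc of a long pair $(v^i_j,u^i_{j+1})$ is \emph{not} pinned to the single interval $[(m_j+i)/k,\,(m_j+i+1)/k-1/(2\nu k)]$: from \eqref{L4} alone the arc may extend from $v^i_j$ in either direction, and the paper's own Cases~2a/2b show both signs genuinely occur. This already doubles the slot count and spoils your $\nu=1$ argument (you would only get $k(h+1)\le 2k$, hence $h\le 1$, which is no contradiction). Second, and more seriously, your no-nesting principle requires \emph{exact} length equality, whereas \eqref{L4} gives equality only up to $o(1)$. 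Two distinct long pairs occupying the same approximate slot can be strictly nested by an $o(1)$ amount without violating unlinking; nothing in the setup prevents $\eta^i_j$ and $\eta^{i'}_{j'}$ from lying in the same basin with $o(1)$-close crossings of $\bd D$. So ``same slot'' does not produce a link, and injectivity of $(i,j)\mapsto (m_j+i)\bmod k$ does not follow.

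The paper's inductive method sidesteps this entirely: it never compares arc lengths, but at each step uses the exact placement against a single already-located pair from $\Theta^{i\pm 1}$ to force a \emph{topological} conclusion ($\eta_{j+1}\subset\Delta_{\eta^{\cdot}_{\cdot}}$), which is robust under $o(1)$ perturbation and carries definite dynamical content.
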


Thus, the sequence of $\sL$-arcs used to define $\Theta^0$ reduces to $\eta_0<\eta_1<\eta_2$ (so $h=1$), and the heteroclinic $\eta_0$ is not in the cycle $B, Q(B), \ldots, Q^{\circ k-1}(B)$. As a result, each of the $k$ basins in this cycle contains the homoclinic $\eta^i_1$ for a unique $0 \leq i \leq k-1$. In particular, another cycle of basins at $w_\ell$ is needed to accommodate the heteroclinic arcs $\eta_0, \eta^1_0, \ldots, \eta^{k-1}_0$, so the degeneracy order $\nu$ of $w_\ell$ must be at least $2$. 

\begin{proof}
We know that the cycle $B, Q(B), \ldots, Q^{\circ k-1}(B)$ contains the homoclinic $\eta_1$. But there is a possibility that this cycle contains the heteroclinic $\eta_0$ or a second homoclinic $\eta_2$. Below we rule out these scenarios: \vs

{\it Case 1.} Suppose the cycle $B, Q(B), \ldots, Q^{\circ k-1}(B)$ contains the heteroclinic $\eta_0$. Without loss of generality assume $\eta_0 \subset B$. By \corref{homuni} $\eta_1$ cannot be in $B$, so $\eta_1 \subset Q^{\circ i}(B)$ for some $1 \leq i \leq k-1$. It follows that $\eta_1$ is in the same parabolic basin as the heteroclinic $\eta^i_0$, so $\de(v_1,v^i_0) \aeq 0$ and therefore $\de(u_1,v^i_0) \aeq 1/(2\nu k)$ by \eqref{L1}. Since $\delta(v_0,u_1) \aeq 1/k-1/(2\nu k)$ by \eqref{L4}, it follows that $\de(v_0,v^i_0) \leq 1/k + o(1)$. On the other hand, \eqref{L5} shows that up to an $o(1)$ error the distance $\de(v_0,v^i_0)$ is a multiple of $1/k$. In fact, $\de(v_0,v^i_0) \aeq i/k$ if $i \leq k/2$ and $\de(v_0,v^i_0) \aeq (k-i)/k$ if $i > k/2$. It follows that $i=1$ or $i=k-1$. Without loss of generality assume that $i=1$ (the other case is completely similar). Then we have the following points in positive cyclic order on $\bd \DD \cong \TT$:  
$$
v_0 < u_1 \aeq v_0+\frac{1}{k}-\frac{1}{2\nu k} < v_1 \aeq v_0+\frac{1}{k} < v^1_0 \aeq v_0+\frac{1}{k}. 
$$
Using \eqref{L5}, we obtain a similar order for all $0 \leq i \leq k-1$:

%%%%%%%%%%%%%%%%%%%%%%%%%%%%%%%%%%%%%%%%%%%
\begin{figure}[]
	\centering
	\begin{overpic}[width=0.6\textwidth]{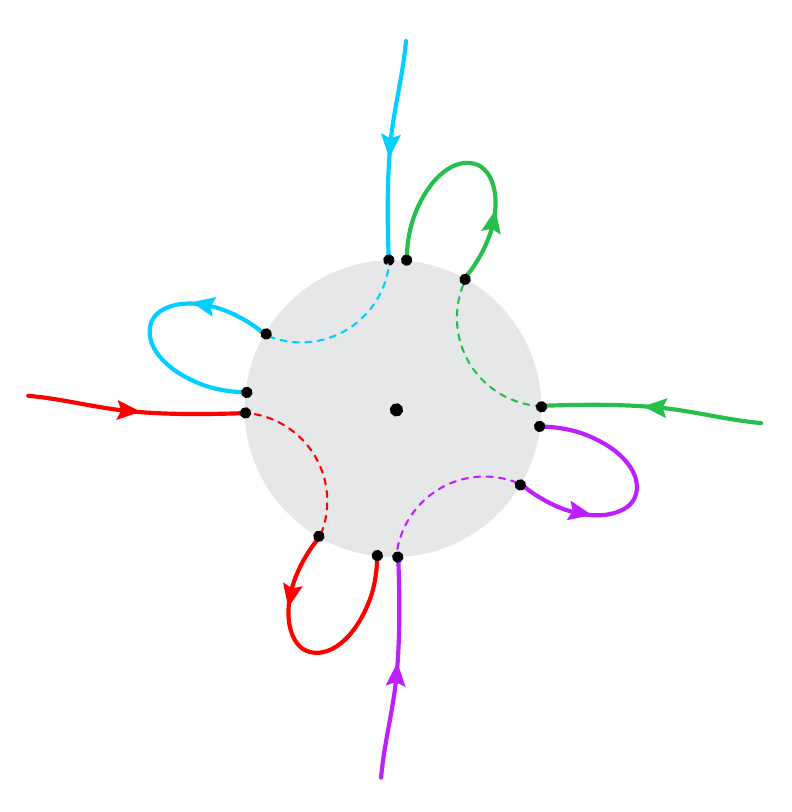}
\put (78,52.5) {\small{\color{mygreen}{$\eta^2_0$}}}
\put (62,75) {\small{\color{mygreen}{$\eta^2_1$}}}
\put (42,81) {\small{\color{myblue}{$\eta^3_0$}}}
\put (22,65.4) {\small{\color{myblue}{$\eta^3_1$}}}
\put (14,45) {\small{\color{red}{$\eta_0$}}}
\put (30.5,25) {\small{\color{red}{$\eta_1$}}}
\put (51,16) {\small{\color{mypurple}{$\eta^1_0$}}}
\put (72,32) {\small{\color{mypurple}{$\eta^1_1$}}}
\put (65.7,51.7) {\footnotesize $v^2_0$}
\put (58.2,64.2) {\footnotesize $u^2_1$}
\put (49.2,64.8) {\footnotesize $v^2_1$}
\put (43.8,67.5) {\footnotesize $v^3_0$}
\put (33,60.2) {\footnotesize $u^3_1$}
\put (31.8,51) {\footnotesize $v^3_1$}
\put (29,45.3) {\footnotesize $v_0$}
\put (35,34.3) {\footnotesize $u_1$}
\put (45.2,33.2) {\footnotesize $v_1$}
\put (50.6,31) {\footnotesize $v^1_0$}
\put (60.5,36.2) {\footnotesize $u^1_1$}
\put (62.4,46) {\footnotesize $v^1_1$}
\put (46.5,46) {\footnotesize $w_\ell$}
	\end{overpic}
\caption{\footnotesize Illustration of {\it Case 1} in the proof of \thmref{only1}, with $k=4$.}
\label{flower}
\end{figure}
%%%%%%%%%%%%%%%%%%%%%%%%%%%%%%%%%%%%%%%%%%% 

\begin{equation}\label{L6}
v^i_0 < u^i_1 \aeq v^i_0+\frac{1}{k}-\frac{1}{2\nu k} < v^i_1 \aeq v^i_0+\frac{1}{k} < v^{i+1}_0 \aeq v^i_0+\frac{1}{k}
\end{equation}
(see \figref{flower}). Now consider the next pair $(v_1,u_2) \in \Theta^0$. By \eqref{L2} and \eqref{L3}, we have $u_2 \aeq v_1 - (2j+1)/(2\nu k)$ or $u_2 \aeq v_1 +(2j+1)/(2\nu k)$ for some $0 \leq j \leq \nu-1$. In the first case \eqref{L6} shows that $(v_1,u_2)$ and $(v_0,u_1)$ would be linked, contradicting \thmref{unlinked-k}. In the second case \eqref{L6} shows that $(v_1,u_2)$ and $(v^1_0,u^1_1)$ would be linked unless $j=2\nu-1$. This leaves only one possibility for $u_2$:
$$
u_2 \aeq v_1 + \frac{1}{k} -\frac{1}{2\nu k} \aeq v^1_0 + \frac{1}{k} -\frac{1}{2\nu k} \aeq u^1_1. 
$$
Observe that since $(v_1,u_2)$ and $(v^1_0,u^1_1)$ are unlinked by \thmref{unlinked-k}, $u_2$ belongs to the interval $I_{\eta^1_1}$ bounded by $u^1_1$ and $v^1_1$, so $\eta_2$ is contained in $\Delta_{\eta^1_1}$. In particular, $\eta_2$ must be a homoclinic, with $v_2 \in I_{\eta^1_1}$ and $v_2 \aeq v^1_1 \aeq v^2_0$. \vs

Now repeat the above argument with the next pair $(v_2,u_3) \in \Theta^0$ to conclude that the only possibility is $u_3 \aeq u^2_1, v_3 \aeq v^2_1$, with $\eta_3$ contained in $\Delta_{\eta^2_1}$, so $\eta_3$ must be a homoclinic. Continuing this process inductively, we finally reach $\eta_{h+1}$ which by the same argument must be contained in $\Delta_{\eta^h_1}$. This is a contradiction since $\eta_{h+1}$ is a heteroclinic. \vs

{\it Case 2.} Suppose the cycle $B, Q(B), \ldots, Q^{\circ k-1}(B)$ does not contain the heteroclinic $\eta_0$ but contains $\eta_1$ and a next homoclinic $\eta_2$. By \eqref{L2} we now have $\de(v_0,u_1) \aeq (2j+1)/(2\nu k)$ for some $0 \leq j \leq \nu-2$ (in particular $\nu \geq 2$). Without loss of generality, assume $u_1 \aeq v_0 + (2j+1)/(2\nu k)$. By \eqref{L1}, either $v_1 \aeq u_1 + 1/(2\nu k)$ or $v_1 \aeq u_1 - 1/(2\nu k)$. The latter is impossible, because it implies $v_0<v_1<u_1$ which would force $(v_0,u_1)$ and $(v_1,u_2)$ be linked since $\de(v_1,u_2) \aeq 1/k - 1/(2\nu k)$ is greater than $\de(v_0,u_1)$ by at least $1/(2\nu k)+o(1)$. Thus, we must have the following points in positive cyclic order:
$$
v_0 < u_1 \aeq v_0 + \frac{2j+1}{2\nu k} < v_1 \aeq v_0 + \frac{2j+2}{2\nu k} < v^1_0 \aeq v_0 + \frac{1}{k}.
$$
Using \eqref{L5}, we obtain a similar order for all $0 \leq i \leq k-1$:
$$
v^i_0 < u^i_1 \aeq v^i_0 + \frac{2j+1}{2\nu k} < v^i_1 \aeq v^i_0 + \frac{2j+2}{2\nu k} < v^{i+1}_0 \aeq v^i_0 + \frac{1}{k}.
$$
Now consider the next pair $(v_1,u_2) \in \Theta^0$. Since $\de(v_1,u_2) \aeq 1/k-1/(2\nu k)$ by \eqref{L4}, there are only two possibilities $u_2 \aeq v_1 \pm (1/k-1/(2\nu k))$. Let us address them separately: \vs 

%%%%%%%%%%%%%%%%%%%%%%%%%%%%%%%%%%%%%%%%%%%
\begin{figure}[]
	\centering
	\begin{overpic}[width=0.7\textwidth]{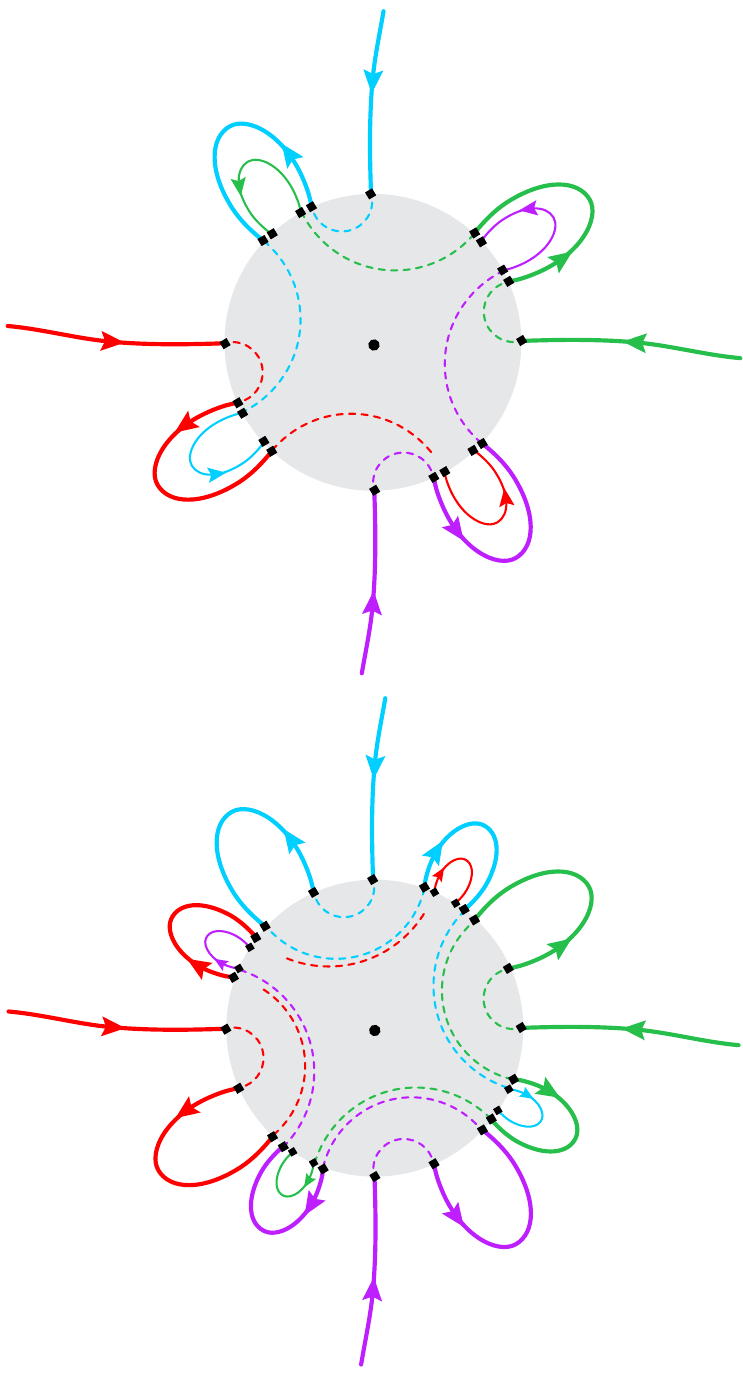}
\put (45,72.2) {\small{\color{mygreen}{$\eta^2_0$}}}
\put (43.7,85) {\small{\color{mygreen}{$\eta^2_1$}}}
\put (28.5,94) {\small{\color{myblue}{$\eta^3_0$}}}
\put (18,92) {\small{\color{myblue}{$\eta^3_1$}}}
\put (8,76.4) {\small{\color{red}{$\eta_0$}}}
\put (9,65) {\small{\color{red}{$\eta_1$}}}
\put (36,60.7) {\footnotesize{\color{red}{$\eta_2$}}}
\put (33.4,38) {\scriptsize{\color{red}{$\eta_3$}}}
\put (24.3,56) {\small{\color{mypurple}{$\eta^1_0$}}}
\put (35,57) {\small{\color{mypurple}{$\eta^1_1$}}}
\put (15.8,75.8) {\footnotesize $v_0$}
\put (16.3,71.4) {\footnotesize $u_1$}
\put (20,66.1) {\footnotesize $v_1$}
\put (31.5,66.4) {\footnotesize $u_2$}
\put (44,23) {\small{\color{mygreen}{$\eta^2_0$}}}
\put (43.7,34.5) {\small{\color{mygreen}{$\eta^2_1$}}}
\put (42.8,17) {\small{\color{mygreen}{$\eta^2_2$}}}
\put (28.3,43.5) {\small{\color{myblue}{$\eta^3_0$}}}
\put (18,42) {\small{\color{myblue}{$\eta^3_1$}}}
\put (34,41.2) {\small{\color{myblue}{$\eta^3_2$}}}
\put (8,26.5) {\small{\color{red}{$\eta_0$}}}
\put (9.3,16.5) {\small{\color{red}{$\eta_1$}}}
\put (10,33) {\small{\color{red}{$\eta_2$}}}
\put (24.3,5.5) {\small{\color{mypurple}{$\eta^1_0$}}}
\put (35,7) {\small{\color{mypurple}{$\eta^1_1$}}}
\put (18.5,7.7) {\small{\color{mypurple}{$\eta^1_2$}}}
\put (15.8,25.8) {\footnotesize $v_0$}
\put (16.2,21.5) {\footnotesize $u_1$}
\put (18.4,17.9) {\footnotesize $v_1$}
\put (17.3,27.9) {\footnotesize $u_2$}
\put (19,30.7) {\footnotesize $v_2$}
\put (30.8,33.8) {\tiny $u_3$}
\put (26.3,73) {\footnotesize $w_\ell$}
\put (26.3,23) {\footnotesize $w_\ell$}
	\end{overpic}
\caption{\footnotesize Illustration of {\it Case 2a} (top) and {\it Case 2b} (bottom) in the proof of \thmref{only1}, with $k=4$.}
\label{patterns}
\end{figure}
%%%%%%%%%%%%%%%%%%%%%%%%%%%%%%%%%%%%%%%%%%% 

{\it Case 2a.} $u_2 \aeq v_1 + 1/k-1/(2\nu k)$. Then, by \eqref{L5} and \eqref{L1}, 
$$
u_2 \aeq v^1_1 - \frac{1}{2\nu k} \aeq u^1_1. 
$$
Since $(v_1,u_2)$ and $(v^1_0,u^1_1)$ are unlinked, $u_2$ belongs to the interval $I_{\eta^1_1}$ bounded by $u^1_1$ and $v^1_1$, so $\eta_2$ is contained in $\Delta_{\eta^1_1}$ (see \figref{patterns} top). Now repeat the argument with the next pair $(v_2,u_3) \in \Theta^0$ to conclude that the only possibility is $u_3 \aeq u^2_1, v_3 \aeq v^2_1$, with $\eta_3$ contained in $\Delta_{\eta^2_1}$, and therefore $\eta_3$ must be a homoclinic. Continuing this process inductively as in {\it Case 1}, we eventually arrive at the conclusion that $\eta_{h+1}$ is contained in $\Delta_{\eta^h_1}$, which is a contradiction since $\eta_{h+1}$ is a heteroclinic. \vs 

{\it Case 2b.} $u_2 \aeq v_1 - 1/k + 1/(2\nu k)$. Then, by \eqref{L5} and \eqref{L1}, 
$$
u_2 \aeq v^{k-1}_1 + \frac{1}{2\nu k}. 
$$
This means $\eta_2$ and $\eta^{k-1}_1$ are in the same basin but neither is inside the other (see \figref{patterns} bottom). Now repeat the argument with the next pair $(v_2,u_3) \in \Theta^0$ to conclude that the only possibility is 
$$
u_3 \aeq v_2 - \frac{1}{k} + \frac{1}{2\nu k} \aeq v^{k-1}_2 + \frac{1}{2\nu k} \aeq u^{k-1}_2.
$$ 
Moreover, since $(v_2,u_3)$ and $(v^{k-1}_1,u^{k-1}_2)$ are unlinked, $u_3$ belongs to the interval $I_{\eta^{k-1}_2}$ bounded by $u^{k-1}_2$ and $v^{k-1}_2$, so $\eta_3$ is contained in $\Delta_{\eta^{k-1}_2}$. In particular, $\eta_3$ must be a homoclinic. Continuing this process inductively, we finally reach $\eta_{h+1}$ which by the same argument must be contained in $\Delta_{\eta^{k-h+1}_2}$. This is a contradiction since $\eta_{h+1}$ is a heteroclinic.  
\end{proof} 

\begin{proof}[Proof of \thmref{C}]
Let us first treat the easier case $N=0$ where there are no heteroclinic arcs. If $w_0=w_N=\zeta_\infty$ is repelling, then $M=M^\#=0$ and there is nothing to prove. Otherwise $w_0$ is parabolic and $M^\#$ is at most the number of cycles of parabolic basins at $w_0$. By classical Fatou-Julia theory, every cycle of parabolic basins contains at least one critical point. Hence $M^\# \leq d-1$, proving \eqref{BB1}. To bound $M$, suppose $p=q/k$ is the period of $w_0$ and $\nu$ is the degeneracy order of $w_0$ as a fixed point of $P^{\circ p}$. Then there are $\nu$ cycles of parabolic basins at $w_0$, each of length $k$. Since $\nu \leq d-1$, we obtain $M \leq k \nu \leq d-1 + (k-1)\nu$, which proves \eqref{BB2}. \vs  

Now suppose $N \geq 1$. For $0 \leq j \leq N$, define 
\begin{align*}
M_j & := \text{number of earrings in} \ \sL \ \text{based at} \ w_j \\
M^\#_j & := \text{number of equivalence classes of earrings in} \ \sL \ \text{based at} \ w_j,
\end{align*}
so $M=\sum_{j=0}^N M_j$ and $M^\#=\sum_{j=0}^N M^\#_j$. Let $p_j=q/k_j$ be the period of $w_j$ (we know from \lemref{period} that $p_j=q \Leftrightarrow k_j=1$ for all $j$ with at most one exception). Let $\nu_j$ be the degeneracy order of $w_j$ as a fixed point of $P^{\circ p_j}$, with the convention that $\nu_N=0$ if $w_N$ is repelling. Then there are $\nu_j$ cycles of parabolic basins at $w_j$, each of length $k_j$. Let $B=P^{\circ q}(B)$ be a parabolic basin at $w_j$. By \thmref{only1}, if $0 \leq j \leq N-1$, the cycle $B, P(B), \ldots, P^{\circ q-1}(B)$ contains at most one heteroclinic or one earring in $\sL$, but not both. Moreover, if this cycle contains a heteroclinic, the Basic Structure Lemma shows that there are at least two critical points of $P$ in the union $B \cup P(B) \cup \cdots \cup P^{\circ q-1}(B)$. This shows  \vs
\begin{align*}
M_j \leq & \ \nu_j-1 & M^\#_j & = \ M_j \qquad \text{if} \ 0 \leq j \leq N-1, \\
M_N \leq & \ k_N \nu_N & M^\#_N & \leq \, \nu_N, 
\end{align*}
and
$$
\sum_{j=0}^{N-1} (\nu_j+1) + \nu_N \leq d-1, \qquad \text{so} \qquad \sum_{j=0}^{N-1} (\nu_j-1) + \nu_N \leq d-1-2N.
$$
It follows that 
$$
2N+M^\# = 2N + \sum_{j=0}^{N-1} M^\#_j + M^\#_N \leq 2N + \sum_{j=0}^{N-1} (\nu_j-1) + \nu_N \leq d-1, 
$$
which proves \eqref{BB1}. Similarly, 
$$
2N+M = 2N + \sum_{j=0}^{N-1} M_j + M_N \leq 2N + \sum_{j=0}^{N-1} (\nu_j-1) + k_N \nu_N \leq d-1 + (k_N-1) \nu_N, 
$$
which proves \eqref{BB2}.
\end{proof}

\section{Proof of \thmref{D}}\label{ex} 

In this section we prove \thmref{D}, that is, we construct real monic polynomials $P$ of any odd degree $\geq 3$ which have the maximum number of heteroclinics allowed by \thmref{C}. \vs     

First assume we have constructed a real monic polynomial $P$ of degree $d=2N+1$ with fixed points $w_N=0<\cdots<w_1<w_0$ such that $0$ is repelling and $w_j$ is parabolic with multiplier $1$ and $\resit(P,w_j)<0$ for every $0 \leq j \leq N-1$. Since $P$ has $2N+1$ fixed points in $\CC$ counting multiplicities, the fixed point set of $P$ is $\{ w_0, w_1, \ldots, w_N \}$ with $w_0, \ldots, w_{N-1}$ having multiplicity $2$. Using the fact that $P$ is monic, we can write  
$$
P_\ve(z)=P(z)+\ve=\ve+z+z(z-w_0)^2\cdots(z-w_{N-1})^2, 
$$ 
which shows $P_\ve(x) \geq \ve+x$ for $x>0$. It follows that the unique repelling fixed point $w_N(\ve)$ near $0$ is negative, while the two simple fixed points of $P_\ve$ bifurcating off of $w_j$ are non-real, hence form a complex-conjugate pair $w_j(\ve),\ov{w_j(\ve)}$ with multipliers $\lambda_j(\ve), \ov{\lambda_j(\ve)}$. We have 
\begin{align*}
\resit(P_\ve,w_j(\ve))+\resit(P_\ve,\ov{w_j(\ve)}) 
& = \frac{1}{2} - \frac{1}{1-\lambda_j(\ve)} + \frac{1}{2} - \frac{1}{1-\ov{\lambda_j(\ve)}} \\
& = 1 - 2 \myre \left( \frac{1}{1-\lambda_j(\ve)} \right).
\end{align*}      
As $\ve \to 0$, this quantity must converge to $\resit(P,w_j)$, which is negative by the assumption. Hence $\myre(1/(1-\lambda_j(\ve)))>1/2$ or $|\lambda_j(\ve)|<1$ for all sufficiently small $\ve>0$ (it is easy to see that $\lambda_j(\ve)$ must tend to $1$ asymptotically along a horocycle as $\ve \to 0$). It follows that all the $2N$ critical points of $P_\ve$ lie in the basins of attraction of $w_j(\ve), \ov{w_j(\ve)}$ for $0 \leq j \leq N-1$. In particular, $K_{P_\ve}$ and therefore $K_P$ is connected. \vs 

Now each of the $2N$ fixed rays of $P_\ve$ must land at a repelling or parabolic fixed point of $P_\ve$. Since the fixed points of $P_\ve$ other than $w_N(\ve)$ are all attracting, it follows that the fixed rays of $P_\ve$ (in particular $R_{P_\ve,0}$) all land at $w_N(\ve)$. Thus, as $\ve \to 0$, the closed ray $\ov{R_{P_\ve,0}} =[w_N(\ve),+\infty]$ converges in the Hausdorff metric to  
$$
[0,+\infty] = [w_N, w_{N-1}] \cup \cdots \cup [w_1,w_0] \cup [w_0,+\infty],
$$ 
with the last interval $[w_0,+\infty]$ being the closed ray $\ov{R_{P,0}}$. \vs

It remains to construct a polynomial $P$ with the aforementioned properties. It will be more convenient in the notations that follow to label the points $0<w_{N-1}<\cdots<w_0$ in increasing order by setting $x_j:=w_{N-j}$. Let $C>0$ and $0<x_1<\cdots<x_N$. Define 
\begin{align*}
Q(z) & := \prod_{j=1}^N (z-x_j) \\
P(z) & := z+Cz(Q(z))^2.
\end{align*}
Evidently $P$ is a real polynomial with fixed points at $0$ and the $x_j$, and $P'(0)>1$ and $P'(x_j)=1$. Our goal is to find suitable $C, x_1, \ldots, x_N$ such that $\resit(P,x_j)<0$ for all $1 \leq j \leq N$. Once this is accomplished, we can conjugate $P$ via a real dilation to a real monic polynomial which will have the desired properties since the r\'esidu it\'eratif is invariant under analytic change of coordinates. \vs

Each $x_j$ is a parabolic fixed point of multiplicity $2$, so the formula \eqref{resitcomp} gives  
$$
\resit(P,x_j) = 1 - \iota(P,x_j) = 1- \frac{2}{3} \frac{P'''(x_j)}{(P''(x_j))^2}. 
$$  
Thus, we need to arrange for the inequalities 
$$
P'''(x_j)> \frac{3}{2} (P''(x_j))^2 \qquad (1 \leq j \leq N).
$$
A straightforward calculation shows 
\begin{align*}
P''(x_j) & = 2C \, x_j \, (Q'(x_j))^2\\
P'''(x_j) & = 6C \, (Q'(x_j))^2 + 6C \, x_j \, Q'(x_j)Q''(x_j),
\end{align*}
so the above inequalities translates to 
\begin{equation}\label{yek}
(Q'(x_j))^2 + x_j \, Q'(x_j)Q''(x_j) > C \, x_j^2 \, (Q'(x_j))^4 \qquad (1 \leq j \leq N). 
\end{equation}
To make these inequalities more explicit, we notice that 
\begin{align*}
\frac{Q'(z)}{Q(z)} & = \sum_{i=1}^N \frac{1}{z-x_i}\\
\frac{Q''(z)}{Q(z)} - \left(\frac{Q'(z)}{Q(z)} \right)^2 & = \sum_{i=1}^N \frac{-1}{(z-x_i)^2}.
\end{align*}
It follows, after routine algebra, that 
\begin{align*}
Q'(x_j) & = \prod_{i \neq j} (x_j-x_i) \\
Q''(x_j) & = 2 Q'(x_j) \sum_{i \neq j} \frac{1}{x_j-x_i}.
\end{align*}
Setting 
$$
H_j:= \sum_{i \neq j} \frac{1}{x_j-x_i},
$$
we can now write the desired inequalities \eqref{yek} in the form
\begin{equation}\label{do}
1+2x_j H_j > C \, x_j^2 \, (Q'(x_j))^2 \qquad (1 \leq j \leq N).
\end{equation}
It suffices to find $x_1,\ldots,x_N$ so that the weaker inequalities 
\begin{equation}\label{se}
1+2x_j H_j>0 \qquad (1 \leq j \leq N)
\end{equation}
hold, for then \eqref{do} can be achieved by choosing $C>0$ sufficiently small. \vs

To obtain \eqref{se}, define $x_1, \ldots, x_N$ recursively by  
\begin{align*}
x_1 &:= 1, \\ 
x_j &:= x_{j-1}+2^j \qquad (2 \leq j \leq N).
\end{align*}
We have 
$$
H_1 = - \frac{1}{2^2} - \frac{1}{2^2+2^3} - \cdots - \frac{1}{2^2+2^3+\cdots+2^N} > -\frac{1}{2}, 
$$
so $1+2x_1 H_1 >0$ and \eqref{se} holds for $j=1$. We claim that $H_j>0$ for $2 \leq j \leq N$, so \eqref{se} holds for these values of $j$ as well. In fact, $H_N>0$ trivially since every term in its defining sum is positive. On the other hand, if $2 \leq j \leq N-1$, then  
$$
\sum_{i<j} \frac{1}{x_j-x_i} > \frac{1}{x_j-x_{j-1}}=\frac{1}{2^j}
$$
while 
$$
\sum_{i>j} \frac{1}{x_j-x_i} = - \frac{1}{2^{j+1}} - \cdots - \frac{1}{2^{j+1}+\cdots+2^N} > -\frac{1}{2^j}.
$$
Adding the two inequalities, we obtain $H_j>0$. This completes the construction of $P$ and the proof of \thmref{D}.

\section*{Appendix. $C^1$ extensions of $\sL$-arcs at their extremities}

As noted in \S \ref{hhintro}, even though $\sL$-arcs are real-analytic curves, they typically fail to have $C^1$ extensions at either of their endpoints. This is easy to see if the endpoint is repelling with multiplier $\lambda$ such that $\lambda^q \notin \ ]1,+\infty[$ (of course only the point $w_N=\zeta_\infty$ can possibly be repelling). Below we explain the typical failure of $C^1$ extensions in the case the endpoint is parabolic by showing that unless the image of the $\sL$-arc in the corresponding Fatou coordinate is a straight line, the tangent direction to the arc must have non-diminishing oscillations near the parabolic endpoint. This behavior is already prevalent in the quadratic family and can be observed in the pictures of the external rays landing at $z=0$ of $z \mapsto e^{2\pi \ii p/q}z+z^2$ for large $q$. Since the problem is local, we may formulate it in the more general setting of a regular invariant curve $\ga$ for an analytic map $f$ with a parabolic fixed point of multiplier $1$ at the origin. We will assume $\ga$ is forward-invariant and approaches $0$ through an attracting petal; the case of a backward-invariant $\ga$ in a repelling petal is completely similar. Without loss of generality we can work with a parametrization $\ga: [0,+\infty[ \to \CC$ which satisfies $f(\gamma(t))=\gamma(t+1)$ and $\lim_{t \to +\infty} \ga(t)=0$. \vs   

Here is a caricature of the proof in the case of a simple (i.e., multiplicity $2$) parabolic fixed point. If $f(z)=z+z^2+O(z^3)$, the Fatou coordinate $\Phi$ mapping an attracting petal to a right half-plane and conjugating $f$ to the unit translation $T$ has the asymptotic $\Phi(z)= -1/z \, (1+o(1))$ as $z \to 0$. The image $\tilde{\ga}:=\Phi(\ga)$ is a $T$-invariant curve which satisfies $T(\tilde{\ga}(t))=\tilde{\ga}(t+1)$, so $\tilde{\ga}(t)-t$ is $1$-periodic and can be represented by a Fourier series. Consider the simplest test case where $\tilde{\ga}(t)-t$ is not constant, e.g., $\tilde{\ga}(t) = t + \ii \sin(2\pi t)$. Pulling back under $\Phi$ and ignoring small error terms then gives $\ga(t) = -1/(t+\ii \sin(2\pi t))$. Changing the time parameter to $s:=-1/t$, we obtain the re-parametrized curve $\ga(s) = 1/(1/s+\ii \sin(2\pi/s))$. It easily follows that $\ga'(s) = 1 + 2\pi \ii \cos(2\pi/s) + O(s)$ which clearly has no continuous extension as $s \to 0^-$. \vs

Let us now give a rigorous proof. Suppose the parabolic fixed point $z=0$ has multiplicity $m=r+1 \geq 2$. As noted in \S \ref{indres}, we can always put the map in the normal form 
$$
f(z) = z + z^{r+1} + b z^{2r+1}+O(z^{2r+2})
$$ 
for some $b \in \CC$. The Fatou coordinate $\Phi$ of $f$ can be constructed in two steps as follows. First we use the {\it pre-Fatou coordinate} $z \mapsto w=-1/(rz^r)$ to conjugate $f$ to a near-translation of the form 
$$
g(w) = w + 1 + \Big( \frac{r+1}{2}-b \Big) \frac{1}{r w} + O(w^{-1-1/r})
$$ 
(observe that by \eqref{resitcomp} the quantity $(r+1)/2-b$ is $\resit(f,0)$). We then conjugate $g$ on a right half-plane to the unit translation $T$ by a Fatou coordinate of the form 
$$
\phi(w) = w - \Big( \frac{r+1}{2} -b \Big) \log w + c + O(w^{-1/r}).
$$
The composition $\Phi(z):=\phi(-1/(rz^r))$ will then be a Fatou coordinate for $f$. Consider a regular $f$-invariant curve $\ga$ and its $T$-invariant image $\tilde{\ga} := \Phi(\ga) = \phi(-1/(r\ga^r))$ as before. The unit tangent $\tilde{\alpha}(t):= \tilde{\ga}'(t)/|\tilde{\ga}'(t)|$ is $1$-periodic and non-constant provided that $\tilde{\ga}$ is not a straight line. Assuming this is the case, it follows that $\lim_{t \to +\infty} \tilde{\alpha}(t)$ does not exist. Let $\Psi$ denote the branch of the inverse of $\Phi$ mapping a right half-plane conformally to an attracting petal containing $\ga$, so $\ga=\Psi(\tilde{\ga})$. Let $\sigma$ be the $r$-th root of $-1$ corresponding to the attracting direction along which $\gamma$ approaches $0$. Then $\Psi'(z)/|\Psi'(z)| \to \sigma$ uniformly as $z$ tends to $\infty$ within any horizontal strip of bounded height containing $\tilde{\ga}$. Now, the unit tangent $\alpha(t):=\ga'(t)/|\ga'(t)|$ satisfies 
$$
\alpha(t)= \frac{\Psi'(\tilde{\ga}(t))}{|\Psi'(\tilde{\ga}(t))|} \  \tilde{\alpha}(t). 
$$
As $t \to +\infty$ the first factor on the right tends to $\sigma$ while the second factor oscillates periodically without converging to a limit. This proves $\lim_{t \to +\infty} \alpha(t)$ does not exist and shows that $\ga$ cannot extend to $0$ as a $C^1$ curve.

\end{document}